\documentclass{amsart}
\usepackage{fullpage}
% packages
\usepackage[table]{xcolor}
\usepackage[T1]{fontenc}
\usepackage{amsmath}
\usepackage{amsfonts}
\usepackage{amssymb}
\usepackage{amsthm}
\usepackage{mathtools}
\usepackage{array}
\usepackage{stmaryrd}
\usepackage{tikz-cd, tikz}
\usepackage{graphicx}
\usepackage{enumitem}
\usepackage{multicol}
\usepackage{multirow}
\usepackage{colortbl}
\usepackage{afterpage}
\usepackage[bottom]{footmisc}

\usepackage{pgfplots}
\pgfplotsset{width=10cm,compat=1.9}

\usepackage[algoruled,linesnumbered]{algorithm2e}
\usepackage{comment}
\usepackage{thmtools}
\usepackage{thm-restate}
\allowdisplaybreaks
\usepackage{hyperref}
\hypersetup{
	colorlinks,
	hyperfootnotes=true,
	linkcolor={red!50!black},
	citecolor={blue!50!black},
	urlcolor={blue!80!black},
}
\usepackage[hyphenbreaks]{breakurl}
\usepackage[capitalise]{cleveref}
\usepackage{tabulary}
\usepackage{adjustbox}
\usepackage[disable]{todonotes}
\usepackage{arydshln}
\usepackage{mathdots}

% amsthm environments
\newtheorem{theorem}{Theorem}[section]
\newtheorem*{theorem*}{Theorem}
\newtheorem{lemma}[theorem]{Lemma}
\newtheorem*{lemma*}{Lemma}
\newtheorem{proposition}[theorem]{Proposition}
\newtheorem*{proposition*}{Proposition}
\newtheorem{corollary}[theorem]{Corollary}

\theoremstyle{definition}
\newtheorem{definition}[theorem]{Definition}

\theoremstyle{remark}
\newtheorem{remark}[theorem]{Remark}
\newtheorem{example}[theorem]{Example}

\newtheorem{convention}[theorem]{Convention}

% macros
% math letters
\newcommand{\Z}{\mathbf{Z}}

\newcommand{\Q}{\mathbf{Q}}

\newcommand{\F}{\mathbf{F}}

\newcommand{\Proj}{\mathbf{P}}

\newcommand{\Fbar}{\overline{\mathbf{F}}}
\newcommand{\Vbar}{\overline{V}}

\DeclareMathOperator{\Frob}{Frob}
\DeclareMathOperator{\frob}{frob}
\let\oldeta\eta
\DeclareMathOperator{\neweta}{\oldeta}
\let\eta\neweta
\let\oldzeta\zeta
\DeclareMathOperator{\newzeta}{\oldzeta}
\let\zeta\newzeta

% actual macros

\newcommand{\Div}{\textnormal{Div}}
\newcommand{\End}{\textnormal{End}}

\newcommand{\Gal}{\textnormal{Gal}}
\newcommand{\Hom}{\textnormal{Hom}}
\newcommand{\Cl}{\textnormal{Cl}}

\newcommand{\res}{\textnormal{res}}

\newcommand{\ord}{\textnormal{ord}}

\newcommand{\Sym}{\textnormal{Sym}}
\newcommand{\cyclo}{\chi}

\newcommand{\divisor}{\textnormal{div}}

\newcommand{\GL}{\textnormal{GL}}

\newcommand{\bigrep}[2]{\Sym^{#1}\rho \otimes \cyclo^{#2}}

\newcommand{\qg}{{q^\gamma}}

\newcommand{\Czeta}{\zeta}

\newcommand{\Kprime}{\F_\qg(C)}
\newcommand{\sep}{\textnormal{sep}}

% bib
\usepackage{url}

 \usepackage[backend=bibtex,doi=false,url=false]{biblatex}
 \addbibresource{sources.bib}

\title{On $\ell$-torsion in degree $\ell$ superelliptic Jacobians over $\F_q$}

\subjclass[2020]{Primary 11R29, 11R58; Secondary 11R34, 11R37, 11G20, 11G45}
% 11R29 Class numbers, class groups, discriminants
% 11R34  	Galois cohomology
% 11R37  	Class field theory
% 11R58  	Arithmetic theory of algebraic function fields
% 11G20  	Curves over finite and local fields
% 11G45  	Geometric class field theory
\keywords{ideal class group, superelliptic curve, Jacobian, Weil pairing, Galois cohomology}
\author[Wanlin Li]{Wanlin Li}
\address{Department of Mathematics, Washington University in St. Louis, 1 Brookings Dr, St.Louis, MO 63105, USA}
\email{wanlin@wustl.edu}
\author{Jonathan Love} 
\address{Mathematics Institute, Leiden University, Leiden, the Netherlands
}
\email{j.r.love@math.leidenuniv.nl}
\author{Eric Stubley}
\email{ericdavidstubey@gmail.com}
\date{December 29, 2024}

\begin{document}

\begin{abstract}
    We study the $\ell$-torsion subgroup in Jacobians of curves of the form $y^{\ell} = f(x)$ for irreducible $f(x)$ over a finite field $\F_{q}$ of characteristic $p \neq \ell$. This is a function field analogue of the study of $\ell$-torsion subgroups of ideal class groups of number fields $\Q(\sqrt[\ell]{N})$. We establish an upper bound, lower bound, and parity constraint on the rank of the $\ell$-torsion which depend only on the parameters $\ell$, $q$, and $\deg f$.
    Using tools from class field theory, we show that additional criteria depending on congruence conditions involving the polynomial $f(x)$ can be used to refine the upper and lower bounds. For certain values of the parameters $\ell,q,\deg f$, we determine the $\ell$-torsion of the Jacobian for all curves with the given parameters.
\end{abstract}

\maketitle

\section{Introduction}\label{sec:intro}

The ideal class group of a number field is one of the central topics of interest in algebraic number theory. If we consider the collection of degree $n$ extensions $K/\Q$ with some fixed Galois group, then for all but finitely many primes $\ell$, the $\ell$-torsion of the class group of $K$ is conjectured to be distributed according to the Cohen-Lenstra heuristics~\cite{cohenlenstra}. If $\ell\mid n$, however, the $\ell$-torsion is expected to have qualitatively different behavior. For instance, in the case $\ell=2$ and $K$ is an imaginary quadratic field, Gauss' genus theory completely describes the $2$-torsion of the class group of $K$ in terms of the number of ramified primes. In general, if $\ell\geq 3$ divides $n$, then the $\ell$-torsion structure can be considerably more mysterious. 

In \cite{schaeferstubley}, the authors used Galois cohomology to study the $\ell$-torsion of the ideal class groups of the degree $\ell$ number fields $\Q(\sqrt[\ell]{N})$ for prime $N$; see \cref{sec:priorwork} for more on the history of this problem. In this paper, we study an analogous problem over global function fields, namely the divisor class groups of fields of the form $\F_q(\sqrt[\ell]{f},x)$ for $f(x)\in\F_q[x]$ irreducible. In this setting, we are able to utilize both Galois cohomology inspired by \cite{schaeferstubley} and tools from arithmetic geometry to obtain more refined constraints on the $\ell$-torsion, and we encounter interesting behavior which does not occur in the number field setting. 

Computing the $\ell$-torsion structure of the divisor class group of a function field is typically a computationally intensive problem that requires first finding the full class group. We produce constraints on the $\ell$-torsion using data that are much easier to compute, and in some cases, these constraints uniquely determine the $\ell$-torsion. The full results are discussed in \cref{sec:rlC_constraints}, but we give one example application here.
\begin{theorem}\label{thm:intro example}
    Let $\ell\geq 3$ be prime, $q$ a prime power with $q^2\equiv 1\bmod\ell$, and $f(x)\in\F_q[x]$ irreducible with $\deg f$ coprime to $\ell$. The $\ell$-torsion of the divisor class group of $\F_q(\sqrt[\ell]{f},x)$ is isomorphic to $(\Z/\ell\Z)^{(\ell-1)/2}$ if $q\equiv -1\bmod\ell$ and $\deg f$ is even, and is trivial otherwise.
\end{theorem}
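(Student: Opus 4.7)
The plan is to deduce \cref{thm:intro example} from the three general constraints on the $\ell$-rank
\[
r := \dim_{\F_\ell} \Cl(\F_q(C))[\ell]
\]
(an upper bound, a lower bound, and a parity constraint) that are the main outcome of \cref{sec:rlC_constraints}. Since the $\ell$-torsion of an abelian group is automatically an $\F_\ell$-vector space, it suffices to compute $r$: one must show that $r = (\ell-1)/2$ in the stated nontrivial case and $r = 0$ otherwise.

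Identifying $\Cl(\F_q(C))[\ell]$ with $J(\F_q)[\ell]$ for $J = \operatorname{Jac}(C)$, the hypothesis $q^2 \equiv 1 \pmod{\ell}$ splits into two subcases: $q \equiv 1 \pmod{\ell}$, when the automorphism $[\zeta]\colon (x,y)\mapsto (x,\zeta y)$ of $J$ is $\F_q$-rational, and $q \equiv -1 \pmod{\ell}$, when $\Frob_q$ conjugates $[\zeta]$ to $[\zeta^{-1}]$. In the cover $\F_q(C)/\F_q(x)$, only the places $(f)$ and $\infty$ are ramified, both totally since $\gcd(\deg f,\ell) = 1$. For the three cases in which \cref{thm:intro example} predicts $r = 0$---namely $q \equiv 1 \pmod{\ell}$ for any admissible $\deg f$, and $q \equiv -1 \pmod{\ell}$ with $\deg f$ odd---I would apply the general upper bound from \cref{sec:rlC_constraints} and verify by direct computation that it evaluates to $0$, forcing $r = 0$.

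In the remaining case $q \equiv -1 \pmod{\ell}$ with $\deg f$ even, both the upper and lower bounds should equal $(\ell-1)/2$. After base change to $\F_{q^2}$ the $\mu_\ell$-action on the Tate module $V_\ell J$ becomes rational, so one decomposes into $\chi$-isotypic pieces $V_\chi$ indexed by the nontrivial characters $\chi$ of $\mu_\ell$; Frobenius pairs these into $(\ell-1)/2$ orbits $\{\chi,\chi^{-1}\}$, and $F$-fixed elements of $V_\chi \oplus V_{\chi^{-1}}$ correspond to $F^2$-fixed elements of $V_\chi$. Producing one rational $\ell$-torsion class from each orbit yields $(\ell-1)/2$ independent classes matching the lower bound, and the upper bound follows by invoking the general result once more.

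The main technical obstacle is verifying that each Frobenius orbit $\{\chi,\chi^{-1}\}$ contributes exactly one rational $\ell$-torsion class when $\deg f$ is even and none when $\deg f$ is odd. This reduces to an obstruction computation tied to the eigenvalue structure of $\Frob_q^2$ on $V_\chi$ (paired against $V_{\chi^{-1}}$ via the Weil pairing into $\Q_\ell(1)$), and the parity dependence on $\deg f$ should be read off from the Galois-cohomological description of $\Cl(\F_q(C))[\ell]$ underlying the bounds in \cref{sec:rlC_constraints}; I expect this is where the parity constraint plays its decisive role in excluding intermediate values of $r$.
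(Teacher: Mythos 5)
Your handling of $\gamma=\ord_\ell(q)\in\{1,2\}$ is correct, and the three cases where you claim $r_\ell(C)=0$ are immediate from \cref{intro_upper_bound}, since $B=(\gcd(d,\gamma)-1)\tfrac{\ell-1}{\gamma}=0$ there. The gap is in the remaining case $q\equiv-1\bmod\ell$ with $d=\deg f$ even. The bounds of \cref{intro_upper_bound} alone give only $1\leq r_\ell(C)\leq(\ell-1)/2$ and $r_\ell(C)\equiv(\ell-1)/2\bmod 2$, which do not determine $r_\ell(C)$; the paper instead invokes \cref{prop:2lifting}, whose proof (in \cref{sec:consequences}) uses \cref{basic_lifting_properties} together with the Weil-pairing parity obstruction \cref{thm:evenselfdual} to show the rooftop over $n=\gamma/2$ has height $\ell-1$. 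That lower-bound input is the substantive content of the theorem in this case, and your proposal leaves it as an acknowledged ``main technical obstacle.''

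The route you sketch for that obstacle --- decompose the $\ell$-adic Tate module over $\F_{q^2}$ into $\mu_\ell$-isotypic pieces and argue that each Frobenius orbit $\{\chi,\chi^{-1}\}$ contributes one rational class --- cannot close the gap as stated. That decomposition only sees the semisimple (rational) eigenstructure of $\Frob$, which determines the multiplicity of the eigenvalue $1$ in the characteristic polynomial of $\Frob$ on $J[\ell]$, i.e.\ the dimension of the \emph{generalized} $1$-eigenspace; but $r_\ell(C)$ is the dimension of the honest $1$-eigenspace, and the two differ precisely because $\Frob$ is not semisimple mod $\ell$ (this is the central difficulty flagged in \cref{rmk:full-basis}). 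Indeed the $\zeta$-isotypic decomposition does not descend to $J[\ell]$ at all --- $\zeta$ acts unipotently mod $\ell$, which is exactly why the paper replaces it by the $(1-\zeta)$-filtration $V^k$. You are right that the Weil pairing and a parity mechanism are the decisive inputs, but they enter through the mod-$\ell$ rooftop computation in \cref{thm:evenselfdual}, not through the rational eigenvalue structure of Frobenius on the Tate module; the argument you gesture at would have to be rebuilt entirely inside that integral framework, which is in essence what the proof of \cref{prop:2lifting} does.
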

\noindent
If $\ell=3$, and $q$ and $\deg f$ are coprime to $3$, \cref{thm:intro example} shows that the $3$-torsion can be determined using no information about $f$ other than its degree. If $\ell=5$, and $q$ and $\deg f$ are coprime to $5$, then we can completely determine the $5$-torsion structure using easily computable conditions depending on $f$ (\cref{cor:small primes}).

\begin{table}[b]
    \begin{tabular}{r|cccccc}
        $7$-torsion: & \hspace{8pt}$0$\hspace{8pt} & $(\Z/7\Z)^{\phantom{1}}$ & $(\Z/7\Z)^2$ & $(\Z/7\Z)^3$ & $(\Z/7\Z)^4$ & $(\Z/7\Z)^5$ \\
        \hline
        count: & $0$ & $5552$ & $0$ & $1840$ & $0$ & $12$
    \end{tabular}
    \caption{The number of isomorphism classes of fields $\F_3(\sqrt[7]{f},x)$ attaining each possible $7$-torsion structure in its divisor class group, with $f(x)\in\F_3[x]$ irreducible of degree $12$.} \label{tab:7data}
\end{table}

For $\ell\geq 7$, the $\ell$-torsion structure is typically not fully determined by the easily computable conditions mentioned above, but we prove a parity constraint which gives us a better understanding of the $\ell$-torsion. To illustrate this phenomenon, up to isomorphism there are $7404$ function fields of the form $\F_3(\sqrt[7]{f},x)$ with $f(x)\in\F_3[x]$ irreducible of degree $12$. For each of these fields, the authors used Magma to compute the divisor class group and recorded the $7$-torsion structure of each; see \cref{tab:7data}. While the relative distribution of curves across the possible torsion structures is a subject of future exploration, our results explain the zeroes in the table. More generally, we will see that the largest power of $(\Z/\ell\Z)$ occurring as a subgroup of the divisor class group of $\F_q(\sqrt[\ell]{f},x)$ must be odd whenever $\ell\geq 3$, $q$ is a primitive root mod $\ell$, and $\deg f$ is even and coprime to $\ell$ (\cref{intro_upper_bound}). It seems as though this phenomenon is unique to the function field setting and does not arise for number fields.

\subsection{Main results}\label{sec:rlC_constraints} 

For all the results that follow, we assume $\ell\geq 3$ is prime, $q$ is a prime power coprime to $\ell$, and $f(x)\in\F_q[x]$ is an irreducible polynomial with $d:= \deg f$ coprime to $\ell$. Let $C$ be the smooth projective curve with affine equation given by $y^\ell=f(x)$; such a curve is an example of a ``superelliptic curve.'' Let $J$ be the Jacobian of $C$, so the degree $0$ subgroup of the divisor class group of $C$ is isomorphic to $J(\F_{q})$. The $\ell$-torsion of $J(\F_q)$ can be equipped with the structure of a vector space over $\F_\ell$, and we define the $\ell$-rank of $C$ to be the dimension of this $\F_\ell$ vector space,
\[r_\ell(C):=\dim_{\F_{\ell}}J[\ell](\F_{q}).\]
The function field of $C$ is isomorphic to $\F_q(\sqrt[\ell]{f},x)$, and up to isomorphism $C$ is the only smooth projective curve with this function field. We define the divisor class group of $\F_q(\sqrt[\ell]{f},x)$ to be the divisor class group of $C$. Then an equivalent definition for $r_\ell(C)$ is that it is the largest power of $\Z/\ell\Z$ that occurs as a subgroup of the divisor class group of the function field $\F_q(\sqrt[\ell]{f},x)$.

\begin{remark}
    The above definitions are valid also for $\ell=2$, but in this case we always have $r_2(C)=0$, because a hyperelliptic curve $y^2=f(x)$ has no $\F_q$-rational $2$-torsion in its Jacobian when $f$ is irreducible.
\end{remark}

Let $\gamma=\ord_{\ell}(q)$ be the multiplicative order of $q \bmod \ell$ in $(\Z/\ell\Z)^\times$, that is, the smallest positive integer such that $q^\gamma\equiv 1\bmod\ell$. This is an important invariant for this problem because $\F_\qg$ is the smallest extension of $\F_q$ containing $\ell$-th roots of unity, and hence the Galois closure of $\F_q(\sqrt[\ell]{f},x)/\F_q(x)$ is a degree $\gamma$ extension field, namely $\F_\qg(\sqrt[\ell]{f},x)$.

\begin{theorem}\label{intro_upper_bound} 
Set
\[B:=(\gcd(d, \gamma) - 1) \frac{\ell-1}{\gamma}.\]
Then the $\ell$-rank $r_\ell(C)$ satisfies $\min\{B,1\}\leq r_\ell(C) \leq B$ and $r_\ell(C)\equiv B\bmod 2$.
\end{theorem}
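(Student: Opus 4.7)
The plan is to compute $r_\ell(C) = \dim_{\F_\ell} V^\phi$ for $V := J[\ell](\overline{\F_q})$ by analyzing the combined action of $\mu_\ell = \langle\sigma\rangle$ (acting via $y \mapsto \zeta y$) and geometric Frobenius $\phi$. Because $C/\mu_\ell \cong \Proj^1$ has trivial Jacobian, the norm $1 + \sigma + \cdots + \sigma^{\ell-1}$ kills $J$, so $V$ becomes a module over $R := \F_\ell[s]/s^{\ell-1}$ with $s := \sigma - 1$. Passing to $T_\ell J$ and using that $\Z_\ell[\zeta_\ell]$ is a DVR shows $V$ is free of rank $d-1$ over $R$. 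The identity $\phi\sigma\phi^{-1} = \sigma^q$ becomes $\phi s \phi^{-1} = u(s)s$ for a unit $u(s) \in R$ with $u(0) = q$, so $\phi$ preserves the filtration $V \supset sV \supset \cdots \supset s^{\ell-1}V = 0$ and, on each graded piece $s^iV/s^{i+1}V$ (identified with $V/sV$ via multiplication by $s^i$), acts as $q^i\bar\phi$, where $\bar\phi$ is the induced $\F_\ell$-linear action on $V/sV$.

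The crucial computation is the characteristic polynomial of $\phi$ on $V[s] = J^{\mu_\ell}(\overline{\F_q})$. Since $f$ is irreducible of degree $d$ coprime to $\ell$, $\pi : C \to \Proj^1$ is totally ramified at exactly $d+1$ points: the $d$ points $P_1,\ldots,P_d$ above the roots of $f$ (cyclically permuted by $\phi$) and $P_\infty$ (fixed by $\phi$). Each class $[P_i - P_\infty]$ lies in $J^{\mu_\ell}$ and is $\ell$-torsion (because $\mathrm{div}(x-\alpha_i) = \ell(P_i - P_\infty)$), and they satisfy $\sum_{i=1}^d [P_i - P_\infty] = [\mathrm{div}(y)] = 0$; since $|J^{\mu_\ell}| = \ell^{d-1}$, a dimension count shows these classes span $V[s]$ with this as the only relation. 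The characteristic polynomial of $\phi$ on $V[s]$ is therefore $(T^d - 1)/(T-1)$. Transferring through the isomorphism $V/sV \xrightarrow{\cdot s^{\ell-2}} V[s]$ (under which $\bar\phi$ corresponds to $q\phi|_{V[s]}$) yields eigenvalues of $\bar\phi$ equal to $q\zeta$ for $\zeta \in \mu_d \setminus \{1\}$, and hence eigenvalues of $\phi$ on $V$ are exactly $\{q^{i+1}\zeta : 0 \leq i \leq \ell-2,\ \zeta \in \mu_d \setminus \{1\}\}$.

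The theorem then follows by counting. The multiplicity of $T=1$ as a root of the characteristic polynomial of $\phi$ on $V$ is the number of pairs $(i,\zeta)$ satisfying $q^{i+1}\zeta=1$, and with $e := \gcd(d,\gamma)$ the identities $q^{-(i+1)} \in \mu_d \iff (\gamma/e)\mid (i+1)$ and $q^{-(i+1)} \neq 1 \iff \gamma \nmid (i+1)$ give the count $(\ell-1)e/\gamma - (\ell-1)/\gamma = (e-1)(\ell-1)/\gamma = B$. Hence $r_\ell(C) = \dim\ker(\phi - 1) \leq B$, and when $B \geq 1$ the factor $T-1$ in the characteristic polynomial forces $r_\ell(C) \geq 1$. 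For the parity $r_\ell(C) \equiv B \pmod 2$, I would invoke the Weil pairing $w : V \times V \to \mu_\ell$, which is alternating, nondegenerate, satisfies $w(\sigma u, v) = w(u, \sigma v)$ (so that $(s^iV)^\perp = s^{\ell-1-i}V$), and satisfies $w(\phi u, \phi v) = w(u,v)^q$; tracking how this pairing constrains the Jordan decomposition of $\phi$ on the generalized $1$-eigenspace forces the number of even-sized Jordan blocks there to be even, giving $B - r_\ell(C)$ even. The most delicate steps are the second (verifying that the $d$ classes $[P_i-P_\infty]$ satisfy no further relations modulo $\ell$, which I would handle via a Kummer-theoretic analysis of $\mu_\ell$-eigenfunctions on $C$) and the parity extraction, which requires a careful interplay between the symplectic structure and the $s$-filtration.
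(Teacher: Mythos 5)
Your computation of the upper bound and the nontriviality lower bound is correct and follows essentially the same route as the paper: both reduce to finding a basis of $V^1 = J[\ell]^{\mu_\ell}$ of Frobenius eigenvectors supported at the ramification locus (the paper's Lemma~\ref{V0basis}), propagating eigenvalues through the $(1-\zeta)$-filtration via the twisted commutation relation, and counting occurrences of the eigenvalue $1$ to get $B$. (Minor slip: the Weil pairing satisfies $e(\zeta u, v) = e(u, \zeta^{-1}v)$, not $e(u,\zeta v)$; it happens not to matter for the claim $(s^iV)^\perp = s^{\ell-1-i}V$.)

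The parity constraint, however, is where the proposal has a real gap. You correctly reformulate $r_\ell(C)\equiv B\bmod 2$ as ``the number of even-length Jordan blocks of $\Frob$ on the generalized $1$-eigenspace $W_1$ is even,'' and correctly identify the Weil pairing as the relevant tool, but the sentence ``tracking how this pairing constrains the Jordan decomposition\dots forces the number of even-sized Jordan blocks there to be even'' is a restatement of the goal, not an argument. The standard symplectic-group fact (even Jordan blocks of a unipotent pair up) does \emph{not} apply directly: the alternating form restricted to $W_1$ is identically zero whenever $\gamma\geq 2$, because $e$ pairs the generalized $\lambda$-eigenspace with the generalized $q/\lambda$-eigenspace, so $W_1$ is paired with $W_q$, not with itself. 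The constraint on $W_1$ therefore has to be extracted indirectly from the interaction of three structures at once --- the alternating pairing, the $\zeta$-filtration (which gives $(V^k)^\perp = V^{\ell-1-k}$), and the Frobenius twist $e(\Frob u,\Frob v)=e(u,v)^q$ --- and a further complication is that the Jordan blocks of $\Frob-1$ on $W_1$ do not align with the filtration ``columns'' $F_n^\bullet$: by the paper's Lemma~\ref{Frob_remainder}, the nilpotent part of $\Frob$ sends a vector in $F_n^{k+1}$ to $F_{n-k}^1$, jumping across columns, so one cannot read off the Jordan decomposition from the per-column rooftop heights without further work. The paper navigates this by introducing the ``rooftop'' bookkeeping $F_n^k$, proving the pairing identity of Lemma~\ref{lem:pairingrelation}, and deducing the rooftop involution $F_n^k\leftrightarrow F_{k-n}^k$ (Theorem~\ref{thm:rooftop_pairs}) and the self-dual constraint that $k$ must be odd for a self-dual rooftop (Theorem~\ref{thm:evenselfdual}); the parity count in Section~\ref{subsec:parity} then pairs $n$ with $n^\vee = k_n - n$ and shows the fixed-point terms are even. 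None of that machinery is present or even sketched in your proposal, and I do not see a shortcut that avoids it.
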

The parity constraint $r_\ell(C)\equiv B\bmod 2$ is proved using the Weil pairing on $J[\ell]$. This phenomenon does not appear to occur in the analogous situation in number fields, namely ideal class groups of cyclic extensions $\Q(\sqrt[p]{N})$ for $N$ prime discussed in \cite{schaeferstubley}; see \cref{sec:priorwork} for a discussion of the number field case.

\begin{example}
    Consider the case $\ell=3$. If $q\equiv 1\bmod 3$ or if $d$ is odd, then $\gcd(d, \gamma)=1$, so \cref{intro_upper_bound} implies that $r_\ell(C)=0$. Otherwise, if $\deg f$ is even and $q \equiv 2 \bmod 3$, we have $r_3(C)=1$, and we recover \cref{thm:intro example} for $\ell=3$. Compare \cite[Theorem 6.1.1]{schaeferstubley} which addresses extensions $\Q(\sqrt[3]{N})/\Q$ for prime $N \equiv 1 \bmod 3$.
\end{example}

In one special case, we can prove a lower bound that equals the upper bound in \cref{intro_upper_bound}, allowing us to construct families of curves with large $\ell$-torsion subgroups in their divisor class groups.

\begin{proposition}\label{prop:2lifting}
    If $\gcd(d,\gamma)=2$, then $r_\ell(C)=\frac{\ell-1}{\gamma}$.
\end{proposition}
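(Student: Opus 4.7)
The upper bound $r_\ell(C) \le (\ell-1)/\gamma$ follows immediately from \cref{intro_upper_bound}: the hypothesis $\gcd(d,\gamma) = 2$ gives $B = (\gcd(d,\gamma)-1)(\ell-1)/\gamma = (\ell-1)/\gamma$. My plan for the matching lower bound is to compute $\dim_{\F_\ell} J[\ell]^{\Frob_q}$ exactly using the $\lambda$-adic filtration on $J[\ell]$, where $\lambda := \zeta^* - 1$ for $\zeta^* : (x,y) \mapsto (x, \zeta_\ell y)$ the natural order-$\ell$ automorphism of $C$ defined over $\F_\qg$.

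First I would establish that $J[\ell]$ is a free module of rank $d-1$ over the local Artinian ring $R := \F_\ell[\lambda]/\lambda^{\ell-1}$. This identification comes from the fact that $T_\ell J$ is a free $\Z_\ell[\zeta_\ell]$-module of rank $d-1$, which in turn follows from the vanishing of the $\zeta^* = 1$ eigenspace of $V_\ell J$ (a Lefschetz trace computation on $C$ using the $d+1$ ramification points of the cover $C \to \Proj^1$). With this identification, the associated graded pieces $\mathrm{gr}^k := \lambda^k J[\ell] / \lambda^{k+1} J[\ell]$ are each isomorphic to $\F_\ell^{d-1}$. Since $\Frob_q$ acts semilinearly on $J[\ell]$ with $\Frob_q(\lambda) \equiv q\lambda \bmod \lambda^2$ (reflecting the cyclotomic Galois action $\zeta_\ell \mapsto \zeta_\ell^q$), the induced $\Frob_q$-action on $\mathrm{gr}^k$ is $q^k$ times its action on $\mathrm{gr}^0$.

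The key Frobenius input is on the bottom graded piece $\mathrm{gr}^{\ell-2} = \ker\lambda$. This is spanned by the classes $T_i := P_i - P_\infty$ (for roots $\alpha_i$ of $f$) subject to $\sum_i T_i = \divisor(y) = 0$; since $f$ is irreducible over $\F_q$, $\Frob_q$ acts as a single $d$-cycle on the $T_i$, so its eigenvalues on $\ker\lambda$ over $\overline{\F_\ell}$ are $\{\zeta_d^j : 1 \le j \le d-1\}$. By Weil-pairing duality between $\mathrm{gr}^0$ and $\mathrm{gr}^{\ell-2}$ (twisted by $\mu_\ell$, on which $\Frob_q$ acts as multiplication by $q$), the eigenvalues on $\mathrm{gr}^0$ are $\{q\zeta_d^{-j}\}$, and hence on $\mathrm{gr}^k$ they are $\{q^{k+1}\zeta_d^{-j}\}$. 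A direct count using $\gcd(d,\gamma) = 2$ shows that $q^{k+1} = \zeta_d^j$ has an $\F_\ell$-rational solution with $j \ne 0$ precisely when $k + 1 \equiv \gamma/2 \bmod \gamma$, yielding exactly $(\ell-1)/\gamma$ such values of $k \in \{0, \ldots, \ell-2\}$.

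Finally, since $\gamma$ divides $\ell - 1$ and is therefore coprime to $\ell$, Galois cohomology of $\F_\ell$-vector spaces for $\Gal(\F_\qg/\F_q)$ vanishes in positive degrees, so taking $\Frob_q$-invariants is exact on the filtration. Summing the graded contributions then gives $\dim J[\ell]^{\Frob_q} = (\ell-1)/\gamma$, matching the upper bound. The main technical obstacle will be setting up the Weil-pairing duality $\mathrm{gr}^0 \cong (\mathrm{gr}^{\ell-2})^\vee \otimes \mu_\ell$ in a Frobenius-equivariant form; once that is in hand, the eigenvalue bookkeeping is routine.
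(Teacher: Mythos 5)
Your approach correctly identifies the upper bound via \cref{intro_upper_bound}, and the framework (the $\lambda = \zeta^*-1$ filtration, the graded eigenvalue bookkeeping, the Weil-pairing duality between $\mathrm{gr}^0$ and $\mathrm{gr}^{\ell-2}$) is essentially the paper's own setup in \cref{sec:filtration} through \cref{sec:lifting_relations_new}. However, the final step has a fatal gap.

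You claim that ``since $\gamma$ is coprime to $\ell$, Galois cohomology of $\F_\ell$-vector spaces for $\Gal(\F_\qg/\F_q)$ vanishes, so taking $\Frob_q$-invariants is exact on the filtration.'' This conflates two different things. It is true that $\Gal(\F_\qg/\F_q) \cong \Z/\gamma\Z$ has order prime to $\ell$, so its higher cohomology vanishes; but $J[\ell]$ is \emph{not} a $\Gal(\F_\qg/\F_q)$-module. It is a $\Gal(\Fbar_q/\F_q) \cong \hat{\Z}$-module, and the cyclic group generated by $\Frob_q$ acting on $J[\ell]$ has order divisible by $\ell$ whenever $\Frob_q$ has a nontrivial Jordan block. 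That is exactly what happens here: the action of $\Frob_q$ on $J[\ell]$ is generally \emph{not} semi-simple (this is the central difficulty flagged in the introduction and in \cref{rmk:full-basis}). Consequently, the exact sequences $0 \to \lambda^{k+1}J[\ell] \to \lambda^k J[\ell] \to \mathrm{gr}^k \to 0$ do not stay exact after taking $\Frob_q$-invariants, and summing $\dim(\mathrm{gr}^k)^{\Frob_q}$ only gives an upper bound on $\dim J[\ell]^{\Frob_q}$, not an equality. Your eigenvalue count is thus a re-derivation of the upper bound $B = (\ell-1)/\gamma$, not a proof that it is achieved.

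The actual content of \cref{prop:2lifting} is to show that each graded eigenvector with eigenvalue $1$ really does lift to a true eigenvector in $J[\ell]$, i.e., that there is no non-maximal rooftop over $n = \gamma/2$. The paper does this by combining \cref{basic_lifting_properties}(c) (which forces any such rooftop $F_{\gamma/2}^k$ with $k \le \ell-2$ to have $k$ a multiple of $\gamma$, hence even) with \cref{thm:evenselfdual} (which forbids rooftops $F_n^k$ with $k$ even and $k \equiv 2n \bmod \gamma$). The Weil pairing enters your argument only to relate the eigenvalues of $\mathrm{gr}^0$ to those of $\mathrm{gr}^{\ell-2}$, but the paper needs it for something deeper: \cref{lem:pairingrelation} produces a numerical constraint on Jordan chains, which is what ultimately rules out the unwanted Jordan blocks. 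Without some input of this kind your filtration argument cannot close.
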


\Cref{thm:intro example} follows immediately from \cref{intro_upper_bound} and \cref{prop:2lifting}.
Both \cref{intro_upper_bound} and \cref{prop:2lifting} can be proven with linear algebra, using linear maps on $J[\ell]$ defined using endomorphisms of $J$. The parity constraint $r_\ell(C)\equiv B\bmod 2$ is proved using the Weil pairing on $J[\ell]$.  These topics are summarized in \cref{sec:lifting_behavior_functions} and discussed in depth in \cref{sec:geo_to_reps} and \cref{sec:lifting_relations_new}. The proofs of \cref{intro_upper_bound} and \cref{prop:2lifting} are then completed in \cref{sec:consequences}. 

If $\gamma\leq 2$ then $r_\ell(C)$ is completely determined by \cref{thm:intro example}, so for the remainder of this section we assume $\gamma\geq 3$. We can compute more refined bounds on $r_\ell(C)$ if we additionally assume $\gamma\mid d$. This constraint ensures that $f$ totally splits in the extension $\F_\qg(x)/\F_q(x)$; this is analogous to the constraint $N\equiv 1 \bmod p$ in \cite{schaeferstubley} which guarantees that $N$ totally splits in $\Q(\zeta_p)/\Q$.
Over $\F_{q^\gamma}[x]$, $f(x)$ splits into $\gamma$ irreducible factors, which we label $f_1(x),f_2(x),\ldots, f_{\gamma}(x)$ in such a way that the Frobenius automorphism on $\F_{q^\gamma}$ sends $f_i(x)$ to $f_{i+1}(x)$ for all $i$ (and $f_{\gamma}(x)$ to $f_1(x)$). Set
\begin{align}
    \nonumber h_n(x)&:=\prod_{i=1}^{\gamma} f_{i}(x)^{q^{(i-1)(\gamma-n)}-1},\qquad n\in \{2,\ldots,\gamma-1\},\\
    \label{eq:local_cup} 
    \mathcal{T}&:=\{1\}\cup \{n\in \{2,\ldots,\gamma-1\}: 
    h_n(x)\text{ is an $\ell^{\text{th}}$ power in }\F_{q^\gamma}[x]/(f_1(x))\}.
\end{align} 

The polynomials $h_n(x)$ are associated via Kummer theory to certain cyclic degree $\ell$ extensions of $\F_\qg(x)$; see \cref{subsec:congruence conditions} for more on how these polynomials arise.

\begin{theorem}\label{intro_refined_upper_bound}
Let $f(x)\in\F_q[x]$ be irreducible of degree $d$, with $d$ coprime to $\ell$ and $3\leq \gamma\mid d$.
Set
\[B':=|\mathcal{T}|\frac{\ell-1}{\gamma}.\]
Then the $\ell$-rank $r_\ell(C)$ satisfies $\min\{B',2\}\leq r_\ell(C)\leq B'$ and $r_\ell(C)\equiv B'\bmod 2$.  

If in addition $\gamma$ is even and $1+\frac{\gamma}{2}\in\mathcal{T}$, then $r_\ell(C)\geq 3$.
\end{theorem}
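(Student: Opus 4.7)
The proof builds on the Kummer-theoretic framework of \cref{sec:geo_to_reps,sec:lifting_relations_new}. Under the hypothesis $\gamma \mid d$, the polynomial $f$ factors completely as $f_1 \cdots f_\gamma$ in $\F_{q^\gamma}[x]$, and the polynomials $h_n$ for $n\in\{1,\dots,\gamma-1\}$ form a Frobenius eigenbasis (with $F$-eigenvalues $q^n\bmod\ell$) for the $(\gamma-1)$-dimensional space of Kummer classes arising from products $\prod_i f_i^{a_i}$ modulo the diagonal class of $f$. The plan is to determine which indices $n$ give valid contributions to $r_\ell(C)$, extract a parity constraint from the Weil pairing, and identify the always-present $n=1$ contribution for the lower bound.

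For the upper bound $r_\ell(C)\leq B'$, a local computation at the ramified place above $f_1$ in $C$ shows that the cyclic $\ell$-extension induced by $h_n$ yields an unramified cover of $C$ (hence a class in $J[\ell]$) precisely when $h_n$ is an $\ell$-th power in $\F_{q^\gamma}[x]/(f_1(x))$, that is, when $n\in\mathcal{T}$. Frobenius equivariance upgrades this local unramifiedness from $f_1$ to every $f_i$, and unramifiedness at $\infty$ is automatic from $\gcd(d,\ell)=1$. Each of the $(\ell-1)/\gamma$ Frobenius orbits on $\sigma$-eigenspaces of $J[\ell]$ can receive at most one contribution per $n\in\mathcal{T}$, yielding $r_\ell(C)\leq |\mathcal{T}|(\ell-1)/\gamma = B'$.

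The parity constraint $r_\ell(C)\equiv B'\pmod 2$ and the lower bound come from the Weil pairing on $J[\ell]$: this pairing is perfect, alternating, and $\sigma$-equivariant, inducing a perfect duality $J[\ell]_i\times J[\ell]_{-i}\to\mu_\ell$ and hence an involution on the index set of $\sigma$-eigenspaces. Transporting this through the $h_n$-parametrization gives an involution on $\{1,\dots,\gamma-1\}$ whose unique interior fixed point (when $\gamma$ is even) is $n=1+\gamma/2$. Non-fixed indices come in pairs with equal contributions to $r_\ell(C)$; the fixed index (when $\gamma$ is even) has its contribution forced to be even-dimensional by the alternating property; and when $\gamma$ is odd, $(\ell-1)/\gamma$ is itself even, keeping $B'$ even. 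The index $n=1$ always lies in $\mathcal{T}$ by definition, and admits an explicit realization contributing $(\ell-1)/\gamma$ dimensions to $r_\ell(C)$, yielding $r_\ell(C)\geq (\ell-1)/\gamma\geq 1$; combined with the parity constraint, this gives $r_\ell(C)\geq\min\{B',2\}$. For the final claim, when $\gamma$ is even and $1+\gamma/2\in\mathcal{T}$, the self-paired index $1+\gamma/2$ contributes alongside the unconditional $n=1$ lift, and the alternating Weil pairing at the self-paired index forces an extra dimension beyond the baseline, giving $r_\ell(C)\geq 3$.

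The main obstacle will be the parity analysis: pinning down the exact form of the Weil-pairing-induced involution on $\{1,\dots,\gamma-1\}$, verifying that the unique interior fixed point (when $\gamma$ is even) is $n=1+\gamma/2$, and carefully combining the alternating structure with the explicit $n=1$ construction to deduce both the baseline bound $r_\ell(C)\geq\min\{B',2\}$ and the sharper bound in the presence of a self-paired index in $\mathcal{T}$.
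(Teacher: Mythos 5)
Your proposal captures the right ingredients (the Kummer-theoretic interpretation of $\mathcal{T}$, the Weil pairing as the source of parity, the special role of the index $1+\gamma/2$), but several of the key steps as written are incorrect or would not go through.

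\textbf{The lower bound via the index $n=1$ is wrong.} You claim that $1\in\mathcal{T}$ "admits an explicit realization contributing $(\ell-1)/\gamma$ dimensions," yielding $r_\ell(C)\geq(\ell-1)/\gamma$. This does not hold. The condition $1\in\mathcal{T}$ (via \cref{thm:congruence_conditions}) only tells you that the rooftop height $k_1$ over $n=1$ is at least $2$, which in the worst case contributes only $c(1)=\lfloor(k_1-2)/\gamma\rfloor+1=1$ to $r_\ell(C)$. Concretely, take $\ell=13$, $\gamma=3\mid d$, $|\mathcal{T}|=1$: the theorem gives $2\leq r_{13}(C)\leq 4$ with $r_{13}(C)$ even, so $r_{13}(C)=2$ is possible, strictly less than your claimed bound $(\ell-1)/\gamma=4$. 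In fact your claimed bound would imply $r_\ell(C)=B'$ always (combined with the upper bound when $|\mathcal{T}|=1$), which the paper's data (\cref{tab:refined-7data}) shows to be false. The paper's actual lower bound argument is a pigeonhole count on diagonals (\cref{lem:diagonal_pigeons}): it collects eigenvectors from $F_n^{n-m+1}$ along a band of diagonals and shows that at most $|S|$ of them can be non-maximal rooftops, using parts (c) and (d) of \cref{basic_lifting_properties}, together with \cref{thm:evenselfdual} in the case $|\mathcal{T}|=1$.

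\textbf{The parity involution is not the one you describe.} You posit an involution on $\{1,\ldots,\gamma-1\}$ coming from the Weil pairing whose "unique interior fixed point (when $\gamma$ is even) is $n=1+\gamma/2$," with non-fixed pairs contributing equally and the fixed index forced to contribute evenly. The paper's argument is structurally different: the involution is $n\mapsto n^\vee$ with $n^\vee\equiv k_n-n\bmod\gamma$, which depends on the curve through the rooftop heights $k_n$; its fixed points are the $n$ with $k_n\equiv 2n\bmod\gamma$, and there can be many or none. The index $1+\gamma/2$ is \emph{not} a fixed point of $n\mapsto 1-n$, and $n\mapsto 1-n$ has no fixed point mod $\gamma$ when $\gamma$ is even---that very fact is what the paper uses (via \cref{thm:rooftop_pairs} at $k=1$) to deduce that rooftops in the bottom layer come in pairs, hence $|\mathcal{T}|\equiv\gamma-1\bmod 2$ and $B'\equiv B\bmod 2$; the parity of $r_\ell(C)$ is then inherited from the already-established $r_\ell(C)\equiv B\bmod 2$ (proved via the $n\mapsto n^\vee$ involution and \cref{thm:evenselfdual}). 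The role of $1+\gamma/2$ is different: it is the unique $n$ with $2\equiv 2n\bmod\gamma$, so \cref{thm:evenselfdual} applies at level $k=2$, and this is what gives the $r_\ell(C)\geq 3$ bound in the final clause.

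\textbf{A conceptual caution.} You speak of "$\sigma$-eigenspaces $J[\ell]_i$" and "Frobenius orbits on $\sigma$-eigenspaces." On $J[\ell]$ the automorphism $\zeta$ is unipotent (its $(\ell-1)$-st power of $(1-\zeta)$ equals $\ell=0$), so it has no eigenspace decomposition there---only the filtration $V^\bullet$. The eigenspace picture is valid on the $\ell$-adic Tate module, but the whole difficulty of the paper is precisely that the mod-$\ell$ reduction is not semisimple. Your upper bound argument as phrased therefore does not carry over directly; the paper's version replaces "one contribution per eigenspace orbit" with the precise count $c(n)\leq(\ell-1)/\gamma$ for $n\in\mathcal{T}$ and $c(n)=0$ for $n\notin\mathcal{T}$ via \cref{thm:congruence_conditions}.
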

Since $|\mathcal{T}|\leq \gamma-1=\gcd(d,\gamma)-1$ we have $B'\leq B$, and from $\min\{B,1\}\leq r_\ell(C)\leq B'$ we can conclude $\min\{B',2\}\geq \min\{B,1\}$. Thus \cref{intro_refined_upper_bound} gives both upper and lower bounds that are at least as strong as those in \cref{intro_upper_bound}. 

In addition to the linear algebra on $J[\ell]$ discussed in \cref{sec:geo_to_reps} and \cref{sec:lifting_relations_new}, the proof of \cref{intro_refined_upper_bound} requires techniques from Kummer theory and Galois cohomology. These techniques are introduced in \cref{subsec:results from cohom} and discussed in depth in Sections \ref{sec:galoisreps}, \ref{sec:galois_cohom}, and \ref{sec:cup products}. The proof of \cref{intro_refined_upper_bound} is then completed in \cref{sec:consequences}.

In some cases, \cref{intro_upper_bound} and \cref{intro_refined_upper_bound} are sufficient to determine $r_\ell(C)$ precisely. 

\begin{corollary}\label{cor:small primes}
    Suppose $\ell=5$. If $\gamma=4\mid d$ then $r_5(C)=B'$, and otherwise $r_5(C)=B$.
\end{corollary}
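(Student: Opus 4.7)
The plan is to split on the value of $\gamma=\ord_5(q)$, which lies in $\{1,2,4\}$ since $(\Z/5\Z)^\times$ has order $4$, and then invoke whichever of \cref{thm:intro example}, \cref{intro_upper_bound}, \cref{prop:2lifting}, or \cref{intro_refined_upper_bound} applies. In the cases $\gamma\in\{1,2\}$ and $\gamma=4$ with $4\nmid d$ the goal is to verify $r_5(C)=B$, and in the remaining case $\gamma=4\mid d$ the goal is to verify $r_5(C)=B'$.

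First I would handle $\gamma=1$: here $\gcd(d,\gamma)=1$, so $B=0$, and \cref{intro_upper_bound} immediately forces $r_5(C)=0$. For $\gamma=2$ the quantity $B=(\gcd(d,2)-1)\cdot 2$ equals $0$ if $d$ is odd and $2$ if $d$ is even; the odd case is again settled by \cref{intro_upper_bound}, while in the even case $\gcd(d,\gamma)=2$ and \cref{prop:2lifting} gives $r_5(C)=(5-1)/2=2=B$. For $\gamma=4$ with $4\nmid d$, the hypothesis $\gcd(d,5)=1$ leaves only $\gcd(d,4)\in\{1,2\}$; if $\gcd(d,4)=1$ then $B=0$ and \cref{intro_upper_bound} gives $r_5(C)=0$, while if $\gcd(d,4)=2$ then $B=(2-1)\cdot(4/4)=1$ and \cref{prop:2lifting} gives $r_5(C)=(5-1)/4=1=B$.

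The final case is $\gamma=4\mid d$, where \cref{intro_refined_upper_bound} applies with $B'=|\mathcal{T}|\cdot(5-1)/4=|\mathcal{T}|$. Since $1\in\mathcal{T}$ and $|\mathcal{T}|\leq\gamma-1=3$, we have $B'\in\{1,2,3\}$. The theorem yields three constraints: $r_5(C)\leq B'$, $r_5(C)\geq\min\{B',2\}$, and $r_5(C)\equiv B'\bmod 2$. If $B'\in\{1,2\}$ the upper and lower bounds coincide and force $r_5(C)=B'$. If $B'=3$, the upper bound and the parity constraint together leave $r_5(C)\in\{1,3\}$, and the lower bound $r_5(C)\geq 2$ rules out $1$, so $r_5(C)=3=B'$.

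The main obstacle, modest as it is, is the $B'=3$ subcase: this is precisely where the parity constraint coming from the Weil pairing becomes essential, since the upper and lower bounds alone would leave a two-element range for $r_5(C)$.
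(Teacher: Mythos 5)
Your proof is correct and follows essentially the same case split as the paper (on $\gamma$, then on $\gcd(d,\gamma)$, then on $|\mathcal{T}|$). The one noteworthy divergence is in the $\gamma=4\mid d$, $B'=3$ subcase: the paper observes that $\mathcal{T}=\{1,2,3\}$ forces $3=1+\gamma/2\in\mathcal{T}$ and invokes the second lower bound of \cref{intro_refined_upper_bound} to get $r_5(C)\geq 3$ directly, whereas you combine the parity constraint $r_5(C)\equiv 3\bmod 2$ with the first lower bound $r_5(C)\geq\min\{B',2\}=2$. Both are valid; yours is a slightly lighter deduction since it bypasses the $1+\gamma/2\in\mathcal{T}$ clause entirely, while the paper's makes the second lower bound earn its keep. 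Your choice to route the $\gamma\leq 2$ cases through \cref{intro_upper_bound} and \cref{prop:2lifting} rather than \cref{thm:intro example} is an equivalent unpacking, since the latter is itself derived from the former two.
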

\begin{proof}
    If $\gamma\leq 2$ then \cref{thm:intro example} implies $r_5(C)=B$, so the only remaining option to consider is $\gamma=4$. If $\gcd(d,\gamma)=1$ then $B=0$, and if $\gcd(d,\gamma)=2$ then $B=1$; in both cases we must have $r_5(C)=B$ by \cref{intro_upper_bound}. So we may now assume $4\mid d$.    
    If $\mathcal{T}\neq \{1,2,3\}$, then $B'=|\mathcal{T}|$ is either $1$ or $2$. In either case $\min\{B',2\}=B'$, so $r_5(C)=B'$ by \cref{intro_refined_upper_bound}. If $\mathcal{T}= \{1,2,3\}$, then $r_5(C)\leq B'=3$, but we also have $1+\frac{\gamma}{2}=3\in\mathcal{T}$ and so $r_5(C)\geq 3$, so again $r_5(C)=B'$ by \cref{intro_refined_upper_bound}.
\end{proof}
For larger values of $\ell$, \cref{intro_upper_bound} and \cref{intro_refined_upper_bound} are not sufficient to determine $r_\ell(C)$. For example, we have the following options when $\ell=7$:
    \[r_7(C)=\left\{\begin{array}{ll}
        2\text{ or }4, & \text{if }\gamma=3\mid d\text{ and }|\mathcal{T}|=2,\\
        3\text{ or }5, & \text{if }\gamma=6\mid d\text{ and }|\mathcal{T}|=5,\\
        B', & \text{ if }3\leq \gamma\mid d\text{ but not the above cases,} \\
        B, & \text{ otherwise.}
    \end{array}\right.\]
For the first two rows, we can exhibit curves attaining both possible values of $r_7(C)$, demonstrating that the parameters $\ell,q,d,|\mathcal{T}|$ are not sufficient to fully determine the value of $r_\ell(C)$ in general. For instance, consider the case $\ell=7$, $q=3$ (so $\gamma=6$), and $d=12$ from the introduction, summarized in \cref{tab:7data}. We may categorize these  function fields further by the sets $\mathcal{T}$ associated to each. See \cref{tab:refined-7data}, and note in particular the last two columns, consisting of curves with the same $\mathcal{T}$ but different values of $r_\ell(C)$. \footnote{The astute reader may notice in \cref{tab:refined-7data} that $\mathcal{T}\setminus\{1\}$ is closed under $n\mapsto 1-n\bmod\gamma$. This symmetry does hold in general, following from \cref{thm:rooftop_pairs} and \cref{thm:congruence_conditions} below, and can be used to cut down the number of computations needed in order to find the set $\mathcal{T}$.}

\begin{table}[h!]
    \centering
    \begin{tabular}{r|ccccc}
        $r_7(C)$: & $1$ & $3$ & $3$ & $3$ & $5$\\
        $\mathcal{T}$: & $\{1\}$ & $\{1,3,4\}$ & $\{1,2,5\}$ & $\{1,2,3,4,5\}$ & $\{1,2,3,4,5\}$ \\
        \hline
        count: & $5552$ & $852$ & $810$ & $178$ & $12$
    \end{tabular}
    \caption{The number of isomorphism classes of fields $\F_3(\sqrt[7]{f},x)$ attaining each possible $7$-rank and set $\mathcal{T}$, with $f(x)\in\F_3[x]$ irreducible of degree $12$.}
    \label{tab:refined-7data}
\end{table}

\subsection{Results from the linear algebra of Frobenius eigenvectors}\label{sec:lifting_behavior_functions}

The most important feature of working with function fields is that we can represent elements of the ideal class group of $\F_q(\sqrt[\ell]{f},x)$ using geometric objects, as described in \cref{sec:rlC_constraints}. This allows us to use morphisms from $C$ to itself to study $r_\ell(C)$.

The $q$-power Frobenius map $(x,y)\to (x^q,y^q)$ on $C(\Fbar_q)$ induces a linear operator $\Frob$ on the $\F_\ell$-vector space $J[\ell]$. The  eigenspace of eigenvalue $1$ for this action is $J[\ell](\F_q)$, so $r_\ell(C)$ can be recovered as the dimension of this eigenspace. The primary difficulty we will encounter is that the action of $\Frob$ on $J[\ell]$ is not semi-simple in general. The action of Frobenius on the $\ell$-adic Tate module $J[\ell^\infty]$ is semi-simple, but this property does not descend to the mod $\ell$ reduction. So even though we can determine the full characteristic polynomial of $\Frob$ acting on $J[\ell]$ with relatively little work (\cref{rmk:full-basis}), this is not enough to determine the dimension of any particular eigenspace.

To study $r_\ell(C)$, we use a filtration coming from the automorphism $(x,y) \mapsto (x,\zeta_\ell y)$ on the space $J[\ell]$ denoted as
\[0=V^0\subseteq V^1\subseteq V^2\subseteq \cdots \subseteq V^{\ell-1}=J[\ell]\]
that is preserved by $\Frob$, and we consider the intersection of \emph{generalized} eigenspaces for $\Frob$ with this filtration.

\begin{restatable}{definition}{EnkFnk}
    \label{def:EnkFnk}
    For $1\leq k\leq \ell-1$ and $n\in\Z/\gamma\Z$, let $F_n^k$ denote the set of $v\in V^k\setminus V^{k-1}$ for which 
    \[(\Frob-q^{n-k+1})^iv=0\qquad\text{for some } i\geq 1.\]
\end{restatable}

Note that for $n\in\Z/\gamma\Z$, multiplication by $q^{n-k+1}$ is a well-defined scalar operator on $J[\ell]$ because $q^\gamma=1$ in $\F_\ell$.
We will show that 
$J[\ell](\F_q)$ has a basis formed by taking one true eigenvector of $\Frob$ from each $F_{k-1}^k$, whenever such an eigenvector exists. Thus $r_\ell(C)$ is directed related to $F_n^k$ in the following result.

\afterpage{
\clearpage
\null
\vfill
\begin{figure}[h]
    \centering
    \usetikzlibrary{math} 
\tikzmath{\a0=0;\a1=4; \a2=1;\a3=4;\a4=6;\a5=1;}
\begin{tabular}{l|r}
\begin{tikzpicture}[scale=0.9]
%axes
\draw  (0,0) edge (6,0);
\draw  (0,0) edge (0,6);
%towers
\draw[draw=none, fill=gray!15]  (1,0) rectangle (6,6);
\draw[fill=gray!35]  (0,0) rectangle (1,\a0);
\draw[fill=gray!35]  (1,0) rectangle (2,\a1);
\draw[fill=gray!35]  (2,0) rectangle (3,\a2);
\draw[fill=gray!35]  (3,0) rectangle (4,\a3);
\draw[fill=gray!35]  (4,0) rectangle (5,\a4);
\draw[fill=gray!35]  (5,0) rectangle (6,\a5);
%labels
\node at (-0.5,0.5) {$1$};
\node at (-0.5,1.5) {$2$};
\node at (-0.5,2.5) {$3$};
\node at (-0.5,3.5) {$4$};
\node at (-0.5,4.5) {$5$};
\node at (-0.5,5.5) {$6$};
\node at (-0.5,6.5) {$k$};
\node at (6.5,-0.5) {$n$};
\node at (0.5,-0.5) {$0$};
\node at (1.5,-0.5) {$1$};
\node at (2.5,-0.5) {$2$};
\node at (3.5,-0.5) {$3$};
\node at (4.5,-0.5) {$4$};
\node at (5.5,-0.5) {$5$};
\node at (1.5,\a1-.5) {$F_1^{\a1}$};
\node at (2.5,\a2-.5) {$F_2^{\a2}$};
\node at (3.5,\a3-.5) {$F_3^{\a3}$};
\node at (4.5,\a4-.5) {$F_4^{\a4}$};
\node at (5.5,\a5-.5) {$F_5^{\a5}$};
%circles
\draw  (0.5,0.5) ellipse (0.5 and 0.5);
\draw  (1.5,1.5) ellipse (0.5 and 0.5);
\draw  (2.5,2.5) ellipse (0.5 and 0.5);
\draw  (3.5,3.5) ellipse (0.5 and 0.5);
\draw  (4.5,4.5) ellipse (0.5 and 0.5);
\draw  (5.5,5.5) ellipse (0.5 and 0.5);
\begin{comment}
%duality
\draw[dashed, gray, fill=none]  (3.9,0.1) rectangle (4.1,0.9);
\draw[dashed, gray, fill=none]  (0.9,0.1) rectangle (1.1,0.9);
\draw[dashed, gray, fill=none]  (2.9,2.1) rectangle (3.1,2.9);
\draw[dashed, gray, fill=none]  (6,2.1) -- (5.9,2.1) -- (5.9, 2.9) -- (6,2.9);
\draw[dashed, gray, fill=none]  (0,2.1) -- (0.1,2.1) -- (0.1, 2.9) -- (0,2.9);
\draw[dashed, gray, fill=none]  (1.9,4.1) rectangle (2.1,4.9);
\draw[dashed, gray, fill=none]  (4.9,4.1) rectangle (5.1,4.9);
\draw[dashed, gray, fill=none]  (3.1,1.1) rectangle (3.9,1.9);
\draw[dashed, gray, fill=none]  (0.1,1.1) rectangle (0.9,1.9);
\draw[dashed, gray, fill=none]  (2.1,3.1) rectangle (2.9,3.9);
\draw[dashed, gray, fill=none]  (5.1,3.1) rectangle (5.9,3.9);
\draw[dashed, gray, fill=none]  (1.1,5.1) rectangle (1.9,5.9);
\draw[dashed, gray, fill=none]  (4.1,5.1) rectangle (4.9,5.9);
\end{comment}
\node at (3,-1) {$\ell=7$, $\gamma=d=6$, $r_{7}=3$};
\end{tikzpicture}
&
\tikzmath{\a0=0; \a1=10; \a2=6;\a3=1;\a4=6;}
\multirow{2}{*}[14em]{
\begin{tikzpicture}[scale=0.9]
%axes
\draw  (0,0) edge (5,0);
\draw  (0,0) edge (0,10);
\node at (2.5,-1) {$\ell=11$, $\gamma=d=5$, $r_{11}=4$};
%towers
\draw[draw=none, fill=gray!15]  (1,0) rectangle (5,10);
\draw[fill=gray!35]  (0,0) rectangle (1,\a0);
\draw[fill=gray!35]  (1,0) rectangle (2,\a1);
\draw[fill=gray!35]  (2,0) rectangle (3,\a2);
\draw[fill=gray!35]  (3,0) rectangle (4,\a3);
\draw[fill=gray!35]  (4,0) rectangle (5,\a4);
%labels
\node at (-0.5,0.5) {$1$};
\node at (-0.5,1.5) {$2$};
\node at (-0.5,2.5) {$3$};
\node at (-0.5,3.5) {$4$};
\node at (-0.5,4.5) {$5$};
\node at (-0.5,5.5) {$6$};
\node at (-0.5,6.5) {$7$};
\node at (-0.5,7.5) {$8$};
\node at (-0.5,8.5) {$9$};
\node at (-0.5,9.5) {$10$};
\node at (-0.5,10.5) {$k$};
\node at (5.5,-0.5) {$n$};
\node at (0.5,-0.5) {$0$};
\node at (1.5,-0.5) {$1$};
\node at (2.5,-0.5) {$2$};
\node at (3.5,-0.5) {$3$};
\node at (4.5,-0.5) {$4$};
\node at (1.5,\a1-.5) {$F_1^{\a1}$};
\node at (2.5,\a2-.5) {$F_2^{\a2}$};
\node at (3.5,\a3-.5) {$F_3^{\a3}$};
\node at (4.5,\a4-.5) {$F_4^{\a4}$};
%circles
\draw  (0.5,0.5) ellipse (0.5 and 0.5);
\draw  (1.5,1.5) ellipse (0.5 and 0.5);
\draw  (2.5,2.5) ellipse (0.5 and 0.5);
\draw  (3.5,3.5) ellipse (0.5 and 0.5);
\draw  (4.5,4.5) ellipse (0.5 and 0.5);
\draw  (0.5,5.5) ellipse (0.5 and 0.5);
\draw  (1.5,6.5) ellipse (0.5 and 0.5);
\draw  (2.5,7.5) ellipse (0.5 and 0.5);
\draw  (3.5,8.5) ellipse (0.5 and 0.5);
\draw  (4.5,9.5) ellipse (0.5 and 0.5);
%duality
%\draw[dashed, gray, fill=none]  (3.1,0.1) rectangle (3.9,0.9);
%\draw[dashed, gray, fill=none]  (0.9,0.1) rectangle (1.1,0.9);
%\draw[dashed, gray, fill=none]  (2.1,2.1) rectangle (2.9,2.9);
%\draw[dashed, gray, fill=none]  (5,2.1) -- (4.9,2.1) -- (4.9, 2.9) -- (5,2.9);
%\draw[dashed, gray, fill=none]  (0,2.1) -- (0.1,2.1) -- (0.1, 2.9) -- (0,2.9);
%\draw[dashed, gray, fill=none]  (1.1,4.1) rectangle (1.9,4.9);
%\draw[dashed, gray, fill=none]  (3.9,4.1) rectangle (4.1,4.9);
%\draw[dashed, gray, fill=none]  (2.9,1.1) rectangle (3.1,1.9);
%\draw[dashed, gray, fill=none]  (0.1,1.1) rectangle (0.9,1.9);
%\draw[dashed, gray, fill=none]  (1.9,3.1) rectangle (2.1,3.9);
%\draw[dashed, gray, fill=none]  (4.1,3.1) rectangle (4.9,3.9);
%\draw[dashed, gray, fill=none]  (3.1,5.1) rectangle (3.9,5.9);
%\draw[dashed, gray, fill=none]  (0.9,5.1) rectangle (1.1,5.9);
%\draw[dashed, gray, fill=none]  (2.1,7.1) rectangle (2.9,7.9);
%\draw[dashed, gray, fill=none]  (5,7.1) -- (4.9,7.1) -- (4.9, 7.9) -- (5,7.9);
%\draw[dashed, gray, fill=none]  (0,7.1) -- (0.1,7.1) -- (0.1, 7.9) -- (0,7.9);
%\draw[dashed, gray, fill=none]  (1.1,9.1) rectangle (1.9,9.9);
%\draw[dashed, gray, fill=none]  (3.9,9.1) rectangle (4.1,9.9);
%\draw[dashed, gray, fill=none]  (2.9,6.1) rectangle (3.1,6.9);
%\draw[dashed, gray, fill=none]  (0.1,6.1) rectangle (0.9,6.9);
%\draw[dashed, gray, fill=none]  (1.9,8.1) rectangle (2.1,8.9);
%\draw[dashed, gray, fill=none]  (4.1,8.1) rectangle (4.9,8.9);
\end{tikzpicture}
}
\\ \cline{1-1}
\tikzmath{\a0=0; \a1=0; \a2=6;\a3=0;\a4=6;\a5=0;}
\begin{tikzpicture}[scale=0.9]
%axes
\draw  (0,0) edge (6,0);
\draw  (0,0) edge (0,6);
\node at (3,-1) {$\ell=7$, $\gamma=6$, $d=3$, $r_{7}=2$};
\node at (0,7) {};
%towers
\draw[fill=gray!35]  (0,0) rectangle (1,\a0);
\draw[fill=gray!35]  (1,0) rectangle (2,\a1);
\draw[fill=gray!35]  (2,0) rectangle (3,\a2);
\draw[fill=gray!35]  (3,0) rectangle (4,\a3);
\draw[fill=gray!35]  (4,0) rectangle (5,\a4);
\draw[fill=gray!35]  (5,0) rectangle (6,\a5);
%labels
\node at (-0.5,0.5) {$1$};
\node at (-0.5,1.5) {$2$};
\node at (-0.5,2.5) {$3$};
\node at (-0.5,3.5) {$4$};
\node at (-0.5,4.5) {$5$};
\node at (-0.5,5.5) {$6$};
\node at (-0.5,6.5) {$k$};
\node at (6.5,-0.5) {$n$};
\node at (0.5,-0.5) {$0$};
\node at (1.5,-0.5) {$1$};
\node at (2.5,-0.5) {$2$};
\node at (3.5,-0.5) {$3$};
\node at (4.5,-0.5) {$4$};
\node at (5.5,-0.5) {$5$};
%\node at (0.5,\a1+.5) {$a_1^{[\a1]}$};
\node at (2.5,\a2-.5) {$F_2^{\a2}$};
%\node at (2.5,\a3+.5) {$a_3^{[\a3]}$};
\node at (4.5,\a4-.5) {$F_4^{\a4}$};
%\node at (4.5,\a5+.5) {$a_5^{[\a5]}$};
%\node at (5.5,\a6+.5) {$a_6^{[\a6]}$};
%circles
\draw  (0.5,0.5) ellipse (0.5 and 0.5);
\draw  (1.5,1.5) ellipse (0.5 and 0.5);
\draw  (2.5,2.5) ellipse (0.5 and 0.5);
\draw  (3.5,3.5) ellipse (0.5 and 0.5);
\draw  (4.5,4.5) ellipse (0.5 and 0.5);
\draw  (5.5,5.5) ellipse (0.5 and 0.5);
\begin{comment}
%duality
\draw[dashed, gray, fill=none]  (3.9,0.1) rectangle (4.1,0.9);
\draw[dashed, gray, fill=none]  (0.9,0.1) rectangle (1.1,0.9);
\draw[dashed, gray, fill=none]  (2.9,2.1) rectangle (3.1,2.9);
\draw[dashed, gray, fill=none]  (6,2.1) -- (5.9,2.1) -- (5.9, 2.9) -- (6,2.9);
\draw[dashed, gray, fill=none]  (0,2.1) -- (0.1,2.1) -- (0.1, 2.9) -- (0,2.9);
\draw[dashed, gray, fill=none]  (1.9,4.1) rectangle (2.1,4.9);
\draw[dashed, gray, fill=none]  (4.9,4.1) rectangle (5.1,4.9);
\draw[dashed, gray, fill=none]  (3.1,1.1) rectangle (3.9,1.9);
\draw[dashed, gray, fill=none]  (0.1,1.1) rectangle (0.9,1.9);
\draw[dashed, gray, fill=none]  (2.1,3.1) rectangle (2.9,3.9);
\draw[dashed, gray, fill=none]  (5.1,3.1) rectangle (5.9,3.9);
\draw[dashed, gray, fill=none]  (1.1,5.1) rectangle (1.9,5.9);
\draw[dashed, gray, fill=none]  (4.1,5.1) rectangle (4.9,5.9);
\end{comment}
\end{tikzpicture}
\end{tabular}
    \caption[Caption for LOF]{Examples of possible $\Frob$ eigenvector configurations. Each chart represents one curve\footnotemark \ with some specified parameters $\ell,\gamma,d$. Cell $(n,k)$ is colored light gray if $F_n^{k}$ is nonempty, and dark gray if $F_n^k$ contains an eigenvector of $\Frob$. Cell $(n,k)$ is circled if $n-k+1\equiv 0\pmod\gamma$. Rooftops $F_n^k$ are labeled. The number of dark gray circles equals to
    $r_\ell(C)$. 
    For more on how to read these diagrams see \cref{rmk: visual guide}.}
    \label{fig:lifting_charts}
\end{figure}
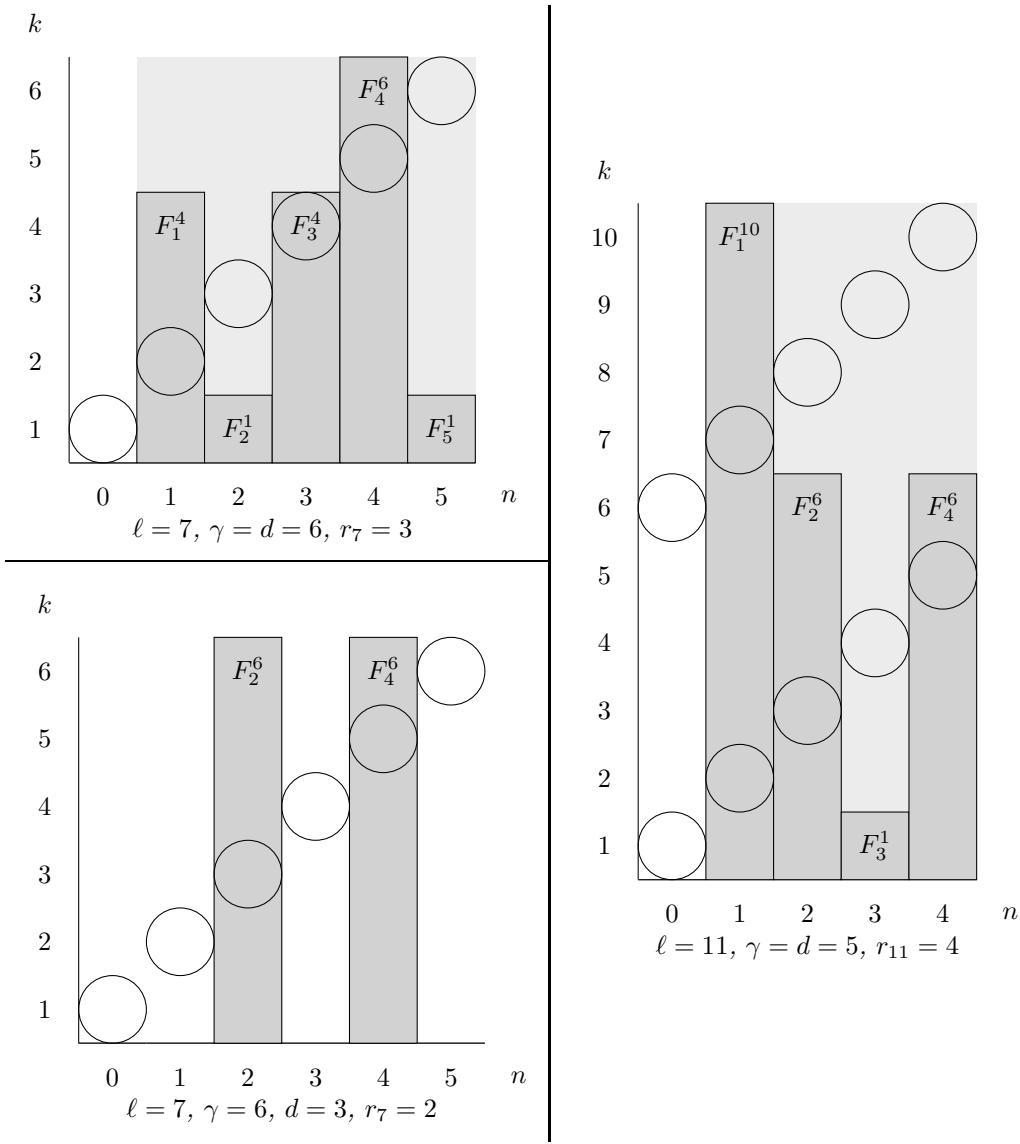
\vfill
\footnotetext{These diagrams do not come from explicit curves, but are examples of configurations satisfying all the combinatorial constraints discussed in \cref{sec:lifting_behavior_functions}.} 
\clearpage
}

\begin{restatable}{theorem}{rlCcount}
    \label{thm:rlC_count} The $\ell$-rank of the divisor class group
    $r_\ell(C)$ equals the number of values $1\leq k\leq \ell-1$ such that $F_{k-1}^k$ contains an eigenvector of $\Frob$.
\end{restatable}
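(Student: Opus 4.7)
I would analyze how the honest Frobenius-fixed subspace $E := J[\ell](\F_q) = J[\ell]^{\Frob = 1}$ intersects the filtration $V^\bullet$. Setting $E^k := E \cap V^k$, we have
\[r_\ell(C) = \dim E = \sum_{k=1}^{\ell-1} \dim(E^k/E^{k-1}),\]
so the theorem reduces to showing each $\dim(E^k/E^{k-1}) \in \{0,1\}$, with the value $1$ occurring precisely when $F_{k-1}^k$ contains a $\Frob$-eigenvector.

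One direction is immediate. We have $E^k \supsetneq E^{k-1}$ iff some $v \in V^k \setminus V^{k-1}$ satisfies $\Frob v = v$. Since $q^{n-k+1} \equiv 1 \pmod \ell$ has the unique solution $n \equiv k-1 \pmod \gamma$ in $\Z/\gamma\Z$, such a $v$ is by definition an eigenvector lying in $F_{k-1}^k$; conversely, any such eigenvector yields an element of $E^k\setminus E^{k-1}$. Hence $\dim(E^k/E^{k-1}) \geq 1$ iff $F_{k-1}^k$ contains an eigenvector of $\Frob$.

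The technical heart of the proof is the reverse inequality $\dim(E^k/E^{k-1}) \leq 1$. To establish this, I would pass to the graded quotient $V^k/V^{k-1}$, inside which $E^k/E^{k-1}$ embeds into the $\Frob$-fixed points, and show $\dim (V^k/V^{k-1})^{\Frob = 1} \leq 1$. The mechanism is rigidity of the $\Frob$-action on graded pieces: using the semi-linear commutation relation $\Frob\,\pi = \pi^q\,\Frob$ (coming from $\Frob(\zeta_\ell) = \zeta_\ell^q$ on the curve) together with the module structure of $J[\ell]$ over $\F_\ell[\pi]/\Phi_\ell(\pi) = \F_\ell[u]/u^{\ell-1}$ (where $u = \pi - 1$), the actions of $\Frob$ on consecutive graded pieces differ only by the scalar twist $q$. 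The problem therefore reduces to the bottom piece $V^1 = J[\ell]^\pi$, which is spanned by differences of ramification points $P_i - P_j$ over the $d$ roots of $f$. Since $f$ is irreducible of degree $d$ coprime to $\ell$, Frobenius cyclically permutes the $P_i$, so its matrix on $V^1$ has characteristic polynomial $(x^d-1)/(x-1)$; the roots of this polynomial are distinct in $\Fbar_\ell$ because $\gcd(d,\ell) = 1$. Hence $\Frob$ has simple spectrum on $V^1$, so $\dim (V^1)^{\Frob=1} \leq 1$, and by the twist this propagates to all graded pieces.

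The hardest step is this rigidity claim: that each $V^k/V^{k-1}$ carries a $\Frob$-action isomorphic to that on $V^1$ up to a scalar twist by a power of $q$, with, in particular, simple Frobenius spectrum. Carrying this out rigorously requires both the compatibility of the $\pi$-filtration with the semi-linear Frobenius and the identification of $V^1$ with the span of ramification differences, which are precisely the structural inputs developed in \cref{sec:geo_to_reps} and \cref{sec:lifting_relations_new}.
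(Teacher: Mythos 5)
Your proposal is correct and takes essentially the same approach as the paper's proof: both hinge on the observation that the $\Frob$-fixed subspace of each graded piece $V^k/V^{k-1}$ is at most one-dimensional, obtained by transporting the simple-spectrum property of $\Frob$ on $V^1$ up the filtration via the $q$-twisted commutation relation. The paper packages this as an inductive construction of a basis of $J[\ell](\F_q)$ (one vector from each nonempty $F_{k-1}^k$), while you package it as a dimension count via $r_\ell(C)=\sum_k\dim(E^k/E^{k-1})$; the substance is identical, and the key inputs (\cref{V0basis}, \cref{subquotient_iso}, \cref{lem:linearcommute}) are the same in both.
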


For a visual interpretation, see \cref{fig:lifting_charts}. Each of the three figures represents a different possible curve. If the cell with coordinates $(n,k)$ is shaded dark gray, this means that there exists a $\Frob$ eigenvector with eigenvalue $q^{n-k+1}$ in $V^k\setminus V^{k-1}$. The circles correspond to cells $(n,k)$ with $q^{n-k+1}=1$ in $\F_\ell$, so any dark gray circle corresponds to a $\Frob$ eigenvector of eigenvalue $1$, that is, an element of $J[\ell](\F_q)$. \textbf{The number of dark gray circles equals to
    $r_\ell(C)$.} See \cref{rmk: visual guide} for a more thorough guide to reading these diagrams.

We prove several constraints that determine when $F_n^{k}$ contains an eigenvector of $\Frob$. A number of relations follow fairly directly from linear algebra on $J[\ell]$. We say that $F_n^{k}$ is a \textbf{``rooftop''} if $F_n^k$ has a $\Frob$ eigenvector but there is no $k<k'\leq \ell-1$ such that $F_n^{k'}$ has an eigenvector. This notion is justified by statement (b) of the following Proposition. We say a rooftop $F_n^{k}$ is \textbf{``non-maximal''} if $k\neq\ell-1$.

\begin{restatable}{proposition}{basicliftingproperties}
    \label{basic_lifting_properties}
    Let $n\in\Z/\gamma\Z$ and $1\leq k\leq \ell-1$. We have the following:
    \begin{enumerate}[label=(\alph*)]
        \item $F_n^1$ has a $\Frob$ eigenvector if and only if $\gamma\mid dn$ and $\gamma\nmid n$.
        \item If $F_n^k$ has a $\Frob$ eigenvector, then $F_n^{k'}$ has a $\Frob$ eigenvector for all $1 \le  k'\leq k$.
        \item If $F_n^k$ is a rooftop, then $F_{n-k}^1$ has a $\Frob$ eigenvector (that is, $\gamma\mid d(n-k)$ and $\gamma\nmid (n-k)$).
        \item If $F_n^k$ is a non-maximal rooftop, there is no rooftop of the form $F_{n+i}^{k+i}$ for $i\neq 0$ and $1\leq k+i\leq \ell-2$.
    \end{enumerate}
\end{restatable}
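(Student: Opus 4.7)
My strategy is to use the commutation $\Frob\circ\tau = \psi(\tau)\circ\Frob$ with $\tau := \sigma - 1$ and $\psi(\tau) := (1+\tau)^q - 1 \equiv q\tau \pmod{\tau^2}$, combined with an explicit description of $V^1$ and a Jordan-block analysis of $\Frob$, to reduce all four claims to linear algebra on $J[\ell]$. Throughout, I would exploit that $\Frob^\gamma$ commutes with $\tau$ (since $q^\gamma \equiv 1 \pmod{\ell}$), which makes $\Frob^\gamma$ into an $\F_\ell[\tau]$-linear operator.

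For (a), I would identify $V^1 = J[\ell]^\sigma$ as the $\ell$-torsion divisor classes on $C$ supported on the $\sigma$-fixed points, namely the ramification points $P_i = (\alpha_i, 0)$ (for $\alpha_i$ a root of $f$) and the unique point $\infty$ over $x = \infty$ (using $\gcd(d,\ell) = 1$). From $\divisor(x - \alpha_i) = \ell(P_i - \infty)$ each $[P_i - \infty]$ is $\ell$-torsion, and $\divisor(y) = \sum_i P_i - d\cdot\infty$ yields the single relation $\sum_i [P_i - \infty] = 0$. Since $f$ is irreducible, $\Frob$ cyclically permutes the $P_i$, so $V^1 \cong \F_\ell[T]/\Phi(T)$ as an $\F_\ell[\Frob]$-module, with $\Phi(T) = 1 + T + \cdots + T^{d-1}$. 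Separability of $\Phi$ (from $\gcd(d,\ell) = 1$) gives semisimplicity of $\Frob$ on $V^1$, so the $\F_\ell$-rational eigenvalues are exactly the non-trivial $d$-th roots of unity in $\F_\ell$: namely $q^n$ is such a root iff $\gamma\mid dn$ and $\gamma\nmid n$.

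For (b), given an eigenvector $v \in F_n^k$ with $\Frob(v) = \lambda v$ and $\lambda = q^{n-k+1}$, I would analyze the $\F_\ell$-submodule $M := \F_\ell\langle v, \tau v, \ldots, \tau^{k-1}v\rangle$. The commutation $\Frob(\tau^j v) = \lambda\, \psi(\tau)^j v$ shows $M$ is stable under both $\tau$ and $\Frob$, and in the basis $e_i := \tau^{k-i}v$ (so that $e_i \in V^i \setminus V^{i-1}$) $\Frob$ acts upper-triangularly with diagonal entries $\lambda q^{k-i}$. For each $1 \leq k' \leq k$, I would construct an eigenvector at position $k'$ of eigenvalue $\lambda q^{k-k'} = q^{n-k'+1}$ by solving the resulting triangular linear system; the potential obstructions appear at rows $j > k'$ with $j \equiv k'\pmod{\gamma}$ (where the diagonal entry coincides with the target eigenvalue), and I would verify that these obstructions all vanish automatically given that $v$ is itself an actual eigenvector, using the explicit form of $\psi(\tau)^j$. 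The main obstacle will be the case $k > \gamma$, where eigenvalue coincidences along the diagonal require careful bookkeeping.

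For (c) and (d), I would use a Jordan-block analysis of $\Frob$ on $J[\ell]$. For (c), the rooftop condition means $F_n^{k+1}$ contains generalized but no actual eigenvectors of eigenvalue $q^{n-k}$; this forces a Jordan block of $\Frob - q^{n-k}$ of size at least $2$ whose top lies in $V^{k+1}\setminus V^k$, and whose bottom is an actual eigenvector of $q^{n-k}$ in some $V^{k'}\setminus V^{k'-1}$ with $k' \leq k$. Applying (b) to this bottom eigenvector produces an actual eigenvector of the same eigenvalue in $V^1$, which is exactly a Frob-eigenvector in $F_{n-k}^1$, and by (a) this gives $\gamma\mid d(n-k)$ and $\gamma\nmid(n-k)$. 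For (d), two non-maximal rooftops $F_n^k$ and $F_{n+i}^{k+i}$ on the same diagonal would share the eigenvalue $\lambda = q^{n-k+1}$ and produce two Jordan blocks of $\Frob - q^{-1}\lambda$ with tops at different non-maximal positions in the filtration; I would derive a contradiction by exhibiting too many elements in the generalized eigenspace of $q^{-1}\lambda$ compared to the count predicted by the semisimple structure of each $V^{k'}/V^{k'-1} \cong V^1$. The main obstacle throughout (c) and (d) will be carefully tracking the Jordan block positions through the filtration and verifying the counting argument.
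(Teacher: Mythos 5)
Your approach for (a) matches the paper's: identify $V^1$ via ramification-point divisors and use semisimplicity of $\Frob$ on $V^1$.

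For (b), you take a genuinely different route. The paper introduces a modified endomorphism $\eta$ (a ``logarithm'' of $\zeta$) satisfying the exact relation $\Frob\circ\eta = q\eta\circ\Frob$ on $J[\ell]$ (proved via the combinatorial identity in the appendix), so that $\eta^{k-k'}v$ is automatically an eigenvector. You instead work directly with $\tau = 1-\zeta$ and observe that $\Frob^\gamma$ commutes with $\tau$; combined with $\Frob^\gamma v = v$ for an eigenvector $v$, this makes $\Frob$ diagonalizable on $M = \F_\ell[\tau]v$ (minimal polynomial divides the separable $x^\gamma - 1$), and the triangular structure then forces an eigenvector into each $M\cap V^{k'}\setminus V^{k'-1}$. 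This is a valid and arguably more economical argument than the paper's, since it sidesteps the appendix. However, your sketch buries this key step: the phrase ``verify that these obstructions vanish automatically'' obscures the fact that it is precisely the diagonalizability coming from $\Frob^\gamma = \mathrm{id}$ on $M$ that makes the triangular system solvable; without that observation, the obstructions do \emph{not} obviously vanish, and $\tau^{k-k'}v$ itself is generally not an eigenvector.

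For (c) and (d) there is a genuine gap. You assert that a Jordan chain for $\Frob - q^{n-k}$ starting from $F_n^{k+1}$ bottoms out at an eigenvector in $V^{k'}\setminus V^{k'-1}$ for some $k'\le k$, and then claim ``applying (b) to this bottom eigenvector produces an actual eigenvector of the \emph{same} eigenvalue in $V^1$.'' That step is false: descending the filtration via (b) multiplies the eigenvalue by $q$ at each step, so an eigenvector of eigenvalue $q^{n-k}$ in $F_m^{k'}$ (where $m\equiv n-k+k'-1\bmod\gamma$) produces an eigenvector in $F_m^1 = F_{n-k+k'-1}^1$, which has eigenvalue $q^{n-k+k'-1}$, not $q^{n-k}$, unless $k'\equiv 1\bmod\gamma$. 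What (c) actually requires is the precise claim of the paper's Lemma~\ref{Frob_remainder}: one can choose a lift $v$ of the eigenvector in $F_n^k$ so that $(\Frob-q^{n-k})v$ lies in $V^1$ itself (not merely in some $V^{k'}$). That lemma is proved using $\eta(\Frob-q^{n-k})v = q^{-1}(\Frob-q^{n-k+1})\eta v = 0$, which depends on the \emph{exact} commutation $\Frob\eta = q\eta\Frob$; with $\tau$ in place of $\eta$ the analogous computation produces only $\tau(\Frob-q^{n-k})v\in V^{k-1}$, one step short and not iterable, because $\tau v$ is no longer an eigenvector. Your diagonalizability trick from (b) does not rescue this: it requires $\Frob^\gamma v=v$, which fails for the non-eigenvector lift $v$. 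Part (d) inherits the same gap, since both the paper's linear-combination argument and your proposed dimension count need the chain bottoms to land in the one-dimensional space $F_{n-k}^1$.
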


These relations are proven in \cref{sec:proofs_of_basic_lifting}. Part (a) is obtained by constructing an explicit basis of $\Frob$ eigenvectors for $V^1$, and part (b) follows from a relation between $\Frob$ and the map used to define the filtration. If $F_n^k$ is a non-maximal rooftop, we will show that applying $(\Frob-q^{n-k})$ to a vector in $F_n^{k+1}$ yields an eigenvector in $F_{n-k}^1$ (a ``Jordan chain'' of length $2$), giving part (c). Part (d) comes from that if two such Jordan chains exist, a linear combination of the two generalized eigenvectors will produce an eigenvector.

The remaining three constraints \cref{thm:rooftop_pairs}, \cref{thm:evenselfdual}, and \cref{thm:congruence_conditions} are less straightforward, and form the main technical contributions of this paper. The first two can be proven using the Weil pairing on $J[\ell]$, by proving a numerical relation between pairs of Jordan chains (\cref{lem:pairingrelation}).

\begin{restatable}{theorem}{rooftoppairs}
    \label{thm:rooftop_pairs}
    Suppose $1\leq k\leq \ell-2$ and $n\in\Z/\gamma\Z$. Then $F_n^k$ is a rooftop if and only if $F_{k-n}^k$ is a rooftop.
\end{restatable}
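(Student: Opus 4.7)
The plan is to exploit the Weil pairing $e_\ell\colon J[\ell]\times J[\ell]\to\mu_\ell$ together with the fact that the order-$\ell$ automorphism $\sigma\colon (x,y)\mapsto (x,\zeta_\ell y)$ of $C$ extends to an automorphism of $J$ preserving the canonical principal polarization. This latter fact yields the Rosati identity $e_\ell(\sigma v,w)=e_\ell(v,\sigma^{-1}w)$, which combined with Frobenius equivariance $e_\ell(\Frob v,\Frob w)=e_\ell(v,w)^q$ will allow us to convert the symmetry $n\leftrightarrow k-n$ into a duality statement for the pairing. The underlying idea is that the filtration $V^\bullet$ has a natural self-pairing obtained by ``using up'' enough powers of $T=\sigma-1$ to map $V^k$ down to $V^1$ before pairing.

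Concretely, I would first introduce the refined bilinear form $\langle v,w\rangle_k:=e_\ell(v,T^{k-1}w)$ on $V^k$. The polarization identity forces the adjoint to be $T^{\ast}=-T(1+T)^{-1}$, from which a short computation shows $\langle\cdot,\cdot\rangle_k$ descends to a bilinear form on $V^k/V^{k-1}$. Next, from $\Frob\sigma=\sigma^q\Frob$ we obtain $\Frob^{-1}T\Frob=(1+T)^{q^{-1}}-1=q^{-1}T+O(T^2)$; applying this to $w\in V^k$ and using that $T^{\ell}$ annihilates $V^k$ yields the clean identity $T^{k-1}\Frob w=q^{-(k-1)}\Frob\,T^{k-1}w$. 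Combined with the Frobenius equivariance of $e_\ell$, this gives the transformation rule
\[
\langle\Frob v,\Frob w\rangle_k \;=\; q^{2-k}\,\langle v,w\rangle_k.
\]
If $v\in F_n^k$ and $w\in F_m^k$ are generalized eigenvectors with eigenvalues $q^{n-k+1}$ and $q^{m-k+1}$, the rule forces $q^{n-k+1}\cdot q^{m-k+1}=q^{2-k}$ whenever $\langle v,w\rangle_k\neq 0$, i.e.\ $m\equiv k-n\pmod\gamma$. So $\langle\cdot,\cdot\rangle_k$ pairs the generalized eigenspace underlying $F_n^k$ with that underlying $F_{k-n}^k$, and these are the only eigenspace pairs on which it is nonzero.

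To upgrade this matching of generalized eigenspaces into a matching of rooftops, I would invoke \cref{lem:pairingrelation}, which refines the correspondence to one between the subspaces of classes in $V^k/V^{k-1}$ that lift to \emph{genuine} $\Frob$-eigenvectors in $V^k\setminus V^{k-1}$. Granting this, $F_n^k$ contains a true eigenvector iff $F_{k-n}^k$ does. Applying the same matching at each level $k'>k$, combined with \cref{basic_lifting_properties}(d)---which forbids rooftops from appearing on diagonal shifts of a non-maximal rooftop---transfers the ``no eigenvector at higher level'' half of the rooftop condition as well, completing the proof.

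The main obstacle is \cref{lem:pairingrelation} itself. The generalized-eigenspace matching from the transformation rule is essentially formal, but distinguishing classes that lift to honest eigenvectors from those that are only generalized requires analyzing the full $\Frob$-Jordan chain structure on each generalized eigenspace, including chain elements lying outside $V^k$. Verifying that the Weil pairing honors this finer structure---so that the subspaces of genuine-eigenvector lifts are perfectly paired by $\langle\cdot,\cdot\rangle_k$---is the technical crux, and is where careful bookkeeping of powers of $T$ and signs will be essential.
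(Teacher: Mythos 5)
Your plan to use the Weil pairing, and your identification of \cref{lem:pairingrelation} as the key technical step, are both on the right track — and indeed match the paper's strategy. However, the form you propose to build the argument around does not do what you need. For $v,w\in V^k$, your form $\langle v,w\rangle_k:=e_\ell(v,T^{k-1}w)$ pairs an element of $V^k$ with the element $T^{k-1}w\in V^1$. By \cref{lem:weilproperties}(a), the Weil pairing of $V^k$ with $V^{k'}$ is trivial whenever $k+k'\le\ell-1$; since $k+1\le\ell-1$ for the entire range $1\le k\le\ell-2$ of the theorem, your form is \emph{identically} $1$ on $V^k\times V^k$. The transformation rule $\langle\Frob v,\Frob w\rangle_k=\langle v,w\rangle_k^{\,q^{2-k}}$ (which, incidentally, should be an exponent rather than a scalar, since values land in $\mu_\ell$) is therefore vacuous: it reads $1=1$ and yields no information about which generalized eigenspaces pair nontrivially.

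The paper avoids this trap by pairing across the filtration, not within a level: \cref{lem:weilproperties}(c) makes the pairing nondegenerate precisely between $F_n^k$ and $F_{-n}^{\ell-k}$, i.e.\ at complementary levels $k+k'=\ell$. To compare lifting obstructions at level $k$ on the two diagonals $n$ and $k-n$, the paper therefore has to hoist the data over $k-n$ up to $V^{\ell-1}$ using $\eta$-preimages (the vectors $\widehat{v'}$ and $\widehat{u_{-n}}$), and only then pair with the data over $n$ that lives near the bottom of the filtration. That hoisting-then-pairing is exactly the content of \cref{lem:pairingrelation}, which is why the lemma cannot be replaced by a refined bilinear form on a single filtration quotient. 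Once \cref{lem:pairingrelation} is in hand, the paper's proof of \cref{thm:rooftop_pairs} is an induction on $k$: the induction hypothesis together with \cref{basic_lifting_properties}(c),(d) first establishes that $F_{k-n}^k$ has an eigenvector, and then the pairing relation with $d=0$ forces the contradiction $c=0$. Your sketch gestures at the same endgame but leaves the existence-of-eigenvector step implicit; the genuine gap, though, is the trivial form at the start.
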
 

When $k\equiv 2n\bmod\gamma$, the above result is vacuous; however in this case we will see that the numerical relation gives us the following result.

\begin{restatable}{theorem}{evenselfdual}
    \label{thm:evenselfdual}
    Suppose $1\leq k\leq \ell-2$ and $n\in\Z/\gamma\Z$. If $k\equiv 2n\bmod\gamma$ and $k$ is even then $F_n^k$ is not a rooftop.
\end{restatable}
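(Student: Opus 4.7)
The plan is to specialize the Weil pairing mechanism of \cref{lem:pairingrelation} (which drives \cref{thm:rooftop_pairs}) to the self-dual regime $n\equiv k-n\pmod\gamma$. Suppose for contradiction that $F_n^k$ is a rooftop under the stated hypotheses and fix a true Frobenius eigenvector $v\in V^k\setminus V^{k-1}$ with $\Frob v=q^{n-k+1}v$.

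The Weil pairing $e_\ell$ on $J[\ell]$ combined with the filtration by kernels of powers of $\tau=\sigma-1$ yields, in \cref{lem:pairingrelation}, a non-degenerate pairing $\beta$ that partners the generalized Frobenius eigenspace of index $n$ with that of index $k-n$ inside $V^k/V^{k-1}$. In our self-dual regime $k\equiv 2n\pmod\gamma$, these two eigenspaces coincide, so $\beta$ becomes a pairing of a single eigenspace with itself. The crucial structural fact is that the symmetry type of $\beta$ is controlled by the parity of $k$: the Weil pairing $e_\ell$ is alternating, while the adjoint of $\tau$ with respect to $e_\ell$ is $-\sigma^{-1}\tau$ (from $e_\ell(\tau v,w)=-e_\ell(v,\sigma^{-1}\tau w)$), so the $(k-1)$-fold use of $\tau$ in the construction of $\beta$ contributes a sign $(-1)^{k-1}$, which combines with the alternating sign of $e_\ell$ to make $\beta$ itself $(-1)^k$-symmetric. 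For $k$ even, $\beta$ is therefore alternating.

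To derive the contradiction, I would invoke the non-vanishing content of \cref{lem:pairingrelation} together with the Jordan-chain analysis from \cref{basic_lifting_properties}. The rooftop condition is precisely the hypothesis that forces $\beta$ to pair the true eigenvector $v$ non-trivially with a partner drawn from its own Jordan chain; in the self-dual rooftop setting (where the relevant chains on both sides of $\beta$ collapse to the same chain) this partner is forced to be $v$ itself, up to a scalar tracked through \cref{basic_lifting_properties}. Thus \cref{lem:pairingrelation} asserts $\beta(v,v)\neq 0$. But $\beta$ is alternating and $\ell$ is odd, so $\beta(v,v)=0$, a contradiction. Hence $F_n^k$ is not a rooftop.

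The main obstacle is the sign bookkeeping establishing that $\beta$ is $(-1)^k$-symmetric: it requires carefully iterating the adjoint formula $e_\ell(\tau v,w)=-e_\ell(v,\sigma^{-1}\tau w)$ through the $(k-1)$-fold $\tau$ operations used to transport $V^k/V^{k-1}$ to $V^1$ (or equivalently, to identify $V^k/V^{k-1}$ and $V^{\ell-k}/V^{\ell-k-1}$ as Weil duals). This computation is exactly what \cref{lem:pairingrelation} carries out, so once that lemma is established the present theorem is a short specialization; the parity of $k$ is the only input that distinguishes the (vacuous) even case of \cref{thm:rooftop_pairs} from the genuinely new obstruction of \cref{thm:evenselfdual}.
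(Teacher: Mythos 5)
Your proposal identifies exactly the two ingredients the paper uses: \cref{lem:pairingrelation} (specialized to $v=v'$, $c=d$) and the sign $(-1)^k$ coming from iterating \cref{lem:weilproperties}(b) together with the alternating property of $e$; this is the same proof in outline, not a genuinely different route.

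That said, two points in your framing are imprecise enough that the argument would need reworking before it closes. First, there is no pairing $\beta$ \emph{on} $V^k/V^{k-1}$ in the paper's mechanism: the Weil pairing pairs filtration level $k$ against level $\ell-k$, and the objects actually appearing in \cref{lem:pairingrelation} are $e(\widehat{v},u_{n-k})$ and $e(\widehat{u_{-n}},v)$, which live at levels $(\ell-1,1)$ and $(\ell-1-k,k+1)$ respectively. If you want a self-pairing on $V^k/V^{k-1}$ you would have to construct it from these (via $\eta$-lifts), show it is well-defined, and redo the sign bookkeeping --- work the paper avoids by working directly with the relation. Second, the rooftop hypothesis does \emph{not} assert that any Weil pairing is nonzero; it asserts that the obstruction scalar $c$ from \cref{Frob_remainder} is nonzero. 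The non-vanishing input, $e(\widehat{v},u_{n-k})\neq 1$, is a separate fact about dual pairs (\cref{lem:weilproperties}(c)). The paper combines them by turning the \cref{lem:pairingrelation} identity into $e(\widehat{v},u_{n-k})^{cq^n(1+(-1)^k)}=1$ and using the nontriviality of the base and $1+(-1)^k=2\neq 0\bmod\ell$ to force $c=0$, contradicting the rooftop hypothesis. You should phrase the contradiction in those terms rather than as ``$\beta(v,v)\neq 0$ but $\beta$ alternating.'' Finally, note that you use $\tau=\zeta-1$, whose adjoint under $e$ carries an extra $\zeta^{-1}$ factor; the paper introduces $\eta$ precisely so that \cref{lem:weilproperties}(b) gives the clean adjoint $e(\eta v, v')=e(v,\eta v')^{-1}$, which is what makes the $(-1)^{k}$ count go through without further bookkeeping.
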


Both \cref{thm:rooftop_pairs} and \cref{thm:evenselfdual} are proved in \cref{sec:lifting_relations_new}. 
\Cref{thm:rooftop_pairs} says that $F_n^k\mapsto F_{k-n}^k$ defines an involution on the set of non-maximal rooftops, and \cref{thm:evenselfdual} imposes a parity constraint on the fixed points of this involution; these two observations will be used together in \cref{subsec:parity} to prove the parity constraint $r_\ell(C)\equiv B\bmod 2$. 

\subsection{Results from Galois cohomology}\label{subsec:results from cohom}

Another method we use to study the $\ell$-rank of $J[\ell](\F_q)$ comes from Galois cohomology. This approach is more closely related to the approach used to study the number field version of this problem, where there is no direct analogue of the geometric $\ell$-torsion subgroup $J[\ell](\overline{\F}_q)$ (see \cref{sec:priorwork}). In \cref{sec:galoisreps} and \cref{sec:galois_cohom} we use Kummer theory to relate the two perspectives. The culmination of these sections is \cref{prop:eigen to cohom}, which relates the existence of an eigenvector of $\Frob$ in $F_n^k$ to the existence of a certain cohomology class in a Selmer group associated to a $(k+1)$-dimensional representation of $\Gal(\F_q(x)^\sep /\F_q(x))$.

In \cref{sec:cup products} we show that $F_n^k$ is a non-maximal rooftop if and only if a certain cup product of cohomology classes does not vanish. The vanishing of this cup product can be determined by a residue field calculation, leading us to the last constraint.

\begin{restatable}{theorem}{congruenceconditions}
    \label{thm:congruence_conditions}
    Suppose $2\leq \gamma\mid d$ and $n\in\Z/\gamma\Z$. Then $F_n^2$ has a $\Frob$ eigenvector if and only if $n\in\mathcal{T}$ (as in \cref{eq:local_cup}).
\end{restatable}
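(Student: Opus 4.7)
The plan is to apply the Galois-cohomological framework developed in \cref{sec:galoisreps,sec:galois_cohom,sec:cup products}. By \cref{prop:eigen to cohom}, the existence of a $\Frob$ eigenvector in $F_n^2$ translates to the existence of a class in a specific Selmer subgroup of $H^1(G_K, W)$, where $K=\F_q(x)$ and $W$ is a $3$-dimensional Galois module encoding both the filtration piece $V^2/V^1$ and the eigenvalue $q^{n-1}$. The obstruction theory of \cref{sec:cup products} further identifies the nonvanishing of this class with the vanishing of a cup product in $H^2$ built from the Kummer class of $f$ and a class representing the $V^2/V^1$-extension twisted by $q^{n-1}$.

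The cases $n=0$ and $n=1$ should be handled separately. For $n=0$, parts (a) and (b) of \cref{basic_lifting_properties} show that $F_0^2$ contains no eigenvector (because $F_0^1$ does not), matching $0\notin\mathcal{T}$. For $n=1$, the Frobenius twist is trivial: I would exhibit an eigenvector in $F_1^2$ directly, or equivalently observe that the relevant cup product vanishes for trivial cohomological reasons, consistent with the convention $1\in\mathcal{T}$. The main content is therefore the case $2\le n\le \gamma-1$.

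In this range, after base change to $\F_\qg(x)$ where $\mu_\ell$ is rational and Kummer theory applies, $f$ splits as $\prod_{i=1}^{\gamma} f_i$ and the cup product decomposes into $\gamma$ local tame-symbol contributions at the primes $(f_i)$. Tracking how $\Frob$ both cyclically permutes the primes $(f_i)$ and scales by $q^{n-1}$, the local contribution at $(f_i)$ is represented by the residue of $f_i(x)^{q^{(i-1)(\gamma-n)}-1}$ after being transported along the Galois orbit to the common residue field $\F_\qg[x]/(f_1(x))$. Multiplying these contributions gives the class of $h_n(x)\bmod f_1(x)$, and by local-global reciprocity the global cup product vanishes if and only if this combined class is trivial, i.e.\ if and only if $h_n$ is an $\ell$-th power modulo $f_1$.

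The hard part will be the explicit identification of the cup product with the polynomial $h_n$: namely, fixing cocycle representatives for the $V^2/V^1$ class so that the Frobenius twist by $q^{n-1}$ produces precisely the exponent $q^{(i-1)(\gamma-n)}-1$ on the factor $f_i$, and then verifying that the Galois averaging used to descend from $\F_\qg(x)$ back to $\F_q(x)$ collapses the $\gamma$ local residue conditions into a single residue condition at $(f_1)$. Once this bookkeeping is carried out, the theorem follows directly from the cup product criterion of \cref{sec:cup products}.
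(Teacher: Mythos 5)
Your high-level framework is the right one — translate via \cref{prop:eigen to cohom}, reduce to a local cup product via \cref{lifting_is_local}, and handle $n\equiv 0,1$ separately by elementary lifting considerations from \cref{basic_lifting_properties}. That much matches the paper.

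For the main case $n\not\equiv 0,1\bmod\gamma$, however, your route diverges from the paper's, and it is the part you defer to ``the hard part'' that actually carries the content. The paper does \emph{not} decompose the cup product into $\gamma$ tame-symbol contributions at the primes $f_1,\dots,f_\gamma$; it stays at the single place $f$ of $\F_q(x)$ throughout. The key ingredient you are missing is \cref{lem:Kn cocycle}: $H^1_S(\chi^{1-n})$ is one-dimensional (by \cref{h1S_character_dimension}, this is exactly where $n\not\equiv 0,1$ is used), and this lemma produces an explicit cocycle $a_{1-n}$ whose kernel field is $K_{1-n}=\F_\qg(\sqrt[\ell]{g_{\gamma+1-n}})$. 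Once you have $a_{1-n}$ in hand, \cref{lifting_is_local} and \cref{local_cup_products} reduce everything to a single, clean question: is $\res_f(a_{1-n})$ in the span of $\mathbf{b}\in H^1(f,\chi^{1-n})$? By \cref{lem:H1v basis} this is a comparison of kernel fields completed at a prime above $f$ — no symbol computation at all — and Kummer theory at $f_1$ turns it into the statement that $g_{\gamma+1-n}/f=h_n$ is an $\ell$-th power mod $f_1$. The factor $\prod_i f_i^{\cdots}$ appears because it is built into the Kummer generator $g_{\gamma+1-n}$, not because $\gamma$ separate local conditions are being collapsed by Galois averaging.

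Two concrete worries with your sketch as written. First, if you really do try to compute the cup product by summing local invariants over $\F_\qg(x)$, you must also account for the place at $\infty$; the paper sidesteps this via \cref{h2_injection}, which shows $\res_f:H^2_S(\chi^{1+k+n})\to H^2(f,\chi^{1+k+n})$ is already an isomorphism, so only the condition at $f$ matters. Your proposal never addresses $\infty$. Second, the ``local contribution at $(f_i)$ transported to $\F_\qg[x]/(f_1)$'' picture requires a choice of isomorphism of residue fields for each $i$, and verifying that your bookkeeping reproduces precisely the exponent $q^{(i-1)(\gamma-n)}-1$ is the whole ballgame; the kernel-field route avoids having to track this, because those exponents are already packaged into $g_{\gamma+1-n}$. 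So: same framework, right answer targeted, but the proposal leaves unproven the identification that is the actual content of the theorem, and the suggested substitute route has gaps (the place at $\infty$, the residue-field bookkeeping) that would need to be closed.
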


We note that \cref{basic_lifting_properties} and \cref{thm:rooftop_pairs} can also be proven entirely using this Galois cohomology framework: for \cref{thm:rooftop_pairs}, instead of the Weil pairing we one can use Poitou-Tate duality. On the other hand, we have not yet found a way to prove \cref{thm:evenselfdual} using this framework. The key difficulty comes from determining whether a Selmer class associated to a self-dual representation lifts to a Selmer class associated to a higher-dimensional representation. These self-dual representations are quite difficult to work with compared to their non-self-dual counterparts, so our geometric proof of \cref{thm:evenselfdual} using the Weil pairing illustrates a method that can be used to work with them in the function field setting. On the other hand, we were only able to prove \cref{thm:congruence_conditions} using cohomological techniques. Thus, using both the geometric approach (Sections \ref{sec:geo_to_reps}--\ref{sec:lifting_relations_new}) and the cohomological approach (Sections \ref{sec:galoisreps}--\ref{sec:cup products}) allows us to prove stronger results than any one approach individually.

Together with \cref{basic_lifting_properties} and \cref{thm:rlC_count}, constraints on $r_\ell(C)$ can be obtained by counting arguments, analyzing the various restrictions on pairs $(n,k)$. \Cref{sec:consequences} contains proofs of some such constraints, including all the results stated in \cref{sec:rlC_constraints}.

\subsection{Prior work}\label{sec:priorwork}

The study of $\ell$-torsion in divisor class groups of superelliptic extensions $K(\sqrt[\ell]{f},x)/K(x)$ (for some field $K$) has been explored in many other contexts; for some examples~\cite{wawrow,jedrzejak}. Most of these explorations are largely independent from the content of this paper; for instance, some take $K=\Q$ instead of $K=\F_q$, and they impose different conditions on $\ell$ and $f(x)$. Further, these works typically focus on a particular subgroup of the $\ell$-torsion generated by divisors supported at the ramification locus of $f(x)$, which is the first stage $V^1$ in the filtration of $J[\ell]$ discussed in \cref{sec:lifting_behavior_functions}. 

In the case of hyperelliptic function fields $\F_q(\sqrt{f},x)/\F_q(x)$, this first stage $V^1$ contains the entirety of the $2$-torsion; Cornelissen uses this to compute the $2$-rank of $J[2](\F_q)$ for arbitrary hyperelliptic curves over $\F_q$ (allowing $f(x)$ to be reducible)~\cite{cornelissen}. However, for $\ell>2$, there is more to the filtration than this first stage, and these deeper filtration stages are one of the primary focuses of this paper. The primary difficulty we face is that unlike the action of Frobenius on $V^1$, the action of $\Frob$ on $J[\ell]$ as a whole is not semi-simple. See \cref{rmk:full-basis} for a discussion.

The aforementioned filtration can be defined using an endomorphism $(x,y)\mapsto (x,\zeta_\ell y)$ on $C$, where $\zeta_\ell$ is some $\ell^{\text{th}}$ root of unity in $\overline{\F}_q$. This endomorphism and the filtration it defines were used by Poonen--Schaefer~\cite{poonen_schaefer} and were further explored by Arul~\cite{arul2020}. 

Other authors have studied the $\ell$-torsion subgroups of divisor class groups of different kinds of degree $\ell$ extensions of $\F_q(x)$. For instance, Wittmann considered degree $\ell$ Galois extensions $K/\F_q(x)$, and studied the Galois module structure of the $\ell$-torsion in the divisor class group of $K$ \cite{Wittmann}.

The question this paper is exploring has a direct analogue in number fields: namely, to study the $p$-rank of the ideal class group of $\Q(N^{1/p})$ for distinct primes $N$ and $p$. Several authors have studied this question under the assumption $N \equiv 1 \bmod p$, which is analogous to the assumption $\gamma\mid d$ we make in \cref{intro_refined_upper_bound}. Using deformations of Galois representations, Calegari--Emerton~\cite{calegari_emerton} determined conditions under which the $p$-part is cyclic (i.e.~$p$-rank $1$). For instance, one of their results is that if $\prod_{i=1}^{(N-1)/2}i^i$
is a $p$-th power modulo $N$, then the $p$-rank of $\Cl(\Q(N^{1/p}))$ is at least $2$~\cite[Theorem 1.3(ii)]{calegari_emerton}. These results were generalized by Wake--Wang-Erickson~\cite[Proposition 11.1.1]{wake_wangerickson}; in particular, they interpreted the congruence condition as a cup product on Galois cohomology. The techniques of Wake--Wang-Erickson were used by Karl Schaefer and the third author to prove a full converse of Calegari--Emerton's result by imposing additional congruence conditions. 

The cohomological methods used in Sections \ref{sec:galoisreps}--\ref{sec:cup products} of this paper closely follow the work of Schaefer and the third author, using the Galois cohomology framework developed by Wake--Wang-Erickson. In particular, the upper bound in \cref{intro_refined_upper_bound} is directly analogous to \cite[Theorem 1.1.1]{schaeferstubley}. On the other hand, \cite[Table 3]{schaeferstubley} shows that there is no parity constraint on the $p$-rank in the number field setting; the parity constraint on $r_\ell(C)$ appears to be a phenomenon unique to function fields.

\subsection{Acknowledgements}

The bulk of this research was conducted while all three authors held a CRM-ISM postdoctoral fellowship. 
The first author was partially supported by NSF grant DMS-2302511, and the second author was partially supported by ERC Starting Grant 101076941 (`\textsc{Gagarin}').

The authors thank Patrick Allen, Jordan Ellenberg, Jaclyn Lang, Bjorn Poonen, Karl Schaefer, Jacob Stix, Yunqing Tang, Carl Wang-Erickson for conversations that pointed them in helpful directions. 
The first two authors want to thank the third author for suggesting this project and for introducing them to the technical details of Galois cohomology needed for this paper, and the third author wishes to thank his collaborators for seeing this project through after he left academia.

\section{Structure of the $\ell$-torsion subgroup}\label{sec:geo_to_reps}

In this section, we introduce our setup and prove \cref{thm:rlC_count} and \cref{basic_lifting_properties}.

\subsection{Notation and Setup}\label{sec:setup}

We use the following notation throughout the paper.
\begin{itemize}
    \item $\ell\geq 3$ is a prime.
    \item $q$ is a prime power coprime to $\ell$.
    \item $\gamma$ is the multiplicative order of $q$ in $(\Z/\ell\Z)^\times$.
    \item $\zeta\in\F_{q^\gamma}$ is a fixed nontrivial $\ell^{\text{th}}$ root of unity.
    \item $f(x)\in \F_q[x]$ is an irreducible polynomial with $d:=\deg f$ coprime to $\ell$.
    \item $C/\F_q$ is the smooth projective curve with affine equation $y^\ell=f(x)$, and $C_{\Fbar_q}$ its base change to $\Fbar_q$.
    \item $J/\F_q$ is the Jacobian of $C$, and $J_{\Fbar_q}$ its base change to $\Fbar_q$.
    \item For a field extension $\F/\F_q$, $J(\F)$ denotes the $\F$-points of $J$. Elements of $J(\Fbar_q)$ can be interpreted as divisors on $C_{\Fbar_q}$ modulo linear equivalence, and if $\F/\F_q$ is a finite extension, elements of $J(\F)$ correspond to divisor classes in $J(\Fbar_q)$ that are invariant under $\Gal(\Fbar_q/\F)$.
    \item $J[\ell](\F)$ denotes the $\ell$-torsion subgroup of $J(\F)$, and $J[\ell]:=J[\ell](\Fbar_q)$ the geometric $\ell$-torsion group.
    \item The $\ell$-torsion rank of $C$ is defined to be
\[r_\ell(C):=\dim_{\F_{\ell}}(J[\ell](\F_{q})).\]
\end{itemize}

We also define two morphisms of $C_{\F_\qg}$ by giving their actions on geometric points $(x,y)\in C(\Fbar_q)$. Using the $\ell^{\text{th}}$ root of unity $\zeta\in\F_\qg$ chosen above, by abuse of notation we also let $\zeta:C_{\F_\qg}\to C_{\F_\qg}$ denote the morphism defined by
\[\zeta:(x,y)\mapsto (x,\zeta y).\]
We let $\Frob:C_{\F_\qg}\to C_{\F_\qg}$ denote the \emph{relative Frobenius} map,
\[\Frob:(x,y)\mapsto (x^q, y^q).\]
Note that $\zeta$ is an automorphism of $C_{\F_\qg}$, while $\Frob$ is a degree $q$ endomorphism; both act invertibly on $C(\Fbar_q)$. On $C(\Fbar_q)$, we also have the relation
\[\Frob\circ \zeta=\zeta^q\circ\Frob.\]
By further abuse of notation, we also let $\zeta$ and $\Frob$ denote the respective endomorphisms of $J_{\Fbar_q}$ induced by their namesakes, as well as the induced linear maps on $J[\ell]$ considered as a vector space over $\F_\ell$.

\begin{convention}
    When not otherwise specified, an ``eigenvector'' will refer to an eigenvector of $\Frob$ acting as a linear map on the $\F_\ell$--vector space $J[\ell]$, i.e.~a nonzero $v\in J[\ell]$ satisfying $\Frob v= cv$ for some $c\in\F_\ell$. Likewise, a ``generalized eigenvector'' will refer to a generalized eigenvector of $\Frob$, i.e.~a nonzero $v\in J[\ell]$ satisfying $(\Frob -c)^iv=0$ for some $c\in\F_\ell$ and $i\geq 0$).
\end{convention}

\subsection{The $1-\zeta$ Filtration of $J[\ell]$}\label{sec:filtration}
The automorphism $\zeta$ and endomorphism $1-\zeta$ on $J_{\Fbar_q}$ were discussed in detail in \cite[Section 2.3]{arul2020}. Here we use them to construct a filtration on $J[\ell]$.

Noting that the endomorphism $\zeta$ is annihilated by the $\ell^{\text{th}}$ cyclotomic polynomial, we can derive the relation
\[\prod_{i=1}^{\ell-1}(1-\zeta^i)=\ell\]
in the endomorphism ring $\End(J_{\Fbar_q})$. For each $1\leq i\leq \ell-1$, $1-\zeta^i$ is equal to $1-\zeta$ times a unit, and 
so $(1-\zeta)^{\ell-1}$ is $\ell$ times a unit. We can conclude that the kernel of $(1-\zeta)^{\ell-1}$ on $J_{\Fbar_q}$ is exactly $J[\ell]$.

Define $V^{k}$ to be the $\Fbar_q$-points of $\ker\,(1-\zeta)^{k}$; these $V^{k}$ then give a filtration
\[
0=V^0 \subset V^{1} \subset \ldots \subset V^{\ell-1} = J[\ell](\Fbar_q).
\]
Note that each subgroup $V^k$ has the structure of a $\F_\ell$-vector space. 
\begin{lemma}\label{orig_subquotient_iso}
    For each $k=2,\ldots,\ell-1$, the endomorphism $1-\zeta$ induces an isomorphism $V^k/V^{k-1}\to V^{k-1}/V^{k-2}$ of $(d-1)$-dimensional vector spaces over $\F_\ell$.
\end{lemma}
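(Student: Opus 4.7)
The plan is first to check that $1-\zeta$ gives a well-defined injection $V^k/V^{k-1}\hookrightarrow V^{k-1}/V^{k-2}$, and then to show each $V^k$ has $\F_\ell$-dimension $k(d-1)$; equidimensionality will then upgrade the injection to an isomorphism with the common dimension $d-1$.

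Well-definedness of the induced map is immediate from $(1-\zeta)V^j\subseteq V^{j-1}$ for $j\geq 1$. For injectivity, if $v\in V^k$ has $(1-\zeta)v\in V^{k-2}$, then $(1-\zeta)^{k-1}v=(1-\zeta)^{k-2}(1-\zeta)v=0$, so $v\in V^{k-1}$. The key computation is then $\deg(1-\zeta)$ as an isogeny of $J_{\Fbar_q}$. In the subring $\Z[\zeta]\subseteq\End(J_{\Fbar_q})$ (isomorphic to the ring of $\ell$-th cyclotomic integers since $\zeta$ has order exactly $\ell$ as an endomorphism of $J_{\Fbar_q}$), the identity $\Phi_\ell(1)=\ell$ gives the factorization $[\ell]=\prod_{i=1}^{\ell-1}(1-\zeta^i)$, and for each $1\leq i\leq\ell-1$ the factorization $1-\zeta^i=(1-\zeta)(1+\zeta+\cdots+\zeta^{i-1})$ writes $1-\zeta^i$ as $1-\zeta$ times a cyclotomic unit; that unit is therefore an automorphism of $J_{\Fbar_q}$, and $\deg(1-\zeta^i)=\deg(1-\zeta)$ for every $i$. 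Combined with Riemann--Hurwitz for the degree-$\ell$ cover $C\to\Proj^1$, which is totally ramified at the $d$ finite branch points and (using $\gcd(d,\ell)=1$) also at $\infty$, yielding $2g=(\ell-1)(d-1)$, multiplicativity of degree gives $\deg(1-\zeta)^{\ell-1}=\deg[\ell]=\ell^{(\ell-1)(d-1)}$, hence $\deg(1-\zeta)=\ell^{d-1}$.

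The last ingredient is separability: $[\ell]$ is étale since $\ell$ is coprime to the characteristic of $\F_q$, and inseparable degree is multiplicative along the factorization $[\ell]=u\cdot(1-\zeta)^{\ell-1}$, so $1-\zeta$ is separable too. Therefore $|V^k|=|\ker(1-\zeta)^k|=\deg(1-\zeta)^k=\ell^{k(d-1)}$, which forces $\dim_{\F_\ell}(V^k/V^{k-1})=d-1$ and upgrades the injection to the desired isomorphism. The main thing to be careful about is the degree computation: it depends crucially on $\gcd(d,\ell)=1$ for the total ramification at $\infty$ and the resulting genus formula, and on separability of $1-\zeta$ to convert the degree into the size of the geometric kernel.
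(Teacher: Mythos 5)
Your proof is correct and follows essentially the same structure as the paper's: compute $\deg(1-\zeta)=\ell^{d-1}$, use separability to conclude $|V^k|=\ell^{k(d-1)}$, then observe the induced map $V^k/V^{k-1}\to V^{k-1}/V^{k-2}$ is injective (kernel of $V^k\to V^{k-1}/V^{k-2}$ is exactly $V^{k-1}$) and hence an isomorphism by equidimensionality. The only difference is in how you obtain $\deg(1-\zeta)$: the paper cites the norm--degree formula for an order acting on an abelian variety (Milne's Proposition 12.12, $\deg\alpha = N_{\Q(\zeta)/\Q}(\alpha)^{2g/(\ell-1)}$), while you derive the same value directly from $\deg[\ell]=\ell^{2g}$ and the factorization $[\ell]=u\cdot(1-\zeta)^{\ell-1}$ via cyclotomic units, which is a slightly more self-contained route. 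You also supply the separability argument explicitly (multiplicativity of inseparable degree along the factorization of the \'etale map $[\ell]$), which the paper asserts without comment; this is a worthwhile addition rather than a discrepancy.
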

\begin{proof}
    We have $N_{\Q(\zeta)/\Q}(1-\zeta)=\ell$, $[\Q(\zeta):\Q]=\ell-1$, and the curve $C$ has genus $g=\frac12(\ell-1)(d-1)$ by Riemann-Hurwitz. So for all $0\leq k\leq \ell-1$, 
    \[\deg(1-\Czeta)^k=\ell^{k(d-1)}\]
    by \cite[Proposition 12.12]{Milne1986}. These endomorphisms are all separable and so $\ker\,(1-\Czeta)^{k}$ has $\ell^{k(d-1)}$ points in $J(\Fbar_q)$. This implies $\dim_{\F_\ell}V^k=k(d-1)$. For $2\leq k\leq \ell-1$, the kernel of the map $V^k\to V^{k-1}/V^{k-2}$ induced by $1-\zeta$ is $V^{k-1}$, so $(1-\zeta):V^k/V^{k-1}\to V^{k-1}/V^{k-2}$ is an isomorphism by dimension considerations. 
\end{proof}

\subsection{A modification of $1-\zeta$}

Recall from \cref{sec:setup} that the relative Frobenius map $\Frob:J_{\Fbar_q}\to J_{\Fbar_q}$ is induced by the action $(x,y)\mapsto (x^q, y^q)$ on $C(\Fbar_q)$. The maps $\Frob$ and $1-\zeta$ on $J_{\Fbar_q}$ satisfy the relation
\begin{align}\label{eq:comm_relation}
    \Frob\circ(1-\zeta)=(1-\zeta^q)\circ\Frob.
\end{align}
Since $1-\zeta^q$ and $1-\zeta$ are associates in $\End(J_{\Fbar_q})$, this identity shows that the action of $\Frob$ on $J[\ell](\Fbar_q)$ preserves the filtration stages $V^k$. However, the automorphism $\frac{1-\zeta^q}{1-\zeta}$ of $J_{\Fbar_q}$ does not preserve the generalized eigenspaces of $\Frob$. To account for this, we introduce a modification of $1-\zeta$ that interacts in a more predictable way with the Frobenius map.

\begin{definition}\label{def:eta}
    Let $\eta\in \End(J_{\Fbar_q})$ be defined by
    \[\eta:=-\sum_{i=1}^{\ell-2}i^{-1}(1-\zeta)^i,\] 
    where $i^{-1}\in\Z$ denotes an inverse of $i$ modulo $\ell$.
\end{definition}
While the endomorphism $\eta$ depends on the choice of inverses mod $\ell$, the action of $\eta$ on $J[\ell]$ is well-defined, independent of the choice of $i^{-1}$ for each $i$.
We have the following two important facts about $\eta$, which both capture the idea that $\eta$ behaves like a ``logarithm'' of $\zeta$. The first statement in the following lemma says that $\eta$ acts like $\zeta-1$ up to higher-order terms.

\begin{lemma}\label{subquotient_iso}
    We have $V^1=\ker\eta\cap J[\ell]$, and for each $2\leq k\leq \ell-1$, $\eta$ and $\zeta-1$ are equal as isomorphisms $V^k/V^{k-1}\to V^{k-1}/V^{k-2}$.
\end{lemma}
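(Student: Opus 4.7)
The plan is a short computation showing that $\eta$ agrees with $\zeta-1$ modulo the next lower filtration stage, together with an induction-style argument using minimality of the filtration index. The whole proof rests on one bookkeeping observation about $(1-\zeta)$.

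First I would record the filtration-lowering property: for $v \in V^k$ and $i \ge 1$, we have $(1-\zeta)^i v \in V^{\max(k-i,0)}$. This is immediate from the definition, since $(1-\zeta)^{k-i}\bigl((1-\zeta)^i v\bigr) = (1-\zeta)^k v = 0$ whenever $i \le k$ (and otherwise $(1-\zeta)^i v = 0$ outright).

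For the second claim (equality of maps on subquotients), I would fix $k$ with $2 \le k \le \ell-1$ and $v \in V^k$, and compute $\eta v$ modulo $V^{k-2}$. By the filtration-lowering property, every summand $i^{-1}(1-\zeta)^i v$ with $i \ge 2$ lies in $V^{k-i} \subseteq V^{k-2}$, so only the $i=1$ term survives the quotient:
\[\eta v \;\equiv\; -(1-\zeta)v \;=\; (\zeta-1)v \pmod{V^{k-2}}.\]
In particular $\eta v \in V^{k-1}$, so $\eta$ induces a well-defined map $V^k/V^{k-1} \to V^{k-1}/V^{k-2}$ that coincides with the one induced by $\zeta-1$, and \cref{orig_subquotient_iso} already shows this is an isomorphism.

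For $V^1 = \ker\eta \cap J[\ell]$, the inclusion $V^1 \subseteq \ker\eta$ is immediate because $(1-\zeta)$ (hence every $(1-\zeta)^i$ for $i\ge 1$) annihilates $V^1$. For the reverse inclusion I would argue contrapositively: if $v \in J[\ell] \setminus V^1$, let $k \ge 2$ be the minimal index with $v \in V^k$. By the computation above, the image of $\eta v$ in $V^{k-1}/V^{k-2}$ equals $(\zeta-1)v$, which is nonzero by \cref{orig_subquotient_iso} (the case $k=2$ is the harmless edge case where $V^{k-2}=0$ and the congruence is an equality). Hence $\eta v \notin V^{k-2}$, and in particular $\eta v \ne 0$.

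I do not anticipate a substantive obstacle: the argument is essentially the observation that $\eta$ is the truncation $-\sum_{i=1}^{\ell-2} i^{-1}(1-\zeta)^i$ of the formal logarithm of $\zeta = 1 - (1-\zeta)$, and the truncation is harmless on $J[\ell]$ because $(1-\zeta)^{\ell-1}$ already annihilates $J[\ell]$. The only care needed is the small-$k$ boundary case and the reminder that although $\eta$ as an element of $\End(J_{\Fbar_q})$ depends on the chosen integer lifts $i^{-1}$, its action on $J[\ell]$ does not — so the whole calculation is well-defined.
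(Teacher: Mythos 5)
Your proof is correct and takes essentially the same route as the paper's, which cites \cref{orig_subquotient_iso} together with the observation that $\eta+(1-\zeta)$ lies in the ideal $(\ell,(1-\zeta)^2)$ of $\End(J_{\Fbar_q})$; your term-by-term computation (only the $i=1$ summand of $\eta$ survives modulo $V^{k-2}$, and $1^{-1}$ acts as $1$ on $J[\ell]$) is precisely an unpacking of that ideal-membership fact, and you handle the reverse inclusion $\ker\eta\cap J[\ell]\subseteq V^1$ exactly as one should, by passing to the minimal filtration index and invoking the injectivity on subquotients.
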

\begin{proof}
    This follows from \cref{orig_subquotient_iso} and the fact that $\eta+(1-\zeta)$ is in the ideal generated by $\ell$ and $(1-\zeta)^2$.
\end{proof}

The second statement about $\eta$ in the following lemma can be thought of as a linearization of the relation $\Frob\circ\zeta=\zeta^q\circ\Frob$. As a note of caution, the following relation does \emph{not} hold when $\Frob$ and $\eta$ are considered as endomorphisms of $J_{\Fbar_q}$; we obtain the desired equality only when we restrict to the actions on $J[\ell]$.

\begin{lemma}\label{lem:linearcommute}
    As linear maps on $J[\ell]$, 
    \[\Frob\circ \eta=q\eta\circ\Frob.\]
\end{lemma}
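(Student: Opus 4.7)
The plan is to treat $\eta$ as a truncated ``logarithm'' of $\zeta$ and recognize the stated identity as the operator-theoretic shadow of the functional equation $\log(\zeta^q)=q\log(\zeta)$. First, iterating the commutation relation \cref{eq:comm_relation} yields $\Frob\circ(1-\zeta)^i=(1-\zeta^q)^i\circ\Frob$ on $J_{\Fbar_q}$ for every $i\geq 1$, so summing as in the definition of $\eta$ gives
\[\Frob\circ\eta \;=\; \Bigl(-\sum_{i=1}^{\ell-2}i^{-1}(1-\zeta^q)^i\Bigr)\circ\Frob.\]
The lemma therefore reduces to showing that $-\sum_{i=1}^{\ell-2}i^{-1}(1-\zeta^q)^i = q\eta$ as endomorphisms of $J[\ell]$.

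The key input for this reduced identity is the fact, recorded in \cref{sec:filtration}, that $(1-\zeta)^{\ell-1}$ annihilates $J[\ell]$. Consequently there is a well-defined $\F_\ell$-algebra map $\F_\ell[T]/T^{\ell-1}\to\End_{\F_\ell}(J[\ell])$ sending $T\mapsto 1-\zeta$, under which $1-(1-T)^q\mapsto 1-\zeta^q$. So it suffices to prove the polynomial congruence
\[L\bigl(1-(1-T)^q\bigr)\equiv qL(T)\pmod{\ell,\,T^{\ell-1}}\qquad\text{where}\qquad L(S):=-\sum_{i=1}^{\ell-2}i^{-1}S^i.\]
I would prove this by lifting to characteristic zero: in $\Q[[T]]$ the formal identity $\log((1-T)^q)=q\log(1-T)$ reads
\[-\sum_{i\geq 1}i^{-1}\bigl(1-(1-T)^q\bigr)^i = -q\sum_{i\geq 1}i^{-1}T^i.\]
Since $1-(1-T)^q$ has $T$-adic valuation $1$, all terms with $i\geq\ell-1$ vanish modulo $T^{\ell-1}$, so truncating both sides at degree $\ell-2$ replaces the infinite sums by $L$. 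The denominators occurring are $1,\ldots,\ell-2$, all units in $\Z_{(\ell)}$, so both sides actually lie in $\Z_{(\ell)}[T]/T^{\ell-1}$, and reducing modulo $\ell$ gives the required congruence.

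The main subtlety, which explains the warning that this relation fails on $J_{\Fbar_q}$, is that the truncation is only legitimate once we restrict to $\ell$-torsion: on $J_{\Fbar_q}$ the relation $(1-\zeta)^{\ell-1}=0$ is replaced by $(1-\zeta)^{\ell-1}=\ell\cdot(\text{unit})$, and the next term $T^{\ell-1}/(\ell-1)$ of the formal logarithm would contribute a nonzero correction that cannot simply be discarded after reduction mod $\ell$. Beyond this, the argument is routine bookkeeping between formal power series in $\Z_{(\ell)}[[T]]$ and the operator algebra of $\End_{\F_\ell}(J[\ell])$.
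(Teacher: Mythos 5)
Your proof is correct and takes a genuinely different route from the paper's. The paper's proof (Appendix A) is a direct combinatorial computation: it expands $(\zeta^q-1)^i$ binomially in powers of $(\zeta-1)$, collects coefficients of $(\zeta-1)^k$, and invokes a separately proved combinatorial identity (\cref{comblemma}, itself established by a polynomial identity in $\Q[x]$ and comparison of coefficients of $x$). You instead make precise the heuristic the paper only mentions informally---that $\eta$ is a truncated logarithm---by packaging everything into a map $\F_\ell[T]/T^{\ell-1}\to\End_{\F_\ell}(J[\ell])$, $T\mapsto 1-\zeta$, and lifting to the formal identity $\log\bigl((1-T)^q\bigr)=q\log(1-T)$ in $\Q[[T]]$. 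The truncation step and the integrality check (all denominators lie in $\{1,\dots,\ell-2\}$, hence are units in $\Z_{(\ell)}$) are exactly right. What your approach buys is that the mechanism is visible at a glance: the same formal logarithm argument shows immediately \emph{why} the relation holds only on $J[\ell]$, since the first discarded term of the logarithm, $T^{\ell-1}/(\ell-1)$, corresponds to $(1-\zeta)^{\ell-1}$, which is $\ell\cdot(\text{unit})$ rather than zero on $J_{\Fbar_q}$. The paper's proof has the minor virtue of being self-contained and elementary (no appeal to formal power series), but yours is shorter, cleaner, and more conceptual, and could profitably replace the appendix.
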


\begin{proof}
    This can be proven by formal manipulation of polynomials; see \cref{appendix_linearcommute}.
\end{proof}

A consequence of this result is that if $v\in J[\ell]$ is a (generalized) eigenvector of $\Frob$, then so is $\eta v$.

\subsection{Generalized eigenvectors of $\Frob$}\label{sec:basisJl}

Any $v\in J[\ell](\F_q)$ lies in some filtration stage $v\in V^k\setminus V^{k-1}$. Then $\eta^{k-1}v\in V^1$, and by \cref{lem:linearcommute} we have 
\[\Frob(\eta^{k-1}v)=q^{k-1}\eta^{k-1}(\Frob v)=q^{k-1}\eta^{k-1} v,\]
so $\eta^{k-1}v$ is an eigenvector of eigenvalue $q^{k-1}$. So our first goal is to identify which powers of $q$ arise as eigenvalues of $\Frob$ acting on $V^1$.

To start, we find a basis of $V^1$ which is most suitable for our study of the $\Frob$ action. The dimension of $V^1$ is $d-1$ for $d:=\deg f$. The following Lemma, which is well-known in the literature (see e.g.~the proof of \cite[Theorem 1.7]{ELS}), shows that the action of $\Frob$ on $V^1$ is diagonalizable over $\overline{\F}_\ell$.

\begin{lemma}\label{V0basis}
    The action of $\Frob$ on $V^1\otimes\Fbar_\ell$ has a basis
    \[\{u_\beta:\beta\in\Fbar_\ell,\;\beta^d=1,\;\beta\neq 1\}\]
    where $u_\beta$ is an eigenvector of $\Frob$ with eigenvalue $\beta$.
\end{lemma}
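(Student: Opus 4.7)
The plan is to exhibit explicit eigenvectors in $V^1$ coming from ramification divisors of the cover $(x,y)\mapsto x$ and then diagonalize the cyclic Frobenius action over $\Fbar_\ell$.

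First, I would identify concrete generators of $V^1$. Let $\alpha_1,\dots,\alpha_d\in\Fbar_q$ be the roots of $f$ (a single Frobenius orbit, since $f$ is irreducible of degree $d$), let $P_i=(\alpha_i,0)\in C(\Fbar_q)$, and let $\infty$ be the unique point at infinity (unique because $\gcd(d,\ell)=1$). Since $(x,y)\mapsto x$ is totally ramified at each $P_i$ and at $\infty$, we have $\divisor(x-\alpha_i)=\ell P_i-\ell\infty$, so each $D_i:=P_i-\infty$ represents a class in $J[\ell](\Fbar_q)$. Because the $P_i$ and $\infty$ are exactly the geometric fixed points of the automorphism $\zeta\colon(x,y)\mapsto(x,\zeta y)$, each $D_i$ lies in $\ker(1-\zeta)\cap J[\ell]=V^1$. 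From $\divisor(y)=\sum_i P_i-d\infty$ we get the single relation $\sum_{i=1}^d D_i=0$ in $J$. Since $\dim_{\F_\ell}V^1=d-1$ by \cref{orig_subquotient_iso}, the classes $D_1,\dots,D_d$ span $V^1$ with this as the only relation.

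Next I would compute the Frobenius action. With the labeling chosen so that $\Frob(\alpha_i)=\alpha_{i+1}$ (indices mod $d$), we have $\Frob(P_i)=P_{i+1}$ and $\Frob(\infty)=\infty$, hence $\Frob(D_i)=D_{i+1}$. Thus, on the quotient of $\bigoplus_{i=1}^d\Fbar_\ell\cdot D_i$ by the line $\sum D_i=0$, the map $\Frob$ acts as the standard cyclic permutation matrix. Since $\gcd(d,\ell)=1$, this action is diagonalizable over $\Fbar_\ell$ with eigenvalues precisely the $d$-th roots of unity other than $1$ (the excluded eigenvalue corresponds to the quotiented-out line). For each $\beta\in\Fbar_\ell$ with $\beta^d=1$ and $\beta\neq 1$, the vector
\[
u_\beta:=\sum_{i=1}^d \beta^{-i}D_i\in V^1\otimes\Fbar_\ell
\]
satisfies $\Frob(u_\beta)=\sum_i\beta^{-i}D_{i+1}=\beta\sum_j\beta^{-j}D_j=\beta u_\beta$, and the $d-1$ vectors $\{u_\beta\}$ are linearly independent by the standard Vandermonde/character orthogonality argument.

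There is no serious obstacle here: the dimension count was already secured in \cref{orig_subquotient_iso}, and the only potentially subtle point is verifying that $V^1$ equals the span of the $D_i$ (rather than containing it properly), which is forced by matching dimensions $d-1$ on both sides. The mild bookkeeping item to watch is the case analysis at infinity to confirm that a single infinite place exists and is fixed by $\zeta$, which uses the hypothesis $\gcd(d,\ell)=1$.
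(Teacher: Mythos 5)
Your overall approach matches the paper's: both construct explicit eigenvectors $u_\beta = \sum_i \beta^{-i}(P_i - \infty)$ from the ramification divisors and diagonalize the cyclic Frobenius permutation. However, there is a genuine gap in the step where you conclude that the $D_i$ span $V^1$. You write that this is ``forced by matching dimensions $d-1$ on both sides,'' but you have not established that the span of $D_1,\dots,D_d$ has dimension $d-1$; you have only shown that it has dimension \emph{at most} $d-1$ (exhibiting one relation $\sum D_i = 0$ does not rule out additional relations, and additional relations would also make some $u_\beta$ vanish). Knowing $\dim V^1 = d-1$ tells you nothing about whether the $D_i$ might sit inside a smaller $\Frob$-invariant subspace. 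The reasoning is circular: to ``match dimensions'' you need to know the $D_i$ satisfy no relation other than $\sum D_i = 0$, which is precisely the unproven assertion.

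The paper handles exactly this point by citing \cite[Proposition 2.3.1]{arul2020} (equivalently, the proof of \cite[Theorem 1.7]{ELS}), which establishes that $P_1,\dots,P_{d-1}$ actually form a basis of $V^1$. That claim is the nontrivial content of the lemma; everything else (that each $D_i$ lies in $J[\ell]$, is fixed by $\zeta$, and that Frobenius acts cyclically and diagonalizes since $\gcd(d,\ell)=1$) is the same easy computation you and the paper both do. To repair your argument you would either need to cite this input, or give an independent argument that the only relation among the ramification divisor classes is the principal divisor of $y$ — for instance by showing directly that $V^1$ is generated as a group by the classes $[P]-[\infty]$ with $P$ a fixed point of $\zeta$.
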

(Caution: recall that $\Frob$ is induced by the $q$-power Frobenius map on $J_{\Fbar_q}$, \emph{not} an $\ell$-power Frobenius map on $\Fbar_\ell$.)
\begin{proof}
    Suppose that $f(x)$ factors over $\Fbar_q$ as $(x - x_{1}) \ldots (x - x_{d})$, where $\Frob(x_{i}) = x_{i+1}$.
    The curve $C$ has a unique point above $\infty\in\Proj^1(\Fbar_q)$ by the assumption $\ell \nmid d$, which we also call $\infty$.
    Define the points $P_{i} := [(x_{i},0)] - [\infty]\in J(\Fbar_q)$ for $i=1,\ldots,d$. We have the relation $P_1+\cdots P_d=0$; by \cite[Proposition 2.3.1]{arul2020}, the points $P_1,\ldots,P_{d-1}$ form a basis for $V^1$ (see also \cite[Proof of Theorem 1.7]{ELS}).

    Given $\beta\in\Fbar_\ell$ satisfying $\beta^d=1$ and $\beta\neq 1$, set
    \begin{align}
        u_\beta:=\sum_{i=1}^d \beta^{-i} P_i\in V^1\otimes\Fbar_\ell.
    \end{align}
    Since $\beta\neq 1$, the coefficients of $P_1$ and $P_d$ are distinct and so $u_\beta\neq 0$. We have    \begin{align*}
        \Frob u_\beta &= \sum_{i=1}^{d-1} \beta^{-i} P_{i+1}+\beta^{-d}P_1\\
        &= \beta\left(\sum_{i=1}^{d-1} \beta^{-i-1} P_{i+1}+\beta^{-1}P_1\right)\\
        &= \beta u_\beta.
    \end{align*}
    Thus we have $d-1$ eigenvectors with distinct eigenvalues, so these form a basis for the $(d-1)$-dimensional vector space $V^1\otimes\Fbar_\ell$.
\end{proof}

For $2\leq k\leq \ell-1$, an eigenvector in $V^1$ with eigenvalue $q^n$ lifts under $\eta^{k-1}$ to an eigenvector of $\Frob$ in $V^k/V^{k-1}$ with eigenvalue $q^{n-k+1}$ (by \cref{subquotient_iso} and \cref{lem:linearcommute}). This lift is a priori only an eigenvector in the quotient space, but we show in \cref{Fnk_updown} that we can always take the lift to be a generalized eigenvector of $\Frob$ acting on $J[\ell]$ using properties of the operator $\eta$.

Recall the definition of the sets $F_n^k$.

\EnkFnk*

\noindent
Namely $F_n^k$ is the set of generalized eigenvectors of $\Frob$ in the $k$-th filtration stage with eigenvalue $q^{n-k+1}$.

\begin{remark}\label{rmk: visual guide}
    By this point we have developed a lot of notation, so it may be helpful to have a picture in mind as we proceed. Examples are provided in \cref{fig:lifting_charts}. To each curve $C$, we associate a grid of cells $(n,k)$ with $n\in\Z/\gamma\Z$ and $k=1,\ldots,\ell-1$. Roughly speaking, the shaded cells can be matched bijectively with an independent set of vectors in $J[\ell]$; rows (indexed by $k$) correspond to the filtration stages $V^k$; columns (indexed by $n$) are $\eta$-invariant subspaces; and each diagonal with $n-k\in\Z/\gamma\Z$ constant corresponds to a distinct $\Frob$ eigenvalue.

    More precisely, we shade the cell with coordinates $(n,k)$ light gray if the set $F_n^k$ is nonempty: that is, if there exists a generalized $\Frob$ eigenvector with eigenvalue $q^{n-k+1}$ in $V^k\setminus V^{k-1}$. \Cref{Fnk_updown}(a) says that $\eta$ acts on the grid by shifting everything down one cell, taking $F_n^k$ to $F_n^{k-1}$ and annihilating the bottom layer $k=1$. \Cref{Fnk_updown}(b) tells us that we can also go backwards: any shaded cell has a shaded cell above it. Thus each column is either entirely shaded or entirely empty. \Cref{cor:eigenvalues} tells us precisely which columns are shaded or empty. Cells $(n,k)$ and $(n',k')$ correspond to the same generalized $\Frob$ eigenvector if $q^{n-k+1}=q^{n'-k'+1}$, or equivalently, if $n-k\equiv n'-k'\bmod \gamma$.

    In \cref{sec:proofs_of_basic_lifting} we will see that true eigenvectors of $\Frob$ form ``towers'' in these grids, and in \cref{sec:lifting_relations_new} we will see that the Weil pairing imposes a kind of \emph{rotational symmetry} on these grids.
\end{remark}

\begin{lemma}\label{Fnk_updown}
    Let $n\in\Z/\gamma\Z$ and $1\leq k\leq \ell-1$. Suppose $w\in F_n^k$.
    \begin{enumerate}[label=(\alph*)]
        \item If $k\geq 2$, then $\eta w\in F_n^{k-1}$.
        \item If $k\leq \ell-2$, there exists $v\in F_n^{k+1}$ with $\eta v=w$.
    \end{enumerate}
\end{lemma}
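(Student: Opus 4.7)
My plan is as follows. For part (a), the verification is direct and has three ingredients. Since $\eta=-\sum_{i=1}^{\ell-2}i^{-1}(1-\zeta)^i$ is a polynomial in $(1-\zeta)$ divisible by $(1-\zeta)$, it carries $V^k$ into $V^{k-1}$. By \cref{subquotient_iso}, the induced map $\eta:V^k/V^{k-1}\to V^{k-1}/V^{k-2}$ is an isomorphism, so the condition $w\notin V^{k-1}$ forces $\eta w\notin V^{k-2}$. Finally, from \cref{lem:linearcommute} I would extract the intertwining identity $(\Frob-qc)\eta = q\eta(\Frob-c)$ on $J[\ell]$ for any $c\in\F_\ell$, which iterates to $(\Frob-qc)^i\eta = q^i\eta(\Frob-c)^i$. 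Applied with $c=q^{n-k+1}$ and $i$ sufficiently large to annihilate $w$, this shows $\eta w$ is a generalized eigenvector of eigenvalue $q^{n-k+2}=q^{n-(k-1)+1}$. Together the three facts give $\eta w\in F_n^{k-1}$.

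For part (b), the main work is to produce a preimage of $w$ under $\eta$ that is itself a generalized eigenvector of $\Frob$, not merely some arbitrary lift. First I would show that $\eta:V^{k+1}\to V^k$ is surjective by induction on $k$: given a target $w'\in V^k$, the subquotient isomorphism of \cref{subquotient_iso} lifts it modulo $V^{k-1}$ to some $v'\in V^{k+1}$; the error $\eta v'-w'$ lies in $V^{k-1}$ and can be corrected using the inductive hypothesis (the base case $k=1$ being trivial since $V^0=0$). Applied to $w$, this yields some $v_0\in V^{k+1}$ with $\eta v_0=w$, and necessarily $v_0\notin V^k$ since otherwise $w=\eta v_0\in V^{k-1}$.

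Next, decompose $V^{k+1}\otimes\Fbar_\ell$ into generalized $\Frob$-eigenspaces and write $v_0=\sum_\alpha v_\alpha$ accordingly. The intertwining identity from part (a) shows that $\eta$ carries the generalized eigenspace for $\alpha$ in $V^{k+1}$ into that for $q\alpha$ in $V^k$, so $w=\eta v_0=\sum_\alpha \eta v_\alpha$ expresses the element $w$, which lies in the $q^{n-k+1}$-generalized eigenspace of $V^k$, as a sum over distinct generalized eigenspaces. By directness of this decomposition, only the term with $q\alpha=q^{n-k+1}$ survives: $w=\eta v_{q^{n-k}}$, with all other $\eta v_\alpha=0$. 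The vector $v:=v_{q^{n-k}}$ is $\F_\ell$-rational because $q^{n-k}\in\F_\ell$ makes the relevant generalized eigenspace defined over $\F_\ell$; by construction it is a generalized eigenvector of eigenvalue $q^{n-k}=q^{n-(k+1)+1}$; and $\eta v=w\notin V^{k-1}$ forces $v\notin V^k$ by part (a). Hence $v\in F_n^{k+1}$.

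The main obstacle, as indicated above, is not the existence of a lift of $w$ along $\eta$ but the refinement to a lift respecting the $\Frob$-eigenspace structure. This is precisely why the argument depends on \cref{lem:linearcommute}; without the commutation relation $\Frob\eta=q\eta\Frob$ on $J[\ell]$, there would be no way to project a general lift onto the correct generalized eigenspace, and the connection between the filtration and the $\Frob$-spectrum needed to set up the grid of \cref{fig:lifting_charts} would not be available.
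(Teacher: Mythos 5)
Part (a) matches the paper's argument exactly: apply $\eta$, use \cref{subquotient_iso} for the filtration step, and intertwine via \cref{lem:linearcommute} to shift the eigenvalue. Part (b) reaches the same conclusion by a genuinely different route. You and the paper both begin by proving (or citing) surjectivity of $\eta:V^{k+1}\to V^k$ and taking an arbitrary lift $v_0$. But from there the paper pushes the error into $V^1$ — it observes that $(\Frob-q^{n-k})^iv_0$ lands in $\ker\eta\cap J[\ell]=V^1$, splits that remainder according to the $\Frob$-eigenspace decomposition of $V^1$ (using semisimplicity of $\Frob$ on $V^1$ from \cref{V0basis}), and subtracts an explicit correcting vector $z\in V^1$ to kill the off-eigenvalue part. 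Your proof instead decomposes $V^{k+1}\otimes\Fbar_\ell$ directly into generalized $\Frob$-eigenspaces, observes that $\eta$ shifts the $\alpha$-eigenspace to the $q\alpha$-eigenspace, and concludes by directness that only the component $v_{q^{n-k}}$ can map to $w$; the projection onto the $q^{n-k}$-block is $\F_\ell$-rational because that block is $\ker(\Frob-q^{n-k})^N$ for $N$ large. Both are correct. Your version is more uniform and avoids the intermediate reduction to $V^1$, at the small cost of needing to discuss rationality of the primary decomposition on the full $V^{k+1}$; the paper's version is more constructive and needs semisimplicity only on the small space $V^1$, where it comes for free from the explicit basis. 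One stylistic nit: your closing step "forces $v\notin V^k$ by part (a)" is really just the elementary fact that $\eta$ carries $V^k$ into $V^{k-1}$, not part (a) itself; the conclusion is right, but attributing it to (a) is a slight overreach.
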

\begin{proof}
    By \cref{subquotient_iso} we have $\eta w\in V^{k-1}\setminus V^{k-2}$. If $i\geq 1$ is such that $(\Frob- q^{n-k+1})^iw=0$, then by \cref{lem:linearcommute} we have
    \[(\Frob- q^{n-k+2})^i\eta w=q^i\eta(\Frob- q^{n-k+1})^iw=0,\]
    proving (a).

    To set up the proof of (b), we first note that the action of $\Frob$ on $V^1$ is semisimple (for instance by recalling from \cref{V0basis} that $V^1\otimes\Fbar_\ell$ splits into a direct sum of $\Frob$ eigenspaces). Let $U$ be the $q^{n-k}$ eigenspace of $\Frob$ acting on $V^1$ (note that $U$ may be $0$- or $1$-dimensional), and let $W$ be the $\Frob$-invariant complementary subspace of $U$ in $V^1$.

    Now suppose $k\leq \ell-2$ and $w\in F_n^k$. Since $\eta$ maps $V^{k+1}$ surjectively onto $V^k$, there exists $v\in V^{k+1}\setminus V^{k}$ with $\eta(v)=w$. We have
    \begin{align}\label{eq:annihilate}
        q^i\eta(\Frob-q^{n-k})^iv=(\Frob-q^{n-k+1})^i\eta v=0,
    \end{align}
    so $(\Frob-q^{n-k})^iv\in\ker\eta\cap J[\ell]=V^1$. Write $(\Frob-q^{n-k})^iv=u+s$ for $u\in U$ and $s\in W$. Since $W$ is preserved by $\Frob$ and does not contain any $\Frob$ eigenvectors of eigenvalue $q^{n-k}$, there exists $z\in W$ such that $(\Frob-q^{n-k})^iz=s$. Therefore
    \begin{align}\label{eq:Frobremainder}
        (\Frob-q^{n-k})^i(v-z)=u,
    \end{align}
    so $(\Frob-q^{n-k})^{i+1}(v-z)=0$. This proves $v-z\in F_n^{k+1}$ and $\eta(v-z)=w$, so (b) holds.
\end{proof}

\begin{corollary}\label{cor:eigenvalues}
    Let $n\in\Z/\gamma\Z$ and $1\leq k\leq \ell-1$. Then $F_n^k$ is nonempty if and only if $\gamma\mid dn$ and $\gamma\nmid n$.
\end{corollary}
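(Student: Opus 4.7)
The plan is to reduce everything to the case $k=1$ using \cref{Fnk_updown}, and then invoke the explicit eigenvector description in \cref{V0basis}.

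First I would argue that $F_n^k$ is nonempty for some (equivalently, every) $1\leq k\leq \ell-1$ if and only if $F_n^1$ is nonempty. The forward direction follows by iterating part (a) of \cref{Fnk_updown}: starting from $w\in F_n^k$, applying $\eta$ a total of $k-1$ times lands in $F_n^1$. The reverse direction follows by iterating part (b) of \cref{Fnk_updown} in the opposite direction to lift an element of $F_n^1$ up to any $F_n^k$ with $k\leq \ell-1$.

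Next, I would unwind the definitions at $k=1$: the set $F_n^1$ consists of nonzero $v\in V^1$ satisfying $(\Frob - q^n)^i v=0$ for some $i\geq 1$, so $F_n^1$ is nonempty exactly when $q^n\in\F_\ell$ is a generalized eigenvalue of $\Frob$ acting on $V^1$. By \cref{V0basis}, the action of $\Frob$ on $V^1\otimes\Fbar_\ell$ is diagonalizable with eigenvalues running over the nontrivial $d$-th roots of unity in $\Fbar_\ell$. So generalized eigenvalues coincide with eigenvalues, and $q^n$ (which lies in $\F_\ell\subseteq\Fbar_\ell$) is an eigenvalue on $V^1\otimes\Fbar_\ell$ if and only if it is a nontrivial $d$-th root of unity. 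An eigenvalue in $\F_\ell$ is automatically realized by an eigenvector in $V^1$ itself (concretely, $u_{q^n}=\sum_{i=1}^d q^{-ni}P_i$ lies in $V^1$ when $q^n\in\F_\ell$).

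Finally, translating the conditions ``$(q^n)^d=1$ in $\F_\ell$'' and ``$q^n\neq 1$ in $\F_\ell$'' via the definition of $\gamma$ as the multiplicative order of $q$ modulo $\ell$ gives exactly $\gamma\mid dn$ and $\gamma\nmid n$, respectively. There is no real technical obstacle here since semisimplicity of $\Frob$ on $V^1$ was already established inside the proof of \cref{Fnk_updown}(b); the only thing to be careful about is that the explicit eigenvectors $u_\beta$ live over $\Fbar_\ell$ in general, but descend to $V^1$ precisely when $\beta\in\F_\ell$, which is exactly the situation for $\beta=q^n$.
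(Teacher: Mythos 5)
Your proof is correct and follows the same route as the paper: reduce to $k=1$ via both directions of \cref{Fnk_updown}, then apply \cref{V0basis} to identify the eigenvalues of $\Frob$ on $V^1$ as the nontrivial $d$-th roots of unity and translate into divisibility conditions on $n$. The extra care you take about generalized versus true eigenvalues and about descent from $\Fbar_\ell$ to $\F_\ell$ is sound (and worth noting, even if the paper's version leaves it implicit).
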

\begin{proof}
    By \cref{Fnk_updown}, $F_n^k$ is nonempty if and only if $F_n^1$ is nonempty, which holds if and only if $q^n$ is an eigenvalue of $\Frob$ on $V^1$. By \cref{V0basis}, this holds if and only if $(q^n)^d=1$ and $q^n\neq 1$ in $\Fbar_\ell$.
\end{proof}

\begin{remark}\label{rmk:full-basis}
    One can generalize the above discussion to determine a basis for $J[\ell]\otimes\Fbar_\ell$ consisting of generalized $\Frob$ eigenvectors. More precisely, for all $1\leq k\leq \ell-1$ and all $\beta\in\Fbar_\ell$ with $\beta^d=1\neq\beta$, there exists $v_\beta^k\in J[\ell]\otimes\Fbar_\ell$ satisfying the following conditions:
    \begin{itemize}
        \item $v_\beta^k\in (V^k\otimes\Fbar_\ell)\setminus (V^{k-1}\otimes\Fbar_\ell)$.
        \item For $k\geq 2$, $\eta(v_\beta^k)=v_\beta^{k-1}$.
        \item $v_\beta^k$ is a generalized $\Frob$ eigenvector with eigenvalue $\beta q^{1-k}$.
    \end{itemize}
    Further, any set $\{v_\beta^k\}$ satisfying the above conditions is a basis for $J[\ell]\otimes\Fbar_\ell$. We can use this to explicitly determine all the diagonal entries of the Jordan canonical form of $\Frob$ acting on $J[\ell]\otimes\Fbar_\ell$, and hence compute the characteristic polynomial of $\Frob$ acting on $J[\ell]$. Thus the only real obstacle remaining is the failure of $\Frob$ to be diagonalizable over $\overline{\F}_\ell$: the rest of this paper can be thought of as a study of the Jordan blocks in the Jordan canonical form of $\Frob$.

    Since our interest lies with $J[\ell](\F_q)$, we will typically restrict our attention to the subspace of $J[\ell]$ generated by generalized $\Frob$ eigenvectors with eigenvalues equal to a power of $q$, that is, the $\Frob$-invariant and $\eta$-invariant subspace spanned by all the sets $F_n^k$. 
\end{remark}

\subsection{Basic counts and lifting results}\label{sec:proofs_of_basic_lifting}

The primary goal of this section is to determine, given some $F_n^k$ containing a $\Frob$ eigenvector, whether $F_n^{k+1}$ also contains a $\Frob$ eigenvector; that is, whether the property of containing a $\Frob$ eigenvector ``lifts'' from $F_n^k$ to $F_n^{k+1}$. Having an understanding of when this lifting occurs will help us to compute $r_\ell(C)$ because of \cref{thm:rlC_count}, which we recall and prove below.

\rlCcount*

\begin{proof}
    For each $1\leq k\leq \ell-1$ such that $F_{k-1}^k$ contains a $\Frob$ eigenvector, let $v_k$ denote such an eigenvector. We claim that the set of all such $v_k$ is a basis for $J[\ell](\F_q)$. First observe that by definition of $F_{k-1}^k$, $v_k$ has $\Frob$ eigenvalue $1$, so $v_k\in J[\ell](\F_q)$. Further, the set of all $v_k$ is linearly independent because each lies in a distinct filtration stage. So it just remains to prove that these vectors span $J[\ell](\F_q)$.
    
    We will prove by induction on $k$ that if $v\in J[\ell](\F_q)\cap V^k$ then $v$ is in the span of the eigenvectors $v_i$ with $i\leq k$. If $k=1$, then we must have $v=0$, as there is no eigenvector of eigenvalue $1$ in $V^1$ by \cref{V0basis}. Now let $k\geq 2$. If $v\in V^{k-1}$ then the result follows by the induction hypothesis, so we can assume $v\in V^k\setminus V^{k-1}$. This means that $v\in F_{k-1}^k$ is an eigenvector, and so $v_k$ must be defined. Now $v$ and $v_k$ define nonzero elements of the quotient $V^k/V^{k-1}$, and the $1$-eigenspace of $\Frob$ in this quotient is at most one-dimensional by \cref{subquotient_iso} and \cref{lem:linearcommute}. So for some $c\in\F_\ell$ we have $v-cv_k\in V^{k-1}$. By the induction hypothesis, $v-cv_k$ is a linear combination of $v_i$ for $i<k$, so $v$ is a linear combination of $v_i$ for $i\leq k$.
\end{proof}

In the remainder of this section we will prove \cref{basic_lifting_properties}. We first note the following important fact about the generalized $\Frob$ eigenvector lifts defined in \cref{Fnk_updown}(b). 

\begin{lemma}\label{Frob_remainder}
    Let $n\in\Z/\gamma\Z$, $1\leq k\leq \ell-2$, and suppose $w\in F_n^k$ is a $\Frob$ eigenvector. Exactly one of the following holds:
    \begin{itemize}
        \item For all $v\in F_n^{k+1}$ with $\eta v=w$, $v$ is an eigenvector.
        \item For all $v\in F_n^{k+1}$ with $\eta v=w$, $(\Frob-q^{n-k})v\in F_{n-k}^1$ is an eigenvector.
    \end{itemize}
\end{lemma}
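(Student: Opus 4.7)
The plan is to attach to any valid lift $v$ the vector $\alpha := (\Frob - q^{n-k})v$ and to prove two things about it: (i) $\alpha$ lies in the $q^{n-k}$-eigenspace $U$ of $\Frob$ acting on $V^1$; and (ii) $\alpha$ does not depend on the choice of $v$. Once these are in hand, the dichotomy reads simply as $\alpha = 0$ (every lift is a $\Frob$-eigenvector) versus $\alpha \in U \setminus \{0\} = F_{n-k}^1$.

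To see that $\alpha \in V^1$, I would use the commutation relation in \cref{lem:linearcommute}, which rearranges to $\eta\circ\Frob = q^{-1}\Frob\circ\eta$ on $J[\ell]$. Applying $\eta$ to $\alpha$,
\[\eta\alpha = q^{-1}\Frob(\eta v) - q^{n-k}\eta v = \bigl(q^{-1}\cdot q^{n-k+1}-q^{n-k}\bigr)w = 0,\]
using $\eta v = w$ and $\Frob w = q^{n-k+1}w$. By \cref{subquotient_iso} this places $\alpha$ in $V^1$. I then want to upgrade $\alpha$ from a generalized eigenvector to an honest eigenvector: since $v \in F_n^{k+1}$ there exists $i\geq 1$ with $(\Frob - q^{n-k})^i v = 0$, and so $(\Frob-q^{n-k})^{i-1}\alpha = 0$, exhibiting $\alpha$ as a generalized $q^{n-k}$-eigenvector living in $V^1$. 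But \cref{V0basis} presents $V^1 \otimes \overline{\F}_\ell$ as a direct sum of $\Frob$-eigenlines with pairwise distinct eigenvalues (the nontrivial $d$-th roots of unity), so the minimal polynomial of $\Frob|_{V^1}$ is separable and $\Frob$ acts semisimply on $V^1$. This forces $\alpha$ to be a true $q^{n-k}$-eigenvector, i.e.~$\alpha\in U$.

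For independence of the lift, I would compare two lifts $v, v' \in F_n^{k+1}$ of $w$: their difference satisfies $\eta(v-v')=0$, so $v-v' \in V^1$, and $v-v'$ is killed by a sufficiently high power of $\Frob - q^{n-k}$ (being the difference of two generalized $q^{n-k}$-eigenvectors). The same semisimplicity of $\Frob|_{V^1}$ then gives $v-v' \in U$, hence $(\Frob-q^{n-k})(v-v')=0$, so $\alpha$ is the same for every valid lift. The two cases now follow at once: either $\alpha = 0$, in which case every lift $v$ satisfies $\Frob v = q^{n-k}v$ and so is an eigenvector; or $\alpha \in U\setminus\{0\}$, which is precisely $F_{n-k}^1$.

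The single substantive ingredient is the semisimplicity of $\Frob|_{V^1}$, which is invoked twice (to put $\alpha$ in $U$, and to make $\alpha$ well defined across lifts). I do not anticipate any serious obstacle; the main thing to check carefully is that since we work over $\F_\ell$ rather than $\overline{\F}_\ell$, semisimplicity must be justified via separability of the minimal polynomial rather than directly via diagonalizability, which \cref{V0basis} readily supplies.
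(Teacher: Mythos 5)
Your proposal is correct and takes essentially the same approach as the paper. The paper's proof opens with a single ``observation'' -- that a vector killed by $\eta$ and lying in the generalized $q^{n-k}$-eigenspace is either zero or an element of $F_{n-k}^1$ -- and applies it twice, first to $v - v'$ (yielding independence of lift) and then to $(\Frob - q^{n-k})v$ (yielding the dichotomy); your write-up carries out exactly these two applications, just in the reverse order and with the semisimplicity of $\Frob|_{V^1}$ spelled out explicitly rather than left implicit.
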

\begin{proof}
    We first make the following observation: if $u\in J[\ell]$ satisfies $\eta u=0$, and $u$ is in the generalized $\Frob$ eigenspace with eigenvalue $q^{n-k}$, then either $u=0$ or $u\in F_{n-k}^1$. If $v,v'\in F_n^{k+1}$ with $\eta v=\eta v'=w$, we apply this observation to $v-v'$ to conclude that the value of $(\Frob-q^{n-k})v$ does not depend on the choice of $v$. Since $w$ is an eigenvector we have
    \[q\eta(\Frob-q^{n-k})v=(\Frob-q^{n-k+1})\eta v=0,\]
    so we apply the same observation to $(\Frob-q^{n-k})v$ to reach the desired conclusion.
\end{proof}

Recall that $F_n^k$ is a \emph{rooftop} if $F_n^k$ has a $\Frob$ eigenvector but there is no $k<k'\leq \ell-1$ for which $F_n^k$ has a $\Frob$ eigenvector, and that $F_n^k$ is a \emph{non-maximal rooftop} if $k\neq\ell-1$. The non-maximal rooftops are exactly the sets $F_n^k$ where the property of having a $\Frob$ eigenvector fails to lift to $F_n^{k+1}$.

\basicliftingproperties*

\begin{remark}
    Following from \cref{rmk: visual guide}, we give a brief visual explanation of each of these conditions. We shade a cell dark gray if $F_n^k$ contains a true $\Frob$ eigenvector. (a) says that in the bottom layer $k=1$, if a cell is shaded at all then it is shaded dark gray. (b) says that the dark gray cells form ``towers:'' any cell below a dark gray cell must also be dark gray. (c) and (d) both place limitations on which cells can contain the top cells of towers (i.e.~the rooftops). (c) says that if a diagonal intersects the $k=1$ layer in an empty cell, then the diagonal below it cannot contain any non-maximal rooftops. (d) says that no diagonal can contain two non-maximal rooftops. These constraints place some limitations on the possible ``skylines'' that can occur as in \cref{fig:lifting_charts}.
\end{remark}

\begin{proof}[Proof of \cref{basic_lifting_properties}]
\begin{enumerate}[label=(\alph*)]
    \item It follows from \cref{cor:eigenvalues} and that all elements of $F_n^1$ are eigenvectors.
    \item If $v$ is an eigenvector in $F_n^{k}$, then $\eta^{k-k'}v\in F_n^{k'}$ by \cref{Fnk_updown}, and this is an eigenvector by \cref{lem:linearcommute}.
    \item If $F_n^{k}$ is a maximal rooftop ($k=\ell-1$), then $\gamma\mid k$. Now $F_n^1$ has an eigenvector by (b) and so $\gamma\mid dn$ and $\gamma\nmid n$ by (a); this implies $\gamma\mid d(n-k)$ and $\gamma\nmid (n-k)$. So again by (a), $F_{n-k}^1$ has an eigenvector.

    Now suppose $F_n^k$ is a non-maximal rooftop, so $1\leq k\leq \ell-2$. By \cref{Frob_remainder}, $F_{n-k}^1$ is nonempty, so by (a), $\gamma\mid d(n-k)$ and $\gamma\nmid (n-k)$.
    \item  Suppose $F_n^k$ and $F_{n+i}^{k+i}$ are both non-maximal rooftops, where $k<k+i\leq \ell-2$ (the case $1\leq k+i<k$ follows by symmetry). By \cref{Frob_remainder}, there exist $v\in V^{k+i+1}\setminus V^{k+i}$ and $w\in V^{k+1}\setminus V^{k}$ for which both $v$ and $w$ map under $(\Frob-q^{n-k})$ to $F_{n-k}^1$, the set of nonzero vectors in a one-dimensional eigenspace. In particular, there exists $d\in\F_\ell^\times$ such that
    \[(\Frob -q^{n-k})v=d(\Frob -q^{n-k})w.\]
    Then $v-dw\in V^{k+i+1}\setminus V^{k+i}$ is an eigenvector of eigenvalue $q^{n-k}$, contradicting the assumption that $F_{n+i}^{k+i}$ is a rooftop. \qedhere
\end{enumerate}
\end{proof}

The remaining proofs require more setup. \Cref{Frob_remainder} tells us that the obstruction to lifting an eigenvector in $F_n^{k}$ to an eigenvector in $F_n^{k+1}$ is given by an element of $F_{n-k}^1$. Our next goal is to establish relations between these obstructions for different values of $n$ (with the same $k$).

\section{The Weil pairing}\label{sec:lifting_relations_new}
In this section, we will use the Weil pairing to prove \cref{thm:rooftop_pairs} and \cref{thm:evenselfdual}. These will then be used in \cref{sec:consequences} to prove \cref{intro_upper_bound}.

The \emph{Weil pairing} is a non-degenerate alternating $\Gal(\overline{\F}_q/\F_q)$-equivariant bilinear form
\begin{align*}
    e:J[\ell]\times J[\ell]\to \mu_\ell
\end{align*}
with the property that
\[e(f(u),f(v))=e(u,v)^{\deg f}\]
for any endomorphism $f:J\to J$ and $u,v\in J[\ell]$.

For the purposes of this paper we will only need the existence of a pairing satisfying the properties listed above; in particular we will never need to compute the pairing explicitly.
For the definition of the Weil pairing and proofs of the stated properties, see for example~\cite[Section 16]{Milne1986}. Note that what we call $e$ is obtained by taking what Milne calls $e_\ell^{\lambda}$ (with $\lambda$ the canonical principal polarization of $J$) and restricting to $J[\ell]\times J[\ell]$. The alternating property follows from Milne's Lemma 16.2(e), and the endomorphism property follows from Lemma 16.2(c).

In the remainder of this section we will prove that the Weil pairing interacts with the $(1-\zeta)$ filtration and with Frobenius in a compatible way.

\begin{lemma}\label{lem:weilproperties}
    Let $1\leq k,k'\leq \ell-1$, and let $v\in V^k$ and $v'\in V^{k'}$.
    \begin{enumerate}[label=(\alph*)]
        \item If $k+k'\le \ell-1$, then $e(v,v')=1$.
        \item If $k+k'\leq\ell+1$, then 
        \[e(\eta(v),v')=e(v,\eta(v'))^{-1}.\]
        \item If $k+k'=\ell$, and $v\in F_n^k$ and $v'\in F_{n'}^{k'}$ for some $n,n'\in\Z/\gamma\Z$, then $e(v,v')\neq 1$ if and only if $n+n'=0$.
    \end{enumerate}
\end{lemma}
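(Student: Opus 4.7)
I will prove (a), (b), (c) in sequence, each building on the previous.

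The key tool for (a) is the adjoint identity $e((1-\zeta)u, w) = e(u, (1-\zeta^{-1})w)$, which follows by bilinearity from $e(\zeta u, \zeta w) = e(u,w)^{\deg\zeta} = e(u,w)$ (since $\zeta$ has degree $1$). Iterating yields $e((1-\zeta)^j u, w) = e(u, (1-\zeta^{-1})^j w)$ for all $j$. A dimension count shows $V^k$ equals the image of $(1-\zeta)^{\ell-1-k}$ acting on $J[\ell]$: indeed, $(1-\zeta)^{\ell-1}$ kills $J[\ell]$, and the image has dimension $(\ell-1)(d-1) - \dim V^{\ell-1-k} = k(d-1) = \dim V^k$ by \cref{orig_subquotient_iso}. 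Writing $v = (1-\zeta)^{\ell-1-k}u$ gives $e(v,v') = e(u,(1-\zeta^{-1})^{\ell-1-k}v')$. Since $1-\zeta^{-1}$ and $1-\zeta$ are associates, $V^{\ell-1-k}$ is also the kernel of $(1-\zeta^{-1})^{\ell-1-k}$, and the hypothesis $k+k'\leq\ell-1$ places $v' \in V^{k'} \subseteq V^{\ell-1-k}$, proving (a).

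For (b), apply the iterated adjoint identity term-by-term in the definition of $\eta$ to obtain $e(\eta v, v') = e(v, \widetilde\eta v')$, where $\widetilde\eta := -\sum_{i=1}^{\ell-2} i^{-1}(1-\zeta^{-1})^i$. Thus (b) reduces to showing $e(v, (\eta + \widetilde\eta) v') = 1$. It suffices to prove $(\eta + \widetilde\eta)(V^{k'}) \subseteq V^{k'-2}$; then the hypothesis $k + (k'-2) \leq \ell-1$ reduces (b) to (a). This containment follows from $\eta \equiv \zeta - 1 \pmod{(1-\zeta)^2}$ and $\widetilde\eta \equiv \zeta^{-1} - 1 \pmod{(1-\zeta^{-1})^2}$, so modulo $(1-\zeta)^2$ we have $\eta + \widetilde\eta \equiv (\zeta - 1) + (\zeta^{-1} - 1) = \zeta^{-1}(\zeta - 1)^2$, which already lies in the ideal generated by $(1-\zeta)^2$.

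For (c), the forward direction ($e(v,v')\neq 1 \Rightarrow n+n' \equiv 0$) uses Galois equivariance $e(\Frob u, \Frob w) = e(u,w)^q$. The generalized $\Frob$-eigenspaces of $V^k/V^{k-1}$ are one-dimensional over $\F_\ell$: transporting along the isomorphism $\eta^{k-1}: V^k/V^{k-1} \to V^1$ (by \cref{subquotient_iso} and \cref{lem:linearcommute}) reduces this to the basis of distinct eigenlines in \cref{V0basis}. Hence $\Frob$ acts on the projection of $v$ by the scalar $q^{n-k+1}$, so $\Frob v = q^{n-k+1} v + w$ with $w \in V^{k-1}$, and similarly $\Frob v' = q^{n'-k'+1}v' + w'$ with $w' \in V^{k'-1}$. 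Expanding $e(\Frob v, \Frob v')$, all three cross terms vanish by (a) (each pairs a vector in filtration level $\leq k-1$ with one in level $\leq k'-1$, with total at most $\ell-1$), giving $e(v,v')^q = e(v,v')^{q^{n+n'-\ell+2}}$. Since $e(v,v')\in\mu_\ell$ and $\gamma \mid \ell-1$, this forces $\gamma \mid n+n'$. For the converse, (a) together with a dimension count shows the pairing descends to a nondegenerate pairing $V^k/V^{k-1} \times V^{k'}/V^{k'-1}\to \mu_\ell$; Frobenius equivariance decomposes this as a direct sum over matched pairs of generalized eigenspaces with eigenvalues multiplying to $q$, which corresponds precisely to the pairing $F_n^k \leftrightarrow F_{-n}^{k'}$; nondegeneracy on each one-dimensional matched block then forces $e(v,v')\neq 1$ whenever $n+n'\equiv 0$.

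The main obstacle is the careful bookkeeping in (c): confirming that each generalized $\Frob$-eigenspace in $V^k/V^{k-1}$ is one-dimensional over $\F_\ell$ (not merely over $\overline{\F}_\ell$), and that the Frobenius-equivariant block decomposition of the induced nondegenerate subquotient pairing matches $F_n^k$ with $F_{-n}^{k'}$ and no other pair; everything else reduces to manipulations with the Weil pairing and the endomorphism relations already established.
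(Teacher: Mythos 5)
Your proof is correct, and it takes a genuinely different route from the paper in parts (b) and (c).

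For (a) you and the paper argue essentially the same way, differing only in whether one factors $v'$ through $(1-\zeta)^k$ (paper) or $v$ through $(1-\zeta)^{\ell-1-k}$ (you) before applying the adjoint identity.

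For (b) your route is cleaner: you push the whole $\eta$ through the pairing at once to obtain $e(\eta v,v')=e(v,\widetilde\eta v')$ with $\widetilde\eta=-\sum_i i^{-1}(1-\zeta^{-1})^i$, then observe that $\eta+\widetilde\eta\equiv (\zeta-1)+(\zeta^{-1}-1)=\zeta^{-1}(\zeta-1)^2\equiv 0 \pmod{(1-\zeta)^2}$, which together with (a) finishes the job. The paper instead writes $\eta(v)=(\zeta-1)v+(\text{lower})$, reduces to $e((1-\zeta)v,v')=e(v,(1-\zeta)v')^{-1}$, and uses the polynomial identity $\zeta^{\ell-2}+\cdots+1\equiv\ell-1\pmod{(1-\zeta)}$ to move a single factor of $(1-\zeta)$ across the pairing. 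Both work; yours is more symmetric.

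For the converse in (c) the two approaches diverge most. The paper proves the forward implication in the stronger form allowing $\Fbar_\ell$-eigenvalues on the second slot, establishes the converse for $k=1$ by nondegeneracy (since $v\in F_n^1$ pairs trivially with everything in a spanning set except lifts of the $q^{k-n}$-eigenline), and then iterates to general $k$ using part (b). You instead verify directly that the pairing descends to a nondegenerate pairing $V^k/V^{k-1}\times V^{\ell-k}/V^{\ell-k-1}\to\mu_\ell$ (via (a) plus the dimension count $\dim V^j=j(d-1)$), note that the quotient $\Frob$-actions are semisimple (via $\eta^{k-1}$ and \cref{V0basis}), and invoke Frobenius equivariance to see that this nondegenerate pairing splits as a direct sum over matched one-dimensional eigenblocks with eigenvalues multiplying to $q$, which is exactly the matching $F_n^k\leftrightarrow F_{-n}^{\ell-k}$. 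This handles all $k$ simultaneously without the $\Fbar_\ell$ generalization or the iteration. One small imprecision to flag: the blanket claim that the generalized $\Frob$-eigenspaces of $V^k/V^{k-1}$ are one-dimensional over $\F_\ell$ holds only for eigenvalues lying in $\F_\ell$ (i.e.\ powers of $q$); for other eigenvalues the $\F_\ell$-indecomposable $\Frob$-invariant block has higher dimension. Since you only invoke one-dimensionality for $q$-power eigenvalues (and since the eigenblocks with non-$q$-power eigenvalues are only needed to guarantee the block decomposition sums to everything, not to be individually one-dimensional), this does not affect the argument — but it would be worth stating more carefully.
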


In light of \cref{lem:weilproperties}(c), we make the following definition.

\begin{definition}
    We say the sets $F_n^k$ and $F_{-n}^{\ell-k}$ are \emph{dual}. If $v\in F_n^k$ and $v'\in F_{-n}^{\ell-k}$, then we say that $(v,v')$ form a \emph{dual pair}. See \cref{fig:duallemma}.
\end{definition}

In terms of the visual interpretation as described in \cref{rmk: visual guide}, each dual pair $v\in F_n^k$ and $v'\in F_{-n}^{\ell-k}$ corresponds to cells $(n,k)$ and $(-n,\ell-k)$ that are related by rotating the grid $180^\circ$. Point (b) relates the Weil pairing of vectors at cells $(n,k-1)$ and $(n',k')$ to the Weil pairing of vectors at cells $(n,k)$ and $(n',k'-1)$, moving one cell up and the other one down. This mirroring effect of the Weil pairing will play an important role in what follows.

\begin{proof}[Proof of \cref{lem:weilproperties}]
    All three statements depend on the following calculation. Let $u,v\in J[\ell]$. Since $\zeta$ is an automorphism of $J$, we have $e(\zeta u,v)=e(u,\zeta^{-1}v)$. Since $\zeta^{-1}=\zeta^{\ell-1}$ and the Weil pairing is bilinear, we have
\begin{align}\label{eq:lambda_weil}
        e((1-\zeta)u,v)=e(u,(1-\zeta^{\ell-1})v)=e(u,(\zeta^{\ell-2}+\cdots+\zeta+1)(1-\zeta)v).
    \end{align}
    Further, since $e$ is alternating, the same relation holds if we swap the entries on both sides.

    We begin by proving (a). If $k+k'\le \ell-1$, then $v'=(1-\zeta)^{k}w'$ for some $w'\in V^{k'+k}$. Then
    \[
        e(v,v')=e((\zeta^{\ell-2}+\cdots+\zeta+1)^{k}(1-\zeta)^{k}v,w')=e(0,w')=1,\]
    because $(1-\zeta)^{k}$ annihilates $V^k$. 
    
    We now prove (b). By definition of $\eta$ (\cref{def:eta}), we can write $\eta(v)=(\zeta-1)v+w$ for some $w\in V^{k-2}$. So
    \[e(\eta(v),v')=e((\zeta-1)v,v')e(w,v')=e((1-\zeta)v,v')^{-1},\]
    since $e(w,v')=1$ by part (a). By a symmetric argument we have $e(v,\eta(v'))=e(v,(1-\zeta)v')^{-1}$, so it suffices to show that
    \[e((1-\zeta)v,v')=e(v,(1-\zeta)v')^{-1}.\]
    Now note that $\zeta^{\ell-2}+\cdots +\zeta+1$ can be written as an integer polynomial in $1-\zeta$ with constant term $\ell-1$. So applying \cref{eq:lambda_weil},
    \[e((1-\zeta)v,v')=e(v,(\zeta^{\ell-2}+\cdots+\zeta+1)(1-\zeta)v')=e(v,(\ell-1)(1-\zeta)v'+w')\]
    for some $w'\in V^{k'-2}$; again by part (a), $e(v,w')=1$. Therefore
    \[e((1-\zeta)v,v')=e(v,(1-\zeta)v')^{\ell-1}=e(v,(1-\zeta)v')^{-1}.\]

    In order to deduce (c) we will prove a more general result. As in the proposition statement, let $k+k'=\ell$, and $v\in F_n^k$, meaning $v \in V^k\setminus V^{k-1}$ and $\Frob\  v= q^{n-(k-1)}v$ as an element of the quotient $V^k/V^{k-1}$. But now let $\Vbar^{k'}:=V^{k'}\otimes\Fbar_\ell$ (and similarly for $\Vbar^{k'-1}$), and let $u\in \Vbar^{k'}\setminus \Vbar^{k'-1}$ be any vector which reduces to an eigenvector of $\Frob$ in the quotient $\Vbar^{k'}/\Vbar^{k'-1}$. In particular, the eigenvalue $\beta\in\overline{\F}_\ell$ of $u$ does not a priori need to be a power of $q$. The Weil pairing extends in a natural way to $\overline{J[\ell]}:=J[\ell]\otimes\Fbar_\ell$, and under these weaker assumptions we will show that $e(v,u)\neq 1$ if and only if $u\in F_{-n}^{k'}$.
    
    Since $\Frob$ is an endomorphism of degree $q$, we have the following for any $i\geq 0$:
    \begin{align*}
        e(v,u)^{q^i}&=e(\Frob^i v, \Frob^i u)=e(q^{(n-(k-1))i} v+w, \beta^i u+w')
    \end{align*}
    for some $w\in V^{k-1}$ and $w'\in V^{k'-1}$. By part (a), we can eliminate $w$ and $w'$. Thus
    \begin{align*}
        e(v,u)^{q^i}&=e(q^{(n-(k-1))i} v, \beta^i u)=e(v,\beta^i u)^{q^{(n-(k-1))i}}
    \end{align*}
    so solving for $e(v,\beta^iu)$ we find
    \begin{align*}
        e(v, \beta^i u)=e(v,u)^{q^{(k-n)i}}.
    \end{align*}
    Writing the minimal polynomial of $\beta$ as 
    \[h(x)=a_dx^d+\cdots +a_1x+a_0\in \F_\ell[x],\]
    we have
    \begin{align*}
        1&=e(v,h(\beta) u)=\prod_{i=0}^d e(v,\beta^i u)^{a_i}=e(v,u)^{h(q^{k-n})}.
    \end{align*}
    If $e(v,u)\neq 1$, we must have $h(q^{k-n})=0$ in $\F_\ell$. Since $h(x)$ is irreducible but has a root in $\F_\ell$, it must be linear; hence $\beta= q^{k-n}=q^{(1-\ell)+k-n}$, showing that in fact $u\in F_{-n}^{\ell-k}$.

    Conversely, assume $v\in F_n^k$ and $v'\in F_{-n}^{\ell-k}$. First consider the case $k=1$. By (a), we know $e(v,w)=1$ for all $w\in V^{\ell-2}$. By the proof of the reverse direction above, we know that for any $u\in \overline{J[\ell]}$, if $u$ reduces to a $\Frob$ eigenvector in $\overline{J[\ell]}/\Vbar^{\ell-2}$ with any eigenvalue other than $q^{k-n}$, then $e(v,u)=1$. We also know that $\overline{J[\ell]}/V^{\ell-2}$ is spanned by its eigenspaces, which are all one-dimensional. So if $e(v,v')=1$, then we would have $e(v,w)=1$ for all $w$ in a basis of $\overline{J[\ell]}$, contradicting non-degeneracy of $e$. Hence $e(v,v')\neq 1$.

    For $k\geq 1$, note that $\eta^{k-1}(v)\in F_n^1$, and we can write $v'=\eta^{k-1}(u')$ for some $u'\in F_{-n}^{\ell-1}$. So applying the $k=1$ case and part (b),
    \[e(v,v')=e(\eta^{k-1}(v),u')^{(-1)^{k-1}}\neq 1.\qedhere\]
\end{proof}

\subsection{Lifting relations with dual pairs}
For all $n\in\Z/\gamma\Z$ with $\gamma\mid dn$ and $\gamma\nmid n$, fix once and for all a $\Frob$ eigenvector $u_n\in F_n^1$ (which exists by \cref{basic_lifting_properties}(a)).
Now suppose that $F_n^k$ has an eigenvector for some $n\in\Z/\gamma\Z$ and $1\leq k\leq\ell-2$. By \cref{Frob_remainder}, there exists $v\in F_n^{k+1}$ mapping to this eigenvector under $\eta$, such that either $v$ is an eigenvector itself, or $(\Frob-q^{n-k})v\in F_{n-k}^1$ is an eigenvector. In other words, there exists a constant $c\in\F_\ell$ such that 
\begin{equation}\label{eq:vc}
    (\Frob-q^{n-k})v=cu_{n-k},
\end{equation}
and $v$ is an eigenvector if and only if $c=0$. In the same way, if $F_{k-n}^{k}$ has an eigenvector then there exists $v'\in F_{k-n}^{k+1}$ and $d\in\F_\ell$ such that 
\begin{equation}\label{eq:vpd}
    (\Frob-q^{-n}) v'=du_{-n}.
\end{equation}
The key observation behind the following lemma is that if we lift \cref{eq:vpd} along powers of $\eta$ until the preimage of $v'$ reaches the very top of the filtration, then the lifts of $u_{-n}$ and $v'$ form {dual pairs} with the vectors $v$ and $u_{n-k}$ appearing in \cref{eq:vc}. See \cref{fig:duallemma} for a summary of this setup following the visual interpretation laid out in \cref{rmk: visual guide}.

\begin{lemma}\label{lem:pairingrelation}
    Assume $F_n^{k}$ and $F_{k-n}^{k}$ both contain $\Frob$ eigenvectors, and let $v,c,v',d$ be as above. Then there exist $\widehat{v'}\in F_{k-n}^{\ell-1}$ and $\widehat{u_{-n}}\in F_{-n}^{\ell-1-k}$ such that $\eta^{\ell-2-k}(\widehat{v'})=v'$, $\eta^{\ell-2-k}(\widehat{u_{-n}})=u_{-n}$, and
    \[e(\widehat{v'},u_{n-k})^{cq^n}e(\widehat{u_{-n}},v)^{dq^{k-n}}=1.\]
\end{lemma}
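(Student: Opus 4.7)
The plan is to construct $\widehat{v'}$ and $\widehat{u_{-n}}$ in a coupled way via the Frobenius, then extract the identity from the $\Frob$-equivariance $e(\Frob x, \Frob y) = e(x,y)^q$ of the Weil pairing.

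First, pick any lift $\widehat{v'} \in F_{k-n}^{\ell-1}$ of $v'$ via $\eta^{\ell-2-k}$, which exists by iterating \cref{Fnk_updown}(b). Set $R := (\Frob - q^{k-n+1})\widehat{v'}$, a generalized $\Frob$ eigenvector of eigenvalue $q^{k-n+1}$ lying in $V^{\ell-2}$. To couple $R$ with $u_{-n}$, climb the $\eta$-tower inductively: for $0 \leq j \leq \ell-2-k$, set $v'_{k+1+j} := \eta^{\ell-2-k-j}(\widehat{v'}) \in F_{k-n}^{k+1+j}$ and $R_{k+1+j} := (\Frob - q^{-n-j})v'_{k+1+j}$. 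The base case $R_{k+1} = du_{-n}$ holds by hypothesis, and applying \cref{lem:linearcommute} to $\eta\Frob v'_{k+1+j}$ gives the recursion $\eta R_{k+1+j} = q^{-1} R_{k+j}$. Iterating $\ell-2-k$ times and reducing $q^{-(\ell-2-k)} = q^{k+1}$ in $\F_\ell$ via $q^{\ell-1}=1$ yields $\eta^{\ell-2-k}R = q^{k+1} d\, u_{-n}$. When $d \neq 0$ this forces $R \in F_{-n}^{\ell-1-k}$, so setting $\widehat{u_{-n}} := (q^{k+1}d)^{-1}R$ gives the clean coupled relation
\[
\Frob \widehat{v'} = q^{k-n+1}\widehat{v'} + q^{k+1}d\,\widehat{u_{-n}},
\]
with the required lifting property $\eta^{\ell-2-k}\widehat{u_{-n}} = u_{-n}$ automatic. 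When $d = 0$, pick $\widehat{u_{-n}}$ as any lift in $F_{-n}^{\ell-1-k}$ (which exists by \cref{Fnk_updown}(b)); then $R \in V^{\ell-2-k}$ and the $R$-dependent cross-pairings will vanish for filtration reasons.

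With these choices fixed, expand $e(\Frob v, \Frob \widehat{v'}) = e(v,\widehat{v'})^q$ bilinearly using $\Frob v = q^{n-k}v + cu_{n-k}$ together with the formula above for $\Frob \widehat{v'}$. The leading term $e(v,\widehat{v'})^q$ cancels on both sides, and the term $e(u_{n-k}, \widehat{u_{-n}})$ vanishes via \cref{lem:weilproperties}(a) since $1 + (\ell-1-k) \leq \ell-1$. The two surviving cross-terms involve $e(v, \widehat{u_{-n}})$ and $e(u_{n-k}, \widehat{v'})$; rearranging via the alternating property $e(x,y) = e(y,x)^{-1}$ produces the stated identity.

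The main obstacle is the inductive computation establishing $\eta^{\ell-2-k}R = q^{k+1}du_{-n}$: this requires careful tracking of eigenvalues along a Jordan chain of length $\ell-2-k$ and the modular reduction of $q$-powers via Fermat. A secondary subtlety is the degenerate case $d=0$, where the coupling breaks but one must verify separately that the $d$-dependent pairings vanish by filtration alone, so the now unconstrained $\widehat{u_{-n}}$ still satisfies the identity.
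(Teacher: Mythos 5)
Your argument is correct and essentially the same as the paper's: both lift $v'$ to $\widehat{v'}\in F_{k-n}^{\ell-1}$, relate $(\Frob-q^{k-n+1})\widehat{v'}$ to $\widehat{u_{-n}}$ via the $\eta$-commutation relation, and then expand the Weil pairing using $e(\Frob x,\Frob y)=e(x,y)^q$ together with \cref{lem:weilproperties}(a) to kill the unwanted terms. The only organizational difference is that you build $\widehat{u_{-n}}$ as a scaled copy of the Jordan defect $(\Frob-q^{k-n+1})\widehat{v'}$ (so the coupled identity holds exactly, at the cost of a separate $d=0$ case), whereas the paper picks $\widehat{u_{-n}}$ independently and absorbs the discrepancy into an error term $w\in V^{\ell-2-k}$ which is then discarded by \cref{lem:weilproperties}(a) — both routes are equivalent. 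One point worth noting: your computation, like the paper's own, yields $e(\widehat{v'},u_{n-k})^{cq^{k-n}}e(\widehat{u_{-n}},v)^{dq^n}=1$, with the exponents $q^{k-n}$ and $q^n$ swapped relative to the lemma's displayed statement; this appears to be a typo in the statement and is harmless for the downstream applications (\cref{thm:rooftop_pairs} and \cref{thm:evenselfdual}), which only use that the two factors are both nontrivial or, in the self-dual case, that $q^n=q^{k-n}$.
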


\begin{figure}
    \centering
    \begin{tikzpicture}   
        \draw[thick] (0,0.6) -- (8,0.6) -- (8,5.9) -- (0,5.9) -- cycle;
        \node (v) at (2.5, 2.5) {$v$};
        \node (w) at (1, 1) {$u_{n-k}$};
        \node (what) at (5.5, 4) {$\widehat{u_{-n}}$};
        \node (vhat) at (7, 5.5) {$\widehat{v}'$};
        \node (wp) at (5.5, 1) {$u_{-n}$};
        \node (vp) at (7, 2.5) {$v'$};

        %labels
        \node at (1,0.3) {$n-k$};
        \node at (2.5,0.3) {$n$};
        \node at (5.5,0.3) {$-n$};
        \node at (7,0.3) {$k-n$};
        \node[anchor = east] at (0, 1) {$1$};
        \node[anchor = east] at (0, 2.5) {$k+1$};
        \node[anchor = east] at (0, 4) {$\ell-1-k$};
        \node[anchor = east] at (0, 5.5) {$\ell-1$};

        \path[dashed] (v) edge [bend right=28] node {} (what);
        \path[dashed] (w) edge [bend left=28] node {} (vhat);
        \path[-Stealth, inner sep=5pt, anchor = 135] (v) edge node {} (w);
        \path[-Stealth, anchor = 135] (vp) edge node {} (wp);
        \path[-Stealth, anchor = 135] (vhat) edge node {} (vp);
        \path[-Stealth, anchor = 135] (what) edge node {} (wp);
        \node at (6.25, 3.25) {};
    \end{tikzpicture}
    \caption{An illustration of the setup (vectors $v,v'$) and conclusion ($\widehat{v'},\widehat{u_{-n}}$) of \cref{lem:pairingrelation}. Vertical arrows point from a vector to its image under $\eta^{\ell-2-k}$. A diagonal arrow from $w$ to $u$ means that $w$ is a generalized eigenvector for $\Frob$ with some eigenvalue $\beta$ and $(\Frob-\beta)w$ is in the span of $u$. Dashed curves connect dual pairs.}
    \label{fig:duallemma}
\end{figure}
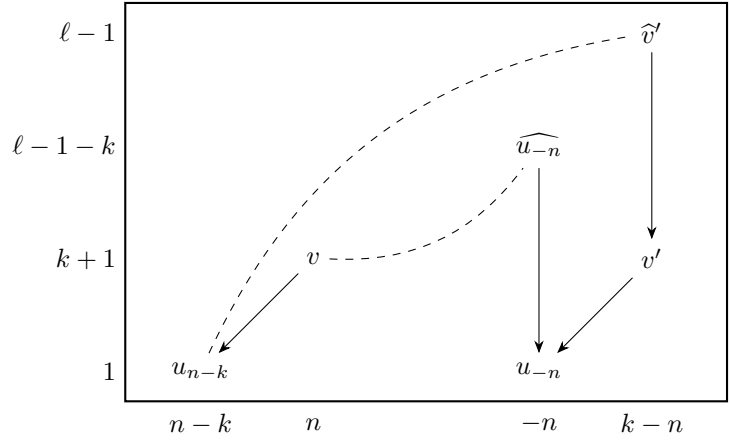

Since $(\widehat{v'},u_{n-k})$ and $(\widehat{u_{-n}},v)$ are dual pairs, their Weil pairings are both nontrivial by \cref{lem:weilproperties}(c). This will allow us to make conclusions about the constants $c$ and $d$. 

\begin{proof}
    By \cref{Fnk_updown}, there exist $\widehat{v'}\in F_{k-n}^{\ell-1}$ and $\widehat{u_{-n}}\in F_{-n}^{\ell-1-k}$ that are preimages of $v'$ and $u_{-n}$, respectively, under $\eta^{\ell-2-k}$. Then
    \[(\Frob-q^{k+2-\ell-n})\widehat{v'}= q^{k+2-\ell}d\widehat{u_{-n}}+w\]
    for some $w\in V^{\ell-2-k}$, which can be checked by showing that both sides have the same image under $\eta^{\ell-2-k}$. Therefore,
    \begin{align*}
        e(\widehat{v'},v)&=e(\widehat{v'},v)^{q^{\ell-1}}\\
        &=e(\Frob\widehat{v'},\Frob v)^{q^{\ell-2}}\\
        &=e(q^{k+2-\ell-n}\widehat{v'}+q^{k+2-\ell}d\widehat{u_{-n}}+w,\;q^{n-k} v+cu_{n-k})^{q^{\ell-2}}\\
        &=e(q^{k-n}\widehat{v'}+q^{k}d\widehat{u_{-n}}+q^{\ell-2}w,\;q^{n-k} v+cu_{n-k}).
    \end{align*}
    We now apply bilinearity. Since $v,u_{n-k}\in V^{k+1}$, we can use \cref{lem:weilproperties}(a) to eliminate all pairings involving $w$, as well as the pairing of $\widehat{u_{-n}}$ with $u_{n-k}$. We obtain
    \[e(\widehat{v'},v)=
    e(\widehat{v'},v)
    e(\widehat{v'},u_{n-k})^{cq^{k-n}}
    e(\widehat{u_{-n}},v)^{dq^{n}}\]
    which implies the desired result.
\end{proof}

With this we can now give proofs of \cref{thm:rooftop_pairs} and \cref{thm:evenselfdual}.

\rooftoppairs*

\begin{proof}
    We first prove by induction on $k$ that if $F_n^k$ is a non-maximal rooftop, then so is $F_{k-n}^k$. We must first show that $F_{k-n}^k$ contains a $\Frob$ eigenvector. Since $F_n^k$ is a rooftop, we have that $F_{k-n}^1$ is nonempty by \cref{basic_lifting_properties}(c). For the base case $k=1$ this already establishes that $F_{k-n}^k$ has a $\Frob$ eigenvector. Otherwise, for the sake of contradiction, suppose $F_{k-n}^{k'}$ is a rooftop for some $1\leq k'<k$. By the induction hypothesis, $F_{k'-(k-n)}^{k'}=F_{n-(k-k')}^{k-(k-k')}$ is also a rooftop. But since $F_n^k$ is a non-maximal rooftop, this contradicts \cref{basic_lifting_properties}(d). Hence $F_{k-n}^k$ has an eigenvector.
    
    Now since $F_n^k$ is a non-maximal rooftop, we can take $v\in F_n^{k+1}$ and $c\in\F_\ell$ as in \cref{eq:vc} with $c\neq 0$. If we assume for the sake of contradiction that $F_{k-n}^k$ is not a rooftop, then we can take $v'\in F_{k-n}^{k+1}$ and $d=0$ in \cref{eq:vpd}. By \cref{lem:pairingrelation} we have
    \[e(\widehat{v'},u_{n-k})^{cq^n}
    e(\widehat{u_{-n}},v)^{dq^{k-n}}=1,\]
    where $(\widehat{v'},w)$ and $(\widehat{w'},v)$ are dual pairs; in particular, we have $e(\widehat{v'},u_{n-k})\neq 1$ by \cref{lem:weilproperties}(c). Since $cq^n\not\equiv 0\bmod\ell$ and $d=0$ we obtain a contradiction. Hence $F_{k-n}^k$ must be a rooftop. This concludes the proof for $k\leq \ell-2$.
\end{proof}

\evenselfdual*

\begin{proof}
    Note that $n\equiv k-n\bmod\gamma$, so we can take $v=v'\in F_n^{k+1}$, and $c=d\in\F_\ell$ satisfying \cref{eq:vc} and \cref{eq:vpd}. By \cref{lem:pairingrelation}, there exist $\widehat{v}\in F_n^{\ell-1}$ and $\widehat{u_{n-k}}\in F_{-n}^{\ell-1-k}$ such that
    \[e(\widehat{v},u_{n-k})^{cq^n}e(\widehat{u_{n-k}},v)^{cq^{k-n}}=1,\]
    where $\eta^{\ell-2-k}(\widehat{v})=v$ and $\eta^{\ell-2-k}(\widehat{u_{n-k}})=u_{n-k}$. Hence, by \cref{lem:weilproperties}(b) and the fact that $e$ is alternating,
    \[e(\widehat{u_{n-k}},v)=e(u_{n-k},\widehat{v})^{(-1)^{\ell-2-k}}=e(\widehat{v},u_{n-k})^{(-1)^k}.\]
    Since $q^n\equiv q^{k-n}\bmod \ell$ we have
    \[e(\widehat{v},u_{n-k})^{cq^n(1+(-1)^k)}=1.\]
    Since $e(\widehat{v},w)\neq 1$ by \cref{lem:weilproperties}, and $k$ is even by assumption, this is only possible if $c=0$, so that $v\in F_n^{k+1}$ is an eigenvector.
\end{proof}

For the proofs of \cref{intro_upper_bound} and \cref{prop:2lifting}, the reader may skip ahead to \cref{sec:consequences}. The intervening sections on Galois representations and Galois cohomology are only required for the proof of \cref{intro_refined_upper_bound}, though they can also be used to provide an alternate cohomological interpretation of some of the preceding results.

\section{From Frobenius eigenvectors to Galois representations}\label{sec:galoisreps}

The primary goal of this section is to show that the existence of an eigenvector of $\Frob$ in $F_n^k$ is equivalent to the existence of a certain $k$-dimensional representation $\psi$ of the absolute Galois group
\[G_{\F_q(x)}\colon= \Gal(\F_q(x)^\sep/\F_q(x)),\]
where $\F_q(x)^\sep$ denotes the separable closure of $\F_q(x)$.
A precise statement is given in \cref{prop:eigen to rep}. In the following sections we use Galois cohomology to analyze conditions under which such a Galois representation can occur, with the goal of proving \cref{thm:congruence_conditions}.

\subsection{Automorphisms of function fields} \label{subsec:setup}

Our first step is to translate our setup to the function field setting. The function field associated to the curve $C$ is $\F_q(C)=\F_q(x,y)$, where $y$ satisfies the equation $y^\ell=f(x)$.Since $\ell$ is coprime to $q$, the extension $\F_q(C)/\F_q(x)$ is separable. The Galois closure of $\F_q(C)/\F_q(x)$ is the field $\F_{q^\gamma}(C)= \F_{q^\gamma}(x,y)$, since $\F_{q^\gamma}/\F_q$ is generated by $\ell^{\text{th}}$ roots of unity. We define two automorphisms of $\F_\qg(C)$ by their actions on $x$, on $y$, and on $c\in\F_\qg$:
\begin{align*}
     \xi:   \qquad x&\mapsto x, & y&\mapsto \zeta^{-1} y, & c&\mapsto c\text{ for }c\in\F_{q^\gamma},\\
     \frob:  \qquad x&\mapsto x, & y&\mapsto y, & c&\mapsto c^q\text{ for }c\in\F_{q^\gamma}.
\end{align*}
Both of these automorphisms preserve $\F_\qg(C)$ and fix $\F_q(x)$ and so define elements of $\Gal(\F_\qg(C)/\F_q(x))$. More precisely, $\xi$ is a generator of the subgroup $\Gal(\F_\qg(C)/\F_\qg(x))\simeq \mathbf{Z}/\ell\mathbf{Z}$, and $\frob$ maps to a generator of the quotient group $\Gal(\F_\qg(x)/\F_q(x))\simeq \mathbf{Z}/\gamma\mathbf{Z}$. As discussed in the following \Cref{lem:G}, we can pick a section $\Gal(\F_{q^\gamma}(x)/\F_q(x)) \to \Gal(\F_{q^\gamma}(C)/\F_q(x))$ and thus we will use $\frob$ to denote both the element in $\Gal(\F_\qg(C)/\F_q(x))$ and its image in $\Gal(\F_\qg(x)/\F_q(x))$.
In summary, we have the following structure.

\begin{lemma}\label{lem:G}
    The Galois group $G_C:=\Gal(\F_{q^\gamma}(C)/\F_q(x))$ is isomorphic to $(\mathbf{Z}/\ell\mathbf{Z})\rtimes (\mathbf{Z}/\gamma\mathbf{Z})$, satisfying the exact sequence
    \[ 1 \to \langle \xi\rangle \to G_C \to \langle \frob\rangle \to 1 \]
    and with the semi-direct product structure given by $\frob\circ \xi=\xi^q\circ \frob$.
\end{lemma}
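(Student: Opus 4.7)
The plan is a straightforward verification using standard Galois theory of function fields, handled via the tower $\F_q(x) \subseteq \F_{q^\gamma}(x) \subseteq \F_{q^\gamma}(C)$. The lower layer is the constant field extension obtained by adjoining $\ell$-th roots of unity, and the upper layer is the Kummer-type extension given by adjoining a root of $y^\ell = f(x)$. I would analyze the two layers separately and then assemble them.

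First, I would check that $\F_{q^\gamma}(C)/\F_q(x)$ is Galois of degree $\ell\gamma$. The degrees $[\F_q(C):\F_q(x)] = \ell$ and $[\F_{q^\gamma}(C):\F_q(C)] = \gamma$ are already used in the introduction; the former holds because $y^\ell - f(x)$ is irreducible over $\F_q(x)$, and the latter because $C$ is geometrically irreducible (both facts follow from $\gcd(\ell, d) = 1$). For the Galois property, the $\ell$ roots of $y^\ell - f(x)$ over $\F_q(x)$ are $\zeta^i y$ for $0 \le i \le \ell-1$, all lying in $\F_{q^\gamma}(C)$, and $\F_{q^\gamma} = \F_q(\zeta)$; thus $\F_{q^\gamma}(C)$ is precisely the splitting field of $y^\ell - f(x)$ over $\F_q(x)$.

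Next, I would verify that the prescribed formulas for $\xi$ and $\frob$ genuinely define elements of $G_C$: each fixes $x$ and is compatible with the relation $y^\ell = f(x)$ (the case of $\xi$ because $(\zeta^{-1})^\ell = 1$, the case of $\frob$ because $f \in \F_q[x]$ is coefficient-wise fixed and $y$ is fixed). The subgroup $N := \Gal(\F_{q^\gamma}(C)/\F_{q^\gamma}(x))$ is generated by $\xi$ and has order $\ell$; since $\F_{q^\gamma}(x)/\F_q(x)$ is Galois (as a constant field extension), $N$ is normal in $G_C$, and the quotient $\Gal(\F_{q^\gamma}(x)/\F_q(x)) \cong \Z/\gamma\Z$ is generated by the restriction of $\frob$.

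To exhibit the splitting, I would show that $\frob$ has order exactly $\gamma$ in $G_C$: since $\frob$ fixes $y$, the identity $\frob^\gamma = 1$ already holds on $\F_{q^\gamma}(x, y) = \F_{q^\gamma}(C)$, and no smaller power fixes $\F_{q^\gamma}$. Hence $\langle \frob \rangle \cap N = \{1\}$, and a size count gives $G_C = N \rtimes \langle \frob \rangle$. The semidirect product relation is then a direct check on generators: both sides of $\frob \circ \xi = \xi^q \circ \frob$ act trivially on $x$ and agree on $\F_{q^\gamma}$, while on $y$ one computes $\frob(\xi(y)) = \frob(\zeta^{-1} y) = \zeta^{-q} y = \xi^q(y) = \xi^q(\frob(y))$. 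There is no substantive obstacle; the main subtlety is ensuring that $\frob$ as defined really has order exactly $\gamma$ in $G_C$ (not merely modulo $N$), so that $\langle \frob \rangle$ provides an honest section rather than just a generator of the quotient.
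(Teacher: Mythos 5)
Your proof is correct and takes the same route the paper implicitly follows; in fact the paper gives no formal proof of this lemma, only the preceding paragraph of \cref{subsec:setup} defining $\xi$ and $\frob$ and noting the tower $\F_q(x)\subset\F_\qg(x)\subset\F_\qg(C)$, so your argument simply makes that discussion precise. One small inaccuracy worth flagging: the degree facts $[\F_q(C):\F_q(x)]=\ell$ and $[\F_\qg(C):\F_q(C)]=\gamma$ do not really use $\gcd(\ell,d)=1$. Both come from the fact that $f$ is irreducible (hence squarefree over $\Fbar_q$), so it is not an $\ell$-th power in $\Fbar_q(x)$, which makes $y^\ell-f(x)$ irreducible over $\Fbar_q(x)$ and keeps the constant field of $\F_q(C)$ equal to $\F_q$. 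The hypothesis $\gcd(\ell,d)=1$ is used elsewhere in the paper (e.g.\ for the unique point at infinity in \cref{V0basis}), not here. This does not affect the validity of your proof.
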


The following diagram summarizes the fields we are considering so far. All pictured extensions are Galois except for $\F_q(C)/\F_q(x)$.
\begin{center}
\begin{tikzcd}
& \F_{q^\gamma}(C)\arrow[dl, dash] \arrow[dr, dash,"\langle \xi\rangle \simeq \mathbf{Z}/\ell\mathbf{Z}"]\arrow[dd, dash, "G_C"] & \\
\F_q(C) \arrow[dr, dash] & &\F_{\qg}(x) \arrow[dl,dash,"\langle \frob\rangle \simeq \mathbf{Z}/\gamma\mathbf{Z}"] \\
& \F_{q}(x) &
\end{tikzcd}
\end{center}

We briefly discuss how these function field maps relate to the morphisms $\zeta,\Frob:C_{\F_\qg}\to C_{\F_\qg}$ from \cref{sec:setup}. Let $\zeta^\sharp$ and $\Frob^\sharp$ denote the respective maps of function fields induced by the maps $\zeta$ and $\Frob$. Then we can immediately see that $\zeta^\sharp=\xi^{-1}$. On the other hand, the maps $\frob$ and $\Frob^\sharp$ are quite different. The \emph{arithmetic Frobenius} map $\frob$ does not fix the base field $\F_\qg$, and so is not induced by a morphism of $\F_\qg$-varieties. The \emph{relative Frobenius} map $\Frob^\sharp$ -- which is the map from $\F_\qg(C)$ to itself that sends $x\mapsto x^q$, $y\mapsto y^q$, and fixes $\F_\qg$ -- is induced by a morphism of $\F_\qg$-varieties, but is not a field automorphism; the image is a degree $q$ subfield of $\F_\qg(C)$. The composition $\Frob^\sharp\circ\frob$ is the \emph{absolute Frobenius} map, which acts on any $h\in \F_\qg(C)$ by $h\mapsto h^q$. See \cite[Page 93]{Poonen} for more on the decomposition of absolute Frobenius into relative Frobenius and arithmetic Frobenius. These observations give us the following relations for all geometric points $P\in C(\Fbar_q)$ and rational functions $h\in \F_\qg(C)$:
\[\xi(h)(\zeta(P))=h(P),\qquad \frob(h)(\Frob(P))=h(P)^q.\]
As a consequence, on the divisor classes we have
\begin{align}\label{eq:func field to scheme morphism}
    \divisor(\xi(h))=\zeta(\divisor(h)),\qquad \divisor(\frob(h))=\Frob(\divisor(h)).
\end{align}

\subsection{Constructing Galois representations}\label{subsec:defreps}

We first define a few explicit representations that will be used to identify $\Frob$ eigenvectors in $F_n^k$. Recall that $G_C=\Gal(\F_\qg(C)/\F_q(x))$ is generated by $\xi$ and $\frob$, and there is a natural quotient map $G_{\F_q(x)}\to G_C$.

\begin{definition}
    Let $\chi:G_{\F_q(x)}\to \F_\ell^*$ be the representation given by the action on $\mu_\ell \subset \F_\qg$. That is, $\chi$ factors through $G_C$ where it acts by
    \[ \chi(\frob)=q, \qquad \chi(\xi)=1.  \]
\end{definition}

\begin{definition}\label{defrho}
    Let $\rho:G_{\F_q(x)}\to \GL_2(\F_\ell)$ be the representation that factors through $G_C$ where it acts by
    \[\rho(\frob)=\begin{pmatrix} q & 0\\0&1\end{pmatrix},\qquad \rho(\xi)=\begin{pmatrix} 1 & 1\\0&1\end{pmatrix}\]
    for a choice of basis $\{u,v\}\subset \F_\ell^2$. 
\end{definition}
 
The span of $u$ is the unique one-dimensional subrepresentation of $\rho$, which is isomorphic to $\chi$ as can be seen by considering the upper left entry of the matrix form of $\rho$. Moreover, for any $\sigma\in G_{\F_q(x)}$, we have $\rho(\sigma) = \begin{pmatrix}
    \chi(\sigma) & b(\sigma) \\ 0  & 1
\end{pmatrix}$ for some $b(\sigma) \in \F_\ell$; in particular, $b$ is a crossed homomorphism $G_{\F_q(x)} \to \F_\ell$ representing a class in $H^1(G_{\F_q(x)}, \chi)$. Using the cohomological setup we will introduce in \cref{subsec:Cohomologyclasses}, this is equivalent to saying that $\rho$ is the extension of $1$ by $\chi$ associated to the function $b\in H^1(G_{\F_q(x)}, \chi)$.

For $0\leq k\leq \ell-1$, the \emph{$k$-th symmetric power} of $\rho$, $\Sym^k\rho:G_{\F_q(x)}\to\GL_{k+1}(\F_\ell)$, is defined as follows. Taking the basis $\{u,v\}$ for $\F_\ell^2$ as in \cref{defrho}, a basis for $\F_\ell^{k+1}$ is given by formal monomials $e_i:=\frac{1}{i!}u^{k-i}v^i$ for $i=0,\ldots,k$ (note that $i!$ is invertible over $\F_\ell$). For each $\sigma \in G_{\F_q(x)}$, define
\begin{align*}
    \Sym^k\rho(\sigma)(e_i)=\frac{1}{i!}(\rho(\sigma)(u))^i(\rho(\sigma)(v))^j,
\end{align*}
computed by expanding the right-hand side as a sum of monomials. Then $\Sym^k\rho$ factors through $G_C$, and has the following matrix representation on the generators of $G_C$ with respect to the basis $e_0,\ldots,e_k$.
    \begin{equation}\label{eq:SymFrobzeta}
\Sym^k\rho (\frob)=\begin{pmatrix}
    q^k & 0 & 0 & 
   \cdots & 0 \\ 
     & q^{k-1} & 0 & \cdots 
     & 0 \\ 
     &   & q^{k-2} &  \cdots & 
      0 \\ 
     &  &  & \ddots & \vdots   \\ 
     &  &  & & 1 
    \end{pmatrix}, \qquad\qquad \Sym^k\rho (\xi)=\begin{pmatrix}
    1 & 1 & \frac{1}{2!} & 
   \cdots & \frac{1}{k!} \\ 
     & 1 & 1 & \cdots 
     & \frac{1}{(k-1)!} \\ 
     &   & 1 &  \cdots & 
      \frac{1}{(k-2)!} \\ 
     &  &  & \ddots & \vdots   \\ 
     &  &  & & 1 
    \end{pmatrix}.
    \end{equation}
The matrix $\Sym^k\rho(\xi)$ is similar to a Jordan block, so there is no nontrivial decomposition of $\F_\ell^{k+1}$ into a direct sum of two subspaces invariant under $\Sym^k\rho(\xi)$. We can conclude that $\Sym^k\rho$ is indecomposable for $0\leq k\leq \ell-1$. 

\begin{lemma}\label{lem:extend}
    For any $n\in\Z/\gamma\Z$ and $2\leq k\leq \ell-1$, there is a short exact sequence of $G_{\F_q(x)}$-representations
    \begin{equation}\label{eq:reps}
        0 \to \Sym^{k-1}\rho \otimes \chi^{n+1} \to \Sym^k\rho \otimes \chi^n \to \chi^n \to 0.
    \end{equation}
\end{lemma}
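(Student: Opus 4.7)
The plan is to produce the short exact sequence at the level of $G_C$-representations (since $\chi$, $\rho$, and $\Sym^k\rho$ all factor through the finite quotient $G_C$), and in fact to first handle the case $n=0$ and then obtain the general case by tensoring with the one-dimensional character $\chi^n$. The key is to exhibit the subrepresentation $\Sym^{k-1}\rho \otimes \chi \hookrightarrow \Sym^k\rho$ explicitly, after which identifying the quotient with the trivial representation $\mathbf{1}$ is straightforward.

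Working with the model of $\Sym^k\rho$ as the space of homogeneous degree-$k$ polynomials in $u, v$, the matrix formula for $\rho$ in \cref{defrho} yields $\rho(\sigma)(u)=\chi(\sigma)\,u$ for every $\sigma\in G_{\F_q(x)}$, so $u$ spans a subrepresentation of $\rho$ isomorphic to $\chi$. I would then consider the multiplication-by-$u$ map
\[
m_u: \Sym^{k-1}\rho \otimes \chi \longrightarrow \Sym^k\rho, \qquad w\otimes 1 \longmapsto u\cdot w.
\]
This is injective and $G_{\F_q(x)}$-equivariant because $\sigma(u\cdot w)=\chi(\sigma)\,u\cdot \sigma(w)$; its image is the $k$-dimensional subspace of polynomials divisible by $u$, which in the basis of \cref{eq:SymFrobzeta} is $\Span(e_0,\ldots,e_{k-1})$ (the vector $v^k = k!\,e_k$ is the unique basis element not in the image). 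The quotient $\Sym^k\rho/\im(m_u)$ is one-dimensional, spanned by the class of $v^k$; since $\rho(\sigma)(v)\equiv v \pmod{u}$, the induced action on this quotient is trivial.

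I would then sanity-check against the explicit matrices in \cref{eq:SymFrobzeta}: the span of $e_0,\ldots,e_{k-1}$ is visibly preserved by both $\Sym^k\rho(\frob)$ (diagonal) and $\Sym^k\rho(\xi)$ (upper triangular), the induced $\frob$-action has diagonal $(q^k, q^{k-1}, \ldots, q)$, which is $q=\chi(\frob)$ times the diagonal of $\Sym^{k-1}\rho(\frob)$, and the induced $\xi$-action is the leading $k\times k$ principal minor of $\Sym^k\rho(\xi)$, which equals $\Sym^{k-1}\rho(\xi)$ (noting $\chi(\xi)=1$). This confirms the subrepresentation is $\Sym^{k-1}\rho\otimes\chi$, so we have the short exact sequence
\[
0 \to \Sym^{k-1}\rho\otimes\chi \to \Sym^k\rho \to \mathbf{1} \to 0.
\]
Tensoring with $\chi^n$ (which is exact since $\chi^n$ is a one-dimensional character) produces the sequence \eqref{eq:reps}.

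There is no real obstacle here: the statement is essentially the standard fact that $\Sym^k$ of an extension of $\mathbf{1}$ by $\chi$ has a natural filtration with graded pieces $\chi^k,\chi^{k-1},\ldots,\chi,\mathbf{1}$. The one bookkeeping point worth being careful about is the direction of the twist: because the distinguished subrepresentation of $\rho$ is $\chi$ (not $\mathbf{1}$), multiplication by a generator of this subrepresentation contributes an extra factor of $\chi$, which is why the subobject on the left of \eqref{eq:reps} is twisted by $\chi^{n+1}$ and not $\chi^n$.
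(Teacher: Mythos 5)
Your proof is correct and takes essentially the same approach as the paper: both identify $\Span(e_0,\ldots,e_{k-1})$ as the $k$-dimensional invariant subspace, read off from the matrices in \eqref{eq:SymFrobzeta} that the subrepresentation is $\Sym^{k-1}\rho\otimes\chi^{n+1}$ and the quotient is $\chi^n$. Your multiplication-by-$u$ framing and reduction to $n=0$ is a slightly more conceptual repackaging of the same observation, but not a different argument.
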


\begin{proof}
   The matrix representation for $\bigrep{k}{n}$ is given by the matrix representation for $\Sym^\rho$ with every entry multiplied by $\chi^n(\sigma)$. The span of $e_0,\ldots,e_{k-1}$ is a $k$-dimensional invariant subspace. The action of $\bigrep{k}{n}$ on this subspace given by the $k\times k$ upper left submatrix, giving an explicit isomorphism to $\Sym^{k-1}\rho \otimes \chi^{n+1}$. The corresponding quotient representation is given by the entry at the bottom-right, which is isomorphic to $\chi^n$.
\end{proof}

Under the same assumptions as \cref{lem:extend}, we also have a short exact sequence of the form
\begin{equation}\label{eq:topleft cyclo}
0 \to \cyclo^{k+n} \to \bigrep{k}{n} \to \bigrep{k-1}{n} \to 0.
\end{equation}

We will also compute the dual representations of the representations defined above.
Let $V$ be a vector space over $\F_\ell$, and $V^\vee:=\Hom (V,\F_\ell)$ its dual space, so that there is a natural perfect pairing $V\times V^\vee\to\F_\ell$.

\begin{definition}\label{def:dual}
    The \emph{linear dual} (or \emph{contragredient}) of a representation $\theta: G_{\F_q(x)} \to \GL(V)$ is a representation $\theta^\vee:G_{\F_q(x)}\to\GL(V^\vee)$ characterized by the condition that $V\times V^\vee\to \F_\ell$ induces a $G_{\F_q(x)}$-equivariant homomorphism $\theta\times \theta^\vee\to 1$.
    
    The \emph{cohomological dual} of a Galois representation $\theta: G_{\F_q(x)} \to \GL(V)$ is defined as
    \[\theta^*:=\Hom_{G_{\F_q(x)}}(\theta, \mu_\ell)\simeq \theta^\vee\otimes\chi.\]
\end{definition}
If we identify the vector space $\F_\ell^n$ with its dual via the pairing $(u,v)\mapsto u^Tv$, we can check that for each $\sigma\in G_{\F_q(x)}$, the matrix representing $\theta^\vee(\sigma)$ must be the transpose of the matrix representing $\theta(\sigma)^{-1}$, in order to guarantee that $(\theta^\vee(\sigma) u)^T(\theta(\sigma) v)=u^T v$ for all $u,v\in \F_\ell^n$. 
\begin{lemma}\label{bigrep_dual}
    Let $0\leq k\leq \ell-1$ and $n\in\Z/\gamma\Z$. The linear dual of $\bigrep{k}{n}$ is isomorphic to $\bigrep{k}{-k-n}$.
\end{lemma}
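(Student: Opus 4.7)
My plan is to prove the lemma by combining the identity $\rho^\vee \cong \rho \otimes (\det\rho)^{-1}$, which holds for any two-dimensional representation, with the standard functorial properties of symmetric powers. First I would compute $\det \rho$: reading off the matrix forms in \cref{defrho}, $\det \rho(\frob) = q \cdot 1 = q = \chi(\frob)$ and $\det \rho(\xi) = 1 \cdot 1 - 0 \cdot 1 = 1 = \chi(\xi)$. Since $\frob$ and $\xi$ generate the image of $G_{\F_q(x)}$ in $G_C$, and both $\det \rho$ and $\chi$ factor through $G_C$, this gives $\det \rho = \chi$.

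Next I would establish $\rho^\vee \cong \rho \otimes \chi^{-1}$. This comes from the general fact that for any two-dimensional representation $V$, the wedge pairing $V \otimes V \to \wedge^2 V = \det V$ is perfect, yielding an isomorphism $V \cong V^\vee \otimes \det V$ of representations; concretely, one can verify by a direct calculation that the change-of-basis matrix $\bigl(\begin{smallmatrix} 0 & 1 \\ -1 & 0\end{smallmatrix}\bigr)$ conjugates $(\rho(\sigma)^{-1})^T$ into $(\det\rho)^{-1}(\sigma)\,\rho(\sigma)$ for each $\sigma$. Applied with $\det \rho = \chi$, this yields $\rho^\vee \cong \rho \otimes \chi^{-1}$.

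Then I would chain together the two standard identities $\Sym^k(V^\vee) \cong (\Sym^k V)^\vee$ and $\Sym^k(V \otimes L) \cong \Sym^k V \otimes L^{\otimes k}$ for a line $L$, giving
\[
(\Sym^k \rho)^\vee \cong \Sym^k(\rho^\vee) \cong \Sym^k(\rho \otimes \chi^{-1}) \cong \Sym^k \rho \otimes \chi^{-k}.
\]
Finally, tensoring with $\chi^{-n}$ produces the desired isomorphism
\[
(\Sym^k \rho \otimes \chi^n)^\vee \cong \Sym^k \rho \otimes \chi^{-k-n}.
\]

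The main subtlety lies in the characteristic-$\ell$ setting: the identity $(\Sym^k V)^\vee \cong \Sym^k(V^\vee)$ requires the induced pairing $\Sym^k V \times \Sym^k V^\vee \to \F_\ell$ to be nondegenerate, which relies on the invertibility of the factorials $1!, \ldots, k!$ modulo $\ell$. This holds precisely because $0 \le k \le \ell-1$, matching the range in which the representations $\Sym^k \rho$ are indecomposable (as noted after \eqref{eq:SymFrobzeta}). If one wanted to bypass the abstract argument entirely, the matrix forms in \eqref{eq:SymFrobzeta} also allow a direct verification: compute the transpose-inverses of the matrices representing $\Sym^k \rho \otimes \chi^n$ on $\frob$ and $\xi$, then conjugate by the anti-diagonal permutation matrix $w$ (with $1$'s on the anti-diagonal), and check that the resulting matrices agree with those of $\Sym^k \rho \otimes \chi^{-k-n}$ on the generators of $G_C$.
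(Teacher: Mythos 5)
Your main argument is correct and takes a genuinely different route from the paper. The paper proves this lemma by a direct $(k{+}1)$-dimensional matrix computation: it conjugates $\bigrep{k}{-k-n}(\frob)$ and $\bigrep{k}{-k-n}(\xi)$ by the anti-diagonal matrix $B$ of \cref{eq:special_diag} (which has alternating signs $(-1)^j$ along the anti-diagonal), transposes, and verifies via the identity $\sum_{t=i}^{j}\frac{(-1)^{j-t}}{(t-i)!(j-t)!}=\delta_{ij}$ that the result is the literal inverse of $\bigrep{k}{n}$ on each generator. Your approach instead reduces to the $2$-dimensional case via the standard identity $\rho^\vee \cong \rho \otimes (\det\rho)^{-1}$ (valid because the wedge pairing on a rank-$2$ space is perfect in any characteristic) together with the computation $\det\rho = \chi$, and then invokes functoriality of $\Sym^k$. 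You correctly flag the only nontrivial characteristic-$\ell$ point, namely that $(\Sym^k V)^\vee \cong \Sym^k(V^\vee)$ requires the factorials $1!,\dots,k!$ to be invertible, which is exactly the hypothesis $k \leq \ell-1$; this is the same condition the paper exploits implicitly through its choice of basis $e_i = \frac{1}{i!}u^{k-i}v^i$. Your argument is cleaner and more conceptual, at the cost of appealing to functoriality rather than being entirely self-contained.

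One small correction to your closing remark: the concrete alternative you describe, conjugating the transpose-inverse of $\Sym^k\rho\otimes\chi^n$ by the anti-diagonal permutation matrix $w$ with all $1$'s, does not literally reproduce $\bigrep{k}{-k-n}$. For the generator $\xi$, the transpose-inverse of $\Sym^k\rho(\xi)$ has the alternating-sign entries $\frac{(-1)^{j-i}}{(j-i)!}$, and conjugation by $w$ (which merely reverses both indices) preserves those signs; the result therefore differs from $\Sym^k\rho(\xi)$ by the signs and is not equal to it. This is precisely why the paper's matrix $B$ carries the alternating signs $(-1)^j$ down its anti-diagonal rather than all $1$'s: the extra diagonal twist $\operatorname{diag}(1,-1,1,\dots,(-1)^k)$ is what cancels the signs. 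Of course conjugating by $w$ still produces a representation \emph{isomorphic} to $\bigrep{k}{-k-n}$, since $B$ and $w$ differ by a diagonal matrix, but the entrywise agreement you describe requires $B$, not $w$. Since your abstract argument makes no use of this remark, the proof stands.
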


\begin{proof} 
    Since $\bigrep{k}{n}$ factors through $G_C$, the same must be true of its linear dual, so we can restrict our attention to $G_C$. Let $\theta=\bigrep{k}{-k-n}$. Using the change of basis determined by the matrix
    \begin{align}\label{eq:special_diag}
        B=\begin{psmallmatrix}
         &  & & & -1 \\
         &  & & 1 & \\
         &  & -1 &  & \\
         & \iddots  & &  & \\
        (-1)^k &   & &  & \\
        \end{psmallmatrix},
    \end{align}
    we compute
    \begin{align*}
    (B\theta(\frob)B^{-1})^T=\begin{pmatrix}
    q^{-n-k} &  & 
    &  \\ 
     & q^{1-n-k} &  
     &  \\ 
     &  &  \ddots &    \\ 
     &  &  & q^{-n} 
    \end{pmatrix}, \qquad
    (B\theta(\xi)B^{-1})^T=
    \begin{pmatrix}
    1 & -1 & \frac{1}{2!} & 
   \cdots & \frac{(-1)^k}{k!} \\ 
     & 1 & -1 & \cdots 
     & \frac{(-1)^{k-1}}{(k-1)!} \\ 
     &   & 1 &  \cdots & 
      \frac{(-1)^{k-2}}{(k-2)!} \\ 
     &  &  & \ddots & \vdots   \\ 
     &  &  & & 1 
    \end{pmatrix}.
    \end{align*}
    These are the inverses of $(\bigrep{k}{n})(\frob)$ and $(\bigrep{k}{n})(\xi)$, respectively. The former assertion is clear, and the latter uses the observation that for all $0\leq i\leq j\leq k$,
    \begin{align*}
        \sum_{t=i}^j\frac{(-1)^{j-t}}{(t-i)!(j-t)!}=\frac{1}{(j-i)!}\sum_{t=0}^{j-i}(-1)^\ell\binom{j-i}{t}=\left\{\begin{array}{ll}
            1 & \text{if }j=i, \\
            0 & \text{otherwise.}
        \end{array}\right.
    \end{align*}
    We can conclude that the change of basis determined by $B$ takes $\bigrep{k}{-n-k}$ to the linear dual of $\bigrep{k}{n}$. 
\end{proof}

\subsection{$J[\ell](\F_\qg)$ as a Galois representation}\label{subsec:Kummer}

Note that any $\Frob$ eigenvector $v\in F_n^k$ is automatically in $J[\ell](\F_{\qg})$ since $q^\gamma = 1 \in \F_\ell^*$ and
\[\Frob^\gamma v=q^{(n-k+1)\gamma}v=(q^\gamma)^{n-k+1} v=v.\] We therefore restrict our attention to the structure of $J[\ell](\F_\qg)$ as a $G_{\F_q(x)}$ Galois module.

We will define an action of $G_{\F_q(x)}$ on $J[\ell](\F_\qg)$ via Kummer theory. Let $H$ denote the subgroup of $\F_\qg(C)^\times/\F_\qg(C)^{\times \ell}$ defined by the property that $h\in H$ if and only if $\divisor(h)$ is a multiple of $\ell$ in $\Div(C_{\F_\qg})$.

\begin{lemma}\label{lem:H to Jl}
    There is a split exact sequence
    \[0\to \mu_\ell \to H\to J[\ell](\F_\qg)\to 0.\]
\end{lemma}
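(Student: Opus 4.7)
The plan is to construct an explicit map $\Phi\colon H\to J[\ell](\F_\qg)$ by sending a class $[h]$ to the class of the unique divisor $D\in\Div(C_{\F_\qg})$ satisfying $\divisor(h)=\ell D$. First I would verify that $D$ is well-defined: it is unique because $\Div(C_{\F_\qg})$ is torsion-free, and if we replace $h$ by $h g^\ell$ then $D$ is replaced by $D+\divisor(g)$, so the class $[D]$ in $\Pic(C_{\F_\qg})$ is independent of the choice of lift. Since $\divisor(h)$ has degree $0$, we have $D\in\Div^0(C_{\F_\qg})$, and clearly $\ell[D]=[\divisor(h)]=0$. Using that the curve $C$ has an $\F_q$-rational point at infinity (recall $\ell\nmid d$), we have $J(\F_\qg)=\Pic^0(C_{\F_\qg})$, so $\Phi([h])\in J[\ell](\F_\qg)$.

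Next I would check exactness. For surjectivity, given a class $P\in J[\ell](\F_\qg)$, I would pick a representative $D_0\in\Div^0(C_{\F_\qg})$ (using the rational point at infinity to produce such a representative), and write $\ell D_0=\divisor(h)$ for some $h\in\F_\qg(C)^\times$; then $[h]\in H$ maps to $P$. For the kernel, if $\Phi([h])=0$ with $\divisor(h)=\ell D$, then $D=\divisor(g)$ for some $g\in\F_\qg(C)^\times$, so $\divisor(h/g^\ell)=0$ and hence $h/g^\ell\in\F_\qg^\times$. Thus $[h]$ lies in the image of $\F_\qg^\times/(\F_\qg^\times\cap \F_\qg(C)^{\times\ell})$. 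A quick check (any $\ell$-th root of a constant in $\F_\qg(C)^\times$ has no zeros or poles and hence lies in $\F_\qg^\times$) shows $\F_\qg^\times\cap \F_\qg(C)^{\times\ell}=\F_\qg^{\times\ell}$, so the kernel is $\F_\qg^\times/\F_\qg^{\times\ell}\cong\mu_\ell$, the last isomorphism because $\F_\qg^\times$ is cyclic of order divisible by $\ell$ containing $\mu_\ell$.

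Finally, for the splitting: both $\mu_\ell$ and $J[\ell](\F_\qg)$ are annihilated by $\ell$ and hence are $\F_\ell$-vector spaces, so any short exact sequence between them splits as $\F_\ell$-modules (equivalently, as abelian groups), simply by lifting an $\F_\ell$-basis of $J[\ell](\F_\qg)$ to elements of $H$.

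The main obstacle is the subtle point that surjectivity requires representing every class of $J[\ell](\F_\qg)$ by an actual $\F_\qg$-rational divisor (not merely a divisor whose class is $\Gal(\overline{\F_q}/\F_\qg)$-invariant). This potential discrepancy between $\Pic^0(C_{\F_\qg})$ and $J(\F_\qg)$ is controlled by the presence of an $\F_\qg$-rational point on $C$, which is supplied by the point at infinity under our standing hypothesis $\ell\nmid d$. Beyond this, the proof is a direct application of Kummer theory.
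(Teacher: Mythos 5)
Your proof is correct in substance and uses the same map $h\mapsto \tfrac1\ell\divisor(h)$ as the paper; the kernel computation is essentially identical (and in fact slightly more carefully spelled out, verifying $\F_\qg^\times\cap\F_\qg(C)^{\times\ell}=\F_\qg^{\times\ell}$). Where you diverge is the splitting: the paper constructs an explicit set-theoretic section by normalizing rational functions to be ``monic'' at a fixed place $Q$, which has the minor advantage of producing a canonical preimage $h_D$ of each $D\in J[\ell](\F_\qg)$ that is then used in the subsequent discussion; you instead observe that the sequence splits automatically. The automatic-splitting route is cleaner when one only needs the abstract statement.

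However, your justification for the splitting has a small but real gap. You assert that ``any short exact sequence between them splits as $\F_\ell$-modules'' because the two outer terms $\mu_\ell$ and $J[\ell](\F_\qg)$ are $\F_\ell$-vector spaces. That general principle is false: $0\to\ell\Z/\ell^2\Z\to\Z/\ell^2\Z\to\Z/\ell\Z\to 0$ is a nonsplit short exact sequence of abelian groups whose outer terms are $\F_\ell$-vector spaces. What actually makes the argument work is that the \emph{middle} term $H$ is also annihilated by $\ell$, since $H$ is by definition a subgroup of $\F_\qg(C)^\times/\F_\qg(C)^{\times\ell}$; once you note this, the sequence is a short exact sequence of $\F_\ell$-vector spaces and hence splits. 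That one-line fix closes the gap, after which your proof stands.
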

\begin{proof}
    Define the map $H\to J[\ell](\F_\qg)$ by $h\mapsto \frac1\ell\divisor(h)$. If $h$ is in the kernel of this map then there exists $g\in \F_\qg(C)^\times$ with $\ell\divisor(g)=\divisor(h)$, so $h=cg^\ell$ for some constant $c\in\F_\qg^\times$. This implies that $h=c$ as elements of the quotient group $\F_\qg(C)^\times/\F_\qg(C)^{\times\ell}$. Since $q^\gamma\equiv 1\bmod \ell$, we have $\F_\qg^\times/\F_\qg^{\times\ell}\simeq \mu_\ell$.
    
    It suffices to define a right inverse for the map $h\mapsto \frac1\ell\divisor(h)$. 
    Pick an arbitrary place $Q$ of $\F_\qg(C)$ with corresponding uniformizer $\pi_Q$ and valuation $\ord_Q$, and define the set of ``monic'' rational functions
    \[K^1:=\left\{h\in \F_\qg(C)^\times:\left(\frac{h}{\pi_Q^{\ord_Q(h)}}\right)(Q)=1\right\}.\]
    For every $h \in \F_\qg(C)^\times$, there exists $\alpha \in \F_\qg^\times$ such that $\alpha h \in K^1$.
    Then $K^1$ is a subgroup of $\F_\qg(C)^\times$, and we have an exact sequence
    \[0\to K^1\xrightarrow{\divisor} \Div(C_{\F_\qg})\to J(\F_{q^\gamma})\to 0.\]
    Now for any $D\in J[\ell](\F_\qg)$, there exists $h_D\in K^1$ satisfying $ \divisor(h_D) = \ell D$. Thus $h_D\in H$ maps to $D$.
\end{proof}

The action of $G_{\F_q(x)}$ on $\F_q(x)^\sep$ induces actions on $\mu_\ell$ and $H$, and the map $\mu_\ell\to H$ from \cref{lem:H to Jl} is equivariant under these actions. This induces a quotient representation structure on $J[\ell](\F_\qg)$.
Explicitly, for each $D\in J[\ell](\F_\qg)$, pick a preimage $h_D\in H$. By \cref{eq:func field to scheme morphism}, we have $\xi(h_D)=h_{\zeta(D)}$ and $\frob(h_D)=h_{\Frob(D)}$ as elements of $H/\mu_\ell$. Thus the structure of $J[\ell](\F_\qg)$ as a $G_{\F_q(x)}$-representation is determined by stipulating that the map $G_{\F_q(x)}\to \GL(J[\ell](\F_\qg))$ factors through $G_C$, where it acts by
\begin{align*}
    \frob\cdot D&=\Frob(D),\\
    \xi\cdot D&=\zeta(D).
\end{align*}

Using this setup, we can show that each $\Frob$ eigenvector in $F_n^k$ generates a $G_{\F_q(x)}$-subrepresentation of $J[\ell](\F_\qg)$ that is isomorphic to one of the representations constructed in the previous section.

\begin{lemma}\label{lem:GaloisrepW}
    There exists an eigenvector of $\Frob$ in $F_n^k$ if and only if there exists a $G_{\F_q(x)}$-subrepresentation of $J[\ell](\F_\qg)$ isomorphic to $\Sym^{k-1}\rho\otimes\chi^{n+1-k}$.
\end{lemma}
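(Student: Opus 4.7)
My plan is to prove the lemma by constructing an explicit $G_{\F_q(x)}$-equivariant isomorphism between $\bigrep{k-1}{n+1-k}$ and a distinguished subspace of $J[\ell](\F_{q^\gamma})$ associated to a Frobenius eigenvector. The main technical ingredient will be the observation that on $J[\ell]$, the operator $\eta$ is a truncated logarithm of $\zeta$, so that the identity $\zeta = \exp(\eta)$ holds on $J[\ell]$; this translates between the filtration data (controlled by $\eta$) and the Galois action (which involves $\zeta$).

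For the forward direction, given a Frobenius eigenvector $v\in F_n^k$, I set $v_j := \eta^{k-1-j}v$ for $j=0,\ldots,k-1$. By iterating \cref{Fnk_updown}(a) together with \cref{lem:linearcommute}, each $v_j$ is a Frobenius eigenvector with eigenvalue $q^{n-j}$ lying in $V^{j+1}\setminus V^j$, so the $v_j$ are linearly independent and span a $k$-dimensional subspace $W\subseteq J[\ell](\F_{q^\gamma})$. These Frobenius eigenvalues match the diagonal $\frob$-action on $\bigrep{k-1}{n+1-k}$ as described by \cref{eq:SymFrobzeta}. For the $\xi$-action, using $\zeta = \sum_{i=0}^{\ell-2}\eta^i/i!$ on $J[\ell]$ together with the relation $\eta v_j = v_{j-1}$ (setting $v_{-1}:=0$ and noting $\eta^k v=0$), a direct computation gives $\zeta v_j = \sum_{j'=0}^j v_{j'}/(j-j')!$, which matches the $\xi$-action in \cref{eq:SymFrobzeta}. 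Hence the $\F_\ell$-linear map sending $e_j \mapsto v_j$ is a $G_{\F_q(x)}$-equivariant isomorphism.

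For the converse direction, suppose $W\subseteq J[\ell](\F_{q^\gamma})$ is a subrepresentation isomorphic to $\bigrep{k-1}{n+1-k}$, and let $v\in W$ correspond to the basis vector $e_{k-1}$. From the explicit matrix form of $\xi$ in \cref{eq:SymFrobzeta} (which coincides with its action on $\bigrep{k-1}{n+1-k}$ since $\chi(\xi)=1$), one checks that $(\xi-1)^{k-1}e_{k-1} = e_0 \neq 0$ and $(\xi-1)^k e_{k-1}=0$; transferring along the isomorphism and using that $\xi$ acts as $\zeta$ on $J[\ell]$ places $v\in V^k\setminus V^{k-1}$. Simultaneously the diagonal $\frob$-action gives $\Frob v = q^{n-k+1}v$, so $v$ is a Frobenius eigenvector in $F_n^k$.

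The main obstacle is cleanly establishing the identity $\zeta = \exp(\eta)$ on $J[\ell]$. This requires recognizing the defining formula $\eta = -\sum_{i=1}^{\ell-2}(1-\zeta)^i/i$ as the truncation of the formal power series $\log(\zeta)$, using that $(1-\zeta)^{\ell-1}$ annihilates $J[\ell]$, and then verifying that the formal power series identity $\exp(\log(\zeta))=\zeta$ descends to the nilpotent quotient $\F_\ell[1-\zeta]/(1-\zeta)^{\ell-1}$. Once this is in hand, the remainder of the proof is a direct matrix comparison using the already-established \cref{Fnk_updown}, \cref{lem:linearcommute}, and \cref{subquotient_iso}.
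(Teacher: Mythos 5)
Your proposal is correct and follows essentially the same route as the paper's proof: forward direction via the basis $\eta^{k-1}v,\ldots,v$ and matrix comparison with \cref{eq:SymFrobzeta}, converse direction by checking the nilpotency order of $\zeta-1$ on the vector corresponding to $e_{k-1}$. The only slight difference is that you more carefully isolate and justify the truncated identity $\zeta=\exp(\eta)$ on $J[\ell]$ (via formal-power-series inversion of the truncated $\log$ in the nilpotent quotient), whereas the paper simply invokes \cref{def:eta} and works with the truncation $\zeta=1+\eta+\cdots+\eta^{k-1}/(k-1)!$ valid on $W$; both are correct and amount to the same observation.
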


\begin{proof}
    As was discussed above, the structure of $J[\ell](\F_\qg)$ as a $G_{\F_q(x)}$-representation is determined by the fact that the representation factors through $G_C$, where $\frob$ acts by $\Frob$ and $\xi$ acts by $\zeta$. So to prove the lemma it suffices to consider the actions of $\Frob$ and $\zeta$.
    
    Suppose $v\in F_n^k$ is a $\Frob$ eigenvector.
    Since $v \in V_k\setminus V_{k-1}$, the vectors $\eta^{k-1}v,\eta^{k-2}v,\cdots,v$ are linearly independent and span a $k$-dimensional $\F_\ell$ vector space which we call $W$. By \cref{lem:linearcommute}, the matrix representing $\Frob$ acting on the basis $\{\eta^{k-1}v,\eta^{k-2}v,\cdots,v\}$ is
   \begin{equation} \label{eq:betavFrob}
    \begin{pmatrix}
        q^{n} &   &  &  \\
         & q^{n-1} &   & \\
         &  &\ddots & \\
         &  &  & q^{n-k+1}
    \end{pmatrix}.
    \end{equation}
    Since $\eta^k v=0$, $W$ is also stable under the action of $\eta$. By \cref{def:eta}, the actions of $\eta$ and $\zeta$ satisfy the equation $$\zeta = 1 + \eta + \frac{\eta^2}{2!}+\cdots+\frac{\eta^{k-1}}{(k-1)!}$$
    on $W$. The matrix representing $\zeta$ acting on the basis $\{\eta^{k-1}v,\eta^{k-2}v,\cdots,v\}$ is therefore
    \begin{equation}\label{eq:betavzeta}
        \begin{pmatrix}1 & 1 &  \frac{1}{2!} & \cdots & \frac{1}{(k-1)!}\\
         & 1 & 1 & \cdots & \frac{1}{(k-2)!}\\
         &  & 1 & \cdots & \frac{1}{(k-3)!}\\
         &  &  & \ddots & \vdots\\
         & &  &  & 1
            
        \end{pmatrix}. 
    \end{equation}

    Comparing the two matrices with Equations \cref{eq:SymFrobzeta}, we conclude that the $G_{\F_q(x)}$ action on $W$ (with $\frob$ acting via $\Frob$ and $\xi$ acting via $\zeta$) is isomorphic to $\Sym^{k-1}\rho\otimes\chi^{n-k+1}$.

    Conversely, suppose $W$ is a subgroup of $J[\ell](\F_\qg)$ that is isomorphic as a $G_{\F_q(x)}$-representation to $\Sym^{k-1}\rho\otimes\chi^{n-k+1}$. Let $v\in W$ correspond to the vector $(0,\ldots,0,1)$. Then we have $\Frob v=q^{n-k+1}v$, $(\zeta-1)^{k-1}v\neq 0$, and $(\zeta-1)^kv=0$, establishing that $v$ is a $\Frob$ eigenvector in $F_n^k$ as desired.
\end{proof}

\subsection{Galois representations from unramified extensions}
Recall that $H\leq \F_\qg(C)^\times/\F_\qg(C)^{\times\ell}$ is defined by the property that $h\in H$ if $\divisor(h)\in \ell \ \Div(C_{\F_\qg})$. By \cref{lem:H to Jl} $H$ is a finite group.
Given a subgroup $\Gamma\leq H$, we define
\[K_\Gamma:=\F_{q^\gamma}(C)(\sqrt[\ell]{h}:h\in \Gamma).\]
\begin{lemma}\label{max_unram}
    The maximal unramified elementary $\ell$-extension of $\Kprime$ is $K_H$.
\end{lemma}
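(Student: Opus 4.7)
The plan is to prove this via Kummer theory together with a local ramification criterion. Since $\mu_\ell \subset \F_{q^\gamma} \subset \F_\qg(C)$ and $\ell$ is invertible in $\F_\qg(C)$, Kummer theory provides an inclusion-reversing bijection between subgroups $\Gamma \leq \F_\qg(C)^\times / \F_\qg(C)^{\times\ell}$ and finite abelian extensions of $\F_\qg(C)$ of exponent dividing $\ell$, given precisely by $\Gamma \mapsto K_\Gamma$. In particular, every elementary abelian $\ell$-extension of $\F_\qg(C)$ is of the form $K_\Gamma$ for a unique such $\Gamma$, so it suffices to show that $K_\Gamma$ is unramified at every place of $\F_\qg(C)$ if and only if $\Gamma \leq H$.

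The heart of the argument is the following local claim: for $h \in \F_\qg(C)^\times$ with nontrivial class in $\F_\qg(C)^\times/\F_\qg(C)^{\times\ell}$, the Kummer extension $\F_\qg(C)(\sqrt[\ell]{h})/\F_\qg(C)$ is unramified at a place $P$ if and only if $\ord_P(h) \equiv 0 \pmod{\ell}$. Since $\ell \neq \mathrm{char}(\F_q)$, the extension is at worst tamely ramified, so this is a standard computation: if $\pi_P$ is a uniformizer at $P$ and $\ord_P(h) = \ell m + r$ with $0 \leq r < \ell$, then $h$ and $h/\pi_P^{\ell m}$ generate the same extension, and the minimal polynomial $T^\ell - h/\pi_P^{\ell m}$ of $\sqrt[\ell]{h}/\pi_P^m$ over the completion at $P$ has the Newton polygon with a single segment of slope $r/\ell$. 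Thus the extension is unramified at $P$ exactly when $r = 0$. Since an extension generated by several Kummer generators is unramified at $P$ if and only if each individual Kummer subextension is, $K_\Gamma$ is unramified at $P$ if and only if $\ord_P(h) \equiv 0 \pmod{\ell}$ for all $h \in \Gamma$.

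Applying this place by place, $K_\Gamma/\F_\qg(C)$ is unramified everywhere if and only if $\divisor(h) \in \ell \, \Div(C_{\F_\qg})$ for all $h \in \Gamma$, which is precisely the condition $\Gamma \leq H$. By inclusion-reversal of the Kummer correspondence, the maximal unramified elementary abelian $\ell$-extension corresponds to $\Gamma = H$, giving $K_H$ as claimed.

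The only real obstacle is verifying the local ramification criterion cleanly; one should either cite a standard reference (e.g.\ the tame Kummer theory in Serre's \emph{Local Fields}) or record the brief Newton polygon computation above. Everything else reduces to bookkeeping from the Kummer correspondence. Note that since $H$ is finite by Lemma~\ref{lem:H to Jl}, the resulting extension $K_H/\F_\qg(C)$ is automatically finite, as expected.
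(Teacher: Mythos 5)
Your proof is correct and follows essentially the same route as the paper: Kummer theory reduces the claim to the local statement that adjoining $\sqrt[\ell]{h}$ is unramified at $P$ exactly when $\ell \mid \ord_P(h)$, which the paper simply asserts and you justify by a Newton-polygon/tameness computation. One small slip worth fixing: the Kummer correspondence $\Gamma \mapsto K_\Gamma$ is inclusion-\emph{preserving}, not inclusion-reversing (larger $\Gamma$ gives a larger field), which is in fact what your final step uses when you pass from the maximal admissible $\Gamma=H$ to the maximal unramified extension $K_H$.
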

\begin{proof}
    By Kummer theory, subgroups of $\Kprime^\times/\Kprime^{\times\ell}$ correspond bijectively with abelian extensions of $\Kprime$ of exponent $\ell$ by taking $\ell$-th roots of all elements of the subgroup. Adjoining an $\ell$-th root of $h\in \Kprime^\times$ results in an unramified extension if and only if for every place $v$ of $\Kprime$, $h=u_v\pi_v^{\ell n_v}$ for some unit $u_v\in \mathcal{O}_v^\times$ and $n_v\in\Z$, where $\pi_v$ is a choice of uniformizer. This is equivalent to requiring $\ell\mid\divisor(h)$, that is, $h\in H$.
\end{proof}

As a consequence of \cref{max_unram}, every subextension $K_\Gamma/\Kprime$ of $K_H/\Kprime$ is an abelian extension of $\F_\qg(C)$. Let $A_\Gamma:=\Gal(K_\Gamma/\F_\qg(C))$.
See the following diagram for a summary of the fields involved, and the Galois groups corresponding to some of the extensions.
\begin{center}
\begin{tikzcd}
& K_\Gamma \ar[d,dash,"A_\Gamma"] &\\
& \F_{q^\gamma}(C)\arrow[dl, dash] \arrow[dr, dash,"\langle \xi\rangle \simeq \mathbf{Z}/\ell\mathbf{Z}"]\arrow[dd, dash, "G_C"] & \\
\F_q(C) \arrow[dr, dash] & &\F_{\qg}(x) \arrow[dl,dash,"\langle \frob\rangle \simeq \mathbf{Z}/\gamma\mathbf{Z}"] \\
& \F_{q}(x) &
\end{tikzcd}
\end{center}

\begin{proposition}\label{relating_reps}
    Let $\Gamma\leq H$ be a $G_{\F_q(x)}$-invariant subgroup. 
    Then we have an isomorphism $A_\Gamma\simeq \Gamma^*$ of $G_C$-representations, where $\Gamma^*=\Hom_{G_{\F_q(x)}}(\Gamma,\mu_\ell)$ is the cohomological dual of $\Gamma$ defined in \Cref{def:dual}.
\end{proposition}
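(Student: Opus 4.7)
My plan is to prove the proposition using Kummer theory, establishing the isomorphism via the Kummer pairing and then verifying its $G_C$-equivariance by a direct computation.

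The first step is to observe that the extension $K_\Gamma/\F_q(x)$ is Galois. Since $\Gamma$ is $G_{\F_q(x)}$-invariant and the extension $\F_{q^\gamma}(C)/\F_q(x)$ is already Galois, any lift $\tilde\tau$ of $\tau\in G_{\F_q(x)}$ to an automorphism of a fixed separable closure permutes the $\ell$-th roots of elements of $\Gamma$ (mod $\mu_\ell$), so it preserves $K_\Gamma$. This gives a short exact sequence
\[
1\to A_\Gamma \to \Gal(K_\Gamma/\F_q(x))\to G_C\to 1,
\]
and in particular a $G_C$-action on $A_\Gamma$ by conjugation of lifts, which is exactly the action appearing in the statement.

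The second step is to invoke the Kummer pairing. For each $h\in \Gamma$, fix an $\ell$-th root $\alpha_h\in K_H$; then the map
\[
\kappa:A_\Gamma\times \Gamma\to \mu_\ell,\qquad \kappa(\sigma,h):=\sigma(\alpha_h)/\alpha_h,
\]
is a well-defined bilinear pairing (independent of the choice of $\alpha_h$ because $\sigma$ fixes $\mu_\ell\subset \F_{q^\gamma}(C)$) that is perfect by Kummer theory, since $\F_{q^\gamma}(C)$ contains $\mu_\ell$. This yields an $\F_\ell$-linear isomorphism $A_\Gamma\xrightarrow{\sim} \Hom(\Gamma,\mu_\ell)$.

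The third step, which is the main technical point, is to verify that this isomorphism is $G_C$-equivariant. Given $\tau\in G_C$, choose a lift $\tilde\tau\in \Gal(K_\Gamma/\F_q(x))$. Applying $\tilde\tau^{-1}$ to $\alpha_{\tilde\tau(h)}$ produces an $\ell$-th root of $h$, hence $\tilde\tau^{-1}(\alpha_{\tilde\tau(h)})=c\,\alpha_h$ for some $c\in\mu_\ell$, equivalently $\tilde\tau(\alpha_h)=\tilde\tau(c)^{-1}\alpha_{\tilde\tau(h)}$. Using that $\sigma\in A_\Gamma$ fixes $\F_{q^\gamma}$ (and hence $c$), the computation
\[
\tilde\tau\sigma\tilde\tau^{-1}(\alpha_{\tilde\tau(h)})=\tilde\tau(c)\,\tilde\tau(\kappa(\sigma,h))\,\tilde\tau(\alpha_h)=\tilde\tau(\kappa(\sigma,h))\,\alpha_{\tilde\tau(h)}
\]
shows that $\kappa(\tilde\tau\sigma\tilde\tau^{-1},\tilde\tau(h))=\tilde\tau(\kappa(\sigma,h))$. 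This is precisely the equivariance required for the isomorphism $A_\Gamma\simeq \Hom(\Gamma,\mu_\ell)$ to respect the $G_C$-actions: conjugation on $A_\Gamma$, the natural function-field action on $\Gamma$, and the cyclotomic action $\chi$ on $\mu_\ell$. The right-hand side is by definition $\Gamma^*$.

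The hard part is bookkeeping the choices of $\ell$-th roots in the equivariance calculation; the cocycle $c\in\mu_\ell$ measuring the failure of $\tilde\tau$ to preserve the $\alpha_h$ must be shown to cancel. Everything else is standard Kummer theory. No additional input beyond $\Gamma\subseteq H$ being $G_{\F_q(x)}$-invariant and $\mu_\ell\subset \F_{q^\gamma}(C)$ is needed.
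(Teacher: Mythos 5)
Your proposal is correct and takes essentially the same approach as the paper: establish the Kummer pairing between $A_\Gamma$ and $\Gamma$ (the paper presents it as $\Gamma\simeq\Hom(A_\Gamma,\mu_\ell)$, you present the transposed version $A_\Gamma\simeq\Hom(\Gamma,\mu_\ell)$, but these are the same perfect pairing) and then verify $G_C$-equivariance by a direct computation with $\ell$-th roots, where the key point in both cases is that the ambiguity in the choice of root cancels because $\sigma\in A_\Gamma$ fixes $\mu_\ell\subset\F_{q^\gamma}$. Your bookkeeping of the constant $c$ and the cancellation of $\tilde\tau(c)$ with $\tilde\tau(c)^{-1}$ is exactly the content of the paper's displayed equation.
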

\begin{proof}
    By Kummer theory, we have an isomorphism of groups
    \[\Gamma\simeq \Hom(A_\Gamma,\mu_\ell),\]
    where $f\in\Gamma$ is associated to the homomorphism $\tau\mapsto \tau(\sqrt[\ell]{f})/\sqrt[\ell]{f}$ for any choice of $\ell$-th root of $f$. It suffices to check that this isomorphism is $G_{\F_q(x)}$-equivariant. For $\sigma\in G_{\F_q(x)}$, $\tau\in A_\Gamma$, and $f\in\Gamma$, we have $\sigma(\sqrt[\ell]{f})=\zeta^t\sqrt[\ell]{\sigma(f)}$ for some integer $t$, and so
    \[\sigma\left(\frac{\tau(\sqrt[\ell]{f})}{\sqrt[\ell]{f}}\right)=\frac{\sigma\tau\sigma^{-1}(\zeta^t\sqrt[\ell]{\sigma(f)})}{\zeta^t\sqrt[\ell]{\sigma(f)}}=\frac{\sigma\tau\sigma^{-1}(\sqrt[\ell]{\sigma(f)})}{\sqrt[\ell]{\sigma(f)}},\]
    the last equality following because $\sigma\tau\sigma^{-1}$ is in $A_\Gamma$ and therefore fixes $\zeta\in\F_\qg$. 
\end{proof}

\begin{lemma}\label{lem:split}
    Let $\Gamma\leq H$ be a $G_{\F_q(x)}$-invariant subgroup. The map $\Gal(K_\Gamma/\F_q(x))\to G_C=\Gal(\Kprime/\F_q(x))$ has a splitting; equivalently,
    \[\Gal(K_\Gamma/\F_q(x))\simeq A_\Gamma\rtimes G_C.\]
\end{lemma}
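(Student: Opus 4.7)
The extension
\[1 \to A_\Gamma \to E := \Gal(K_\Gamma/\F_q(x)) \to G_C \to 1\]
is classified by a class $[c] \in H^2(G_C, A_\Gamma)$, and the lemma is equivalent to showing $[c] = 0$. My plan is to reduce via Hochschild--Serre to splitting the pulled-back extension over $\langle\xi\rangle$, and then to split the latter using an inertia subgroup at the place at infinity.

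Because $|\langle\frob\rangle| = \gamma$ is coprime to $\ell$ and $A_\Gamma$ is an $\F_\ell$-vector space, $H^p(\langle\frob\rangle, M)$ vanishes for every $p \geq 1$ and every $\F_\ell[\langle\frob\rangle]$-module $M$. Applying this to $M = H^q(\langle\xi\rangle, A_\Gamma)$, the Hochschild--Serre spectral sequence attached to the short exact sequence $1 \to \langle\xi\rangle \to G_C \to \langle\frob\rangle \to 1$ of \cref{lem:G} degenerates at the $E_2$-page, and the edge map
\[\res \colon H^2(G_C, A_\Gamma) \xrightarrow{\sim} H^2(\langle\xi\rangle, A_\Gamma)^{\langle\frob\rangle}\]
is an isomorphism. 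Consequently, it suffices to show that the subextension
\[1 \to A_\Gamma \to P \to \langle\xi\rangle \to 1,\qquad P := \Gal(K_\Gamma/\F_\qg(x)),\]
splits, i.e.~to exhibit a subgroup of $P$ of order $\ell$ mapping isomorphically onto $\langle\xi\rangle$.

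Such a subgroup is supplied by inertia at infinity. Let $\mathfrak{P}$ be a place of $K_\Gamma$ above $\infty \in \F_q(x)$ and let $I_\mathfrak{P} \subset P$ denote its inertia subgroup. Since $\Gamma \leq H$ yields $K_\Gamma \subseteq K_H$, and $K_H/\F_\qg(C)$ is unramified by \cref{max_unram}, the inertia of $K_\Gamma/\F_\qg(C)$ at $\mathfrak{P}$ is trivial; therefore $I_\mathfrak{P}$ injects into the inertia of $\F_\qg(C)/\F_\qg(x)$ at the place below $\mathfrak{P}$. The hypothesis $\gcd(d, \ell) = 1$ forces $\F_\qg(C)/\F_\qg(x)$ to be totally ramified of index $\ell$ at $\infty$ (otherwise an $\ell$-th root of $f$ would acquire fractional valuation), so the inertia below is all of $\langle\xi\rangle$, and by comparing orders $I_\mathfrak{P} \to \langle\xi\rangle$ is an isomorphism. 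This is the desired complement to $A_\Gamma$ in $P$, so $[c] = 0$ and $E \simeq A_\Gamma \rtimes G_C$. The main delicate step I anticipate is confirming the Hochschild--Serre degeneration carefully enough to know that $\res$ is bijective (not merely injective); once that is established, the inertia construction immediately yields the splitting.
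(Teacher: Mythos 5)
Your argument is correct and follows essentially the same route as the paper: reduce to splitting the pulled-back extension over $\langle\xi\rangle$ via the restriction map (injective because $[G_C:\langle\xi\rangle]=\gamma$ is coprime to $\ell$ and $A_\Gamma$ is $\ell$-torsion), then produce a complement to $A_\Gamma$ using inertia at a place that is totally ramified in $\F_\qg(C)/\F_\qg(x)$ but unramified in $K_\Gamma/\F_\qg(C)$. The only cosmetic difference is that the paper works with inertia above the place $f$ while you work above $\infty$; both are totally ramified of index $\ell$ because $\gcd(d,\ell)=1$, so either choice works, and your worry about upgrading injectivity of $\res$ to bijectivity is unnecessary since injectivity alone is what the argument needs.
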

\begin{proof}
    We follow the proof of Lemma 3.1.3 of~\cite{schaeferstubley}. The extension
    \begin{equation}\label{eq: AW GC sequence}
        1\to A_\Gamma\to \Gal(K_\Gamma/\F_q(x))\to G_C\to 1
    \end{equation}
    determines a cohomology class $[\Gal(K_\Gamma/\F_q(x))]\in H^2(G_C,A_\Gamma)$, and it suffices to determine whether this class is $0$. To do this, we consider the subgroup 
    \[\langle \xi\rangle =\Gal(\Kprime/\F_\qg(x))\leq G_C,\]
    which determines a restriction map $H^2(G_C,A_\Gamma)\to H^2(\langle \xi\rangle,A_\Gamma)$. The image of $[\Gal(K_\Gamma/\F_q(x))]$ under this restriction map corresponds to the sequence
    \[1\to A_\Gamma\to \Gal(K_\Gamma/\F_\qg(x))\to \Gal(\Kprime/\F_\qg(x))\to 1.\]
    We show that this map has a splitting. Let $f_1$ be a place of $K_\Gamma$ lying above $f$, the place determined by the irreducible polynomial $f(x)$. Since $f$ is totally ramified in $\Kprime/\F_\qg(x)$, but the extension $K_\Gamma/\Kprime$ is unramified by \cref{max_unram}, the inertia group at $f_1$ is a copy of $\Z/\ell\Z$ in $\Gal(K_\Gamma/\F_\qg(x))$ that maps isomorphically to $\Gal(\Kprime/\F_\qg(x))$, defining a splitting as desired.
    
    Hence $[\Gal(K_\Gamma/\F_q(x))]$ maps to the zero class under the restriction map. But by the Lyndon-Hoschild-Serre spectral sequence we have an inflation-restriction exact sequence
    \[H^2(G_C/\langle \xi\rangle, A_\Gamma^{\langle \xi\rangle})\to H^2(G_C, A_\Gamma)\to H^2(\langle \xi\rangle, A_\Gamma).\]
    Since $G_C/\langle \xi\rangle=\Gal(\F_\qg(x)/\F_q(x))$ has order $\gamma$ (coprime to $\ell$), while $A_\Gamma$ is a vector space over $\F_\ell$, the first term of this sequence is $0$. Hence the restriction map is injective, proving that $[\Gal(K_\Gamma/\F_q(x))]=0$ as desired.
\end{proof}

\begin{lemma}\label{lem:2dim iso to Jl}
    Suppose $\Gamma\leq H$ is isomorphic as a $G_{\F_q(x)}$-representation to $\bigrep{k}{n}$ for $k\geq 2$. Then $\Gamma$ maps isomorphically onto its image in $J[\ell](\F_\qg)$ under the map defined in \cref{lem:H to Jl}.
\end{lemma}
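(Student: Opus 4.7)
The plan is to reduce the statement to showing $\Gamma \cap \mu_\ell = 0$ inside $H$, since by the exact sequence of \cref{lem:H to Jl} this intersection equals the kernel of the restricted map $\Gamma \to J[\ell](\F_\qg)$. Because $\mu_\ell$ is one-dimensional over $\F_\ell$, the only alternative to rule out is $\mu_\ell \subseteq \Gamma$. I will do so by contradiction, exploiting the indecomposable structure of $\bigrep{k}{n}$.

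Suppose $\mu_\ell \subseteq \Gamma$. As a $G_{\F_q(x)}$-representation, $\mu_\ell \cong \chi$, so this would give an inclusion of $\chi$ into $\bigrep{k}{n}$. Inspecting the matrix for $\Sym^k\rho(\xi)$ in \cref{eq:SymFrobzeta}, the $\xi$-fixed subspace of $\bigrep{k}{n}$ is one-dimensional, spanned by $e_0$, on which $\frob$ acts as $q^{k+n}$; thus the unique one-dimensional subrepresentation of $\bigrep{k}{n}$ is $\chi^{k+n}$. The assumed inclusion therefore forces $k+n \equiv 1 \pmod \gamma$. Then the quotient $\Gamma/\mu_\ell$ is isomorphic as a $G_{\F_q(x)}$-representation to $\bigrep{k-1}{n}$ by the exact sequence \cref{eq:topleft cyclo}, and since this quotient coincides with the image of $\Gamma$ in $J[\ell](\F_\qg)$, we realize $\bigrep{k-1}{n}$ as a $G_{\F_q(x)}$-subrepresentation of $J[\ell](\F_\qg)$. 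By \cref{lem:GaloisrepW}, such a subrepresentation exists only if there is a $\Frob$-eigenvector in $F_{n+k-1}^{k}$. But $k+n \equiv 1 \pmod \gamma$ gives $n+k-1 \equiv 0 \pmod \gamma$, and $F_0^k$ is empty by \cref{cor:eigenvalues} (the condition $\gamma \nmid n$ fails at $n=0$). This contradiction completes the argument.

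The main subtlety is the case $k+n \equiv 1 \pmod \gamma$: for all other residues, simply comparing the $\frob$-action on $\mu_\ell$ with that on the unique one-dimensional subrepresentation of $\Gamma$ already precludes the inclusion on purely character-theoretic grounds. The case $k+n \equiv 1 \pmod \gamma$ is where the proof genuinely uses the global structure of $J[\ell](\F_\qg)$, by translating the assumption $\mu_\ell \subseteq \Gamma$ into a forbidden subrepresentation via \cref{lem:GaloisrepW} and then invoking the emptiness of $F_0^k$ from \cref{cor:eigenvalues}. The hypothesis $k \geq 2$ ensures that the quotient representation $\bigrep{k-1}{n}$ appearing in this step is not a priori trivial, so that the previously established combinatorial characterization applies cleanly.
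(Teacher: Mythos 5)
Your proof is correct, and it takes a genuinely different route from the paper's. You and the paper both begin the same way: assume $\mu_\ell \subseteq \Gamma$, use the uniqueness of the one-dimensional subrepresentation $\chi^{n+k}$ of $\bigrep{k}{n}$ to force $n+k \equiv 1 \bmod \gamma$. After that, the arguments diverge. The paper works directly with Kummer theory: it extracts from the two-dimensional subrepresentation isomorphic to $\rho$ an element $h \in \Gamma$ with $\frob(h)=h$ and $\xi(h)=\zeta h$, shows via the Kummer pairing that $h$ must equal $y^{-1}$ in $\F_\qg(C)^\times/\F_\qg(C)^{\times\ell}$, and derives a contradiction because $\divisor(y) \notin \ell\,\Div(C_{\F_\qg})$, so $y^{-1} \notin H$. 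You instead pass to the quotient $\Gamma/\mu_\ell \cong \bigrep{k-1}{n}$ (via \cref{eq:topleft cyclo}), observe that this quotient is exactly the image of $\Gamma$ in $J[\ell](\F_\qg)$, and then feed it back into the correspondence of \cref{lem:GaloisrepW}: this would produce a $\Frob$-eigenvector in $F_{n+k-1}^{k} = F_0^k$, which \cref{cor:eigenvalues} forbids. Your route leans more heavily on the machinery already built (the eigenvector dictionary and the combinatorial classification of nonempty $F_n^k$), while the paper's route is more self-contained and concrete, pinpointing the obstruction as the single Kummer class of $y$; both are valid, and neither introduces circularity since \cref{lem:GaloisrepW} and \cref{cor:eigenvalues} are established independently of \cref{lem:2dim iso to Jl}. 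One small caveat worth being explicit about: your step ``this quotient coincides with the image of $\Gamma$'' uses that the unique one-dimensional subrepresentation is where $\mu_\ell$ must land (so the quotient by $\mu_\ell$ agrees with the quotient appearing in \cref{eq:topleft cyclo} up to isomorphism); this is fine, but it is the same uniqueness fact doing double duty and deserves a sentence.
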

\begin{proof}
    If $\Gamma$ contains $\zeta\in\mu_\ell$, then $\zeta$ spans a one-dimensional $G_{\F_q(x)}$-subrepresentation of $\Gamma$ isomorphic to $\chi$. The unique one-dimensional subrepresentation of $\bigrep{k}{n}$ is $\chi^{n+k}$, so we can conclude $n=1-k$. Since $k\geq 2$, there is a two-dimensional subrepresentation of $\bigrep{k}{1-k}$ isomorphic to $\rho$. Hence there exists $h\in \Gamma$ with $\frob(h)=h$ and $\xi(h)=\zeta h$ as elements of $H$. This implies that the map $\sigma\mapsto \sigma(h)/h$ is a homomorphism $G_{\F_q(x)}\to\mu_\ell$ that factors through $G_C$ and sends $\frob\mapsto 1$ and $\xi\mapsto \zeta$. But the map $\sigma\mapsto \sigma(y^{-1})/y^{-1}$ is identical, so by Kummer theory $h$ is equal to $y^{-1}$ as elements of $\F_\qg(C)^\times/\F_\qg(C)^{\times\ell}$. This is a contradiction because $\divisor(y)\notin\ell \ \Div(C_{\F_\qg})$ and so $y^{-1}\notin H$. We can conclude that $\Gamma\cap\mu_\ell=\{1\}$ and so $\Gamma$ maps isomorphically to its image in $J[\ell](\F_\qg)$.
\end{proof}

\subsection{Relating eigenvectors to Galois representations}
As stated in \cref{lem:GaloisrepW}, the existence of a $\Frob$ eigenvector in $F_n^k$ is equivalent to the existence of a $G_{\F_{q}(x)}$-subrepresentation of $J[\ell](\F_{q^\gamma})$ isomorphic to $\Sym^{k-1}\rho\otimes\chi^{n+1-k}$. Recall the representation $\Sym^{k-1}\rho\otimes\chi^{n+1-k}$ has kernel $\Gal(\F_{q}(x)^\sep/\F_{q^\gamma}(C))$. We will transform this condition to the existence of a $G_{\F_{q}(x)}$ representation related to a field extension of $\F_{q^\gamma}(C))$.

\begin{definition}\label{def: kernel field}
    Let $G=\Gal(L/F)$ for some separable field extension $L/F$, and $\theta$ a representation of $G$. The \emph{kernel field} of $\theta$, denoted $K^\theta$, is the fixed field in $L$ of $\ker\theta$.
\end{definition}
If $\theta$ is a finite-dimensional representation over $\F_\ell$, then $\ker\theta$ is finite index in $G$, and so $K^\theta$ is a finite extension of the base field $F$. The kernel field is a Galois extension of the base field $F$, and by the first isomorphism theorem, $\theta$ descends to a faithful representation of $\Gal(K^\theta/F)$. In the following statement, we consider the case $G=G_{\F_q(x)}=\Gal(\F_q(x)^\sep/\F_q(x))$.

\begin{proposition}\label{prop:eigen to rep}
    Let $2\leq k\leq \ell-1$. There exists an eigenvector of $\Frob$ in $F_n^k$ if and only if there exists a representation $\psi:G_{\F_q(x)}\to \GL_{k+1}(\F_\ell)$ satisfying the following conditions:
    \begin{enumerate}[label=(\alph*)]
        \item there is an exact sequence of $G_{\F_q(x)}$-representations
        \[0\to \bigrep{k-1}{1-n}\to \psi\to\F_\ell\to 0,\]
        where $\F_\ell$ denotes the one-dimensional trivial representation;
        \item the kernel field $K^\psi$ of $\psi$ is an unramified extension of $\F_\qg(C)$ with $\Gal(K^\psi/\F_\qg(C))\simeq \F_\ell^k$.
    \end{enumerate}
  
\end{proposition}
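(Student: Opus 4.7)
The plan is to construct the correspondence between eigenvectors and representations via Kummer theory, packaging the geometric content from Lemma \ref{lem:GaloisrepW} together with the semidirect product decomposition of Lemma \ref{lem:split}.

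For the forward direction, I would first invoke Lemma \ref{lem:GaloisrepW} to translate an eigenvector $v\in F_n^k$ into a $G_{\F_q(x)}$-subrepresentation $W\leq J[\ell](\F_{q^\gamma})$ isomorphic to $\bigrep{k-1}{n-k+1}$. The next step is to lift $W$ through the exact sequence of Lemma \ref{lem:H to Jl} to a $G_{\F_q(x)}$-invariant subgroup $\Gamma\leq H$ isomorphic to $\bigrep{k-1}{n-k+1}$. Once $\Gamma$ is in hand, the unramified elementary abelian extension $K_\Gamma/\F_{q^\gamma}(C)$ (from Lemma \ref{max_unram}) has Galois group $A_\Gamma$, which by Proposition \ref{relating_reps} combined with Lemma \ref{bigrep_dual} is isomorphic to $\bigrep{k-1}{1-n}$ as a $G_C$-representation. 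Using the splitting of Lemma \ref{lem:split} to write $\Gal(K_\Gamma/\F_q(x))\cong A_\Gamma\rtimes G_C$, I would then define $\psi$ via the block embedding
\[(a,g)\mapsto \begin{pmatrix}\bigrep{k-1}{1-n}(g) & a\\ 0 & 1\end{pmatrix}\in\GL_{k+1}(\F_\ell),\]
where $a$ is viewed as a column vector through the $G_C$-isomorphism $A_\Gamma\cong \bigrep{k-1}{1-n}$, and then inflate to $G_{\F_q(x)}$. Condition (a) is immediate from the block structure; condition (b) follows because $\bigrep{k-1}{1-n}$ is faithful on $G_C$ for $k\geq 2$ (its image of $\xi$ is a nontrivial Jordan block, and its image of $\frob$ has distinct diagonal entries), so $\ker\psi$ is exactly $G_{K_\Gamma}$ and hence $K^\psi=K_\Gamma$.

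For the reverse direction, given $\psi$ satisfying (a) and (b), the hypothesis that $K^\psi/\F_{q^\gamma}(C)$ is unramified elementary abelian lets us invoke Lemma \ref{max_unram} and Kummer theory to obtain a subgroup $\Gamma\leq H$ of dimension $k$ with $K^\psi=K_\Gamma$. Restricting $\psi$ to $G_{\F_{q^\gamma}(C)}$ shows that the homomorphism $G_{\F_{q^\gamma}(C)}\to \F_\ell^k$ recorded in the upper-right block has kernel $G_{K^\psi}$, and the conjugation action of $G_C$ on this $\F_\ell^k$ agrees with the action on the subrepresentation $\bigrep{k-1}{1-n}$ of $\psi$. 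Hence $A_\Gamma\cong \bigrep{k-1}{1-n}$ as a $G_C$-module. Reversing the duality computation via Proposition \ref{relating_reps} and Lemma \ref{bigrep_dual} then gives $\Gamma\cong \bigrep{k-1}{n-k+1}$, and Lemma \ref{lem:2dim iso to Jl} produces a subrepresentation $W\leq J[\ell](\F_{q^\gamma})$ of this isomorphism type; Lemma \ref{lem:GaloisrepW} finally yields the desired $\Frob$-eigenvector.

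The main technical obstacle I expect is the lifting step between $J[\ell](\F_{q^\gamma})$ and $H$ in the forward direction. Since the exact sequence of Lemma \ref{lem:H to Jl} need not split as $G_{\F_q(x)}$-modules, producing a $k$-dimensional $\Gamma\cong \bigrep{k-1}{n-k+1}$ (rather than a $(k+1)$-dimensional extension of $W$ by $\mu_\ell$) requires analyzing the extension class in the relevant $\Ext$ group, or alternatively ruling out the non-split possibility by reconciling it with Lemma \ref{lem:2dim iso to Jl}. The backward application of Lemma \ref{lem:2dim iso to Jl} also needs care when $k=2$, since the Sym-degree of $\Gamma$ drops to $k-1=1$ and the lemma's hypothesis $k\geq 2$ is stated for the symmetric power, so this boundary case must be handled by a separate direct argument checking that a $\rho\otimes\chi^{n-1}$-type subgroup of $H$ still maps injectively into $J[\ell](\F_{q^\gamma})$.
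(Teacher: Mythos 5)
Your proposal follows essentially the same route as the paper's proof: invoke \cref{lem:GaloisrepW} to get a subrepresentation of $J[\ell](\F_{q^\gamma})$, pass to a subgroup $\Gamma\leq H$, compute $A_\Gamma$ via \cref{relating_reps} and \cref{bigrep_dual}, build $\psi$ from the splitting of \cref{lem:split}, and reverse these steps for the converse. The block-matrix form and the faithfulness argument for condition (b) match the paper's.

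On your first concern (lifting $W$ to $\Gamma\leq H$): the paper sidesteps any $\Ext$-class analysis by taking $\Gamma$ to be the image of $\langle G_C w\rangle$ under the explicit section $D\mapsto h_D$ from the proof of \cref{lem:H to Jl}, which is defined by normalizing leading coefficients at a fixed place $Q$. What makes this work is not that $K^1$ is $G_{\F_q(x)}$-stable (it isn't), but that the defect in equivariance of the normalization is a constant of the form $\lambda^{\ord_Q(h_D)}$, and since $h_D\in H$ its divisor — and in particular $\ord_Q(h_D)$ — is divisible by $\ell$, so the defect is an $\ell$-th power and dies in $H=\F_{q^\gamma}(C)^\times/\F_{q^\gamma}(C)^{\times\ell}$. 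Thus the section is genuinely $G_{\F_q(x)}$-equivariant on $J[\ell](\F_{q^\gamma})\to H$, even though the paper only explicitly records the weaker statement that it is equivariant modulo $\mu_\ell$. Your worry that a non-split possibility must be excluded via \cref{lem:2dim iso to Jl} would not actually work for general $n$ (when $n\not\equiv 0\bmod\gamma$ the hypothetical non-split $(k+1)$-dimensional $\pi^{-1}(W)$ is not isomorphic to any $\Sym^m\rho\otimes\chi^j$), so the direct equivariance argument is the right fix.

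On your second concern ($k=2$): you are correct that the paper applies \cref{lem:2dim iso to Jl} with symmetric-power exponent $k-1$, which is $1$ when $k=2$, while the lemma as stated assumes this exponent is $\geq 2$. This is a genuine mismatch in the paper. However, inspection of the lemma's proof shows the hypothesis can be weakened to exponent $\geq 1$: the only place the bound is used is to extract a two-dimensional subrepresentation isomorphic to $\rho$ inside $\Sym^{m}\rho\otimes\chi^{1-m}$, which exists for $m\geq 1$ (for $m=1$ the entire representation is $\rho$). So the gap you flagged is real but easily repaired, and does not require the separate direct argument you anticipated.
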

\begin{proof}

    Recall $G_C=\Gal(\F_\qg(C)/\F_q(x))$.
    Given a Frobenius eigenvector $w\in F_n^k$, let $\langle G_Cw\rangle\leq J[\ell](\F_\qg)$ be the span of the $G_C$-orbit of $w$, and let $\Gamma\leq H$ be the image of this subgroup under the map $D\mapsto h_D$ from the proof of \cref{lem:H to Jl}.
    Then as $G_{\F_q(x)}$-representations we have $\Gamma\simeq \langle G_Cw\rangle\simeq \bigrep{k-1}{n-k+1}$ by \cref{lem:GaloisrepW}, and hence $A_\Gamma\simeq \bigrep{k-1}{1-n}$ by \cref{relating_reps} and \cref{bigrep_dual}.
    
    We can use the splitting \cref{lem:split} to construct a representation $\psi:G_{\F_q(x)}\to \GL_{k+1}(\F_\ell)$. We assert that this representation factors through $\Gal(K_\Gamma/\F_q(x))$ and for an arbitrary element $(\tau,\sigma)\in A_\Gamma\rtimes G_C$
    we define
    \[\psi(\tau,\sigma):=\begin{pmatrix}
        \theta(\sigma) & \tau \\ 0 & 1
    \end{pmatrix},\]
    where $\theta\simeq\bigrep{k-1}{1-n}$ is the $G_{\F_q(x)}$-representation on $A_\Gamma$. Since $\ker \psi \le \ker \theta$,
    the kernel field $K^\psi$ contains $K^\theta$, which is equal to $K^\rho$ since $k\geq 2$; hence $K^\psi$ is an extension of $\F_\qg(C)$. By construction, $\psi$ satisfies the desired exact sequence in condition (a). The fact that $\psi$ factors through $\Gal(K_\Gamma/\F_q(x))$ implies $K^\psi$ is contained in $K_\Gamma$, which is an unramified extension of $\F_\qg(C)$ by \cref{max_unram}. In fact $K^\psi=K_\Gamma$ because $\psi$ acts faithfully on $A_\Gamma$, so $\Gal(K^\psi/\F_\qg(C))=A_\Gamma\simeq \F_\ell^k$. 

    Conversely, suppose there exists $\psi:G_{\F_q(x)}\to \GL_{k+1}(\F_\ell)$ with the given properties. Let $\theta$ be the subrepresentation isomorphic to $\bigrep{k-1}{1-n}$. The exact sequence implies that with respect to an appropriate basis, $\psi$ can be written in the form 
    \[\psi(\sigma)=\begin{pmatrix}
        \theta(\sigma) & a(\sigma) \\ 0 & 1
    \end{pmatrix}\]
    for some $a(\sigma)\in \F_\ell^k$. Condition (b) says that $K^\psi/\F_\qg(C)$ is an unramified elementary $\ell$-extension, so by \cref{max_unram}, $K^\psi=K_\Gamma$ for some $\Gamma\leq H$.
   
    From the matrix form for $\psi$ we see that for $\sigma,\tau\in G_{\F_q(x)}$ with $\tau$ mapping into $\Gal(K_\Gamma/\F_\qg(C))$, we have $a(\sigma\tau\sigma^{-1})=\theta(\sigma)a(\tau)$, 
    so $\Gal(K_\Gamma/\F_\qg(C))$ is isomorphic to $\theta$ as a $G_{\F_q(x)}$-representation. So by \cref{relating_reps}, $\Gamma$ is isomorphic as a $G_{\F_q(x)}$-representation to $\bigrep{k-1}{n-k+1}$, which maps isomorphically to a subrepresentation of  $J[\ell](\F_\qg)$ by \cref{lem:2dim iso to Jl}. The structure of $\bigrep{k-1}{n-k+1}$ implies existence of a vector $v\in W$ such that $\Frob(v)=q^{n-k+1}v$ and $G_C\cdot v$ spans $W$; by considering the dimension of $W$ we can conclude $v\in V_k\setminus V_{k-1}$ and therefore $v\in F_n^k$. 
\end{proof}

In the next section we will relate the existence of this representation $\psi$ to the existence of a certain cohomology class in $H^1(G_{\F_q(x)},A_\Gamma)\simeq H^1(G_{\F_q(x)},\bigrep{k-1}{1-n})$.

\begin{remark}
    A version of \cref{prop:eigen to rep} holds also for $k=1$, but this requires a different set of conditions on $\psi$. First, the fixed field of the kernel of $\bigrep{k-1}{1-n}=\chi^{1-n}$ is not $\F_\qg(C)$, but rather some subfield of $\F_\qg(x)$ depending on the value of $n$; thus the fixed field $K^\psi$ of $\ker\psi$ may not be an extension of $\F_\qg(C)$. We must replace condition (b) with the condition that $K^\psi\cdot\F_\qg(C)/\F_\qg(C)$ is unramified, and then we may continue the proof as above but with $K_\Gamma:=K^\psi\cdot\F_\qg(C)$. Second, if $n\equiv 1\bmod\gamma$ then there may exist a representation $\psi:G_{\F_q(x)}\to \GL_2(\F_\ell)$ satisfying all the conditions, but for which the fixed field $K^\psi$ is the degree $\ell$ base field extension $\F_{q^{\ell}}(x)$; then $K^\psi\cdot \F_\qg(C)$ does not correspond to any nontrivial subspace of $J[\ell](\F_\qg)$. To obtain the desired equivalence we must impose the condition that $K^\psi$ is not unramified over $\F_q(x)$.
    While it is possible to keep track of these additional constraints, we consider only $k\geq 2$ for convenience, since we already have a criterion for the existence of a $\Frob$ eigenvector in $F_n^1$ by \cref{basic_lifting_properties}(a).
\end{remark}

\section{Galois cohomology}\label{sec:galois_cohom}

Throughout this section and the next we will rely on many facts about Galois cohomology of global fields, most of which can be found in Neukirch, Schmidt, and Wingberg~\cite{cohomology}.

\subsection{Cohomology classes and kernel fields}\label{subsec:Cohomologyclasses}

Let $G$ be a group. 
Given an $n$-dimensional $\F_\ell$ representation $\theta:G\to \GL_n(\F_\ell)$, an element $a\in H^1(G,\theta)$ can be represented by a crossed homomorphism $\alpha:G\to \F_\ell^n$ satisfying $\alpha(\sigma\tau)=\theta(\sigma)\alpha(\tau)+\alpha(\sigma)$ for $\sigma,\tau\in G$. Any different representative $\alpha'$ for the same cohomology class $a$ differs from $\alpha$ by a coboundary. That is, there exists an element $v \in \F_\ell^n$ such that $\alpha'(\sigma)-\alpha(\sigma)=\theta(\sigma)v-v$ for all $\sigma \in G$.
\begin{definition}
    With a representation $\theta: G \to \GL_n(\F_\ell)$ and a crossed homomorphism $\alpha:G \to \F_\ell^n$ as above, define a $n+1$ dimensional $\F_\ell$ representation $\theta[\alpha]:G\to\GL_{n+1}(\F_\ell)$ by
    \begin{align*}
        \theta[\alpha](\sigma):\F_\ell^n\times\F_\ell&\to \F_\ell^n\times\F_\ell\\
        (v,c)&\mapsto (\theta(\sigma)v+c\alpha(\sigma),\,c).
    \end{align*}
    We say that $\theta[\alpha]$ is the \emph{extension of $1$ by $\theta$} associated to $\alpha$.
\end{definition}

This definition can be summarized using matrix notation: 
\begin{align}
    \theta[\alpha]:=\left(\begin{array}{ccc;{2pt/3pt}c}
         &  & & \\
        & \theta &  & \alpha \\
         &  & & \\ \hdashline[2pt/3pt]
        & 0 & & 1 
    \end{array}\right).
\end{align}
If $\alpha'$ is a different crossed homomorphism which represents the same class $a \in H^1(G,\theta)$, then we have
\[ \theta[\alpha']=
    \left(\begin{array}{ccc;{2pt/3pt}c}
         &  & & \\
        & I &  & -v \\
         &  & & \\ \hdashline[2pt/3pt]
        & 0 & & 1 
    \end{array}\right)
    \left(\begin{array}{ccc;{2pt/3pt}c}
         &  & & \\
        & \theta &  & \alpha \\
         &  & & \\ \hdashline[2pt/3pt]
        & 0 & & 1 
    \end{array}\right)
     \left(\begin{array}{ccc;{2pt/3pt}c}
         &  & & \\
        & I &  & v \\
         &  & & \\ \hdashline[2pt/3pt]
        & 0 & & 1 
    \end{array}\right)
 \]
where $v\in \F_\ell^n$ is the vector satisfying $\alpha'(\sigma)-\alpha(\sigma)=\theta(\sigma)v-v$.
Thus, different representatives of a cocycle class give rise to representations that are equivalent up to conjugation. From now on, we will denote this representation class by $\theta[a]$ since it only depends on $\theta$ and the cocycle class $a$.

It is straightforward to check that $\theta[a]$ is a $G$-representation and fits into an exact sequence 
$$0\to \theta\to\theta[a]\to \F_\ell\to 0$$
of $G$-representations, where $\F_\ell$ represents the one-dimensional trivial representation. This exact sequence induces a long exact sequence in cohomology which begins
\[0\to H^0(G,\theta)\to H^0(G,\theta[a])\to H^0(G,\F_\ell)\xrightarrow{\delta} H^1(G,\theta),\]
and we have $a=\delta(1)$. Conversely, for any exact sequence $0\to \theta\to\kappa\to \F_\ell\to 0$ of $G$-representations there is a class $a\in H^1(G,\theta)$ with $\kappa\simeq \theta[a]$.

Recall the definition of the kernel field of a Galois representation in \cref{def: kernel field} and for $a \in H^1(G,\theta)$, we will denote the kernel field of $\theta[a]$ as $K^a$.
Since $\ker\theta[a]\leq \ker\theta$, $K^a$ is necessarily an extension of $K^\theta$. 
One can check that the kernel field is well-defined on cohomology classes (in particular, if $a$ is a coboundary then $K^a=K^\theta$, though the converse does not necessarily hold), and in fact that the kernel field is invariant under scaling:
\begin{lemma}\label{lem: kernel field scaling}
    If $a\in H^1(G,\theta)$ and $c\in\F_\ell^\times$ then $K^{ca}=K^a$.
\end{lemma}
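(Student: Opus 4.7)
The plan is to observe that scaling a cocycle by a unit amounts to a change of basis on the associated extension representation, so the scaled extension is isomorphic (even conjugate by an explicit diagonal matrix) to the original, and therefore has the same kernel and kernel field.

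Concretely, I would begin by fixing a crossed homomorphism $\alpha : G \to \F_\ell^n$ representing the class $a$. Since the assignment $a \mapsto \theta[a]$ is additive on cocycles, the class $ca$ is represented by the cocycle $c\alpha$, and so in matrix form
\[
\theta[ca](\sigma) = \begin{pmatrix} \theta(\sigma) & c\alpha(\sigma) \\ 0 & 1 \end{pmatrix}, \qquad \theta[a](\sigma) = \begin{pmatrix} \theta(\sigma) & \alpha(\sigma) \\ 0 & 1 \end{pmatrix}.
\]

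The next step is to exhibit an explicit conjugation relating these two representations. Taking the invertible matrix $M := \operatorname{diag}(c^{-1} I_n, 1)$, a direct block computation shows
\[
M \, \theta[ca](\sigma) \, M^{-1} = \theta[a](\sigma) \qquad \text{for all } \sigma \in G.
\]
Hence $\theta[a]$ and $\theta[ca]$ are conjugate as representations of $G$, which immediately gives $\ker \theta[a] = \ker \theta[ca]$, and therefore $K^{a} = K^{ca}$.

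There is essentially no obstacle here; the only mild care needed is to verify that the kernel field is well-defined on cohomology classes (i.e.\ that it does not depend on the choice of cocycle representative $\alpha$), which was already noted just before the statement and follows from the same style of argument: two cocycles differing by a coboundary give rise to $\theta[a]$'s that are conjugate by a unipotent matrix of the form $\begin{pmatrix} I & v \\ 0 & 1 \end{pmatrix}$, and conjugate representations have equal kernels. With that sanity check in place, the proof reduces to writing down the one-line matrix conjugation above.
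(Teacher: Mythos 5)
Your proof is correct, and the computation checks: with $M=\operatorname{diag}(c^{-1}I_n,1)$ one finds
\[
M\begin{pmatrix}\theta(\sigma)&c\alpha(\sigma)\\0&1\end{pmatrix}M^{-1}
=\begin{pmatrix}c^{-1}\theta(\sigma)&\alpha(\sigma)\\0&1\end{pmatrix}\begin{pmatrix}cI_n&0\\0&1\end{pmatrix}
=\begin{pmatrix}\theta(\sigma)&\alpha(\sigma)\\0&1\end{pmatrix},
\]
so $\theta[ca]$ and $\theta[a]$ are conjugate, hence have equal kernel and equal kernel field. The paper does not actually supply a proof of this lemma; it is stated immediately after the remark ``One can check that the kernel field is well-defined on cohomology classes,'' and both facts are left to the reader as routine. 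Your diagonal-conjugation argument is the natural way to fill that gap, and it is in the same spirit as the unipotent-conjugation verification already present in the surrounding discussion of \cref{subsec:Cohomologyclasses}, so there is no meaningful divergence from the paper to report.
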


\subsection{Selmer conditions}\label{sec:selmer}

Let $\mathcal{M}$ denote the set of all places of $\F_q(x)$.
For each $v\in\mathcal{M}$, let $\F_q(x)_v$ denote the localization of $\F_q(x)$ at $v$, and pick once and for all an inclusion $\F_q(x)^\sep\hookrightarrow\F_q(x)_v^\sep$ of separable closures. This is equivalent to picking a prime above $v$ in $\F_q(x)^\sep$, or equivalently a compatible system of one place above $v$ in each finite extension of $\F_q(x)$. We define
\begin{align*}
    G_{\F_q(x)_v}&:=\Gal(\F_q(x)_v^\sep /\F_q(x)_v),
\end{align*}
and the inclusion $\F_q(x)^\sep\hookrightarrow\F_q(x)_v^\sep $ induces an inclusion $G_{\F_q(x)_v}\hookrightarrow G_{\F_q(x)}$ by restriction. The image of $G_{\F_q(x)_v}$ is the decomposition group of the prime above $v$ in $\F_q(x)^\sep $.

Given a Galois representation $\theta:G_{\F_q(x)}\to \GL_n(\F_\ell)$, we define $\theta_v$ to be its restriction to the decomposition group $G_{\F_q(x)_v}$. 
For convenience we will define the notation
\[H^i(\theta):=H^i(G_{\F_q(x)},\theta)\qquad\text{and}\qquad H^i(v,\theta):=H^i(G_{\F_q(x)_v},\theta_v).\]
By restriction to the decomposition group $G_{\F_q(x)_v}$, we obtain a map
\[\res_{v}: H^{i}(\theta) \to H^{i}(v,\theta).\]

For any $v\in\mathcal{M}$, let $k_v$ denote the residue field of $\F_q(x)$ at the place $v$, so that $G_{k_{v}}$ is the quotient of $G_{\F_q(x)_v}$ by the inertia group $I_v$ above $v$. Let $S=\{f,\infty\}\subseteq\mathcal{M}$, where $f$ denotes the place determined by the irreducible polynomial $f(x)$ defining the curve $C$, and $\infty$ is the place determined by $\frac1x$. 

For each $v\in\mathcal{M}$ we define a subgroup $L_{v} \subseteq H^{1}(v, \theta)$:
\begin{equation}\label{eq:Lvdef}
    L_{v}:=\left\{\begin{array}{ll}
    H^{1}(v,\theta) & v\in S, \\
    H^{1}(G_{k_{v}}, \theta) & v\in\mathcal{M}\setminus S.
\end{array}\right.
\end{equation}
The subgroup $L_v$ for $v\notin S$ is the ``unramified subspace'' 
\[H^{1}(G_{k_{v}}, \theta)=\ker(H^1(G_{\F_q(x)_v},\theta)\to H^1(I_v,\theta)),\]
which is equal to the group defined in \cite[Definition 7.2.14]{cohomology} by the inflation-restriction exact sequence.
The unramified subspace is so called because kernel fields of classes in the unramified subspace introduce no new ramification at $v$: if $I_v\leq \ker\theta$ (so the kernel field of $\theta$ is unramified over $\F_q(x)$ at $v$), and if $a\in H^1(G_{\F_q(x)},\theta)$ satisfies $\res_v(a)\in H^1(G_{k_v},\theta)$, then in fact we also have $I_v\leq \ker \theta[a]$ (the kernel field of $a$ is unramified over $\F_q(x)$ at $v$).
With this setup, we can define the \emph{Selmer group}
\begin{align*}
H^{1}_S(\theta) 	& := \{a \in H^{1}(\theta): \res_{v}(a) \in L_{v} \text{ for all } v\in \mathcal{M}\} \\
								& = \ker\left(H^{1}(\theta) \overset{\res}{\to} \prod_{v\in \mathcal{M}} H^{1}(v,\theta)/L_{v}\right).
\end{align*}

\subsection{A basis for local cohomology groups}\label{sec:local_cohom_dim}
Recall that $\gamma$ is the order of $q$ in $\F_\ell^\times$ and $d=\deg f$. In \cref{subsec:defreps} we defined $G_{\F_q(x)}$-representations $\chi$ and $\rho$ which factor through $G_C=\Gal(\F_{q^\gamma}(C)/\F_q(x))$.

Let 
\[\theta=\bigrep{k}{n},\] 
and let $h^i(\theta)$ denote the $\F_\ell$-dimension of cohomology group $H^i(\theta)$. 
Since $\theta(\xi)$ is similar to a Jordan block as is described in \cref{eq:SymFrobzeta}, the dimension of $H^0(\theta)$ depends only on whether the top-left entry of $\theta(\frob)$ equals $1$. That is, 
\begin{equation}\label{eq:h0}
    h^0(\theta)=\begin{cases} 1 & \chi^{n+k} = 1 \\ 0 & \text{otherwise} \end{cases}=\begin{cases} 1 & \gamma \mid n+k \\ 0 & \text{otherwise.} \end{cases}
\end{equation}
We now consider $\theta_v$, the restriction of $\theta$ to the decomposition group $G_{\F_q(x)_v}$, for $v\in S=\{f,\infty\}$.
The polynomial $f(x)$ splits over $\F_\qg$ into $\gcd(d,\gamma)$ factors which are cyclically permuted by $\frob\in\Gal(\F_\qg(x)/\F_q(x))$. The decomposition group $G_{\F_q(x)_f}$ fixes these factors, so the only powers of $\frob$ lying in the image of $G_{\F_q(x)_f}\to G_C$ are powers of $\frob^d$. On the other hand $\infty$ has a unique prime above it in $\F_\qg(x)$, so $\frob$ is in the image of $G_{\F_q(x)_\infty}$. We can conclude that
\begin{align}\label{h0local}
h^{0}(f, \theta) & = \begin{cases} 1 & \gamma\mid d(n+k) \\ 0 & \text{otherwise,} \end{cases}, &\qquad 
h^{0}(\infty, \theta) & = \begin{cases} 1 & \gamma\mid (n+k) \\ 0 & \text{otherwise.} \end{cases}
\end{align}
By Tate duality~\cite[(7.2.6)]{cohomology} and \cref{bigrep_dual}, this implies
\begin{align}\label{h2local}
h^{2}(f, \theta) & = \begin{cases} 1 & \gamma\mid d(1-n) \\ 0 & \text{otherwise,} \end{cases}, &\qquad 
h^{2}(\infty, \theta) & = \begin{cases} 1 & \gamma\mid (1-n) \\ 0 & \text{otherwise.} \end{cases}
\end{align}
Finally, since $\ell$ is coprime to $q$, the local Euler characteristic of $\theta$ at any $v\in\mathcal{M}$ is trivial~\cite[(7.3.2)]{cohomology} and so
\begin{align}
h^{1}(v, \theta) & = h^0(v,\theta)+h^2(v,\theta).
\end{align}
In particular, we observe that $H^1(v,\theta)$ is at most $2$-dimensional for $v\in S$.

We now give an explicit basis for $H^1(v,\theta)$ and discuss their corresponding kernel fields.

\begin{lemma}\label{lem:H1v basis}
    Let $v=f$ or $\infty$, and let $\delta=d$ or $1$ respectively. Let $\theta=\bigrep{k}{n}$ for $0\leq k\leq \ell-1$, and $\theta_v$ the restriction to $G_{\F_q(x)_v}$.
    \begin{enumerate}[label=(\alph*)]
        \item If $\gamma\mid \delta(n+k)$ then there exists a nonzero element $\mathbf{ur}\in H^1(v,\theta)$ such that the kernel field $K^{\mathbf{ur}}$ is the degree $\ell$ unramified extension of $K^{\theta_v}$.
        \item If $\gamma\mid \delta(1-n)$ then there exists a nonzero element $\mathbf{b}\in H^1(v,\theta)$ such that the kernel field $K^{\mathbf{b}}$ is $\F_\qg(C)_v$. 
    \end{enumerate}
    Further, $H^1(v,\theta)$ has a basis consisting of whichever of $\mathbf{b}$ and $\mathbf{ur}$ it contains.
\end{lemma}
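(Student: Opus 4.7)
The plan is to combine a dimension count from local Tate duality with explicit constructions of the two candidate classes. Since $\ell \neq p$, the local Euler characteristic formula yields
\[ h^1(v,\theta) = h^0(v,\theta) + h^2(v,\theta), \]
and \cref{h0local} together with \cref{h2local} show that these two dimensions are the indicator functions of conditions (a) and (b) respectively. Thus $\dim_{\F_\ell}H^1(v,\theta)$ equals the number of (a), (b) that hold, and the basis assertion will follow once I exhibit the two classes in the appropriate regimes.

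\textbf{The unramified class.} Supposing (a) holds, I would apply the inflation-restriction sequence for $I_v \lhd G_{\F_q(x)_v}$,
\[ 0 \to H^1(G_{k_v}, \theta_v^{I_v}) \to H^1(v, \theta) \to H^1(I_v, \theta_v)^{G_{k_v}}, \]
so that the unramified subspace is $H^1(G_{k_v}, \theta_v^{I_v})$. Because $\theta(\xi)$ is a unipotent Jordan block by \cref{eq:SymFrobzeta}, the invariants $\theta_v^{I_v} = \theta^{\langle \xi\rangle}$ are one-dimensional with $\frob$ acting by $\chi^{n+k}$; hence the residue Frobenius $\frob^\delta$ acts trivially iff (a) holds, in which case the unramified cohomology is one-dimensional and identified with $\Hom(G_{k_v}, \F_\ell)$. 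I would take $\mathbf{ur}$ to be any generator. After modifying to a cocycle representative vanishing on $I_v$, it reduces to a nonzero homomorphism of $G_{k_v}$ onto $\F_\ell$, which cuts out the unique degree-$\ell$ unramified extension $L/\F_q(x)_v$. Since the residue degree of $K^{\theta_v}/\F_q(x)_v$ divides $\gamma$ (coprime to $\ell$), the fields $K^{\theta_v}$ and $L$ are linearly disjoint over $\F_q(x)_v$, so $K^{\mathbf{ur}} = K^{\theta_v}\cdot L$ is precisely the degree-$\ell$ unramified extension of $K^{\theta_v}$.

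\textbf{The ramified class.} Supposing (b) holds, I want a class whose kernel field is exactly $\F_\qg(C)_v$; equivalently, a class in the image of the inflation map $H^1(\Gal_v, \theta_v) \hookrightarrow H^1(v, \theta)$, where $\Gal_v = \Gal(\F_\qg(C)_v/\F_q(x)_v)$. My approach is to compute $H^1(\Gal_v, \theta_v)$ via the Lyndon-Hochschild-Serre spectral sequence for $\langle \xi\rangle \lhd \Gal_v$: since $|\Gal_v/\langle \xi\rangle|$ is coprime to $\ell$, the spectral sequence degenerates to $H^i(\Gal_v, \theta_v) = H^i(\langle \xi\rangle, \theta_v)^{\Gal_v/\langle \xi\rangle}$. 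A direct Tate-cohomology computation using the Jordan-block form of $\theta(\xi)$ yields a one-dimensional $H^1(\langle \xi\rangle, \theta_v)$ on which $\frob^\delta$ acts by $\chi^{\delta(1-n)}$ in the generic range $k \leq \ell - 2$, so (b) guarantees a nonzero invariant. I take $\mathbf{b}$ to be the inflation of this invariant to $H^1(v, \theta)$; a matrix analysis of $\theta[\mathbf{b}]$ then shows that it acts faithfully through $\Gal_v$, giving $K^{\mathbf{b}} = \F_\qg(C)_v$. When both conditions hold, linear independence of $\mathbf{ur}$ and $\mathbf{b}$ is automatic since one is unramified and the other is not, completing the basis claim.

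\textbf{Main obstacle.} The delicate point is pinning down $K^{\mathbf{b}}$ as exactly $\F_\qg(C)_v$, and not as a larger wildly-ramified extension. The inflation argument above handles this cleanly in the generic range $k \leq \ell - 2$, but the boundary case $k = \ell - 1$ (where $H^1(\langle \xi\rangle, \theta)$ collapses by a Tate-periodicity computation) requires a separate construction of $\mathbf{b}$: most cleanly via the short exact sequence $0 \to \chi^{n+\ell-1} \to \bigrep{\ell-1}{n} \to \bigrep{\ell-2}{n} \to 0$ of \cref{eq:topleft cyclo}, lifting the $\mathbf{b}$-class for $\bigrep{\ell-2}{n}$ already established in the generic range through the associated long exact sequence in local cohomology and checking that the lift still factors through $\Gal_v$.
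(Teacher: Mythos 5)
Your construction of $\mathbf{ur}$ is essentially the same as the paper's (both ultimately identify the unramified class with a homomorphism of the residue Galois group into $\F_\ell$, pushed into $\theta$ via the trivial subrepresentation). For $\mathbf{b}$ you take a genuinely different route. The paper builds $\mathbf{b}$ \emph{explicitly} as the extension class of $\Sym^{k+1}\rho$ over $\F_\ell$ (via the short exact sequence $0 \to \bigrep{k}{} \to \Sym^{k+1}\rho \to \F_\ell \to 0$) and proves $\mathbf{b}\neq 0$ by the dimension count $h^0(v,\bigrep{k}{})=h^0(v,\Sym^{k+1}\rho)$; you instead compute $H^1(\Gal_v,\theta_v)$ abstractly via the LHS spectral sequence for $\langle\xi\rangle\lhd\Gal_v$ and a Tate-cohomology calculation for the cyclic group of order $\ell$. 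Your approach is cleaner as pure group cohomology, while the paper's explicit construction is tailored so that $\res_v(b^{[k]})=\mathbf{b}$ holds by definition, which is what makes the cup-product analysis in \cref{local_cup_products} and the proof of \cref{thm:congruence_conditions} run smoothly later on. Both arguments, as you note, only work cleanly for $k\leq\ell-2$ (the paper's because $\Sym^{k+1}\rho$ as an extension of $\F_\ell$ by $\bigrep{k}{}$ requires $k+1\leq\ell-1$ per \cref{lem:extend}).

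The concern I have is with your proposed patch for $k=\ell-1$, which I do not think can succeed, because the statement itself appears to fail there when (b) holds and (a) does not. You already observe that $H^1(\langle\xi\rangle,\theta_v)=0$ when $\theta(\xi)$ is a full Jordan block of size $\ell$ (since the norm element acts by $(\xi-1)^{\ell-1}\neq 0$ and kills everything). But this is not merely an obstacle to one method of construction: by the LHS spectral sequence it means $H^1(\Gal_v,\theta_v)=0$ outright, so there is simply no nonzero class in $H^1(v,\theta)$ inflated from $\Gal_v$, i.e., no class with kernel field contained in $\F_\qg(C)_v$. The unique class in $H^1(v,\theta)$ in this regime (which exists by the dimension count) has $Na(t)\neq 0$ where $t$ is a tame inertia generator and $N=\sum_i\theta(t)^i=(\theta(\xi)-1)^{\ell-1}$; consequently $\theta[a](t)$ has order $\ell^2$, so the kernel field $K^a$ is ramified of degree $\ell$ over $\F_\qg(C)_v$, not equal to it. Lifting a class through the long exact sequence for $0\to\chi^{n+\ell-1}\to\bigrep{\ell-1}{n}\to\bigrep{\ell-2}{n}\to 0$ cannot manufacture a class with a smaller kernel field than this. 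So the boundary case should not be patched but rather excised: the lemma should be read as (and is only applied for, via \cref{prop:eigen to cohom}) $k\leq\ell-2$. Your argument is sound in that range.
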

Here $\F_\qg(C)_v$ means the completion of $\F_\qg(C)$ at the place above $v$ determined by the decomposition group $G_{\F_q(x)_v}$.

\begin{remark}
    If $k\geq 1$ then the $K^{\theta_v}=\F_\qg(C)_v$, so when $\mathbf{b}$ exists in $H^1(v,\theta)$, $K^{\mathbf{b}}$ is not a nontrivial extension of $K^{\theta_v}$; this serves as a caution that a nontrivial cohomology class may define a trivial extension of kernel fields. However, if $k=0$ then $K^{\theta_v}$ is a subfield of $\F_\qg(x)_v$, so $K^{\mathbf{b}}/K^{\theta_v}$ is a nontrivial extension in this case.
\end{remark}

\begin{proof}   
    We start by defining an \emph{unramified class} $\mathbf{ur}\in H^1(v,\F_\ell)$. Since $\F_\ell$ is the trivial representation, a class in $ H^1(v,\F_\ell)$ is represented by a group homomorphism.
Let $\mathbf{ur}\in H^1(v,\F_\ell)$ be defined by the property that it factors through $\Gal(\F_{q^\ell}(x)_v/\F_q(x)_v)$, where $\mathbf{ur}$ is represented by $\Gal(\F_{q^\ell}(x)_v/\F_q(x)_v) \to \F_\ell$ given by sending the Frobenius map $c\mapsto c^q$ to $1$.  
    
    If $\gamma\mid \delta(n+k)$, then following \cref{eq:h0} $\F_\ell$ is a subrepresentation of $\theta_v$, and the image of $\mathbf{ur}\in H^1(v,\F_\ell)$ under $H^1(v,\F_\ell)\to H^{1}(v, \theta_v)$ we also denote by $\mathbf{ur}$. The kernel field of $\mathbf{ur}\in H^{1}(v, \theta_v)$ is the compositum $\F_{q^\ell}\cdot K^{\theta_v}$; as $K^{\theta_v}$ is the composite of an unramified $\Z/\gamma\Z$ extension with a ramified $\Z/\ell\Z$ extension, it never contains $\F_{q^\ell}$. Thus $K^{\mathbf{ur}}/K^{\theta_v}$ is a nontrivial extension and so the class $\mathbf{ur}$ is nonzero.
    
    If $\gamma\mid \delta(1-n)$, then $\chi^n_v=\chi_v$, so the localizations at $v$ of $\bigrep{k}{n}$ and $\bigrep{k}{}$ are isomorphic. The representation $\Sym^{k+1}\rho$ is an extension of $1$ by $\bigrep{k}{}$ following \cref{lem:extend}, so we can define $\mathbf{b}\in H^{1}(v,\theta_v)$ to be the class corresponding to this extension by $1$, or equivalently the image of $1\in H^0(v,\F_\ell)$ under $\delta$ in the long exact sequence
    \[0\to H^0(v,\bigrep{k}{})\to H^0(v,\Sym^{k+1}\rho)\to H^0(v,\F_\ell)\xrightarrow{\delta} H^1(v,\bigrep{k}{}).\]
    The injection $H^0(v,\bigrep{k}{})\to H^0(v,\Sym^{k+1}\rho)$ is an isomorphism because both groups have the same dimension, and therefore $\mathbf{b}=\delta(1)\neq 0$. 
    The kernel field of $\mathbf{b}$ is equal to the fixed field of $\ker\rho_v$. Note that this equals the fixed field of $\ker\theta_v$ provided $k\geq 1$, which is why we can't use the kernel field to deduce that $\mathbf{b}$ defines a nonzero cohomology class.

    If both $\gamma\mid \delta(n+k)$ and $\gamma\mid \delta(1-n)$, then $\mathbf{b}$ and $\mathbf{ur}$ define independent classes in $H^1(v,\bigrep{k}{n})$ because they define distinct kernel fields over $\F_q(x)_v$ (\cref{lem: kernel field scaling}). So whichever of the elements $\mathbf{b}$ and $\mathbf{ur}$ exist in $H^1(v,\theta_v)$, they span a subspace that matches the dimension $h^1(v,\theta)$ computed above, and therefore they must form a basis.
\end{proof}

\subsection{Selmer groups of characters}\label{sec:selmer_characters}

Using the computations from \cref{sec:local_cohom_dim}, we can compute the dimension of global Selmer groups of characters. 

\begin{lemma}\label{h1S_character_dimension} Recall $h^1_S(\cyclo^n)$ denotes the dimension of the cohomology group $H^1_S(\cyclo^n)$. We have
\[h^1_S(\cyclo^n)=\begin{cases} 1 & \gamma \mid d(1-n)  \\ 0 & \text{otherwise} \end{cases}+\begin{cases} 1 & \gamma \mid n  \\ 0 & \text{otherwise.} \end{cases}\]
\end{lemma}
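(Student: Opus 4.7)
The plan is to apply the Greenberg--Wiles formula (a consequence of Poitou--Tate duality, see e.g.\ \cite[Theorem 8.7.9]{cohomology}) and then verify that the dual Selmer group vanishes. Under local Tate duality, the Selmer conditions $L_v=H^1(v,\chi^n)$ for $v\in S$ and $L_v=H^1(G_{k_v},\chi^n)$ for $v\notin S$ have orthogonal complements $L_v^\perp=0$ for $v\in S$ and $L_v^\perp=H^1(G_{k_v},\chi^{1-n})$ for $v\notin S$. The Greenberg--Wiles formula then gives
\[h^1_S(\chi^n)-h^1_{S^\perp}(\chi^{1-n})=h^0(\chi^n)-h^0(\chi^{1-n})+\sum_{v\in S}\bigl(\dim L_v-h^0(v,\chi^n)\bigr),\]
and since $\ell$ is coprime to every residue characteristic, the vanishing of the local Euler--Poincar\'e characteristic at $v\in S$ rewrites $\dim L_v-h^0(v,\chi^n)$ as $h^2(v,\chi^n)$.

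Next I would substitute the values from \cref{sec:local_cohom_dim} specialized to $k=0$---namely $h^0(\chi^n)=[\gamma\mid n]$, $h^0(\chi^{1-n})=[\gamma\mid 1-n]$, $h^2(f,\chi^n)=[\gamma\mid d(1-n)]$, and $h^2(\infty,\chi^n)=[\gamma\mid 1-n]$. The two $[\gamma\mid 1-n]$ contributions cancel, leaving
\[h^1_S(\chi^n)=h^1_{S^\perp}(\chi^{1-n})+[\gamma\mid n]+[\gamma\mid d(1-n)],\]
which matches the claimed formula once the dual Selmer group is shown to be trivial.

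To prove $h^1_{S^\perp}(\chi^{1-n})=0$, note that a class in the dual Selmer group is unramified at every place of $\F_q(x)$ and restricts to zero at $f$ and $\infty$. Since $\Proj^1_{\F_q}$ has trivial Picard group, the maximal abelian unramified pro-$\ell$ extension of $\F_q(x)$ is the constant field extension, so the everywhere-unramified subspace of $H^1(G_{\F_q(x)},\chi^{1-n})$ injects into $H^1(\Gal(\Fbar_q/\F_q),\chi^{1-n})$. When $\gamma\nmid 1-n$ the latter vanishes because $\mathrm{Frob}-1$ acts invertibly on $\chi^{1-n}$. When $\gamma\mid 1-n$ it is one-dimensional, spanned by the unramified class $\mathbf{ur}$ of \cref{lem:H1v basis} corresponding to $\F_{q^\ell}(x)/\F_q(x)$; but the residue field $\F_{q^d}$ at $f$ does not contain $\F_{q^\ell}$ (as $\gcd(d,\ell)=1$), so the restriction of $\mathbf{ur}$ at $f$ is nonzero and $\mathbf{ur}\notin H^1_{S^\perp}(\chi^{1-n})$. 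Either way the dual Selmer group is trivial.

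The main technical obstacle is the careful identification of the dual local conditions so that the Greenberg--Wiles formula can be invoked cleanly in this function-field setting; once that bookkeeping is in place, the computation of local invariants and the vanishing of the dual Selmer group follow from the results already established in \cref{sec:local_cohom_dim} and the elementary fact that $\Proj^1_{\F_q}$ admits no nontrivial unramified covers beyond constant field extensions.
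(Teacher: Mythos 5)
Your proposal is correct and follows the same strategy as the paper: apply the Greenberg--Wiles formula (whose local product collapses to $v\in S$ since the unramified condition makes all other factors trivial), substitute the local dimension counts from \cref{sec:local_cohom_dim}, and then argue the dual Selmer group $H^1_{S^\perp}(\chi^{1-n})$ vanishes. The one place you diverge slightly is in the vanishing argument: the paper phrases it in terms of field extensions, observing that a nonzero dual Selmer class would produce a $\Z/\ell\Z$-extension of a subfield of $\F_\qg(x)$, unramified everywhere and split at $f$ and $\infty$, and no such extension exists; you instead identify the everywhere-unramified subspace with $H^1(\Gal(\Fbar_q/\F_q),\chi^{1-n})$ and rule out its potential generator $\mathbf{ur}$ by checking the local condition at $f$, using $\gcd(d,\ell)=1$. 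Both routes work, though it is worth noting that checking the condition at $\infty$ (residue field $\F_q$, so the constant extension $\F_{q^\ell}(x)$ is inert there) would do the job without invoking $\gcd(d,\ell)=1$ at all, which is closer to what the paper actually exploits.
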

\begin{proof}
We begin by defining an alternate Selmer group. Let $\theta$ be a $G_{\F_q(x)}$-representation and $\theta^*$ its cohomological dual (\cref{def:dual}). For each place $v\in\mathcal{M}$ we define a subgroup of $H^1(v,\theta^*)$ by
\[L_{v}^\perp:=\left\{\begin{array}{ll}
    0 & v\in S, \\
    H^{1}(G_{k_{v}}, \theta^*) & v\in\mathcal{M}\setminus S,
\end{array}\right.\]
where $H^1(G_{k_v},\theta^*)$ is the unramified subspace as defined in \cref{eq:Lvdef}. For all places $v$ of $\F_q(x)$, the subgroup $L_v^\perp\leq H^1(v,\theta^*)$ is precisely the annihilator under the local Tate pairing of the subgroup $L_v(\theta)\leq H^1(v,\theta)$ as defined in \cref{eq:Lvdef}  \cite[Theorem 7.2.15]{cohomology}. 
Using these subgroups we define
\begin{align*}
H^{1}_{S^*}(\theta^*) 	& := \{a \in H^{1}(\theta^*): \res_{v}(a) \in L_{v}^\perp \text{ for all } v\}.
\end{align*}

In this setting, the Greenberg-Wiles formula~\cite[Theorem 8.7.9]{cohomology} reduces to
\[\frac{\#H^1_S(\chi^n)}{\#H^1_{S^*}(\chi^{1-n})}=\frac{\#H^0(\chi^n)}{\#H^0(\chi^{1-n})}\cdot\frac{\#H^1(f,\chi^n)}{\#H^0(f,\chi^n)}\cdot\frac{\#H^1(\infty,\chi^n)}{\#H^0(\infty,\chi^n)},\]
because for all $v\notin\{f,\infty\}$, we have $\#L_v=\#H^0(v,\theta)$ (see the proof of \cite[(8.7.9)]{cohomology}).
The right-hand side of this equation can be determined using the dimension computations in \cref{sec:local_cohom_dim}, and equals the right-hand side of the statement of the lemma. So it suffices to show that $H^1_{S^\ast}(\chi^{1-n})$ is trivial.

Now suppose there exists a nonzero class $a\in H^1_{S^\ast}(\cyclo^{1-n})$ and let $K^a$ denote the kernel field of $a$. For $v\in S$, the Selmer condition $L_v^\perp$ implies 
that the $G_{\F_q(x)_v}$-representations $\theta^*_v$ and $\theta^*[a]_v=\theta^*_v[\res_v(a)]$ have the same kernel field,
so the extension $K^a/K^\theta$ is totally split at all places over $v\in S$. Since $a$ is not a coboundary, the extension of $1$ determined by $a$ must have a nontrivial unipotent element in its image, so there is an element of order $\ell$ in $\Gal(K^a/\F_q(x))$. Since $\Gal(\F_\qg(x)/\F_q(x))$ has order coprime to $\ell$, this implies that $K^a$ is a $\Z/\ell\Z$ extension of some subfield of $\F_{q^{\gamma}}(x)$
that is unramified everywhere and split at $f$ and $\infty$. No such extensions exist, so in fact $H^1_{S^\ast}(\cyclo^{1-n})$ is trivial. Hence the dimension of $H^1_S(\chi^n)$ is exactly as predicted by the statement of the lemma. 
\end{proof}

\subsection{From $\Frob$ eigenvectors to cohomology}

Using the cohomology computations above, we can now complete the work we began in \cref{sec:galoisreps} of relating the existence of eigenvectors of $\Frob$ in $F_n^k$ to the existence of certain cohomology classes.

\begin{lemma}\label{lem:extension_unramified}
    Let $a\in H^1_S(\bigrep{k}{n})$ for some $1\leq k\leq \ell-1$, and let $L/\F_q(x)$ be the kernel field of $a$. Then $L$ is an unramified extension of $\F_\qg(C)$.
\end{lemma}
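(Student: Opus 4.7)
The plan is to verify the claim in two parts: first that $\F_\qg(C)\subseteq L$, and then that $L/\F_\qg(C)$ is unramified at each place separately. For the inclusion, I would argue that $\theta:=\bigrep{k}{n}$ is faithful on $G_C$ when $k\geq 1$. The matrix forms in \cref{eq:SymFrobzeta} show that $\theta(\xi)$ is a nontrivial unipotent matrix while $\theta(\frob)$ is diagonal with consecutive entries $q^{n+j}$ and $q^{n+j+1}$; since these differ by a factor of $q$ of order $\gamma$, $\theta(\frob)$ has order exactly $\gamma$. A short argument comparing unipotent and diagonal matrices then shows $\ker\theta|_{G_C}$ is trivial, hence $K^\theta=\F_\qg(C)$. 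Since $\ker\theta[a]\subseteq\ker\theta$, we obtain $L=K^{\theta[a]}\supseteq K^\theta=\F_\qg(C)$.

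For places $v\notin S$, the extension $\F_\qg(C)/\F_q(x)$ is unramified at $v$, so $\theta|_{I_v}$ is trivial. The Selmer condition $\res_v(a)\in H^1(G_{k_v},\theta_v)$ combined with triviality of $\theta|_{I_v}$ forces any cocycle representing $\res_v(a)$ (which is just a homomorphism $I_v\to\F_\ell^{k+1}$) to vanish on $I_v$. Hence $\theta[a]|_{I_v}$ is trivial, so $L/\F_q(x)$ is unramified at $v$, and a fortiori $L/\F_\qg(C)$ is unramified above $v$.

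The crux is to handle $v\in S=\{f,\infty\}$, where the Selmer condition is vacuous. Here I would invoke \cref{lem:H1v basis}, which says $H^1(v,\theta_v)$ is spanned by (a subset of) the two classes $\mathbf{ur}$ and $\mathbf{b}$, whose kernel fields are respectively an unramified degree-$\ell$ extension of $K^{\theta_v}$ and $K^{\theta_v}=\F_\qg(C)_v$ itself. The key elementary observation is that for any cohomology classes $a_1,a_2$ one has $\ker\theta[a_1]\cap\ker\theta[a_2]\subseteq\ker\theta[a_1+a_2]$, which by Galois correspondence translates to $K^{a_1+a_2}\subseteq K^{a_1}\cdot K^{a_2}$. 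Combined with \cref{lem: kernel field scaling}, writing $\res_v(a)=c_1\mathbf{ur}+c_2\mathbf{b}$ yields $K^{\res_v(a)}\subseteq K^{\mathbf{ur}}\cdot K^{\mathbf{b}}=K^{\mathbf{ur}}$, an unramified extension of $\F_\qg(C)_v$. Therefore the local kernel field of $a$ at $v$ is unramified over $\F_\qg(C)_v$, and $L/\F_\qg(C)$ is unramified at every prime above $v$.

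The main obstacle is this last step, since no Selmer condition controls the behavior at $S$ directly; one must use the explicit structure of $H^1(v,\theta_v)$ given by \cref{lem:H1v basis}. It is the simultaneous good behavior of both potential basis classes---$\mathbf{ur}$ defining an unramified extension of $\F_\qg(C)_v$ and $\mathbf{b}$ defining only the trivial extension of $\F_\qg(C)_v$---that is responsible for preserving unramifiedness of $L$ over $\F_\qg(C)$, even though $L/\F_q(x)$ itself must be ramified above the places in $S$.
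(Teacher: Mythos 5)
Your proposal is correct and takes essentially the same approach as the paper: reduce to $v\in S$ via the Selmer condition, then invoke \cref{lem:H1v basis} to express $\res_v(a)$ in terms of $\mathbf{ur}$ and $\mathbf{b}$. The paper phrases the final step by observing that cocycles for $\mathbf{ur}$ and $\mathbf{b}$ vanish on the inertia subgroup of $\F_\qg(C)_v$ (hence so does any linear combination), which is equivalent to your containment $K^{\res_v(a)}\subseteq K^{\mathbf{ur}}\cdot K^{\mathbf{b}}$.
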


\begin{proof}
    The kernel field contains the fixed field of $\ker\bigrep{k}{n}$, which is $\F_\qg(C)$ because $k\geq 1$. The Selmer condition ensures $L$ is unramified over $\F_\qg(C)$ at all $v\notin S$, so it suffices to check the ramification at $v\in S$.
    By \cref{lem:H1v basis}, for $v\in S$, $\res_v(a)$ is a linear combination of $\mathbf{ur}$ and $\mathbf{b}$ (allowing the coefficient to be $0$ if the corresponding class is not in $H^1(v,\theta)$). Since these classes both define unramified extensions of $\F_\qg(C)_v$, the corresponding extensions of $1$ both vanish on the inertia group of $\F_\qg(C)_v$, so the same is true of any linear combination. This implies that the kernel field of $\res_v(a)$ is
    an unramified extension of $\F_\qg(C)_v$. Since the kernel field of $\res_v(a)$ is the completion at a prime above $v$ of the kernel field of $a$, we can conclude that $L$ is unramified over $\F_\qg(C)$ at $v\in S$, and also at all $v\notin S$ by the Selmer condition.
\end{proof}

\begin{proposition}\label{prop:eigen to cohom}
    Let $2\leq k\leq \ell-1$ and $n\in\Z/\gamma\Z$. The following are equivalent:
    \begin{itemize}
        \item There exists an eigenvector of $\Frob$ in $F_n^k$.
        \item $\gamma\nmid n$, and there exists a class $a \in H^1_S(\bigrep{k-1}{1-n})$ that maps to a nonzero class $a'\in H^1_S(\cyclo^{1-n})$ under the map induced by \cref{eq:reps}.
    \end{itemize}
\end{proposition}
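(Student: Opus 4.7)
The plan is to use \cref{prop:eigen to rep} as the core input, and translate its condition on the existence of a representation $\psi$ into the cohomological language of the current proposition via the standard correspondence from \cref{subsec:Cohomologyclasses} between extensions of $1$ by $\theta:=\bigrep{k-1}{1-n}$ and classes in $H^1(\theta)$, namely $\psi\leftrightarrow\theta[a]$. Under this correspondence, condition (a) of \cref{prop:eigen to rep} is automatic, so the work lies in matching condition (b) of \cref{prop:eigen to rep} with the two cohomological conditions (the Selmer condition on $a$ and the non-vanishing of $a'$), and in handling $\gamma\nmid n$ (which in the forward direction comes from \cref{cor:eigenvalues} applied to the nonempty $F_n^k$, and in the backward direction is an explicit hypothesis).

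First I would show that the ``unramified over $\F_\qg(C)$'' half of condition (b) is equivalent to $a\in H^1_S(\theta)$. For $v\notin S$, $\res_v(a)\in L_v=H^1(G_{k_v},\theta_v)$ iff $\psi|_{G_{\F_q(x)_v}}$ factors through $G_{k_v}$ iff $K^\psi/\F_q(x)$ is unramified at $v$, which (since $\F_\qg(C)/\F_q(x)$ is unramified outside $S$) is equivalent to $K^\psi/\F_\qg(C)$ being unramified at every place over $v$. At $v\in S$ the Selmer condition is vacuous, and the fact that $K^\psi/\F_\qg(C)$ is unramified above $v$ (given a Selmer class $a$) is exactly the content of \cref{lem:extension_unramified}, which uses the hypothesis $k\geq 2$.

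Next I would handle the ``$\Gal(K^\psi/\F_\qg(C))\simeq\F_\ell^k$'' half of condition (b). Because $\theta$ is trivial on $G_{\F_\qg(C)}$, the restriction to $G_{\F_\qg(C)}$ of any cocycle $\alpha$ representing $a$ becomes a genuine homomorphism $\phi:G_{\F_\qg(C)}\to \F_\ell^k$ with $\Gal(K^\psi/\F_\qg(C))\simeq \mathrm{Im}(\phi)$. A short calculation using the cocycle identity and normality of $G_{\F_\qg(C)}$ in $G_{\F_q(x)}$ shows $\phi(\sigma\tau\sigma^{-1})=\theta(\sigma)\phi(\tau)$, so $\mathrm{Im}(\phi)$ is a $G_C$-stable subspace of $\theta$. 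Since $\theta=\bigrep{k-1}{1-n}$ is indecomposable and its maximal proper invariant subspace is $\bigrep{k-2}{2-n}$ (from \cref{eq:reps}), one has $\mathrm{Im}(\phi)=\F_\ell^k$ iff $\mathrm{Im}(\phi)\not\subseteq \bigrep{k-2}{2-n}$ iff $a$ does not lift to $H^1(\bigrep{k-2}{2-n})$ iff $a'\neq 0$ in $H^1(\chi^{1-n})$. Functoriality of the Selmer condition under the quotient $\theta\to\chi^{1-n}$ shows that the image $a'$ of any $a\in H^1_S(\theta)$ automatically lies in $H^1_S(\chi^{1-n})$.

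Combining these two translations with \cref{prop:eigen to rep} yields the desired equivalence. I expect the main technical obstacle to be the ``full rank'' step: the cocycle manipulation identifying $\mathrm{Im}(\phi)$ as a $G_C$-stable subspace, combined with the structural fact about the invariant subspace lattice of $\bigrep{k-1}{1-n}$. Everything else reduces to the dictionary between Kummer-theoretic extensions, cocycles, and the Selmer local conditions already set up in Sections \ref{sec:galoisreps} and \ref{sec:galois_cohom}.
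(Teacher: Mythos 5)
Your overall strategy matches the paper's: translate the existence of $\psi$ from \cref{prop:eigen to rep} through the extension/cocycle dictionary, and match condition (b) against the Selmer condition on $a$ plus the nonvanishing of $a'$. The Selmer half of your argument is fine (and the shortcut of getting $\gamma\nmid n$ in the forward direction from \cref{cor:eigenvalues} is a clean alternative to the paper's dimension count for $h^1_S(\chi)$).

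However, the ``full rank'' step has a genuine gap. You assert the chain
\[
\mathrm{Im}(\phi)=\F_\ell^k \ \Longleftrightarrow\ \mathrm{Im}(\phi)\not\subseteq\bigrep{k-2}{2-n} \ \Longleftrightarrow\ a \text{ does not lift} \ \Longleftrightarrow\ a'\neq 0,
\]
but the middle equivalence is not an equivalence without further argument. The direction ``$a$ lifts to $H^1(\bigrep{k-2}{2-n})$ $\Rightarrow$ $\mathrm{Im}(\phi)\subseteq\bigrep{k-2}{2-n}$'' is easy (choose a representative cocycle valued in the subspace; the restriction $\phi$ to $G_{\F_\qg(C)}$ depends only on the class $a$). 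But the converse is exactly the hard point. What $\mathrm{Im}(\phi)\subseteq\bigrep{k-2}{2-n}$ gives you is that $\alpha'|_{G_{\F_\qg(C)}}=0$, which by inflation--restriction only says that $a'$ is inflated from $H^1(G_C,\chi^{1-n})$ --- not that $a'=0$. You need $H^1(G_C,\chi^{1-n})=0$, and this holds precisely because $\gamma\nmid n$: by inflation--restriction through $\langle\xi\rangle\lhd G_C$, since $\gamma$ is coprime to $\ell$ the only possible contribution is $H^1(\langle\xi\rangle,\chi^{1-n})^{G_C/\langle\xi\rangle}$, and $\frob$ acts on this by $q^{-n}$, so the invariants vanish unless $\gamma\mid n$. (Indeed when $\gamma\mid n$, $\rho$ itself gives a class $a'\neq 0$ with $\alpha'|_{G_{\F_\qg(C)}}=0$.) The paper carries out an equivalent computation directly on the cocycle $\alpha'$ using $\frob\xi\frob^{-1}=\xi^q$. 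You need to supply this argument; without it, the backward direction of your proposition does not close.
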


\begin{proof}
    Given a Frobenius eigenvector $w\in F_n^k$, we obtain a representation $\psi$ as in \cref{prop:eigen to rep}, which is an extension of $1$ by $\theta:=\bigrep{k-1}{1-n}$ and therefore corresponds to a class $a\in H^1(\theta)$. Letting $K^\psi$ denote the kernel field, by \cref{prop:eigen to rep} we have $K^\psi/\F_\qg(C)$ unramified and therefore $a\in H^1_S(\theta)$. If we write $\psi$ in matrix form by picking a cocycle $\alpha$ as
    \begin{equation}\label{eq:psi matrix}
        \psi(\tau)=\begin{pmatrix}
        \theta(\tau) & \alpha(\tau) \\ 0&1
    \end{pmatrix},
    \end{equation}
    then $\tau\in\Gal(\F_q(x)^\sep/\F_\qg(C))$ maps to a matrix of the form $\begin{pmatrix}
        I_{k} & \alpha(\tau) \\ 0&1
    \end{pmatrix}$.
   Since $\psi$ descends to a faithful representation of $\Gal(K^\psi/\F_\qg(C))$, the map $\tau\mapsto \alpha(\tau)$ is an isomorphism onto $\F_\ell^k$ by \cref{prop:eigen to rep}. So letting $a'\in H^1_S(\chi^{1-n})$ be the class given by restriction of $\alpha$ to the bottom entry, there exists $\tau\in\Gal(K^\psi/\F_\qg(C))$ for which $a'(\tau)\neq 0$; this proves that the kernel field of $a'$ is strictly larger than $\F_\qg(C)$, so $a'$ must define a nonzero class in $H^1_S(\chi^{1-n})$.
    
    We also have $\gamma\nmid n$. For if $\gamma\mid n$, then $\chi^{1-n}=\chi$ and $h^1_S(\chi^{1-n})=1$ by \cref{h1S_character_dimension}. This implies that up to scalar multiple, $a'\in H^1_S(\chi^{1-n})$ is the class defining $\rho$ as an extension of $1$ by $\chi$, and therefore has kernel field $\F_\qg(C)$, a contradiction.

    Conversely, suppose we are given an element $a\in H^1_S(\theta)$ satisfying the described condition with $\gamma\nmid n$. This class corresponds to an extension of $1$ by $\theta$ which we denote $\psi$. Let $K^\psi$ denote the kernel field of $\psi$. By \cref{lem:extension_unramified}, $K^\psi$ is an unramified extension of $\F_\qg(C)$. 

    We can pick a cocycle $\alpha$ such that the matrix form $\psi$ has the form as in \cref{eq:psi matrix} where the last entry $\alpha'$ of $\alpha$ represents the nonzero class $a'\in H^1_S(\chi^{1-n})$ given by the assumption. 
    
    Now we claim that with this matrix form,
    there must exist $\tau\in \Gal(\F_q(x)^\sep /\F_\qg(C))$ for which $\alpha'(\tau) \ne 0$. If not, the fact $\alpha'(\tau)=0$ for all $\tau\in \Gal(\F_q(x)^\sep /\F_\qg(C))$ implies that the kernel field of $a'$ is contained in $\F_\qg(C)$. Thus the representation 
    $\begin{psmallmatrix}
        \chi^{1-n} & \alpha' \\ 0&1
    \end{psmallmatrix}$
    factors through $G_C=\Gal(\F_\qg(C)/\F_q(x)) = \langle \xi,\frob \rangle$. Since 
    \begin{align*}
        q\alpha'(\xi)=\alpha'(\xi^q)=\alpha'(\frob \xi\frob^{-1})=q^{1-n}\alpha'(\xi),
    \end{align*}
    and $\gamma\nmid n$ by assumption, we must have $\alpha'(\xi)=0$, so the representation factors through $\Gal(\F_\qg(x)/\F_q(x))$ which is a cyclic group.
    This contradicts the assumption that $a'$ is a nonzero class in $H^1_S(\chi^{1-n})$.

    Hence there exists $\tau\in \Gal(\F_q(x)^\sep /\F_\qg(C))$ for which $\alpha(\tau)$ has nonzero bottom entry. Now for arbitrary $\sigma\in G_{\F_q(x)}$ we have $\alpha(\sigma\tau\sigma^{-1})=\theta(\sigma)\alpha(\tau)$, so every vector in the orbit of $\alpha(\tau)$ under the $G_{\F_q(x)}$-representation $\theta$ is obtained as $\alpha(\tau')$ for some $\tau'\in \Gal(\F_q(x)^\sep /\F_\qg(C))$. The orbit under $\theta$ of a vector with nonzero bottom entry has full dimension $k$, so the restriction of $\psi$ to $\Gal(\F_q(x)^\sep /\F_\qg(C))$ has image isomorphic to $\F_\ell^k$; in particular this implies $\Gal(K^\psi/\F_\qg(C))\simeq \F_\ell^k$. We thus obtain a representation $\psi$ as in \cref{prop:eigen to rep}, from which we obtain an eigenvector in $F_n^k$.
\end{proof}

\section{Cup products}\label{sec:cup products}

Suppose $0\leq k\leq\ell-3$ and we want to determine the existence of $\Frob$ eigenvectors in $F^{k+2}_{1-n}$ for $n\in\Z/\gamma\Z$. In light of \cref{prop:eigen to cohom}, we are led to consider when $H^1_S(\bigrep{k+1}{n})$ contains an element that maps to a nontrivial element of $H^1_S(\chi^{n})$ under the map induced by \cref{eq:reps}. The map $\bigrep{k+1}{n}\to\chi^{n}$ factors through the map $\bigrep{k+1}{n} \to \bigrep{k}{n}$ coming from \cref{eq:topleft cyclo}, so a necessary condition is that $H^1_S(\bigrep{k}{n})$ contains a class that maps to a nontrivial element of $H^1_S(\chi^{n})$. So we will now assume we are given a class in $H^1_S(\bigrep{k}{n})$, and want to know when it lifts to an element in $H^1_S(\bigrep{k+1}{n})$. We will see that this condition is equivalent to the vanishing of a local cup product, and explain how to determine this vanishing condition explicitly in the case $k=0$.

\subsection{Lifting cohomology classes}\label{sec:lifting_local}

Let $\F_q(x)_S$ denote the maximal separable extension of $\F_q(x)$ that is unramified outside $S$. Then $\F_q(x)_S$ contains $\F_\qg(C)$, and so for any $G_{\F_q(x)}$-representation $\theta$ that factors through $G_C$, $\theta$ descends to a representation of $G_{\F_q(x),S}:=\Gal(\F_q(x)_S/\F_q(x))$. For the Selmer conditions defined in \cref{sec:selmer}, we can write the corresponding Selmer group as a full cohomology group:
\begin{align*}
    H^1_S(\theta)&=\ker\left(H^{1}(G_{\F_q(x)}, \theta) \to H^{1}(\Gal(\F_q(x)^\sep /\F_q(x)_S), \theta)\right)\\
    &=H^{1}(G_{\F_q(x),S}, \theta).
\end{align*}
This will allow us to locate $H^1_S(\theta)$ within a long exact sequence in cohomology. 
We also define $H^i_S(\theta):=H^{i}(G_{\F_q(x),S}, \theta)$ for all $i\geq 0$.

Let $0\leq k\leq \ell-2$. Take the short exact sequence of representations 
\[
0 \to \cyclo^{1+k+n} \to \bigrep{k+1}{n} \to \bigrep{k}{n} \to 0
\]
from \cref{eq:topleft cyclo}, but now considered as representations of $G_{\F_q(x),S}$.  The corresponding long exact sequence has a portion given by
\begin{align}\label{eq:long_exact_seq}
    \cdots\to H^{1}_S(\bigrep{k+1}{n}) \to H^{1}_S( \bigrep{k}{n}) \overset{\delta}{\to} H^{2}_S(\cyclo^{1+k+n}) \to\cdots.
\end{align}
From this we see that a class $a \in H^{1}_S(\bigrep{k}{n})$ lifts to a class $a' \in H^{1}_S(\bigrep{k+1}{n})$ if and only if the image of $a$ under the connecting homomorphism $\delta$ to $H^{2}_S(\cyclo^{1+k+n})$ is $0$.
We will show that this condition can be detected by the vanishing of a cup product
\begin{align*}
    H^{1}_S(\bigrep{k}{})\otimes H^{1}_S(\bigrep{k}{n})\xrightarrow{\cup} H^2_S(\chi^{1+k+n})
\end{align*}
induced by the dual pairing $(\Sym^k\rho)\otimes (\Sym^k\rho)^\vee\to\F_\ell$ together with the calculation in \cref{bigrep_dual}. 

Note that $\Sym^{k+1}\rho$ is an extension of $1$ by $\bigrep{k}{}$, unramified away from $S$, and therefore corresponds to a class in $H^{1}_S(\bigrep{k}{})$. Under the map $H^{1}_S(\bigrep{k}{})\to \chi$ as in \cref{eq:reps}, this class maps to the class $b\in H^1_S(\chi)$ that determines $\rho$ as an extension of $1$, as discussed in \cref{subsec:defreps}. We therefore denote this class by $b^{[k]}$. As was discussed in \cref{subsec:Cohomologyclasses}, we can also identify this class as the image of $1\in H^0_S(\F_\ell)$ under the connecting homomorphism $H^0_S(\F_\ell)\to H^1_S(\bigrep{k}{})$ induced by the following short exact sequence obtained from \cref{eq:reps} by taking $n=0$,
\[0 \to \bigrep{k}{} \to \Sym^{k+1}\rho \to \F_\ell \to 0.\]

\begin{lemma}\label{connecting_is_cup}
The image of a class $a\in H^{1}_S(\bigrep{k}{n})$ under the connecting homomorphism in \cref{eq:long_exact_seq}
\[
\delta: H^{1}_S(\bigrep{k}{n}) \to H^{2}_S(\cyclo^{1+k+n})
\]
equals $b^{[k]} \cup a$.
\end{lemma}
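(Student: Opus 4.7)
My plan is to appeal to the classical fact that connecting homomorphisms in Galois cohomology are given by cup products with the extension class of the underlying short exact sequence. Specifically, for any short exact sequence $0 \to A \to B \to C \to 0$ of $G$-modules with extension class $[B] \in \mathrm{Ext}^1_G(C, A)$, the connecting homomorphism $\delta \colon H^i(G, C) \to H^{i+1}(G, A)$ coincides with the cup product by $[B]$, once $\mathrm{Ext}^1_G(C, A)$ is identified with $H^1(G, \mathrm{Hom}(C, A))$ and the cup product is taken with respect to the evaluation pairing $\mathrm{Hom}(C, A) \otimes C \to A$. This is standard, and I would simply cite it.

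Applied to the short exact sequence $0 \to \chi^{1+k+n} \to \bigrep{k+1}{n} \to \bigrep{k}{n} \to 0$ coming from \cref{eq:topleft cyclo} (with $k$ replaced by $k+1$), this shows that $\delta(a)$ equals the cup product of $a$ with the extension class of the sequence. Using \cref{bigrep_dual}, there is a canonical isomorphism $\mathrm{Hom}(\bigrep{k}{n}, \chi^{1+k+n}) \cong (\bigrep{k}{n})^\vee \otimes \chi^{1+k+n} \cong \bigrep{k}{-k-n} \otimes \chi^{1+k+n}$, and the evaluation pairing becomes precisely the pairing used to define $\cup$ in the lemma statement. It therefore remains to verify that the extension class of our sequence corresponds to $b^{[k]}$ under this duality identification.

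For this final step, I would observe that our sequence is obtained by tensoring with $\chi^n$ the sequence $0 \to \chi^{k+1} \to \Sym^{k+1}\rho \to \Sym^k\rho \to 0$ from the $n=0$ case of \cref{eq:topleft cyclo}, while $b^{[k]}$ comes from the complementary filtration $0 \to \bigrep{k}{} \to \Sym^{k+1}\rho \to \F_\ell \to 0$ of the same representation. Both extension classes can be written down explicitly as $1$-cocycles using the natural $\F_\ell$-linear section from the basis $\{e_0, \ldots, e_{k+1}\}$ of \cref{eq:SymFrobzeta}. In each case the cocycle $\sigma \mapsto \sigma \cdot s(-) - s(\sigma \cdot -)$ vanishes at $\sigma = \frob$ (since the chosen basis diagonalizes $\Sym^{k+1}\rho(\frob)$), and its $\xi$-component is determined by the same coefficients $\tfrac{1}{(i+1)!}$ appearing in the matrix $\Sym^{k+1}\rho(\xi)$.

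The main obstacle is this last cocycle comparison, since it requires carrying the duality of \cref{bigrep_dual} through the two different filtrations with careful bookkeeping of signs and indices. Once the identifications are set up, however, the matching of these explicit coefficients gives the desired equality $\delta(a) = b^{[k]} \cup a$, and the proof is complete.
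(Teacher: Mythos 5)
Your proof is correct and rests on the same underlying idea as the paper's: both arguments reduce the lemma to the formal relationship between connecting homomorphisms and cup products, together with an identification involving the two filtrations of $\Sym^{k+1}\rho$. The difference is mainly in organization. You invoke the classical principle that $\delta$ is cup product with the extension class of the short exact sequence and then must carry the duality of \cref{bigrep_dual} through to match that extension class with $b^{[k]}$ at the level of explicit cocycles; as you note, this last step requires careful sign and index bookkeeping, and the classical principle itself comes with a sign convention depending on the cohomological degree which you would also need to pin down. The paper instead tensors the $b^{[k]}$-defining sequence with $\theta = \bigrep{k}{n}$, fills in a $G_C$-equivariant map of short exact sequences down to the top-left filtration of $\bigrep{k+1}{n}$ (the left arrow being the dual pairing), and then cites NSW's compatibility of connecting maps with cup products. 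Since the degree-zero class $1 \in H^0_S(\F_\ell)$ enters, no signs arise, and the extension-class match-up is absorbed into the existence of the middle vertical arrow rather than requiring an explicit cocycle computation. The two routes are closely parallel; the paper's is slightly more robust against sign conventions and avoids unwinding the change-of-basis matrix from the proof of \cref{bigrep_dual}.
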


\begin{proof}
This follows from the formal properties of cup products and connecting homomorphisms in group cohomology. To be precise, if we let $\theta=\bigrep{k}{n}$ then we have a $G_C$-equivariant map of short exact sequences
\[\begin{tikzcd}
    0\arrow{r} & (\Sym^k\rho\otimes\chi)\otimes \theta\arrow{r}\arrow{d} & (\Sym^{k+1}\rho)\otimes \theta \arrow{r}\arrow{d} &  \F_\ell\otimes \theta \arrow{r}\arrow{d} &  0 \\
    0 \arrow{r} & \cyclo^{1+k+n} \arrow{r} & \bigrep{k+1}{n} \arrow{r} & \bigrep{k}{n} \arrow{r} & 0.
\end{tikzcd}\]
The left vertical arrow is induced by the dual pairing; the right vertical arrow is the isomorphism $\F_\ell\otimes \theta\simeq \theta$; then there is a unique choice of middle arrow that makes the diagram commute. Then \cite[(1.4.3.i)]{cohomology} says that the following diagram commutes:
\[\begin{tikzcd}
    H^0_S(\F_\ell)\otimes H^1_S(\bigrep{k}{n})\arrow{r}{\cup}\arrow{d}{\delta\otimes 1} & H^1_S(\bigrep{k}{n})\arrow{d}{\delta} & \\
    H^{1}_S(\bigrep{k}{})\otimes H^{1}_S(\bigrep{k}{n})\arrow{r}{\cup} & H^2_S(\chi^{1+k+n}).
\end{tikzcd}\]
Note in particular that the top cup product sends $1\cup a\mapsto a$. Therefore we have
\[\delta(a)=\delta(1\cup a)=\delta(1)\cup a = b^{[k]} \cup a.\qedhere\]
\end{proof}

\subsection{Local cup product}\label{subsec: local cup product}
In the previous section we showed that a class in $H^1_S(\bigrep{k}{n})$ lifts to a class in $H^1_S(\bigrep{k+1}{n})$ if and only if its cup product with $b^{[k]}$ vanishes.
Our next goal is to show that the vanishing of the cup product of $b^{[k]}$ and a class in $H^1_S(\bigrep{k}{n})$ can be detected locally at the place $f$ (\cref{lifting_is_local}).

\begin{lemma}\label{h2S_character_dimension} Recall $h^{2}_{S}(\cyclo^{n})$ denotes the dimension of the cohomology group $H^{2}_{S}(\cyclo^{n})$. We have
\[
h^{2}_{S}(\cyclo^{n}) = \begin{cases} 1 & \gamma \mid d(1-n) \\ 0 & \text{otherwise.} \end{cases}
\]
\end{lemma}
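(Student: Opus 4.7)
The plan is to determine $h^2_S(\chi^n)$ by combining the value of $h^1_S(\chi^n)$ from \cref{h1S_character_dimension} with the global Euler-Poincar\'e characteristic formula, rather than by another direct Greenberg-Wiles computation.

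First I would observe that since $\chi$ factors through the unramified constant field extension $\F_\qg(x)/\F_q(x)$, the character $\chi^n$ is unramified at every place; consequently $H^0_S(\chi^n) = H^0(\chi^n)$, whose dimension is given by \cref{eq:h0} as $1$ when $\gamma\mid n$ and $0$ otherwise. Moreover, since $\ell$ is coprime to $p=\mathrm{char}\,\F_q$, the Galois group $G_{\F_q(x),S}$ has $\ell$-cohomological dimension at most $2$, so $H^i_S(\chi^n)=0$ for $i\geq 3$ and the Euler characteristic is genuinely a three-term expression.

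The key ingredient is the global Euler-Poincar\'e characteristic formula for the function field $\F_q(x)$ (see the function-field analogue in \cite[Chapter 10]{cohomology}): for any finite $G_{\F_q(x),S}$-module $M$ whose order is coprime to $p$,
\[ h^0_S(M) - h^1_S(M) + h^2_S(M) = 0. \]
The vanishing of this Euler characteristic---in contrast to the number field case, where archimedean places contribute correction terms---reflects the absence of archimedean places over $\F_q(x)$.

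Applying the formula with $M=\chi^n$ and combining with the values of $h^0_S(\chi^n)$ above and $h^1_S(\chi^n)$ from \cref{h1S_character_dimension}, the contribution from the condition $\gamma\mid n$ appears in both $h^0_S$ and $h^1_S$ and therefore cancels, immediately leaving $h^2_S(\chi^n)=1$ when $\gamma\mid d(1-n)$ and $0$ otherwise. The only real step requiring care is verifying the precise form of the Euler characteristic formula in this function-field setting, since the paper's earlier Greenberg-Wiles computation did not make use of it directly; this should be routine from the cited reference. An alternative approach, which would stay closer in spirit to the proof of \cref{h1S_character_dimension}, would be to extract $h^2_S(\chi^n)$ from the final three terms of the Poitou-Tate nine-term exact sequence, reusing the already-established vanishing of $H^1_{S^*}(\chi^{1-n})$ to pin down the kernel; but the Euler-characteristic route is the most efficient.
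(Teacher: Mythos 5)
Your proposal is correct and follows essentially the same route as the paper: the paper's one-line proof cites exactly the three ingredients you use, namely the computation of $h^0_S(\chi^n)$ from \cref{eq:h0}, $h^1_S(\chi^n)$ from \cref{h1S_character_dimension}, and the vanishing of the global Euler characteristic over function fields \cite[(8.7.4)]{cohomology}. The additional remarks about cohomological dimension and the Poitou--Tate alternative are reasonable but not needed.
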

\begin{proof}
This formula follows from \cref{eq:h0}, \cref{h1S_character_dimension}, and the triviality of the global Euler characteristic over function fields~\cite[(8.7.4)]{cohomology}.
\end{proof}

\begin{lemma}\label{h2_injection}
The restriction map $\res_{f}: H^{2}_{S}(\cyclo^{n}) \to H^{2}(f, \cyclo^{n})$ is an isomorphism.
\end{lemma}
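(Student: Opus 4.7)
The plan is to reduce to a dimension count and then invoke Poitou--Tate duality to produce surjectivity in the nontrivial case. First I would observe that by \cref{h2S_character_dimension} and \cref{h2local}, both $H^2_S(\chi^n)$ and $H^2(f, \chi^n)$ are one-dimensional when $\gamma \mid d(1-n)$ and zero otherwise. Thus the statement is trivial unless $\gamma \mid d(1-n)$, and in that case it suffices to prove that $\res_f$ is surjective, since a surjection between one-dimensional $\F_\ell$-spaces is automatically an isomorphism.

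The main tool will be the final segment of the nine-term Poitou--Tate exact sequence for $\F_q(x)$ with respect to $S = \{f, \infty\}$,
\[ H^2_S(\chi^n) \to \bigoplus_{v \in S} H^2(v, \chi^n) \to H^0_S(\chi^{1-n})^\vee \to 0, \]
using the identification $(\chi^n)^* = \chi^{1-n}$. If $\gamma \mid d(1-n)$ but $\gamma \nmid (1-n)$, then $H^2(\infty, \chi^n) = 0$ by \cref{h2local} and $H^0_S(\chi^{1-n}) = 0$ because the character is nontrivial (with no trivial subrepresentation); the sequence therefore degenerates to a surjection $H^2_S(\chi^n) \twoheadrightarrow H^2(f, \chi^n)$, which gives the desired isomorphism.

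The genuine obstacle is the subcase $\gamma \mid (1-n)$. Here $\chi^n \simeq \chi$ as a $G_{\F_q(x), S}$-module, both $H^2(f, \chi^n)$ and $H^2(\infty, \chi^n)$ are one-dimensional, and $H^0_S(\chi^{1-n})^\vee \cong \F_\ell$ has dimension one. The image of $H^2_S(\chi^n)$ inside the two-dimensional local target is then forced by Poitou--Tate to be exactly one-dimensional, but a priori it could lie entirely in the $\infty$-factor. To rule this out, I would identify the cokernel map $H^2(f, \chi) \oplus H^2(\infty, \chi) \to \F_\ell$ with (up to sign) the sum of local invariants $(a,b) \mapsto a + b$, via the Hasse--Brauer--Noether reciprocity sequence for the $\ell$-torsion of $\mathrm{Br}(\F_q(x))$. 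Since the kernel of the sum map on $\F_\ell \oplus \F_\ell$ is the antidiagonal $\{(a,-a) : a \in \F_\ell\}$, which projects isomorphically onto each coordinate, $\res_f$ remains an isomorphism in this subcase as well, completing the argument.
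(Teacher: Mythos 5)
Your proof is correct and follows essentially the same approach as the paper's: both reduce via dimension counts to the nontrivial case $\gamma \mid d(1-n)$, invoke the tail of the Poitou--Tate exact sequence, and split into subcases according to whether $\gamma \mid (1-n)$. Your identification of the final map as (up to sign) the sum of local invariants is the same fact the paper extracts by describing that map as the linear dual of the $H^0$-restriction diagonal, so the two arguments differ only in phrasing.
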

\begin{proof}
By \cref{h2local} and \cref{h2S_character_dimension}, if $\gamma\nmid d(1-n)$ then both groups are trivial, so we may suppose $\gamma\mid d(1-n)$. Then both groups have dimension $1$, so it suffices to show that $\res_f$ is nonzero. 
We will extract this from the end of the Poitou-Tate exact sequence~\cite[(8.6.10)]{cohomology}.
Recall since $\chi$ is finite and unramified, we have $\Hom(\chi,\mathcal{O}_S^\times) = \chi^*=\chi^{1-n}$ (\cite[page 387]{cohomology}). The relevant portion of the sequence is
\begin{align}\label{eq:poitou tate part}
    \to H^{2}_{S}(\cyclo^{n}) \xrightarrow{\oplus\res_v} \bigoplus_{v \in S} H^{2}(v, \cyclo^{n}) \to H^{0}_{S}(\cyclo^{1-n})^{\vee} \to 0.
\end{align}
Our ramification set $S$ contains only two places, $f$ and $\infty$.
We know the dimensions of every term appearing in this sequence: the global $H^{2}_{S}$ by \cref{h2S_character_dimension}, the middle terms by \cref{h2local}, and the term $H^{0}_{S}(\cyclo^{1-n})^{\vee}$ has dimension $1$ when $n\equiv 1\bmod \gamma$ and dimension $0$ otherwise.

When $n\not\equiv 1\bmod \gamma$ the only non-zero terms are $H^{2}_{S}(\cyclo^{n})$ and $H^{2}(f, \cyclo^{n})$. Exactness of this sequence implies that the map $\res_{f}$ is not the zero map, and is therefore an isomorphism.

When $n\equiv 1\bmod \gamma$ the groups $H^2_S(\chi^n)$, $H^2(f,\chi^n)$, $H^2(\infty,\chi^n)$ and $H^0_S(\chi^{1-n})^\vee$ are all $1$-dimensional. 
The second map
\[H^2(f,\chi^n)\oplus H^2(\infty,\chi^n)\to H^0_S(\chi^{1-n})^\vee\]
is by definition (e.g.~\cite[page 495]{cohomology}) the linear dual of the restriction map
\[H^0_S(\chi^{1-n})\xrightarrow{\oplus\res_v} H^0(f,\chi^{1-n}) \oplus H^0(\infty,\chi^{1-n})\simeq (H^2(f,\chi^n) \oplus H^2(\infty,\chi^n))^\vee,\]
with the isomorphism following by Tate duality~\cite[(7.2.6)]{cohomology} and \cref{bigrep_dual}.
Since the image of this restriction map has non-zero projection in both local factors, the kernel of the linear dual map necessarily also has non-zero projection on both local factors.
From this, together with exactness of \cref{eq:poitou tate part}, we conclude that the map $\res_{f}$ necessarily has non-trivial image in $H^{2}(f, \cyclo^{n})$, and so again must be an isomorphism.
\end{proof}

\begin{proposition}\label{lifting_is_local}
Let $k\leq \ell-2$. A class $a \in H^{1}_{S}(\bigrep{k}{n})$ lifts to a class in $H^{1}_{S}(\bigrep{k+1}{n})$ if and only if the local cup product
\[
\res_f(b^{[k]}) \cup \res_{f}(a) \in H^{2}(f, \cyclo^{1+k+n})
\]
vanishes.
\end{proposition}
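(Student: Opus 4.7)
The proof is essentially a three-line chain, and the heavy lifting has already been done in the two preceding lemmas. Here is the plan.

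The plan is to combine \cref{connecting_is_cup} with \cref{h2_injection} and the naturality of cup products. By the long exact sequence \cref{eq:long_exact_seq}, a class $a \in H^1_S(\bigrep{k}{n})$ lifts to $H^1_S(\bigrep{k+1}{n})$ if and only if $\delta(a) = 0$ in $H^2_S(\chi^{1+k+n})$. By \cref{connecting_is_cup}, this connecting homomorphism is computed by the global cup product, so the lifting condition becomes $b^{[k]} \cup a = 0$ in $H^2_S(\chi^{1+k+n})$.

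Next I would invoke \cref{h2_injection}, applied with ``$n$'' replaced by $1+k+n$, to conclude that the restriction map
\[
\res_f : H^2_S(\chi^{1+k+n}) \to H^2(f, \chi^{1+k+n})
\]
is injective (in fact an isomorphism). Hence $b^{[k]} \cup a$ vanishes globally if and only if $\res_f(b^{[k]} \cup a)$ vanishes locally at $f$.

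Finally, restriction to a decomposition group is a ring homomorphism on cohomology with respect to the cup product, so $\res_f(b^{[k]} \cup a) = \res_f(b^{[k]}) \cup \res_f(a)$ in $H^2(f,\chi^{1+k+n})$. Chaining the equivalences yields the stated criterion. No step here is subtle; the only point to verify carefully is that \cref{h2_injection} does apply with the twisted exponent $1+k+n$, which is immediate since the lemma is proved for arbitrary $n \in \Z/\gamma\Z$. Thus there is no real obstacle — the content is entirely packaged in \cref{connecting_is_cup} and \cref{h2_injection}, and the present proposition is just their formal composition.
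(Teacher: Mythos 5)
Your proposal is correct and matches the paper's proof essentially step for step: both reduce lifting to vanishing of the connecting map via \cref{eq:long_exact_seq}, identify the connecting map with the global cup product by \cref{connecting_is_cup}, use \cref{h2_injection} to pass to the local $H^2$ at $f$, and then invoke compatibility of cup products with restriction. The only difference is a trivial reordering of which lemma is invoked first.
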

\begin{proof}
From \cref{eq:long_exact_seq} we see that $a$ lifts to a class $a'\in H^{1}_S(\bigrep{k+1}{n})$ exactly when $\delta(a) = 0 \in H^{2}_S(\cyclo^{1+k+n})$. 
\Cref{h2_injection} shows that $\res_{f}: H^{2}_{S}(\cyclo^{1+k+n}) \to H^{2}(f, \cyclo^{1+k+n})$ is injective, so the vanishing of $\delta(a)$ can be checked locally at $f$.
Cup products and connecting homomorphism commute with restriction maps, so by \cref{connecting_is_cup}, we have $\delta(a) = 0$ exactly when
\[
0  = \res_{f}(\delta(a)) = \res_{f}(b^{[k]} \cup a) = \res_{f}(b^{[k]}) \cup \res_{f}(a).\qedhere
\]
\end{proof}

Since we've reduced our lifting question to one about the vanishing of local cup products, it is now in our interest to do a detailed analysis of these local cup products. Recall from \cref{lem:H1v basis} that $H^1(f,\bigrep{k}{n})$ contains a nonzero element $\mathbf{ur}$ when $\gamma\mid d(n+k)$, and contains a nonzero element $\mathbf{b}$ when $\gamma\mid d(1-n)$, and whichever of these classes exist form a basis of $H^1(f,\bigrep{k}{n})$.
Note that $b^{[k]}\in H^1_S(\bigrep{k}{})$ maps under $\res_f$ to $\mathbf{b}\in H^1(f,\bigrep{k}{})$.

\begin{lemma}\label{local_cup_products}
Let $0\leq k \leq \ell - 1$.
    Under the cup product pairing 
\[
H^{1}(f, \bigrep{k}{m}) \times H^{1}(f, \bigrep{k}{n}) \xrightarrow{\cup} H^{2}(f, \cyclo^{k+m+n}),
\]
we have
\begin{align*}
\mathbf{b} \cup \mathbf{b} & = 0, &
\mathbf{b} \cup \mathbf{ur} & \neq 0, \\
\mathbf{ur} \cup \mathbf{ur} & = 0, & 
\mathbf{ur} \cup \mathbf{b} & \neq 0
\end{align*}
whenever each exist in their respective cohomology group.
In particular we have that $\mathbf{b} \cup a = 0$ if and only if $a$ is in the span of $\mathbf{b}$.
\end{lemma}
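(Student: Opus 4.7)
The plan is to handle the four cup product identities separately: cohomological dimension for the purely unramified pairing, local Tate duality for the mixed non-vanishing pairings, and a lifting argument via the chain of symmetric powers of $\rho$ for the self-cup of $\mathbf{b}$.

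First I would dispatch $\mathbf{ur}\cup\mathbf{ur}=0$. The class $\mathbf{ur}\in H^1(f,\bigrep{k}{m})$ is inflated from the unramified cohomology $H^1(G_{k_f},\F_\ell)$ along the embedding of the $I_f$-invariant line $\F_\ell\hookrightarrow\bigrep{k}{m}|_f$ spanned by the top basis vector $e_0$, and inflation commutes with cup products, so $\mathbf{ur}\cup\mathbf{ur}$ lies in the image of $H^2(G_{k_f},-)=0$ (the residue field $k_f$ is finite, so $G_{k_f}\cong\hat\Z$ has cohomological dimension one). Next, for $\mathbf{b}\cup\mathbf{ur}\ne 0$ and $\mathbf{ur}\cup\mathbf{b}\ne 0$ I would invoke local Tate duality. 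The existence conditions $\gamma\mid d(1-m)$ and $\gamma\mid d(n+k)$ imply $\gamma\mid d(1-k-m-n)$; combined with \cref{bigrep_dual}, this gives a $G_{\F_q(x)_f}$-isomorphism $\bigrep{k}{n}|_f\cong(\bigrep{k}{m})^*|_f$ under which our cup product pairing matches the local Tate pairing $H^1(f,V)\times H^1(f,V^*)\to H^2(f,\chi)$. This pairing is perfect and the unramified subspaces on each side are exact annihilators of one another. Since $\mathbf{b}$ lies outside the unramified subspace of $H^1(f,V)$ (which is either zero or one-dimensional and spanned by $\mathbf{ur}$ of $V$), it must pair non-trivially with the generator $\mathbf{ur}$ of the unramified subspace of $H^1(f,V^*)$; symmetrically $\mathbf{ur}\cup\mathbf{b}\ne 0$.

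The hardest case is $\mathbf{b}\cup\mathbf{b}=0$. I would first reduce to $m=n=1$ using the local isomorphisms $\bigrep{k}{m}|_f\cong\bigrep{k}{1}|_f\cong\bigrep{k}{n}|_f$ (valid under $\gamma\mid d(1-m)$ and $\gamma\mid d(1-n)$), which identify both $\mathbf{b}$'s with $\res_f(b^{[k]})$. By \cref{connecting_is_cup} applied globally, $b^{[k]}\cup b^{[k]}=\delta(b^{[k]})\in H^2_S(\chi^{k+2})$, where $\delta$ is the connecting homomorphism for $0\to\chi^{k+2}\to\bigrep{k+1}{1}\to\bigrep{k}{1}\to 0$. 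The key observation is that $b^{[k]}$ lifts to the class $b^{[k+1]}\in H^1_S(\bigrep{k+1}{1})$ of the next symmetric power: removing the top cyclotomic subrepresentation $\chi^{k+2}\subset\Sym^{k+2}\rho$ yields $\Sym^{k+2}\rho/\chi^{k+2}\cong\Sym^{k+1}\rho$, so $b^{[k+1]}$ maps onto $b^{[k]}$ under the induced map $H^1_S(\bigrep{k+1}{1})\to H^1_S(\bigrep{k}{1})$. Hence $\delta(b^{[k]})=0$, so $b^{[k]}\cup b^{[k]}=0$ globally, and restricting to $f$ gives $\mathbf{b}\cup\mathbf{b}=\res_f(b^{[k]}\cup b^{[k]})=0$.

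The final ``in particular'' statement then follows by expanding $a$ in the basis of $H^1(f,\bigrep{k}{n})$: the only non-trivial contribution to $\mathbf{b}\cup a$ comes from the $\mathbf{ur}$-coefficient of $a$ (via the non-zero pairing $\mathbf{b}\cup\mathbf{ur}$), so $\mathbf{b}\cup a=0$ precisely when $a\in\F_\ell\cdot\mathbf{b}$. I expect the main obstacle to be the vanishing $\mathbf{b}\cup\mathbf{b}=0$: graded commutativity of cup products handles only the case of even $k$ (the induced pairing $\Sym^k\rho\otimes\Sym^k\rho\to\chi^k$ has symmetry $(-1)^k$, as one can extract from \cref{bigrep_dual}), whereas the lifting argument via the chain $\Sym^{k+1}\rho\subset\Sym^{k+2}\rho$ gives a uniform proof for all $k$.
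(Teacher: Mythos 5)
Your proposal is correct for $k\leq\ell-3$ and takes a genuinely different route from the paper's proof. The paper organizes the argument as a case analysis on the $\F_\ell$-dimension of $H^1(f,\bigrep{k}{m})$ and $H^1(f,\bigrep{k}{n})$: when both are one-dimensional and spanned by the same class the cup product vanishes by graded commutativity (or because the target $H^2$ is already zero, as one can check from the dimension formulas); when they are one-dimensional and spanned by different classes, or both two-dimensional, the cup product is the (perfect) local Tate pairing. Your proof instead dispatches $\mathbf{ur}\cup\mathbf{ur}=0$ by noting that inflation factors the cup product through $H^2(G_{k_f},-)$, which vanishes since $G_{k_f}\cong\widehat{\Z}$ has cohomological dimension one; dispatches the non-vanishing statements via the annihilator property of unramified subspaces under local Tate duality (equivalent in effect to the paper's perfect-pairing argument, but phrased in terms of \cite[(7.2.15)]{cohomology}); and dispatches $\mathbf{b}\cup\mathbf{b}=0$ by passing to the global class $b^{[k]}$, showing via \cref{connecting_is_cup} that $b^{[k]}\cup b^{[k]}=\delta(b^{[k]})$, and then observing that the pushout identification $\Sym^{k+2}\rho/\chi^{k+2}\cong\Sym^{k+1}\rho$ exhibits $b^{[k+1]}$ as a lift of $b^{[k]}$, killing $\delta(b^{[k]})$. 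This lifting argument is correct, and your observation that naive graded commutativity only gives $a\cup a=0$ when the pairing $\Sym^k\rho\otimes\Sym^k\rho\to\chi^k$ determined by $B$ is symmetric (i.e.\ for one parity of $k$) is a genuine subtlety worth flagging.

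The one gap is that your lifting argument for $\mathbf{b}\cup\mathbf{b}=0$ requires $\Sym^{k+2}\rho$ to exist, which in this framework demands $k+2\leq\ell-1$, i.e.\ $k\leq\ell-3$; by contrast $\mathbf{b}$ itself is defined whenever $\Sym^{k+1}\rho$ exists, so the lemma has content at $k=\ell-2$ where your argument does not apply (your claim of ``a uniform proof for all $k$'' overstates this). In practice the paper only invokes this lemma in the proof of \cref{thm:congruence_conditions} with $k=0$, so the gap is harmless for the applications, but for a complete proof of the lemma as stated you would need to either handle $k=\ell-2$ separately (e.g.\ by direct computation, or by checking that in that range the target $H^2$ vanishes) or restrict the lemma to $k\leq\ell-3$.
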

\begin{proof}
We divide into cases depending on which of $\mathbf{ur},\mathbf{b}$ exist in each of the two groups $H^{1}(f, \bigrep{k}{m})$ and $H^{1}(f, \bigrep{k}{n})$.
\begin{itemize}
    \item If the groups are $1$-dimensional and spanned by the same class (for example $\gamma\mid d(n-1)$ and $\gamma\mid d(m-1)$, but $\gamma\nmid d(n+k)$ and $\gamma\nmid d(m+k)$), then the representations $\bigrep{k}{n}$ and $\bigrep{k}{m}$ have the same restriction to $G_{\F_q(x)_f}$. The cup product is alternating on $H^{1}$ \cite[(1.4.4)]{cohomology}, so the cup product of a class with itself equals zero.
    \item If the groups are $1$-dimensional but spanned by different classes (for example $\gamma\mid d(n-1)$ and $\gamma\mid d(m+k)$, but $\gamma\nmid d(n+k)$ and $\gamma\nmid d(m-1)$), then the restrictions of $\bigrep{k}{n}$ and $\bigrep{k}{m}$ to $G_{\F_q(x)_f}$ are cohomological duals of each other. In this case the cup product is the local Tate pairing~\cite[(7.2.6)]{cohomology}, which is non-degenerate; hence the cup product of the respective generators is nonzero.
    \item If both groups are $2$-dimensional, then the representations $\bigrep{k}{n}$ and $\bigrep{k}{m}$ have the same \emph{self-dual} restriction to $G_{\F_q(x)_f}$, so the cup product is an alternating perfect pairing: classes pair with themselves to be zero, and independent elements pair to be nonzero. 
\end{itemize} 
Note that it is impossible for one group to be $1$-dimensional and the other to be $2$-dimensional; for instance, if $\gamma\mid d(n-1)$, $\gamma\mid d(n+k)$, and $\gamma\mid d(m-1)$, then 
\[\gamma\mid d(m-1)+d(n+k)-d(n-1)=d(m+k).\]
Hence the only case remaining is when $H^1(f,\bigrep{k}{n})$ or $H^1(f,\bigrep{k}{m})$ is $0$-dimensional, in which case the claim is vacuously true.
\end{proof}

\subsection{Detecting vanishing of local cup product}\label{subsec:congruence conditions}

Suppose $\gamma\mid d$, and $f(x)$ factors in $\F_\qg[x]$ as $f_1(x)\cdots f_{\gamma}(x)$,  arranged so that $\frob f_i=f_{i+1}$ for all $i$. For $n\geq 1$ define
\[
g_n(x) = \prod_{i=1}^{\gamma} f_{i}(x)^{q^{(i-1)(n-1)}},
\]
and set
\[K_n:=\F_\qg(\sqrt[\ell]{g_n}).\]
Then $K_n$ is a $(\Z/\ell\Z)$-extension of $\F_\qg(x)$, Galois over $\F_q(x)$, and unramified away from $f$ and $\infty$. Note that $g_1(x)=f(x)$ and so $K_1=\F_\qg(C)$. Also,
\[
\frac{g_{n+\gamma}(x)}{g_n(x)} = \prod_{i=1}^{\gamma} f_{i}(x)^{(q^{\gamma(i-1)}-1)q^{(i-1)(n-1)}}
\]
is an $\ell$-th power in $\F_\qg(x)$ because $q^{\gamma(i-1)}\equiv 1\bmod \ell$. Hence $K_n=K_{n+\gamma}$, so $K_n$ is well-defined for $n\in\Z/\gamma\Z$. The fields $K_n$ can also be produced using explicit class field theory for the rational function field $\F_\qg(x)$, as in \cite{hayes} for example.

\begin{lemma}\label{lem:Kn cocycle}
    Suppose $\gamma\mid d$. For all $n$, there exists a nonzero cocycle $a_n\in H^1(\chi^n)$ with kernel field $K_n$.
\end{lemma}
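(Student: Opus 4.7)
The plan is to construct the cocycle $a_n$ by first realizing $K_n/\F_q(x)$ as a Galois extension, then identifying $\Gal(K_n/M)$ as a $G_{\F_q(x)}$-module, and finally using a group-theoretic splitting to produce an explicit crossed homomorphism.

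First, I would verify that $K_n$ is Galois over $\F_q(x)$ of degree $\gamma\ell$. Setting $M := \F_{q^\gamma}(x)$, the extension $M/\F_q(x)$ is already Galois with cyclic group generated by $\frob$, so it suffices to check that the cyclic subgroup $\langle g_n\rangle \subset M^\times/M^{\times\ell}$ is stable under $\Gal(M/\F_q(x))$. Using $\frob(f_i)=f_{i+1}$ (indices mod $\gamma$) and reindexing the product, a direct computation yields
\[
\frob(g_n) \equiv g_n^{q^{1-n}} \pmod{M^{\times\ell}},
\]
where the only subtlety is handling the wrap-around term for $f_1$, which contributes the exponent $q^{(\gamma-1)(n-1)} \equiv q^{1-n} \pmod{\ell}$ via $q^\gamma\equiv 1\pmod\ell$. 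Since each exponent $q^{(i-1)(n-1)}$ is coprime to $\ell$, $g_n\notin M^{\times\ell}$, so $[K_n:M]=\ell$.

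Next, by Kummer theory the pairing $\Gal(K_n/M)\times\langle g_n\rangle\to\mu_\ell$ is perfect and $G_{\F_q(x)}$-equivariant, so $\Gal(K_n/M)\simeq\Hom(\langle g_n\rangle,\mu_\ell)$. The displayed congruence shows $\langle g_n\rangle\simeq\F_\ell(\chi^{1-n})$, and $\mu_\ell\simeq\F_\ell(\chi)$, giving $\Gal(K_n/M)\simeq\F_\ell(\chi^n)$. Because $\gcd(\gamma,\ell)=1$, the short exact sequence
\[
1 \to \Gal(K_n/M) \to \Gal(K_n/\F_q(x)) \to \Gal(M/\F_q(x)) \to 1
\]
splits (by the same $H^2$-vanishing argument used in \cref{lem:split}). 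Fix a splitting $s$. For $\sigma\in G_{\F_q(x)}$ with image $\bar\sigma\in\Gal(M/\F_q(x))$, write $\sigma|_{K_n}=a_n(\sigma)\cdot s(\bar\sigma)$ uniquely, with $a_n(\sigma)\in\Gal(K_n/M)\simeq\F_\ell$. The conjugation identity $s(\bar\sigma)\tau s(\bar\sigma)^{-1}=\chi^n(\bar\sigma)\cdot\tau$ then implies the crossed-homomorphism relation $a_n(\sigma\sigma')=a_n(\sigma)+\chi^n(\sigma)\,a_n(\sigma')$, so $a_n$ represents a class in $H^1(\chi^n)$.

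Finally, the class is nonzero because the restriction of $a_n$ to $G_M$ is precisely the surjection $G_M\twoheadrightarrow\Gal(K_n/M)\simeq\F_\ell$, with kernel $G_{K_n}$; this in turn identifies the fixed field of $\ker\,\chi^n[a_n]$ as $K_n$. The main obstacle is bookkeeping for the character twists: one must carefully check that Kummer duality turns the $\chi^{1-n}$ acting on $\langle g_n\rangle$ into $\chi^n$ acting on $\Gal(K_n/M)$ (not $\chi^{-n}$ or $\chi^{1-n}$), and that the sign/direction conventions in the crossed-homomorphism formula match the standard cocycle condition for $H^1(\chi^n)$.
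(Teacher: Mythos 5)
Your proposal is correct and takes essentially the same approach as the paper. Both hinge on the computation that $\frob(g_n)\equiv g_n^{q^{1-n}}\pmod{M^{\times\ell}}$ (the paper states this as $\frob(g_n)^{q^{n-1}}=f_1^{j\ell}g_n$), from which the $G_{\F_q(x)}$-module structure $\Gal(K_n/M)\simeq\F_\ell(\chi^n)$ follows; the paper reads this off directly from the relation $\frob\circ\tau=\tau^{q^n}\circ\frob$, whereas you invoke Kummer duality formally, but these are two presentations of the same fact. You then produce the cocycle via an abstract splitting of the exact sequence, while the paper realizes the same splitting concretely by writing down the explicit $2\times 2$ representation $\rho_n$ (the upper-right entry being the cocycle). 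The $H^2$-vanishing argument you cite for the splitting is exactly the one used in \cref{lem:split}. In short: same key computation, same module identification, same splitting — your version is simply phrased more abstractly.
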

\begin{proof}
    We have 
    \[\frob(g_{n})^{q^{n-1}} = \left(f_1^{q^{(\gamma-1)(n-1)}}\prod_{i=2}^{\gamma} f_{i}^{q^{(i-2)(n-1)}}\right)^{q^{n-1}}=f_1^{q^{\gamma(n-1)}-1}g_n.\]
    Writing $q^{\gamma(n-1)}-1=j\ell$ for some integer $j$, we have $\frob(g_n)^{q^{n-1}}=f_1^{j\ell} g_n$. We therefore have $\frob(\sqrt[\ell]{g_n})^{q^{n-1}}=\zeta^t f_1^j\sqrt[\ell]{g_n}$ for some integer $t$. If we let $\tau\in \Gal(K_n/\F_\qg(x))$ denote the automorphism sending $\sqrt[\ell]{g_n}\mapsto \zeta\sqrt[\ell]{g_n}$ then
    \[\frob(\tau(\sqrt[\ell]{g_n}))^{q^{n-1}}=\frob(\zeta\sqrt[\ell]{g_n})^{q^{n-1}}=\zeta^{q^n+t} f_1^j\sqrt[\ell]{g_n}=\tau^{q^n}(\zeta^tf_1^j\sqrt[\ell]{g_n})=\tau^{q^n}(\frob(\sqrt[\ell]{g_n}))^{q^{n-1}},\]
    and since $\gcd(q,\ell)=1$, we can conclude $\frob\circ\tau=\tau^{q^n}\circ\frob$. 
    
    Analogously to \cref{defrho}, let $\rho_n:\Gal(K_n/\F_q(x))\to \GL_2(\F_\ell)$ be the representation defined by
    \[\rho_n(\frob)=\begin{pmatrix}
        q^n&0\\0&1
    \end{pmatrix},\qquad \rho_n(\tau)=\begin{pmatrix}
        1&1\\0&1
    \end{pmatrix}\]
    for $\tau$ as above; this gives a well-defined representation because $\frob\circ\tau=\tau^{q^n}\circ\frob$. This extends to a representation of $G_{\F_q(x)}$ that factors through $\Gal(K_n/\F_q(x))$. Then $\rho_n$ is an extension of $1$ by $\chi^n$ corresponding to a cocycle $a_n\in H^1(\chi^n)$ with kernel field $K_n$. Since $K_n/\F_q(x)$ is unramified away from $f$ and $\infty$ we in fact have $a\in H^1_S(\chi^n)$.
\end{proof}

We finally reach the proof of \cref{thm:congruence_conditions}. Recall that for $n=2,\ldots,\gamma-1$ we have
\[h_n(x):=\prod_{i=1}^{\gamma} f_{i}(x)^{q^{(i-1)(\gamma-n)}-1}=\frac{g_{\gamma+1-n}(x)}{f(x)}.\]

\congruenceconditions*
\begin{proof}

Note that $F_0^2$ does not have a $\Frob$ eigenvector by \cref{basic_lifting_properties}(a) and (b).
Since we assume $\gamma\mid d$, $F_1^1$ has a $\Frob$ eigenvector by \cref{basic_lifting_properties}(a); if it were a rooftop then $F^1_0$ would contain a $\Frob$ eigenvector by \cref{basic_lifting_properties}(c), but this contradicts \cref{basic_lifting_properties}(a). Hence $F_1^2$ has a $\Frob$ eigenvector. Correspondingly, we have $0\notin \mathcal{T}$ and $1\in \mathcal{T}$.

Now suppose $n\not\equiv 0,1\bmod\gamma$. By \cref{h1S_character_dimension}, $H^1_S(\chi^{1-n})$ is one-dimensional, and by \cref{lem:Kn cocycle} it is spanned by a cocycle $a_{1-n}$ with kernel field $K_{1-n}$. By \cref{prop:eigen to cohom}, 
$F_n^2$ has an eigenvector if and only if $H^1_S(\rho\otimes \chi^{1-n})$ has a nontrivial element mapping to $a_{1-n}$.
By \cref{lifting_is_local}, such an element exists if and only if $\res_f(a_{1-n})\cup \mathbf{b}=0$, and by \cref{local_cup_products} it suffices to check whether $\res_f(a_{1-n})$ is in the span of $\mathbf{b}$. By \cref{lem:H1v basis}, we know that $H^1(f,\rho\otimes\chi^{1-n})$ is spanned by $\mathbf{ur}$, which defines an extension with nontrivial residue field degree, and by $\mathbf{b}$, which has kernel field $\F_\qg(C)_f$. Hence we can detect whether $\res_f(a_{1-n})$ is in the span of $\mathbf{b}$ by checking whether $K_{1-n}$ and $\F_\qg(C)$ have the same completion at a prime over $f$. Taking the place $f_1$ above $f$ in $\F_\qg(x)$, this is equivalent by Kummer theory to checking that $h_n(x)=g_{\gamma+1-n}(x)/f(x)$ is an $\ell$-th power in the residue field $\F_\qg[x]/(f_1(x))$. 
\end{proof}

\begin{remark}
    \Cref{basic_lifting_properties} and \cref{thm:rooftop_pairs} may also be proven using an argument similar to that of \cref{thm:congruence_conditions}, that is, by combining \cref{prop:eigen to cohom} with the cohomological lifting conditions developed in \cref{subsec: local cup product}. While the current proof of \cref{thm:rooftop_pairs} invokes the Weil pairing, the cohomological proof instead uses the Poitou-Tate exact sequence~\cite[(8.6.10)]{cohomology} together with the observation that $\bigrep{k-1}{1-n}$ and $\bigrep{k-1}{n+1-k}$ are cohomological duals. 
\end{remark}

\section{Consequences of lifting conditions}\label{sec:consequences}

Using the relations set up in the previous sections, we are now reduced to an essentially combinatorial problem: under the constraints described in \cref{sec:lifting_behavior_functions}, what are the possibilities for the set of pairs $(n,k)$ such that $F_n^k$ has an eigenvector? In particular, following \cref{thm:rlC_count}, we are interested in the $\ell$-rank of the divisor class group $r_\ell(C)$, which equals the number of $k$ for which $F_{k-1}^k$ contains a $\Frob$ eigenvector. In this section we prove all of the constraints on $r_\ell(C)$ mentioned in \cref{sec:rlC_constraints}. 

For $n\in\Z/\gamma\Z$, let $0\leq k_n\leq \ell-1$ denote the smallest integer such that for all $k_n<k'\leq \ell-1$, there is no $\Frob$ eigenvector in $F_n^{k'}$. In other words, if $k_n\geq 1$ then $F_n^{k_n}$ is a rooftop, and if $k_n=0$ then there is no value of $k$ for which $F_n^k$ has a $\Frob$ eigenvector. We will say that $k_n$ is the \textbf{``rooftop height''} over $n$. 
If we take an integer representative $n\in\{0,1,\ldots,\gamma-1\}$, then the number of $1\leq k\leq k_n$ with $k-1\equiv n\bmod \gamma$ is equal to
\[c(n):=\left\lfloor \frac{k_n-1-n}{\gamma}\right\rfloor+1.\]
Following the visual interpretation as described in \cref{rmk: visual guide}, $c(n)$ counts the number of dark grey circles in column $n$.
Since $k_0=0$ by \cref{basic_lifting_properties}(a), we have $$r_\ell(C)=c(1)+c(2)+\cdots+c(\gamma-1)$$ by \cref{thm:rlC_count}, so $c(n)$ counts the number of contributions to $r_\ell(C)$ from $n$. For all $0\leq k_n\leq\ell-1$ and $0\leq n\leq \gamma-1$, we have $0\leq c(n)\leq \frac{\ell-1}{\gamma}$; specifically, if $k_n=0$ then $c(n)=0$, and if $k_n=\ell-1$ then $c(n)=\frac{\ell-1}{\gamma}$.

The proofs of \cref{intro_upper_bound} and \cref{intro_refined_upper_bound} have many similar features: we will first prove the upper bounds in both theorems, then the lower bounds, then the parity constraints.

\subsection{Upper bounds} We first prove the upper bound in \cref{intro_upper_bound}, namely 
\[r_\ell(C)\leq B:=(\gcd(d,\gamma)-1)\frac{\ell-1}{\gamma}.\]
We have $k_n\geq 1$ if and only if $\gamma\mid dn$ and $\gamma\nmid n$ by \cref{basic_lifting_properties}(a). There are $\gcd(d,\gamma)-1$ such values of $n\in\Z/\gamma\Z$, and for each of these we have $c(n)\leq\frac{\ell-1}{\gamma}$. For all other values of $n$ we have $k_n=0$ and therefore $c(n)=0$. Combining these bounds gives the desired result. ($B$ counts the set of pairs $(n,k)$ for which $F_n^k$ is non-empty and $n\equiv k-1\bmod\gamma$; in the visual interpretation of \cref{rmk: visual guide}, this corresponds to circles in cells that are either light or dark gray.)

We can similarly prove the upper bound in \cref{intro_refined_upper_bound}: if $3\leq \gamma\mid d$ then 
\[r_\ell(C)\leq B':=|\mathcal{T}|\frac{\ell-1}{\gamma}.\]
We have $k_n\geq 2$ if and only if $n\in\mathcal{T}$ by \cref{thm:congruence_conditions}, and for each of these $n$ we have $c(n)\leq \frac{\ell-1}{\gamma}$ as above. We have $c(0)=0$ as before, and for all other $n\notin\mathcal{T}$ we have $n\geq 1$ and $k_n\leq 1$ so that $c(n)=0$. Combining these bounds gives the desired result.

\subsection{Lower bounds}

We begin by proving a general lemma that will be useful in producing lower bounds. For fixed $n,k$, the \emph{diagonal} containing $F_n^k$ is the set of all $F_{n+i}^{k+i}$ for $i\in\Z$ with $1\leq k+i\leq \ell-1$. Equivalently, each diagonal is determined by a constant value of $k-n\in\Z/\gamma\Z$. We have strong constraints on how rooftops can be arranged across diagonals: \cref{basic_lifting_properties}(d) states that any given diagonal can contain at most one non-maximal rooftop, and \cref{basic_lifting_properties}(c) limits which diagonals are allowed to contain rooftops at all. So if we can find many sets $F_n^k$ that contain $\Frob$ eigenvectors among a small collection of diagonals right below the main diagonal $F_{k-1}^k$ (the circles in \cref{fig:lifting_charts}), the pigeonhole principle will ensure that only a few of them can be rooftops; the rest must all lift past this main diagonal and hence contribute to $r_\ell(C)$. 

\begin{lemma}\label{lem:diagonal_pigeons}
    Let $m\in \{0,\ldots,\gamma-1\}$, and suppose there are $r$ values of $n\in \{m,\ldots,\gamma-1\}$ such that $F_n^{n-m+1}$ has a $\Frob$ eigenvector. Then $r_\ell(C)\geq r$.
\end{lemma}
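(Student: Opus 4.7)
The plan is to run a counting argument that tracks, for each column $n$, how far its eigenvector tower lifts on its way from the diagonal $k-n \equiv 1-m \pmod\gamma$ up to the main diagonal $k-n\equiv 1\pmod\gamma$. Let $T := \{n \in \{1, \ldots, \gamma-1\} : \gamma \mid dn\}$ denote the set of columns carrying any eigenvector (\cref{basic_lifting_properties}(a)), and set $\sigma := \gamma/\gcd(d,\gamma)$, so that $T$ consists exactly of the nonzero multiples of $\sigma$ in $\{0,\ldots,\gamma-1\}$. For each integer $j \geq 1$ I would define
\[
V_j := \{n \in T : k_n \geq \max(n - m + j,\, 1)\}.
\]
By the definitions of $c(n)$ and $k_n$, we have $V_{m+1} = \{n \in T : c(n) \geq 1\}$, so $r_\ell(C) = \sum_n c(n) \geq |V_{m+1}|$.

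The first step is a lower bound on $|V_1|$. The hypothesis provides $r$ elements of $V_1$ lying in $\{m, \ldots, \gamma-1\}$. Separately, for every $n \in T' := T \cap \{1, \ldots, m-1\}$ the quantity $n - m + 1$ is nonpositive, so the defining inequality reduces to $k_n \geq 1$ and holds automatically. These two contributions are disjoint, giving $|V_1| \geq r + |T'|$.

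The second step bounds the drop $|V_1| - |V_{m+1}| = \sum_{j=1}^{m}|V_j \setminus V_{j+1}|$. A column $n$ contributes to $V_j \setminus V_{j+1}$ precisely when its rooftop sits at height $n-m+j$, i.e., on the diagonal $k-n \equiv j-m \pmod \gamma$. For $j = m$ this diagonal consists of the cells $F_n^n$; by \cref{thm:rooftop_pairs} any non-maximal rooftop there would pair with a rooftop in column $0$, contradicting \cref{basic_lifting_properties}(a), so $|V_m \setminus V_{m+1}| = 0$. For $1 \leq j \leq m-1$, the \cref{thm:rooftop_pairs} pair of such a rooftop lies in column $\gamma + j - m \pmod\gamma$, which belongs to $T$ only when $\sigma \mid (m-j)$; combining this with the at-most-one-non-maximal-rooftop-per-diagonal bound of \cref{basic_lifting_properties}(d) shows that $|V_j \setminus V_{j+1}|$ is zero unless $\sigma \mid (m-j)$, in which case it is at most $1$. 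Summing, the total drop is at most the number of $j \in \{1, \ldots, m-1\}$ with $\sigma \mid (m-j)$, which is the number of multiples of $\sigma$ in $\{1, \ldots, m-1\}$, which is exactly $|T'|$.

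Combining both estimates yields
\[
|V_{m+1}| \geq |V_1| - |T'| \geq (r + |T'|) - |T'| = r,
\]
so $r_\ell(C) \geq r$. The subtle point, and the part I expect to require the most care, is the recognition that the ``free'' contribution $|T'|$ to $|V_1|$ cancels exactly the upper bound on blocking rooftops: both counts equal the number of positive multiples of $\sigma$ strictly below $m$. This coincidence is really the reflective symmetry forced by \cref{thm:rooftop_pairs} applied to the initial segment of $T$, and it is what lets the argument close without losing any factor to the $m-1$ potentially problematic diagonals between $D_{1-m}$ and $D_0$.
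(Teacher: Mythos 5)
Your proof is correct and follows essentially the same pigeonhole strategy as the paper's: bound the number of non-maximal rooftops on the diagonals between $D_{1-m}$ and $D_0$ by the count of multiples of $\sigma$ in $\{1,\ldots,m-1\}$, using \cref{basic_lifting_properties}(c) (or equivalently \cref{thm:rooftop_pairs}) to restrict which diagonals can carry rooftops and \cref{basic_lifting_properties}(d) for the pigeonhole. The paper packages this as a single injectivity argument $n \mapsto n - k_n$ from the set of ``dead'' columns into $S = T'$, whereas you unroll it as a layered cascade $V_1 \supseteq V_2 \supseteq \cdots \supseteq V_{m+1}$; the underlying combinatorics is identical.
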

\begin{proof}
    If $m=0$ this is immediate from \cref{thm:rlC_count} (we have no $\Frob$ eigenvectors in $F_0^1$ by \cref{basic_lifting_properties}(a)), so from now on assume $m\geq 1$. 
    Define the sets
    \begin{align*}
        R&:= \{n\in\{m,\ldots,\gamma-1\}: F_n^{n-m+1} \text{ contains a } \Frob \text{ eigenvector}\},\\
        S&:=\{n\in\{1,\ldots,m-1\}:\gamma\mid dn\}.
    \end{align*}
    Finally, let $L\subseteq R\cup S$ be the set of all $n\in R\cup S$ for which $n-k_n>-1$, where $k_n$ is the rooftop height over $n$; equivalently, the set $L$ consists of all $n\in R\cup S$ with $c(n)=0$. We will prove that $|L|\leq |S|$. 

    To this end, let $n\in L$. By \cref{basic_lifting_properties}(c) we cannot have $n-k_n=0$, so in fact $n-k_n\geq 1$. We also have $n-k_n\leq m-1$: for $n\in R$ this follows from $k_n\geq n-m+1$, and for $n\in S$ this follows from $k_n\geq 0$ and $n\leq m-1$. Again by \cref{basic_lifting_properties}(c) we have $\gamma\mid d(n-k_n)$. Taken together, we can conclude that $n-k_n\in S$. In other words, all $n\in L$ lie on one of a set of $|S|$ diagonals.

    Now for each $n\in L$, we have $k_n\geq 1$: this follows by \cref{basic_lifting_properties}(a) if $n\in S$, and by $k_n\geq n-m+1$ if $n\in R$. So consider the rooftop $F_n^{k_n}$. Since $n-k_n>-1$ and $n\leq\gamma-1$ we have $k_n<\gamma\leq \ell-1$, so $F_n^{k_n}$ is a non-maximal rooftop. By \cref{basic_lifting_properties}(d), it is impossible to have $n-k_n=n'-k_{n'}$ for any distinct $n,n'\in L$. So by the pigeonhole principle, there are at most $|S|$ elements in $L$. 
    
    In conclusion, there are at least $r$ elements in $R\cup S$ satisfying $k_n\geq n+1$, so $c(n)\geq 1$. We can conclude that $r_\ell(C)\geq r$.
\end{proof}

\noindent \textbf{Lower bound of \cref{intro_upper_bound}:} We will show that $B\geq 1$ implies $r_\ell(C)\geq 1$. If $B\geq 1$ then $\gcd(d,\gamma)>1$, so there exists $n\in\Z/\gamma\Z$ with $\gamma\mid dn$ and $\gamma\nmid n$. For this value of $n$, $F_n^1$ has an eigenvector by \cref{basic_lifting_properties}(a). Taking $m=n$ in \cref{lem:diagonal_pigeons} proves $r_\ell(C)\geq 1$.\footnote{Alternatively, simply use the fact that any generalized eigenspace must contain at least one true eigenvector.}

\medskip

\noindent \textbf{First lower bound of \cref{intro_refined_upper_bound}:} 
    Suppose $3\leq \gamma\mid d$. If $B'=1$, then since $B\geq B'$ we have $r_\ell(C)\geq B'$ by the lower bound of \cref{intro_upper_bound}. So it is sufficient to assume 
    \[B':=|\mathcal{T}|\frac{\ell-1}{\gamma}\geq 2\]
    and prove $r_\ell(C)\geq 2$. Note that the assumption $\gamma\mid d$ implies $F_n^1$ has an eigenvector for all $1\leq n\leq\gamma-1$ by \cref{basic_lifting_properties}(a). We always have $1\in\mathcal{T}$ by definition of $\mathcal{T}$, which implies by \cref{thm:congruence_conditions} that $F_1^2$ has an eigenvector.
    
    First consider the case $|\mathcal{T}|=1$, so that $F_n^1$ is a rooftop for all $2\leq n\leq \gamma-1$. Since $2=2\cdot 1$, we can apply \cref{thm:evenselfdual} to conclude that $F_1^2$ is not a rooftop and thus $F_1^3$ has a $\Frob$ eigenvector. Now since $B'\geq 2$ but $|\mathcal{T}|=1$ we can conclude $\ell-1\geq 2\gamma$. So for each $2\leq i\leq \gamma-1$, if $F_1^{i+1}$ were a rooftop, then it would be a non-maximal rooftop; since $F_{1-i}^1$ is also a non-maximal rooftop, this contradicts \cref{basic_lifting_properties}(d). Therefore $F_1^{\gamma+1}$ has an eigenvector, but is not a rooftop by \cref{basic_lifting_properties}(c). Hence $k_1\geq\gamma+2$, which implies $c(1)\geq 2$ and hence $r_\ell(C)\geq 2$.

    Now consider the case $|\mathcal{T}|\geq 2$, so $F_n^2$ has an eigenvector for some $n\in \{2,\ldots,\gamma-1\}$. Since $F_n^2$ and $F_{n-1}^1$ both have eigenvectors, we have $r_\ell(C)\geq 2$ by \cref{lem:diagonal_pigeons}, taking $m=n-1$. Together with the previous case, we see that $r_\ell(C)\geq 2$ whenever $B'\geq 2$.

\medskip

\noindent \textbf{Second lower bound of \cref{intro_refined_upper_bound}:} 
    Assuming $\gamma$ is even and $1+\frac{\gamma}{2}\in\mathcal{T}$, we will show that $r_\ell(C) \ge 3$.
    Since $1+\frac{\gamma}{2}\in\mathcal{T}$, we can conclude that $F_{1+\gamma/2}^{2}$ has a $\Frob$ eigenvector by \cref{thm:congruence_conditions}. Since $2 \equiv 2(1+\frac{\gamma}{2}) \bmod \gamma$, we are in the setting of \cref{thm:evenselfdual}, and can conclude that $F_{1+\gamma/2}^{2}$ is not a rooftop: thus
    $F_{1+\gamma/2}^{3}$ also has a $\Frob$ eigenvector. Further, since $F_{1+\gamma/2}^{1}$ is not a rooftop, neither is $F_{\gamma/2}^{1}$ by \cref{thm:rooftop_pairs}. Hence $F_{1+\gamma/2}^{3}$, $F_{\gamma/2}^{2}$, and $F_{\gamma/2-1}^{1}$ all contain eigenvectors, so $r_\ell(C)\geq 3$ by \cref{lem:diagonal_pigeons}.

\subsection{Parity}\label{subsec:parity}

    We first prove the parity constraint of \cref{intro_upper_bound}. 
    Let 
    \[S:=\{n\in\{1,\ldots,\gamma-1\}:\gamma\mid dn, \ k_n\neq \ell-1\}.\] 
    By \cref{basic_lifting_properties}(a), it is equivalent to say that $S$ is the set of $n\in\{0,\ldots,\gamma-1\}$ with $1\leq k_n\leq \ell-2$. For each $n\in S$, let $n^\vee$ denote the unique value in $\{0,\ldots,\gamma-1\}$ satisfying $n^\vee\equiv k_n-n\bmod\gamma$. 
    By \cref{thm:rooftop_pairs} we have $k_{n^\vee}=k_n$ for all $n\in S$, so $n\mapsto n^\vee$ is an involution on $S$ and $c(n)=c(n^\vee)$ for all $n\in S$.
    We can write
    \begin{align*}
        r_\ell(C)&=\sum_{\substack{n\in\{1,\ldots,\gamma-1\},\\ \gamma\mid dn}} \left(\frac{\ell-1}{\gamma}-\left(\frac{\ell-1}{\gamma}-c(n)\right)\right)\\
        &=B-\sum_{n\in S} \left(\frac{\ell-1}{\gamma}-c(n)\right),
    \end{align*}
    since in the first equality we only remove terms with $c(n)=0$, and in the second equality we only remove terms with $c(n)=\frac{\ell-1}{\gamma}$. If $n\neq n^\vee$, then $n$ and $n^\vee$ contribute equal quantities to the sum. So to prove $r_\ell(C)\equiv B\bmod 2$, it suffices to show that the terms indexed by fixed points of the involution are even.

    Suppose $n=n^\vee$, so that $k_n\equiv 2n\bmod \gamma$. By definition of $c(n)$ we have
    \[(c(n)-1)\gamma\leq k_n-1-n<c(n)\gamma.\]
    Since we additionally have $1\leq n<\gamma$ we can conclude
    \[(c(n)-2)\gamma+2n+1< k_n<c(n)\gamma+2n,\]
    so that in fact $k_n=(c(n)-1)\gamma+2n$. We can therefore write
    \[\frac{\ell-1}{\gamma}-c(n)=\frac{\ell-1-k_n+2n}{\gamma}+1.\]
    By \cref{thm:evenselfdual}, $k_n$ is odd, and therefore $\ell-1-k_n+2n$ is odd. Thus $\frac{\ell-1}{\gamma}-c(n)$ is even as desired.
    
    We now assume $3\leq \gamma\mid d$ and prove the parity constraint of \cref{intro_refined_upper_bound}, namely that $r_\ell(C) \equiv B' \bmod 2$. Following \cref{intro_upper_bound}, it suffices to prove that $B'\equiv B\bmod 2$. If $\gamma$ is odd, then $B$ and $B'$ are both even because $\frac{\ell-1}{\gamma}$ is even. So suppose instead that $\gamma$ is even. By \cref{thm:rooftop_pairs}, $F_n^1$ is a rooftop if and only if $F_{1-n}^1$ is a rooftop; since $\gamma$ is even, we always have $1-n\not\equiv n\bmod \gamma$, and so the rooftops $F_n^k$ with $k=1$ always come in pairs. Now $|\mathcal{T}|$ counts the number of $n$ for which $F_n^2$ contains an eigenvector, which equals $\gamma-1$ minus the number of rooftops with $k=1$. Hence $|\mathcal{T}|\equiv \gamma-1\bmod 2$. Since we are assuming $\gamma\mid d$ we have $|\mathcal{T}|\equiv \gcd(d,\gamma)-1\bmod 2$, which implies $B'\equiv B\bmod 2$.

\subsection{Proof of \cref{prop:2lifting}}
    Finally, we assume $\gcd(d,\gamma)=2$ and prove that $r_\ell(C)=\frac{\ell-1}{\gamma}$. By \cref{basic_lifting_properties}(a), $F_n^1$ has an eigenvector only for $n=\frac{\gamma}{2}$. We will prove that the rooftop height over $n$ is $k_n=\ell-1$, which will imply $r_\ell(C)=c(n)=\frac{\ell-1}{\gamma}$ as desired.
    
    Suppose $1\leq k\leq \ell-2$ is such that $F_n^k$ is a rooftop. By \cref{basic_lifting_properties}(c), this implies $k\equiv 0\bmod{\frac{\gamma}{2}}$ and $k\not\equiv \frac{\gamma}{2}\bmod{\gamma}$; equivalently, $k$ is a multiple of $\gamma$. But then $k\equiv 2n\bmod\gamma$ and $k$ is even, so $F_n^k$ is not a rooftop by \cref{thm:evenselfdual}. Hence $F_n^k$ is not a rooftop for any $1\leq k\leq \ell-2$.

\appendix
\section{Proof of \cref{lem:linearcommute}}\label{appendix_linearcommute}

We prove that $\Frob\circ \eta=q\eta\circ\Frob$.

\begin{lemma}\label{comblemma}
    For all integers $q$ and $0\leq k\leq \ell-1$,
    \[\sum_{i=1}^{\ell-1}\sum_{j=0}^i \frac{(-1)^{j+1}}{i}\binom{i}{j}\binom{qj}{k}=\left\lbrace\begin{array}{ll}
        \displaystyle\frac{(-1)^{k+1}q}{k} & k>0 \\
        0 & k=0.
    \end{array}\right.\]
\end{lemma}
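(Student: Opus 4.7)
The plan is to swap the order of summation in the double sum and reduce everything to two standard facts modulo $\ell$: the identity $\binom{\ell-1}{j}\equiv (-1)^j\bmod\ell$, and the vanishing of power sums $\sum_{j=0}^{\ell-1} j^s\equiv 0\bmod\ell$ for $0\leq s\leq \ell-2$. All identities below are interpreted in $\F_\ell$.

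First I would interchange the order of summation to obtain
\[\sum_{j=0}^{\ell-1}(-1)^{j+1}\binom{qj}{k}\,S_j,\qquad S_j:=\sum_{i=\max(j,1)}^{\ell-1}\frac{1}{i}\binom{i}{j}.\]
The $j=0$ contribution is $-\sum_{i=1}^{\ell-1}\tfrac{1}{i}$, which vanishes modulo $\ell$ because $\{1/i\}_{i=1}^{\ell-1}$ is a permutation of $\{1,\ldots,\ell-1\}$ and $\sum_{i=1}^{\ell-1} i=\binom{\ell}{2}\equiv 0\bmod\ell$ (since $\ell$ is odd). For $j\geq 1$, the algebraic identity $\frac{1}{i}\binom{i}{j}=\frac{1}{j}\binom{i-1}{j-1}$ together with the hockey-stick identity gives
\[S_j=\frac{1}{j}\sum_{m=j-1}^{\ell-2}\binom{m}{j-1}=\frac{1}{j}\binom{\ell-1}{j}\equiv \frac{(-1)^j}{j}\bmod\ell.\]
Substituting collapses the original double sum into the single sum $-\sum_{j=1}^{\ell-1}\frac{1}{j}\binom{qj}{k}$.

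Next I would dispatch the two cases. When $k=0$, we get $-\sum_{j=1}^{\ell-1}\frac{1}{j}=0$ as above. When $k\geq 1$, I would rewrite $\binom{qj}{k}=\frac{qj}{k}\binom{qj-1}{k-1}$ to cancel the factor of $j$, leaving
\[-\frac{q}{k}\sum_{j=1}^{\ell-1}\binom{qj-1}{k-1}.\]
Since $\binom{qj-1}{k-1}$ is a polynomial in $j$ of degree $k-1\leq \ell-2$, the power-sum vanishing implies $\sum_{j=0}^{\ell-1}\binom{qj-1}{k-1}\equiv 0\bmod\ell$, whence
\[\sum_{j=1}^{\ell-1}\binom{qj-1}{k-1}\equiv -\binom{-1}{k-1}=(-1)^k\bmod\ell.\]
Multiplying by $-q/k$ yields the claimed $\frac{(-1)^{k+1}q}{k}$.

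I do not anticipate a substantive obstacle: the range $1\leq i\leq \ell-1$ and $1\leq k\leq \ell-1$ ensures every denominator is invertible in $\F_\ell$, and the only place care is needed is checking that $k-1\leq \ell-2$ (which is exactly the hypothesis) so that the power-sum identity applies. The derivation also clarifies the role of the endpoint $i=\ell-1$: this term contributes $0$ modulo $\ell$ for every $k\leq \ell-1$, consistent with the fact that $\eta$ is actually defined with $i$ running only up to $\ell-2$.
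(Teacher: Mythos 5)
Your argument is correct modulo $\ell$, and it takes a genuinely different route from the paper. The paper first establishes the \emph{polynomial identity} $\sum_{i=0}^{\ell-1}\sum_{j=0}^i (-1)^{i-j}\binom{i}{j}\binom{qj}{k}\binom{x}{i}=\binom{qx}{k}$ in $\Q[x]$, by showing both sides have degree at most $k$ and agree at the $\ell>k$ points $x=0,1,\ldots,\ell-1$, and then reads off the desired identity from the coefficient of $x$; this yields an honest equality in $\Q$. You instead swap the order of summation, collapse the inner sum via the hockey-stick identity to $S_j=\tfrac{1}{j}\binom{\ell-1}{j}\equiv\tfrac{(-1)^j}{j}\bmod\ell$, and then reduce to the power-sum vanishing $\sum_{j=0}^{\ell-1}j^s\equiv 0\bmod\ell$ for $0\leq s\leq\ell-2$. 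This is shorter and uses more elementary ingredients, but every step after the hockey-stick computation relies on working modulo $\ell$ (as you note explicitly at the outset), so you prove only the congruence and not the rational identity the paper establishes. That is a strictly weaker conclusion, but it is exactly the form in which the lemma is applied: the paper's remark following the proof says the identity ``descends to an identity over $\F_\ell$. It is in this form that we apply it.'' So your proof is a valid, self-contained alternative for the paper's purposes, at the cost of proving a less general statement. Your closing observation about the $i=\ell-1$ term reconciling the upper limits $\ell-1$ (in the lemma) and $\ell-2$ (in Definition 2.4 of $\eta$) is a nice sanity check that the two conventions give the same operator on $J[\ell]$, since $(1-\zeta)^{\ell-1}$ kills $J[\ell]$.
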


\begin{proof}
    For $k=0$, this follows from the fact that the alternating sum of $\binom{i}{j}$ for $0\leq j\leq i$ is equal to $0$. So from now on we assume $k\geq 1$.
    We will first prove the identity
    \begin{align}\label{eq:poly_identity}
        \sum_{i=0}^{\ell-1}\sum_{j=0}^i (-1)^{i-j}\binom{i}{j}\binom{qj}{k}\binom{x}{i}=\binom{qx}{k}
    \end{align}
    as an equality in $\Q[x]$. First consider the value of 
    \[\sum_{j=0}^i (-1)^{i-j}\binom{i}{j}\binom{qj}{k}\]
    for $i>k$. This equals the coefficient of $t^k$ in $((t+1)^q-1)^i$. But $(t+1)^q-1$ is a multiple of $t$ and so $((t+1)^q-1)^i$ is a multiple of $t^i$, and hence the coefficient of $t^k$ is zero. Hence only terms with $i\leq k$ contribute to the left-hand side of \cref{eq:poly_identity}, so both sides of the desired identity are polynomials of degree at most $k$.
    
    If $x\in\{0,1,\ldots,\ell-1\}$, we can compute the coefficient of $t^k$ in $(((t+1)^q-1)+1)^x=(t+1)^{qx}$, obtaining
    \[\sum_{i=0}^{x}\binom{x}{i}\sum_{j=0}^i (-1)^{i-j}\binom{i}{j}\binom{qj}{k}=\binom{qx}{k}.\]
    Since $x\leq \ell-1$, and $\binom{x}{i}=0$ for $i>x$, we can sum $i$ from $0$ to $\ell-1$ without changing the value, giving us the desired equality for these specified values of $x$. We therefore have two polynomials of degree at most $k$ in $\Q[x]$ with equal values at $\ell>k$ points, and therefore the polynomials are equal.
    
    Now compare the coefficient of $x$ in each side of \cref{eq:poly_identity}. The coefficient of $x$ in $\binom{x}{i}$ is $0$ if $i=0$ and $(-1)^{i+1}/i$ if $i\geq 1$, so we obtain the desired result.
\end{proof}

\Cref{comblemma} is an identity over $\Q$, but the only denominators that occur are coprime to $\ell$; it therefore descends to an identity over $\F_\ell$. It is in this form that we apply it below.

\begin{proof}[Proof of \cref{lem:linearcommute}]
Using the definition
\[\eta=-\sum_{i=1}^{\ell-2}i^{-1}(1-\zeta)^i=\sum_{i=1}^{\ell-2}\frac{(-1)^{i+1}}{i}(\zeta-1)^i,\] 
we have
\begin{align*}
    \Frob\circ \eta&=\sum_{i=1}^{\ell-1} \frac{(-1)^{i+1}}{i}\Frob\circ (\Czeta-1)^i\\
    &=\sum_{i=1}^{\ell-1}  \frac{(-1)^{i+1}}{i}(\Czeta^q-1)^i\circ\Frob\\
    &=\sum_{i=1}^{\ell-1} \frac{(-1)^{i+1}}{i}\sum_{j=0}^i \binom{i}{j}(-1)^{i-j}(\Czeta-1+1)^{qj}\circ\Frob\\
    &=\sum_{i=1}^{\ell-1} \sum_{j=0}^i \frac{(-1)^{j+1}}{i}\binom{i}{j}\sum_{k=0}^{qj}\binom{qj}{k}(\Czeta-1)^{k}\circ\Frob.
\end{align*}
Now we have $(\Czeta-1)^{k}=0$ for all $k>\ell-1$ and $\binom{qj}{k}=0$ for all $k>qj$, so the sum over $k$ can be indexed from $0$ to $\ell-1$ without changing the value (in both cases, only terms with $k\leq\min\{qj,\ell-1\}$ contribute). Using \cref{comblemma}, we can conclude:
\begin{align*}
    \Frob\circ \eta&=\sum_{k=0}^{\ell-1}\sum_{i=1}^{\ell-1} \sum_{j=0}^i \frac{(-1)^{j+1}}{i}\binom{i}{j}\binom{qj}{k}(\Czeta-1)^{k}\circ\Frob\\
    &=\sum_{k=0}^{\ell-1}\left(\frac{(-1)^{k+1}q}{k}\right)(\Czeta-1)^{k}\circ\Frob\\
    &=q\eta\circ\Frob.\qedhere
\end{align*}
\end{proof}

\printbibliography

\end{document}